\newcommand{\bap}{\begin{app}}
\newcommand{\eap}{\end{app}}
\newcommand{\begs}{\begin{exams}}
\newcommand{\eegs}{\end{exams}}
\newcommand{\beg}{\begin{example}}
\newcommand{\eeg}{\end{exaplem}}
\newcommand{\bpr}{\begin{proposition}}
\newcommand{\epr}{\end{proposition}}
\newcommand{\bt}{\begin{theorem}}
\newcommand{\et}{\end{theorem}}
\newcommand{\bc}{\begin{corollary}}
\newcommand{\ec}{\end{corollary}}
\newcommand{\bl}{\begin{lemma}}
\newcommand{\el}{\end{lemma}}
\newcommand{\bd}{\begin{definition}}
\newcommand{\ed}{\end{definition}}
\newcommand{\brs}{\begin{remarks}}
\newcommand{\ers}{\end{remarks}}
\newcommand{\const}{\text{\rm constant}}
\newcommand{\Span}{{\rm Span }}
\newcommand{\rank}{\text{\rm{rank}}}
\newcommand\kernel{\hbox{\rm Ker}}
\newcommand\br{\begin{remark}}
\newcommand\er{\end{remark}}
\newcommand\bp{\begin{pmatrix}}
\newcommand\ep{\end{pmatrix}}
\newcommand{\be}{\begin{equation}}
\newcommand{\ee}{\end{equation}}
\newcommand\ba{\begin{equation}\begin{aligned}}
\newcommand\ea{\end{aligned}\end{equation}}
\newcommand{\na}{{\nabla}}
\renewcommand{\div}{{\rm div}}
\newtheorem{theorem}{Theorem}[section]
\newtheorem{lemma}[theorem]{Lemma}
\newtheorem{proposition}[theorem]{Proposition}
\newtheorem{remark}[theorem]{Remark}
\newtheorem{definition}{Definition}
\newtheorem{corollary}[theorem]{Corollary}
\newtheorem{example}[theorem]{Example}
\newtheorem{thm}{Theorem}[section]
\newtheorem{prop}[thm]{Proposition}
\newtheorem{cor}[thm]{Corollary}
\newtheorem{lem}[thm]{Lemma}
\newtheorem{defn}[thm]{Definition}
\newtheorem{ass}[thm]{Assumption}
\newtheorem{rem}[thm]{Remark}
\newtheorem{exams}[thm]{Examples}
\newtheorem{notation}[thm]{Notation}
\numberwithin{equation}{section}
\def\({\left(\begin{array}{cccccc}}
\def\){\end{array}\right)}
\newcommand{\beq}{\begin{equation}}
\newcommand{\eeq}{\end{equation}}
\newcommand{\eps}{\varepsilon}
\newcommand{\RR}{\mathbb{R}}
\newcommand{\pf}{\begin{proof}}
\newcommand{\foorp}{\end{proof}}
\newcommand{\NN}{{\mathbb N}}
\newcommand{\CC}{{\mathbb C}}
\newcommand{\EE }{{\mathbb E}}
\newcommand{\cA}{{\mathcal A}}
\newcommand{\cB}{{\mathcal B}}
\newcommand{\cC}{{\mathcal C}}
\newcommand{\cD}{{\mathcal D}}
\newcommand{\cE}{{\mathcal E}}
\newcommand{\cF}{{\mathcal F}}
\newcommand{\cG}{{\mathcal G}}
\newcommand{\cH}{{\mathcal H}}
\newcommand{\cI}{{\mathcal I}}
\newcommand{\cK}{{\mathcal K}}
\newcommand{\cN}{{\mathcal N}}
\newcommand{\cO}{{\mathcal O}}
\newcommand{\cP}{{\mathcal P}}
\newcommand{\cU}{{\mathcal U}}
\newcommand{\uu}{{\underline u}}
\newcommand{\ux}{{\underline x}}
\newcommand{\Id}{{\rm Id }}
\newcommand{\ls}{{\ \lesssim \ }}
\newcommand{\D}{{\partial }}
\newcommand{\bR}{\mathbb{R}}
\newcommand{\bG}{\mathbb{G}}
\newcommand{\bE}{\mathbb{E}}
\newcommand{\bN}{\mathbb{N}}
\newcommand{\bC}{\mathbb{C}}
\newcommand{\mrp}{\mathrm{p}}
\newcommand{\mrh}{\mathrm{h}}
\newcommand{\whgamma}{\widehat{\gamma'}}
\newcommand{\hzeta}{\hat{\zeta}}
\newcommand{\umu}{\underline{\mu}}
\title{\textbf{Viscous boundary layers in hyperbolic-parabolic systems with Neumann boundary conditions}}
\author{\sc \small
Olivier Gues\thanks{LATP, Universit\'e d'Aix-Marseille;
olivier.gues@univ-amu.fr },
Guy M\'etivier\thanks{MAB, Universit\'e de Bordeaux I;
metivier@math.u-bordeaux.fr.
Research of G.M. was partially supported by European
network HYKE,  HPRN-CT-2002-00282. },
Mark Williams\thanks{
University of North Carolina;
williams@email.unc.edu.
Research of M.W. was partially supported by
NSF grants number DMS-0070684 and DMS-0401252.},
Kevin Zumbrun\thanks{Indiana University;
kzumbrun@indiana.edu: K.Z. thanks the Universities of
Bordeaux I and Provence for their hospitality during visits
in which this work was partially carried out.
Research of K.Z. was partially supported by
NSF grants number DMS-0070765 and DMS-0300487. } }
\begin{document}
\date{Revised: \today}
\maketitle
\begin{abstract}\noindent
\emph{\quad}We initiate the study of noncharacteristic
boundary layers in hyperbolic-parabolic problems with Neumann boundary conditions.  More generally, we study boundary layers with mixed Dirichlet--Neumann boundary
conditions where the number of Dirichlet conditions is fewer than the number of hyperbolic
characteristic modes entering the domain,
that is, the number of boundary conditions
needed to specify an outer hyperbolic solution.
We have shown previously that this situation prevents
the usual WKB approximation involving an outer solution with pure Dirichlet
conditions.  It also rules out the usual maximal estimates for the linearization of the hyperbolic-parabolic problem
about the boundary layer.

Here we show that for linear, constant-coefficient, hyperbolic-parabolic problems
 one obtains a reduced hyperbolic problem  satisfying Neumann
or mixed Dirichlet--Neumann rather than Dirichlet boundary conditions.
When this hyperbolic problem can be solved, a unique formal boundary-layer expansion
can be constructed.  In the extreme case of pure Neumann conditions and totally
incoming characteristics, we carry out a full analysis of the quasilinear case,
obtaining a boundary-layer approximation to all orders with a rigorous error analysis. As a  corollary we characterize the small viscosity limit for this problem. The analysis shows that although the associated linearized hyperbolic
and hyperbolic--parabolic problems do not satisfy the usual
maximal estimates for Dirichlet conditions, they do satisfy analogous versions
with losses.
\end{abstract}

\tableofcontents

%
%

\section{Introduction}

\emph{\quad}
In the study of noncharacteristic boundary layers of hyperbolic-parabolic
systems, physical applications motivate the inclusion of Neumann boundary
conditions along with the usual Dirichlet boundary conditions
that have traditionally been considered for such problems
(see, e.g., \cite{GS,R2,R3} and rererences therein).
In particular, as discussed in \cite{NZ1,NZ2,GMWZ5,R},
suction-induced drag reduction along an
airfoil\footnote{ See \cite{S,Br}, or NASA site
http://www.dfrc.nasa.gov/Gallery/photo/F-16XL2/index.html}
 is typically modeled by the compressible Navier--Stokes equations
\begin{equation}
\label{NSeq}
\left\{ \begin{aligned}
 & \D_t \rho +  \div (\rho u) = 0
 \\
 &\D_t(\rho  u) + \div(\rho u^tu)+ \na p =
\eps \mu \Delta u + \eps(\mu+\eta) \nabla \div u
 \\
 &
 \D_t(\rho E) + \div\big( (\rho E  +p)u\big)=
\kappa \Delta T +
\eps \mu \div\big( (u\cdot \nabla) u\big) \\
&
\qquad \qquad \qquad \qquad
\qquad \qquad
+ \eps(\mu+\eta) \nabla(u\cdot \div u)
 \end{aligned}\right.
\end{equation}
on an exterior domain $\Omega$,
with no-slip {\it suction-type} boundary conditions on the velocity,
$
 u_T|_{\partial\Omega}=0$, $u_\nu|_{\partial \Omega}= V(x)< 0,
$
and either prescribed or insulative boundary conditions on
the temperature,
$ T|_{\partial \Omega}= T_{wall}(x)$
or $\D_\nu T|_{\partial \Omega}= 0.
$

The study of such mixed-type boundary layer problems was initiated in
\cite{GMWZ5,GMWZ6} for certain combinations of Dirichlet and Neumann boundary conditions in the viscous problem.  However, the ansatz used there, which assumes that the residual hyperbolic problem should have only Dirichlet boundary conditions,  breaks down when there are too many Neumann conditions in the viscous problem - more precisely,
when there are too few Dirichlet conditions, in the sense that
the number of scalar Dirichlet conditions in the viscous problem is strictly less than the ``correct" number of residual boundary conditions for the hyperbolic problem.
In such cases, the construction in \cite{GMWZ5} of
``$\cC$-manifolds'' of reachable states determining Dirichlet boundary
conditions for the outer, hyperbolic solution fails, due to a lack
of transversality, as a consequence of which (together with the low-frequency
decomposition of \cite{R2}) the maximal linearized estimates used
in \cite{GMWZ5,GMWZ6} to establish rigorous convergence may be
shown to fail as well.
As noted in \cite{R}, the case of \eqref{NSeq} with
incoming supersonic velocity falls into this category, so is not accessible
by the techniques developed up to now.

Clearly, in such cases, a new analysis is required.
Several questions arise, including:

(1) Does the hyperbolic-parabolic problem have a solution on a fixed time interval independent of $\eps$?

(2) Is there a residual hyperbolic problem whose solution gives the small viscosity limit of solutions to the hyperbolic-parabolic problem? In particular, what are the correct residual hyperbolic boundary conditions?
And, are these uniquely determined?

(3) What are the maximal linearized estimates that we may
expect in this context, both for the
residual hyperbolic and full hyperbolic--parabolic problem?

\medskip
In this paper, we answer these questions completely in the extreme
case of pure Neumann boundary conditions and totally incoming hyperbolic
characteristic modes, showing that there is a reduced hyperbolic
problem with Neumann instead of Dirichlet conditions, and that in place of
the standard Dirichlet-type linearized estimates for the reduced hyperbolic
and full hyperbolic--parabolic systems, there hold modified versions with
losses, sufficient to close a rigorous convergence argument. As a corollary we characterize the small viscosity limit for the quasilinear problem.

In the general, linear constant-coefficient case, we present two approaches to constructing a
formal boundary-layer expansion to all orders of the solution to the hyperbolic-parabolic problem.  In general the reduced
hyperbolic (outer) problem features mixed Dirichlet--Neumann boundary conditions.   In the pure Neumann case we prove that the exact and approximate solutions to the hyperbolic-parabolic problem are close when $\eps$ is small.

{\it Our results
motivate the further study of first-order hyperbolic initial-boundary-value
problems with  Neumann or mixed Neumann--Dirichlet boundary
conditions}.   This is at first sight a counterintuitive problem,
since the normal derivative on the boundary is not controlled by
the usual hyperbolic solution theory,
and it does not seem to have received much attention before now.
We regard this as one of the most interesting aspects of the analysis.

\subsection{Linear systems with Neumann boundary conditions}\label{s:linear}

  \emph{\quad}  First we examine a linear problem for which the above
questions have a positive, and rather simple, answer.
Let us consider the parabolic boundary value problem on $\overline{\bR}^{d+1}_+:=\{x=(x',x_d)=(x_0,x'',x_d)\in\bR^{d+1}:x_d\geq 0\}$:
\begin{eqnarray}
  Lu &=& f+ \eps \Delta_x u \ \mathrm{in} \ \{x_d >0\}, \label{0,1}\\
  \D_du_{|x_d=0 } &=& 0 ,\label{0,2} \\
  u_{|t<0}&=& 0 \label{0,3},
\end{eqnarray}
where $L$ is a symmetric hyperbolic operator with constant
coefficients
$$
L = \D_t + \sum_{j=1}^d A_j \D_j,\;  t=x_0
$$
and $f\in H^\infty(\overline{\RR}_+^{1+d})$ with $f_{|t<0}=0$. The $N\times N$ matrices $A_j$
are constant (for now), and the boundary is noncharacteristic:
$$
\det A_d \ne 0.
$$

We look for an approximate solution of the form
$$
u^\eps (x)= u_0(x) + \eps u_1(x,\frac{x_d}{\eps}) + \eps^2
u_2(x,\frac{x_d}{\eps})+ \dots
$$
with the usual profiles
$$
u_j(x,z)= \uu_j(x) + u_j^*(x',z), \quad j \geq 1,
$$
where $\uu_j$ is an ``outer'' solution, and $u^*_j$ is a boundary
layer profile which goes to $0$ as $z \rightarrow \infty$.

\begin{rem}\label{no0}
\textup{
One could postulate a more general profile
$u_0(x,z)= \uu_0(x) + u_0^*(x',z)$
at level $j=0$; however, the resulting $\eps^{-1}$ order
profile equations
$A_d \D_z u_0^* - \D_z^2 u_0^* =0,$
with boundary condition
$ \D_z (u_0^*)_{|z=0} = 0$ would give then $ \D_z u_0^*\equiv 0$,
recovering the assumption $u_0=u_0(x)$.
}
\end{rem}

The profile equation obtained at the order $\eps^0$ is
$$
Lu_0 + A_d \D_z u_1 - \D_z^2 u_1 = f.
$$
which leads to the two equations for $u_0$ and $u^*_1$:
\begin{equation}\label{1a}
    Lu_0 =f
\end{equation}
and
\begin{equation}\label{2}
A_d \D_z u_1^* - \D_z^2 u_1^* =0.
\end{equation}
The boundary condition (\ref{0,2}) gives at the order $\eps^0$:
$$
(\D_d u_0)_{|x_d=0} + (\D_z u_1^*)_{|z=0} = 0.
$$
Hence the solution to the boundary layer equation (\ref{2}) is
\begin{equation}\label{3a}
    u_1^*(x',z)= - e^{z A_d} \, A_d^{-1} \D_du_0(x',0).
\end{equation}
It follows  that $ u_1^*$ is decreasing at $+\infty$ if and only if
$\D_du_0|_{x_d=0}$ lies in  $\EE_-(A_d)$, the negative eigenspace of $A_d$:
\begin{equation}\label{5}
    \D_d {u_0}_{|x_d=0} \in \EE_-(A_d).
\end{equation}
But $u_0$ satisfies $Lu_0=f$; thus
$$
\D_d u_0 = -A_d^{-1}\sum^{d-1}_0 A_j \D_j u_0 + A_d^{-1} f
$$
and the condition (\ref{5}) is equivalent to
\begin{equation}\label{6a}
    H {u_0}_{|x_d=0} \in A_d^{-1} f|_{x_d=0} +\EE_-(A_d),
\end{equation}
where $H$ is the tangential operator $H:= A_d^{-1}\sum_0^{d-1} A_j
\D_j$.   So we are led to solve the mixed problem
\begin{eqnarray}
  Lu_0 &=& f\ \mathrm{in} \ \{x_d >0\},\label{15}\\
  H {u_0}_{|x_d=0 } &\in & A_d^{-1}f_{|x_d=0} + \EE_-(A_d) ,\label{16}\\
  {u_0}_{|t<0}&=& 0.\label{17}
\end{eqnarray}
(The boundary conditions may be rephrased via projections
as described in Remark \ref{unusual}.)

To solve this problem introduce the unknown $v:= Hu_0$, which is the solution of the
symmetric hyperbolic problem with dissipative boundary conditions
\begin{eqnarray}
  Hv + \D_d v &=& H (A_d^{-1}f)\ \mathrm{in} \ \{x_d >0\},\\
  v_{|x_d=0 } &\in & A_d^{-1}f_{|x_d=0} + \EE_-(A_d) ,\\
  {v}_{|t<0}&=& 0.
\end{eqnarray}
Hence $v$ is completely determined; thus $u_0$ is also uniquely
determined as the unique solution of
$$ Hu_0 = v, \quad {u_0}_{|t<0}
=0 $$
(here considered as an initial-value problem defined on slices $x_d\equiv \const$).
 Then $u^*_1$ is uniquely determined by formula $(\ref{3a})$,
and decays to zero at $+\infty$.

The construction follows the same pattern for the
next terms. For example, setting $L'=\D_t + \sum_1^{d-1} A_j \D_j$ we obtain
at the order $\eps^1$ the profile equation
$$
L\underline{u}_1 + L'u^*_1+A_d \D_z u_2 - \D_z^2 u_2 = \Delta u_0.
$$
which leads to the two equations for $\uu_1$ and $u^*_2$:
\begin{equation}
    L\uu_1 = \Delta u_0
\end{equation}
and
\begin{equation}
A_d \D_z u_2^* - \D_z^2 u_2^*  = - L'u_1^*.
\end{equation}
The boundary condition (\ref{0,2}) gives at the order $\eps^1$:
$$
(\D_d \uu_1)_{|x_d=0} + (\D_z u_2^*)_{|z=0} = 0.
$$
One can solve as before these equations which gives a unique solution
for $\uu_1$ and $u_2^*$.

\begin{thm}
$
u^\eps(x) = u_0(x) + \eps u_1(x, x_d/\eps) + \cdots +
 \eps^k \, u_k(x,x_d/\eps) + O(\eps^k)
$
in $L^2((-\infty, T]\times \RR^d_+)$ for all given $T>0$ and all
$k\in\NN$ as $\eps \rightarrow 0$.
\end{thm}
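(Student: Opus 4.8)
The plan is to construct a WKB approximate solution to arbitrarily high order, reduce the statement to an a priori estimate for the linearization $\cL^\eps:=L-\eps\Delta$, and then beat whatever loss that estimate carries by taking the expansion order large.

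\emph{Step 1 (profiles to all orders).} By induction on $j$ one shows that the profiles $\uu_j(x)$, $u_j^*(x',z)$ exist for every $j\geq 1$, with $\uu_j\in H^\infty$ vanishing for $t<0$ and $u_j^*$ Schwartz in $x'$ and exponentially decaying as $z\to+\infty$. The inductive step is exactly the computation displayed in the introduction: at order $\eps^j$ one gets the interior equation $L\uu_j=\Delta\uu_{j-1}$ (with $\uu_0:=u_0$) together with the fast equation $A_d\D_zu_{j+1}^*-\D_z^2u_{j+1}^*=-L'u_j^*+\Delta_{x'}u_{j-1}^*$ (with $u_0^*:=0$), while the Neumann condition (\ref{0,2}) at order $\eps^j$ forces $\D_d\uu_j|_{x_d=0}$ to lie in $\EE_-(A_d)$ modulo explicit lower-order data; as in (\ref{6a})–(\ref{17}) this is a mixed hyperbolic problem for $\uu_j$, solved by setting $v:=H\uu_j$ (a symmetric hyperbolic problem with dissipative boundary condition, hence uniquely solvable) and then recovering $\uu_j$ by integrating $H\uu_j=v$ in $x_d$ from $t<0$; finally $u_{j+1}^*$ is read off the fast ODE, the decaying exponential being selected precisely because the trace lies in $\EE_-(A_d)$ up to lower-order corrections. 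Set $u^\eps_N(x):=\sum_{j=0}^N\eps^j u_j(x,x_d/\eps)$, $u_j=\uu_j+u_j^*$.

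\emph{Step 2 (residuals and reduction).} Plugging $u^\eps_N$ into $\cL^\eps$ and using $\cL^\eps[\phi(x,x_d/\eps)]=\eps^{-1}(A_d\D_z-\D_z^2)\phi+(\text{order }1)-\eps\Delta_x\phi$ and $\D_d[\phi(x,x_d/\eps)]=\D_{x_d}\phi+\eps^{-1}\D_z\phi$, all contributions of orders $\eps^{-1}$ through $\eps^{N-1}$ cancel by the profile equations, leaving an interior residual $R_N:=f-\cL^\eps u^\eps_N$ and a boundary residual $G_N:=-\D_d u^\eps_N|_{x_d=0}$ supported only at the top orders. Since $f\in H^\infty$ the profiles are smooth, and since each boundary-layer term $\eps^j h(x',x_d/\eps)$ with $h$ exponentially decaying in its last slot has $L^2$ norm $\lsi\eps^{j+1/2}$ (from $\int_0^\infty|h(x_d/\eps)|^2\,dx_d=\eps\int_0^\infty|h(z)|^2\,dz$), one gets $\|R_N\|_{L^2}\lsi\eps^{N}$ and $\|G_N\|_{H^s(\{x_d=0\})}\lsi\eps^{N}$ for every $s$. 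Hence the error $w:=u^\eps-u^\eps_N$, where $u^\eps$ now denotes the (unique, by standard theory for this parabolic mixed problem) exact solution of (\ref{0,1})–(\ref{0,3}), solves the \emph{linear} problem
$$\cL^\eps w=R_N\ \text{in}\ \{x_d>0\},\qquad \D_d w|_{x_d=0}=G_N,\qquad w|_{t<0}=0.$$

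\emph{Step 3 (a priori estimate and conclusion).} It remains to prove for this linear problem an estimate $\|w\|_{L^2((-\infty,T]\times\RR^d_+)}\leq C\,\eps^{-q}\big(\|R_N\|_{L^2}+\|G_N\|_{H^{s_0}}\big)$ with $q,s_0$ fixed. The bare energy identity $\tfrac12\|w(T)\|^2+\eps\|\na w\|^2=\tfrac12\int_{x_d=0}\langle A_dw,w\rangle-\eps\int_{x_d=0}\langle \D_dw,w\rangle+\int\langle R_N,w\rangle$ does not close, because the boundary term $\int_{x_d=0}\langle A_dw,w\rangle$ is sign-indefinite while the only boundary condition controls $\D_dw$, not the trace of $w$; this absence of a dissipative/maximal boundary symmetrizer for the Neumann condition is the main obstacle, and is precisely the phenomenon the paper addresses. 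I would circumvent it using the constant-coefficient structure: a Laplace transform in $t$ (weight $e^{-\gamma t}$, $\gamma\geq\gamma_0$) and a Fourier transform in $x''$ turn the problem into a $2N\times 2N$ first-order ODE in $x_d$ with parameters $(\gamma,\tau,\xi'')$; for $\eps>0$ the second-order term yields an $N$-dimensional stable (decaying) subspace, and the task reduces to showing that the $N$ free constants parametrizing the decaying solutions are determined by the Neumann datum $\hat w'(0)\in\CC^N$ with inverse bounded by $O(\eps^{-q})$ uniformly in the parameters — a uniform Lopatinski-type estimate with a controlled loss. The proof of this resolvent bound, obtained by separating the fast scale $z=x_d/\eps$ from the slow scale $x_d$ in the block structure of the symbol (the fast block producing the parabolic modes, the slow block reproducing the dissipative hyperbolic problem of Step 1), is the crux of the whole argument. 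Granting it, Plancherel together with the causal support of $w$ gives the stated estimate on $(-\infty,T]$, so $\|u^\eps-u^\eps_N\|_{L^2}\lsi\eps^{N-q}$; choosing $N>k+q$ and writing $u^\eps_N=u_0+\eps u_1+\cdots+\eps^k u_k+\sum_{j=k+1}^N\eps^j u_j$ with tail $O(\eps^{k+1})$ yields the theorem.
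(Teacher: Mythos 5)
Your Steps 1 and 2 follow the paper's outline: construct the boundary-layer expansion to all orders and reduce to proving a linear a priori estimate for the error. The genuine gap is in Step 3. You correctly diagnose that the naive energy identity does not close, since the boundary trace $\int_{x_d=0}\langle A_d w,w\rangle$ is sign-indefinite while only $\D_d w$ is controlled; but you then do not prove the needed estimate, instead sketching a Fourier--Laplace resolvent argument and explicitly writing ``Granting it.'' That leaves the crux of the theorem unproved.

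The paper's actual argument closes this step by an elementary two-stage energy estimate that your proposal misses. First, differentiate the equation in $x_d$: since the boundary condition is $\D_d u|_{x_d=0}=0$, the function $\D_d u$ satisfies the same hyperbolic--parabolic equation with forcing $\D_d f$ and a \emph{homogeneous Dirichlet} condition, so one integration by parts gives
$\eps|\nabla_x\D_d u|^2_\gamma + \gamma|\D_d u|^2_\gamma \lsi \gamma^{-1}|\D_d f|^2_\gamma$.
Second, integrate by parts in the original equation for $u$; the sign-indefinite boundary term is controlled through the trace inequality $\bigl|\int_{x_d=0}e^{-2\gamma t}\langle A_d u,u\rangle\bigr|\lsi |u|_\gamma\,|\D_d u|_\gamma$ (obtained by writing $\int_{x_d=0}|u|^2 = -\int_{x_d>0}\D_d(|u|^2)$), which combined with the first bound gives $|u|_\gamma\lsi \gamma^{-1}|f|_\gamma + \gamma^{-2}|\D_d f|_\gamma$. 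Applying this to $w=u^\eps-u^\eps_{\mathrm{approx}}$ with residual $O(\eps^r)$ and $\D_d$ of the residual $O(\eps^{r-1})$, for $r\ge 2$, finishes the proof. Your proposed Fourier--Laplace/Lopatinski route is, in spirit, the machinery the paper develops later for the quasilinear totally incoming case (Evans function computation, degenerate Kreiss symmetrizers, resolvent estimates); it is far more than the linear constant-coefficient statement requires, and in any event you assert rather than prove the key resolvent bound, so the argument as written does not establish the theorem.
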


\begin{proof}
Since we can construct an approximate solution to any order, it is
sufficient to prove an estimate of $|u|_{L^2(\Omega_T)}$,
where $\Omega= (-\infty,T] \times \RR^d_+$, for the solution $u$ to
the problem (\ref{0,1})(\ref{0,2})(\ref{0,3}). First we estimate the
normal derivative. Applying $\D_d$ to the equation $(\ref{0,1})$ and
using condition $(\ref{0,2})$ leads to a
hyperbolic--parabolic problem with a
homogenous Dirichlet boundary condition for $\D_d u$. A simple integration by
parts yields (with $|u|_{\gamma} = |e^{-\gamma t}
u|_{L^2(\Omega_T)}$):
$$
\eps | \nabla_x \D_d u|_{\gamma}^2 + \gamma | \D_du|^2_\gamma
\ls \gamma^{-1} | \D_df|_\gamma^2.
$$
Going back to the system (\ref{0,1}), taking the product on the left
by $u$, and integrating by parts leads to
$$
\eps | \nabla u |^2_\gamma + \gamma |u|^2_\gamma \ls
\gamma^{-1} |f|_\gamma^2 + |u|_\gamma |\D_du|_\gamma.
$$
Hence using the previous estimate one gets
$$
\eps |\nabla u|_\gamma^2 + \gamma |u|^2_\gamma \ls
\gamma^{-1} |f|_\gamma^2 + \gamma ^{-3} |\D_df|_\gamma^2,
$$
and finally
\begin{equation}\label{18}
    |u|_{\gamma} \ls \gamma^{-1} |f|_\gamma + \gamma^{-2}
    |\D_df|_\gamma.
\end{equation}
Applying the estimate (\ref{18}) to the error $w= u^\eps -
u^\eps_{approx}$, with the function $f$ replaced by $O(\eps^r)$ and
$\D_d f$ replaced by $O(\eps ^{r-1})$
for $r$  chosen large enough
(i.e., $r\ge 2$),
proves the theorem.
\end{proof}

An analogous result with convergence in $L^2$ replaced by convergence in $L^\infty$ can easily be obtained after getting higher derivative estimates.

\begin{rem}\label{serrermk}
\textup{
The approach followed here is similar to the idea of ``filtering'' introduced by
Serre \cite{Se1} in the somewhat different context of second-order
hyperbolic problems with variational structure,\footnote{
Also featuring Neumann, or ``free,'' boundary conditions.}
in which a degenerate
problem is decomposed into the composition of problems of standard
type, each inducing its own losses/gains.
}
\end{rem}

\subsection{Quasilinear systems with Neumann boundary conditions}

\emph{\quad} Next we derive a candidate for the residual hyperbolic problem in the quasilinear case.
Consider the nonlinear
parabolic problem
\begin{eqnarray}\label{orig}
  L_u(u)  &=& f+ \eps \Delta_x u \ \mathrm{in} \ \{x_d >0\},\\
  \D_du_{|x_d=0 } &=& 0 , \\
  u_{|t<0}&=& 0.
\end{eqnarray}
where $L_u$ is a symmetric hyperbolic operator
$$
L_u = \D_t + \sum_1^d A_j(u) \D_j,
$$
and $f\in H^\infty(\overline{\RR}_+^{1+d})$ with $f_{|t<0}=0$. The matrices $A_j$
are smooth and symmetric, and the boundary is noncharacteristic:
$$
\det A_d (u)\ne 0, \quad \forall u\in \RR^N.
$$
Again we expect an expansion of the form
$$
u^\eps (x)= u_0(x) + \eps u_1(x,\frac{x_d}{\eps}) + \eps^2
u_2(x,\frac{x_d}{\eps})+\dots,
$$
that is, a ``weak'' layer of order $\eps$ in amplitude.
This may be deduced exactly as in the linear constant-coefficient
case, by examination of the order $\eps^{-1}$ profile equations
as described in Remark \ref{no0}.

The equations for the terms of order $\eps ^0$ give
$$
L_{u_0} u_0 + A_d(u_0)\D_z u_1^* - \D_z^2 u_1^* = f.
$$
This equation splits into two parts
\begin{equation}\label{7}
A_d({u_0}_{|x_d=0})\D_z u_1^* - \D_z^2 u_1^* = 0
\end{equation}
and
\begin{equation}\label{8}
L_{u_0} u_0 = f,
\end{equation}
and the boundary condition at the order $\eps^0$ is still
\begin{equation}\label{10}
    (\D_du_0)_{|x_d=0} + (\D_z u_1^*)_{|z=0}.
\end{equation}
 The solution to the boundary layer
equation (\ref{7}) is
\begin{equation}\label{9}
u_1^*(x',z) = - e^{z A_d(u_0(x',0))}A_d^{-1}\big(u_0(x',0)\big)
\D_d u_0(x',0).
\end{equation}
This solution goes to $0$ at $+\infty$ if and only if
\begin{equation}\label{11}
    \D_d u_0(x',0) \in \EE_-\big(A_d\big(u_0(x',0)\big)\big).
\end{equation}
Using the equation (\ref{8}) we rewrite this condition:
\begin{equation*}
    H_{u_0} \big(u_0\big) \in A_d^{-1} \big(u_0(x',0)\big) f_{|x_d=0} +
    \EE_-\big(A_d\big(u_0(x',0)\big)\big).
\end{equation*}
with $H_u := A_d(u)^{-1} L_u - \D_d$. Writing instead
$$
L'_u := \D_t + \sum_1^{d-1} A_j(u) \D_j.
$$
we obtain the  following hyperbolic boundary problem obtained for $u_0$:
\begin{eqnarray}\label{13}
  L_u (u)  &=& f \ \quad \mathrm{in}\; (-\infty, T] \times \RR^d_+ \\
  L'_u(u) &\in & f_{|x_d=0} +
    \EE_-\big(A_d(u)\big) \  \quad  \mathrm{on} \ \{x_d=0\} ,\label{14}\\
  u_{|t<0} &=& 0.
\end{eqnarray}

\begin{rem}\label{unusual}
\textup{We do not know if this  problem is  well-posed in general. The boundary conditions
(\ref{14}) are unusual; they can be as rephrased as
\begin{align}\label{13a}
\begin{split}
&\pi_+(A_d(u))\left(L'_u(u)- f_{|x_d=0}\right)=0\;\mathrm{on } \; \{x_d=0\} \text{ or }\\
&\pi_+(A_d(u))\partial_du=0\;\mathrm{on } \; \{x_d=0\}
\end{split}
\end{align}
(equivalently, $\pi_+\partial_d u=0$),
where $\pi_+(A(u))$ is the projection onto $E_+(A_d(u))$ along $E_-(A_d(u))$.  Yet,  in the constant coefficient linear case the
corresponding problem (\ref{15}), (\ref{16}), (\ref{17}) turns out to have a unique natural solution.}

\textup{In the totally incoming case where $A_d(u)>0$ and thus $E_-(A_d(u))=0$, one can solve \eqref{13a} by first solving a hyperbolic system on the
boundary, as we describe further below.
A high-order approximate solution to the hyperbolic-parabolic problem \eqref{orig} can be constructed, and the small viscosity limit can be completely analyzed.}
\end{rem}

\subsection{Assumptions and main result.}\label{s:result}

\emph{\quad} Our main result treats a quasilinear hyperbolic-parabolic problem where the
questions posed at the beginning  can be answered completely, the case where all characteristics for the hyperbolic problem are incoming: $A_d(u)>0$.  We study the forward problem on $\overline{\bR}^{d+1}_+:=\{x=(x',x_d)=(x_0,x'',x_d)\in\bR^{d+1}:x_d\geq 0\}$:
\begin{align}\label{aa}
\begin{split}
&\cE(u_\eps):=\sum_{j=0}^dA_j(u)\partial_{x_j}u-\eps\Delta u=f\\
&\partial_{x_d}u|_{x_d=0}=0\\
&u=0\text{ in }x_0 <0
\end{split}
\end{align}
where the $A_j$ are $N\times N$ matrices (not necessarily symmetric), $A_d(u)>0$, and $A_0=I$.

The approximate solution, which is  constructed in section \ref{approx}, has the form
\begin{align}\label{ab}
u^a_\eps(x)=u^0(x)+\eps u^1(x)+\dots +\eps^M u^M(x)
\end{align}
and satisfies
\begin{align}\label{ac}
\begin{split}
&\cE(u^a):=\sum_{j=0}^d A_j(u^a)\partial_{x_j}u^a-\eps\Delta u^a=f+\eps^M R_{\eps}\\
&\partial_{x_d}u^a|_{x_d=0}=0\\
&u^a=0\text{ in }x_0 <0.
\end{split}
\end{align}

As a consequence of the totally incoming assumption, there is no fast transition layer in $u^a$. Nevertheless, the nonlinear stability of $u^a$ and the analysis of the small viscosity limit turn out to be delicate questions, because the Evans function for this problem vanishes at zero frequency.  Thus, $u^a$ can be expected to be at best ``weakly stable".

The low frequency Evans function is computed explicitly in section \ref{Evans} and its degeneracy near $0$  is precisely estimated.\footnote{Outside a neighborhood of zero frequency, the Evans function is nonvanishing by
\eqref{18}; recall that the layer in the totally incoming case is constant,
so the analysis of Section \ref{s:linear} applies.}
This estimate allows us to construct  degenerate Kreiss symmetrizers at the symbol level  in section \ref{symm}, and these symmetrizers are used there to prove resolvent estimates for the frozen coefficient linearized problem.\footnote{Degenerate symmetrizers were used also in \cite{GMWZ2}, but there the degeneracy occurred in the elliptic bloc ($S_P$ in \eqref{b1}), rather than the hyperbolic block.}

The resolvent estimates are quantized in section \ref{basic} using the pseudodifferential calculi outlined in the Appendix.   This section provides the main variable coefficient $L^2$
 estimate, Theorem \ref{c0c},  for the problem obtained by linearizing the original system \eqref{aa} around the approximate solution $u^a$.
 Fortunately, the $L^2$ estimate exhibits no loss of derivatives, but there is a  loss of a factor of $\sqrt{\eps}$ when the boundary datum $g=0$.
 This loss in the main estimate, which reflects the degeneracy in the Evans function,  is the source of most of the technical difficulties in the paper, because it prevents us from absorbing  terms that would otherwise be  absorbed easily as ``error terms" in the estimates.

Higher derivative estimates are proved in section \ref{higher} using an appropriate enlarged system, and these estimates are then used in section \ref{nonlinear} to solve the nonlinear error equation satisfied by $u_\eps-u^a$ by Picard iteration.

We let $\Omega_T:=\{x=(x',x_d)=(x_0,x'',x_d)\in\bR^{d+1}:x_d\geq 0, \; x_0\leq T\}$ and sometimes write $t=x_0$.

  \begin{ass}\label{assumptions}
  I.) The $N\times N$ matrices $A_j(u)$ in the system \eqref{aa} are $C^\infty$ and symmetric, $A_0=I$, and $A_d(u)>0$.  Thus, in particular the boundary is
noncharacteristic.

  II.) Let $f\in H^s(\overline{\bR}^{d+1}_+)$ for $s$ large (as in Theorem \ref{e11.8}), $f=0$ in $t<0$, and let $u_0(x)\in\Omega_{T_0}$ denote the solution to the residual hyperbolic problem:
    \begin{align}\label{res}
\begin{split}
&\partial_tu_0+\sum^d_{j=1}A_j(u_0)\partial_ju_0=f\text{ in }x_d>0\\
&\partial_{d}u_0|_{x_d=0}=0\\
&u_0=0 \text{ in }t<0.
\end{split}
\end{align}
    Assume that for $x\in \Omega_{T_0}$ the function $u_0$ takes values in a neighborhood of $0$, $\cU$, such that for $u\in \cU$, the hyperbolic operator $\partial_t+\sum^d_{j=1}A_j(u)\partial_j$ has semisimple characteristics of constant multiplicity.

  \end{ass}

\begin{rem}\label{r1}\textup{The positivity of $A_d$ implies that the boundary condition in \eqref{res} agrees with \eqref{13a}. Assumption II is a familiar condition implying that  the hyperbolic system satisfies the ``block structure" condition first formulated by Kreiss \cite{K} for constructing symmetrizers.   We could replace Assumption II by other weaker assumptions that imply block structure.  We could also require that such an assumption holds only for $x$ near $x_d=0$ with only minor changes in the proofs.}
\end{rem}

\begin{thm}\label{ev1.1}
Under Assumption \ref{assumptions}  there exists an $\epsilon_0$ such that for $0<\epsilon\leq \epsilon_0$ the parabolic problem \eqref{aa} has an exact solution $u_\eps$ on $\Omega_{T_0}$ of the form \begin{align}\label{ev1.2}
u^\epsilon(x)=u^a_\eps+\epsilon^L v_\eps,
\end{align}
where $u^a_\eps$ has the expansion \eqref{ab} in which the leading term is the solution $u_0$ to the residual hyperbolic problem \eqref{res}.
The exponent $L$ can be chosen as large as desired provided the approximate solution is constructed with sufficiently many terms ($M(L)$) and in that case we have: \begin{align}\label{ev1.3}
|\partial^\alpha(v_\eps,\epsilon\partial_d v_\eps)|_{L^\infty}\leq 1 \end{align}
for $|\alpha|\leq L,\; 0<\epsilon\leq\epsilon_0$.  Here $\partial=(\partial_0,\dots,\partial_{d-1})$.

\end{thm}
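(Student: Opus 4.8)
The plan is to reduce the theorem to the main variable-coefficient linear estimate of Section~\ref{basic} (Theorem~\ref{c0c}) by a Picard iteration on the error equation, together with a bootstrap using the higher-derivative estimates of Section~\ref{higher}. First I would fix $L$, invoke the construction of Section~\ref{approx} to obtain $u^a_\eps$ with $M=M(L)$ terms satisfying \eqref{ac}, so that $R_\eps$ and all its derivatives up to some large order are $O(1)$ in $\eps$. Setting $u_\eps=u^a_\eps+\eps^L v_\eps$, I would plug into \eqref{aa} and rearrange to get the quasilinear error equation for $v_\eps$: schematically
\begin{align*}
\mathcal{L}_\eps(u^a_\eps)v_\eps = -\eps^{M-L}R_\eps + \eps^L\, Q_\eps(v_\eps,\nabla v_\eps),\qquad
\partial_d v_\eps|_{x_d=0}=0,\qquad v_\eps=0 \text{ in } x_0<0,
\end{align*}
where $\mathcal{L}_\eps(u^a_\eps)$ is the linearization of $\mathcal{E}$ about $u^a_\eps$ and $Q_\eps$ collects the quadratic-and-higher remainder; the Dirichlet-type Neumann boundary condition persists because $\partial_d u^a_\eps|_{x_d=0}=0$. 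Choosing $M-L\geq L$ (say $M\geq 2L$, adjusted so the derivative loss in the linear estimate is covered), the forcing term $\eps^{M-L}R_\eps$ is small in the relevant norms.

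Next I would set up the iteration $\mathcal{L}_\eps(u^a_\eps)v^{n+1}_\eps = -\eps^{M-L}R_\eps + \eps^L Q_\eps(v^n_\eps,\nabla v^n_\eps)$ in the ball $\mathcal{B}=\{\,|\partial^\alpha(v,\eps\partial_d v)|_{L^\infty}\leq 1,\ |\alpha|\leq L\,\}$, which by Sobolev embedding (on $\Omega_{T_0}\subset\bR^{d+1}_+$, so we need roughly $|\alpha|\leq L + \lceil (d+1)/2\rceil + 1$ tangential/conormal derivatives controlled in $L^2$) is contained in a slightly larger $L^2$-based high-derivative ball. Applying Theorem~\ref{c0c} with $g=0$ to each step gives the basic $L^2$ bound with the $\sqrt\eps$ gain from the homogeneous boundary datum; applying the higher-derivative estimates of Section~\ref{higher} to the enlarged system gives control of conormal and then (using the equation to trade the parabolic term) full derivatives up to the needed order. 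The key quantitative point is that the $\sqrt\eps$ gain on the $g=0$ estimate, combined with the $\eps^L$ prefactor on $Q_\eps$ and the $\eps^{M-L}$ on $R_\eps$, makes the map a contraction on $\mathcal{B}$ for $\eps\leq\eps_0$: the nonlinear term contributes a factor $\eps^{L}\cdot\eps^{-?}$ where the negative power is exactly the (finitely many) powers of $\eps$ lost through $\eps\partial_d$-type scalings in the energy method, so for $L$ large enough relative to that fixed loss one wins. Then $v_\eps=\lim v^n_\eps$ solves the error equation and lies in $\mathcal{B}$, which is precisely \eqref{ev1.3}, and $u_\eps=u^a_\eps+\eps^L v_\eps$ solves \eqref{aa}.

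The main obstacle — and the reason the paper spends so much effort on Sections~\ref{Evans}--\ref{basic} — is that Theorem~\ref{c0c} loses a factor $\sqrt\eps$ precisely when the boundary datum is zero, which is exactly our situation, so the linearized operator is only ``weakly'' invertible and one cannot absorb error terms in the naive way. Concretely, the terms in $Q_\eps$ that involve $\eps\partial_d v_\eps$ (coming from the quasilinear dependence $A_j(u^a_\eps+\eps^L v_\eps)$ acting on $\partial_j(u^a_\eps+\eps^L v_\eps)$, and from the $\eps\Delta$ term) are borderline: the normal derivative is not controlled by ordinary hyperbolic theory, and only the parabolic smoothing plus the degenerate-symmetrizer resolvent estimate makes $\eps\partial_d v_\eps$ available at the cost of $\sqrt\eps$. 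Tracking that loss through the higher-derivative commutator estimates in Section~\ref{higher} — ensuring the number of lost powers of $\eps$ stays bounded independently of the derivative order, so that a single choice of $L$ (hence $M(L)$) suffices — is the delicate bookkeeping that the rest of the paper is designed to carry out. Once that uniform control is in hand, the contraction argument and the passage to the limit are routine.
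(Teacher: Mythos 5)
Your proposal matches the paper's strategy: construct $u^a_\eps$ with $M(L)$ terms, rewrite the error equation for $U=(v,\eps\partial_d v)$, and run a Picard iteration in which each step is controlled by the degenerate $L^2$ estimate (Theorem~\ref{c0c} with $g=0$, $\sqrt\eps$ loss) together with the enlarged-system higher-derivative estimates, with the $\eps^{M-L}$ smallness of $R_\eps$ and the $\eps^L$ prefactor on the quadratic remainder overcoming the finitely many lost powers of $\eps$. This is exactly the argument the paper carries out in Sections~\ref{approx}, \ref{error}, \ref{basic}, \ref{higher}, \ref{nonlinear} culminating in Theorem~\ref{e11.8}, of which Theorem~\ref{ev1.1} is an immediate corollary; the only minor imprecision is that the iteration ball must also track the $H^k$-type norm $\|U_n\|_{k,\gamma}$ (not just the $L^\infty$ bounds), with the $L^\infty$ bounds for $|\alpha|\le L$ then following by Sobolev from the $H^k$ control.
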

This Theorem is an immediate corollary of the more precisely stated Theorem \ref{e11.8}, which is phrased in terms of $U=(v,\eps\partial_dv)$.

\begin{cor}[Small viscosity limits]\label{svlim}
Let $u_\eps$ be the solution to the hyperbolic-parabolic system \eqref{aa}, $u^a_\eps$ the approximate solution \eqref{ab} to that system, and  $u_0$ the solution to the residual hyperbolic problem \eqref{res}. Then
\begin{align}\label{lim}
\begin{split}
&|u_\eps-u^a_\eps|_{L^\infty(\Omega_{T_0})}\leq C\eps^L\\
&|u_\eps-u_0|_{L^\infty(\Omega_{T_0})}\leq C\eps.
\end{split}
\end{align}
\end{cor}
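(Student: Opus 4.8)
The plan is to derive both estimates directly from Theorem~\ref{ev1.1} together with the algebraic form \eqref{ab} of the approximate solution; no new analysis is needed, which is why the corollary is stated as an immediate consequence.

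First I would invoke Theorem~\ref{ev1.1}: for $0<\eps\leq\eps_0$ it provides the decomposition $u_\eps = u^a_\eps + \eps^L v_\eps$ with the uniform bound $|\partial^\alpha(v_\eps,\eps\partial_d v_\eps)|_{L^\infty(\Omega_{T_0})}\leq 1$ for all $|\alpha|\leq L$. Specializing to $\alpha=0$ yields $|v_\eps|_{L^\infty(\Omega_{T_0})}\leq 1$, hence
\[
|u_\eps - u^a_\eps|_{L^\infty(\Omega_{T_0})} = \eps^L\,|v_\eps|_{L^\infty(\Omega_{T_0})}\leq \eps^L ,
\]
which is the first line of \eqref{lim}.

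Next I would exploit the structure of $u^a_\eps$. By \eqref{ab} one has $u^a_\eps - u_0 = \eps u^1 + \cdots + \eps^M u^M$, since the leading term $u^0$ of the expansion is exactly the solution $u_0$ of the residual hyperbolic problem \eqref{res}. The profiles $u^1,\dots,u^M$ are fixed functions on $\Omega_{T_0}$, independent of $\eps$, and are bounded there by their construction in Section~\ref{approx}; therefore
\[
|u^a_\eps - u_0|_{L^\infty(\Omega_{T_0})}\leq \sum_{j=1}^M \eps^j\,|u^j|_{L^\infty(\Omega_{T_0})}\leq C\eps
\]
for $0<\eps\leq\eps_0\leq 1$. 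Combining this with the previous display through the triangle inequality, and noting that $L$ may be taken $\geq 1$ so that $\eps^L\leq\eps$, gives
\[
|u_\eps - u_0|_{L^\infty(\Omega_{T_0})}\leq |u_\eps - u^a_\eps|_{L^\infty(\Omega_{T_0})} + |u^a_\eps - u_0|_{L^\infty(\Omega_{T_0})}\leq \eps^L + C\eps\leq C\eps ,
\]
which is the second line of \eqref{lim}.

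I do not expect any genuine obstacle in this argument: the analytic content is entirely carried by Theorem~\ref{ev1.1}. The only points requiring (routine) care are bookkeeping — that the expansion coefficients $u^j$ are uniformly bounded on the slab $\Omega_{T_0}$ independently of $\eps$, which is supplied by their explicit construction in Section~\ref{approx}, and that one has taken $L\geq 1$, which costs nothing since $L$ is at our disposal at the price of using $M=M(L)$ terms in the expansion.
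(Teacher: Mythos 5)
Your proposal is correct and is precisely the argument the paper intends; the authors state the corollary without proof immediately after Theorem~\ref{ev1.1} because it follows from that theorem by exactly the bookkeeping you describe. The only implicit step worth noting is that the uniform $L^\infty$ bounds on $u^1,\dots,u^M$ come from the regularity $u_k\in H^{s-1-3k}(\Omega_{T_0})$ of Proposition~\ref{g4} together with Sobolev embedding, which requires $s-1-3M>\tfrac{d+1}{2}$ — a condition already imposed there and amply satisfied under the hypotheses of Theorem~\ref{e11.8}; and that $L\geq 1$ holds automatically given the constraints \eqref{e11.9}.
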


\subsection{Mixed boundary conditions: toward a general theory}\label{s:mixed}
\emph{\quad} We conclude with a discussion of the case of mixed Dirichlet--Neumann
boundary conditions in the linear constant-coefficient case,
making contact with the previous work of \cite{GMWZ5}.
Consider again a linear constant-coefficient boundary value problem
$$
Lu  = f+ \eps \Delta_x u \ \mathrm{in} \ \{x_d >0\},
$$
for $L$ as in section \ref{s:linear},\footnote{
Evidently, we can extend as in Sec. \ref{s:result}
to the nonsymmetric case, at the expense of
further assumptions.}
with mixed
boundary conditions
\ba\label{mbc}
\Gamma_1 u|_{x_d=0}&=g_1,\\
\Gamma_2 \partial_d u|_{x_d=0}&=g_2
\ea
satisfying
\be\label{ra}
\rank \Gamma_1+\rank \Gamma_2=\rank \bp \Gamma_1 \\ \Gamma_2\ep=N.
\ee
Let us suppose now that $f$, $g_1$, and $g_2$ vanish in $t<0$ and satisfy high-order corner compatibility conditions at $t=0$, $x_d=0$.   We seek $u$ such that $u=0$ in $t<0$.

We seek a formal boundary-layer expansion
$$
u^\eps (x)= u_0(x,\frac{x_d}{\eps}) +
 + \eps u_1(x,\frac{x_d}{\eps}) + \eps^2
u_2(x,\frac{x_d}{\eps})+ \dots
$$
with profiles
$$
u_j(x,z)= \uu_j(x) + u_j^*(x',z), \quad j \geq 0,
$$
where $\uu_j$ is an ``outer'' solution, and $u^*_j$ is a boundary
layer profile which goes to $0$ as $z \rightarrow \infty$.

Denote by
$\rank \Gamma_1=:\mathcal{D}$ the number of Dirichlet
conditions,
$\rank \Gamma_2=:\mathcal{N}$ the number of Neumann
conditions,
$\dim \EE_+=:\mathcal{I} $ the number of incoming modes,
and $\dim \EE_-=:\mathcal{O} $ the number of outgoing modes,
so that
$$
\mathcal{D}+\mathcal{N}=\mathcal{I}+\mathcal{O}=N.
$$
Henceforth, we may (and do) take $\Gamma_1$ to be a $\cD\times N$ matrix and $\Gamma_2$ to be an $\cN\times N$ matrix.

We divide the analysis into two cases:

(i) $\mathcal{D}\ge \mathcal{I}$, or, equivalently,
$\mathcal{N}\leq \mathcal{O}$, and

(ii) $\mathcal{D}< \mathcal{I}$, or, equivalently,
$\mathcal{N}> \mathcal{O}$.

\noindent The first case is the one considered in \cite{GMWZ5}, and treated
for problem \eqref{NSeq} in \cite{R}.
The second includes the case of Neumann boundary conditions treated here,
and also the case of problem \eqref{NSeq} left untreated in \cite{R}.
As we shall see, they have quite different behavior.  We will see that in case (i) the reduced boundary condition on $\uu_0$ is derived as a solvability condition for obtaining $u^*_0$, while in case (ii) $u^*_0=0$ and the reduced boundary condition on $\uu_0$ is derived as  a solvability condition for obtaining $u^*_1$.  We begin by recalling, with some simplifications possible for this linear problem, the treatment of case (i) in \cite{GMWZ5}.

{\bf Case (i).}  The general solution of $A_d \partial_z u^*_0-\partial_z^2 u^*_0$, which decays to $0$ as $z\to \infty$, has the form
\begin{align}\label{5a}
u^*_0(x',z)=e^{A_d z}d(x')
\end{align}
where $d\in \bE_-(A_d)$ is arbitrary (here and henceforth we suppress $x'$). The $\eps^{-1}$ order boundary condition $\Gamma_2\partial_z u^*_0(0)=0$ implies
\begin{align}\label{5b}
 \partial_z u^*_0(0)\in \ker\left(\Gamma_2|_{\bE_-(A_d)}\right)\text{ and thus }u^*_0(0)\in A_d^{-1}\ker\left(\Gamma_2|_{\bE_-(A_d)}\right).
\end{align}

We make the following transversality assumption:
\begin{align}\label{as}
\begin{split}
&(a)\;\Gamma_2 \text{ has full rank, namely }\cN,\text{ on }\bE_-(A_d)\\
&(b)\;\Gamma_1\text{ has full rank on }X:= A_d^{-1}\ker\left(\Gamma_2|_{\bE_-(A_d)}\right).
\end{split}
\end{align}
Since $\dim \bE_-(A_d)=\cO$, Assumption \ref{as}(a) implies $\dim (\ker\left(\Gamma_2|_{\bE_-(A_d)}\right))=\cO-\cN$ and thus \eqref{as}(b) implies
\begin{align}\label{4}
\dim \Gamma_1 X=\cO-\cN.
\end{align}
Since the subspace $\Gamma_1 X\subset \bR^\cD$ and $\cD=\cI+\cO-\cN$, $\Gamma_1 X$ is equal to the null space of some $\cI\times\cD$ matrix, call it $\cK$.
    Now use the order $\eps^0$ Dirichlet condition
    \begin{align}\label{6}
    \Gamma_1(\uu_0(0)+u^*_0(0))=g_1
    \end{align}
to see that there exists $u^*_0(0)\in X$ satisfying \eqref{6} if and only if
\begin{align}
 \Gamma_1(\uu_0(0))-g_1 \in \Gamma_1X.
 \end{align}
 In other words
  \begin{align}
  \tilde\Gamma_1 (  \uu_0(0))=\tilde g_1,
 \end{align}
 where $\tilde \Gamma_1=\cK\Gamma_1$ and $\tilde g_1=\cK g_1$. Observe that $\tilde \Gamma_1$ is an $\cI\times N$ matrix of rank $\cI$ as required.

The reduced hyperbolic problem is therefore
\begin{align}
\begin{split}
&Lu=f\\
&\tilde\Gamma_1 \uu_0=\tilde g_1\text{ on }x_d=0\\
&\uu_0=0\text{ in }t<0,
\end{split}
\end{align}
which is
well-posed provided that the usual Kreiss Lopatinski condition\footnote{In \cite{GMWZ5} it is shown that both the Kreiss-Lopatinski and transversality conditions follow from a condition on the low-frequency behavior of an Evans function.}
is satisfied.
Continuing this process, one obtains an expansion to all orders.
{\it In this case, boundary layers
are  amplitude $O(1)$
and the reduced boundary conditions are purely Dirichlet}.

{\bf Case (ii).}
We now turn to case (ii), where we make the
assumption
\be\label{gen}
\Gamma_2 \, \hbox{\rm is full rank on }\, \EE_-(A_d).
\ee
Since $\mathcal{N}=\rank \Gamma_2\ge \mathcal{O}=
\dim \EE_-(A_d)$),
we find from the $\eps^{-1}$ order profile equation
$\Gamma_2 \partial_d u_0^*(0)=0$,
and the fact by \eqref{5a} that $\partial_d u_0^*\in \EE_-(A_d)$,
that
\be\label{weaklayer}
\partial_d u_0^*\equiv u_0^*\equiv 0.
\ee
Thus, the boundary-layer expansion features a {\it weak layer}
of amplitude $O(\eps)$, just as in the full Neumann boundary condition
case.
This implies by the order $\eps^0$ boundary condition  $\Gamma_1 u_0=g_1$,
and the weak layer property $u_0=\uu_0$, that the Dirichlet condition
is inherited unchanged by the outer solution, as
\be\label{douter}
\Gamma_1 \uu_0|_{x_d=0}=g_1.
\ee

The order $\eps^0$ Neumann condition is
   \begin{align}\label{1}
    \Gamma_2(\partial_d u_0|_{x_d=0}+\partial_z u^*_1|_{z=0})=g_2.
\end{align}
We deduce the reduced Neumann condition on $u_0$ as a solvability condition that allows us to find a solution $\partial_z u^*_1|_{z=0}\in \bE_-(A_d)$ of \eqref{1}.  Recalling that $\Gamma_2$ is an $\cN\times N$ matrix, we denote by $S$ the subspace of $\bR^{\cN}$ given by
\begin{align}\label{k4}
S=\Gamma_2(\bE_-(A_d)).
\end{align}
By \eqref{gen} the dimension of $S\subset \bR^{\cN}$ is $\cO$.  Thus, $S$ coincides with the kernel of an $(\cN-\cO)\times \cN$ matrix.  Choose one such matrix and call it $M$.

By the definition of $M$, in order to find $\partial_z u^*_1|_{z=0}\in \bE_-(A_d)$ satisfying \eqref{1} we must have
\begin{align}
M\left(\Gamma_2(\partial_d u_0|_{x_d=0}) - g_2\right)= 0,
\end{align}
or in other words
\begin{align}\label{3}
\tilde \Gamma_2 \partial_d u_0|_{x_d=0}=\tilde g_2,
\end{align}
where $\tilde\Gamma_2=M\Gamma_2$ and $\tilde g_2= Mg_2$.  As expected, $\tilde \Gamma_2$ is an $(\cN-\cO)\times N$ matrix of rank $(\cN-\cO)$, giving us the remaining $\cN-\cO$ boundary conditions needed (in addition to the $\cD$ Dirichlet conditions) for the hyperbolic problem.\\

Combining, we obtain
the reduced hyperbolic boundary-value problem
    \begin{align}\label{res2}
\begin{split}
&Lu_0=f\text{ in }x_d>0\\
&\Gamma_1 u_0|_{x_d=0}=g_1\\
&\tilde \Gamma_2 \partial_{d}u_0|_{x_d=0}=\tilde g_2\\
&u_0=0 \text{ in }t<0.
\end{split}
\end{align}

\begin{rem}\label{k3}
a)\;  \textup{In the case of full Neumann boundary conditions we have $\cN=N=\cI+\cO$, and $\Gamma_2$ is a nonsingular $N\times N$ matrix, which we may therefore always take to be $I_N$.   Then we have $S=\bE_-(A_d)$ \eqref{k4} and we may take $M=\tilde\Gamma_2$ to be an $(N-\cO)\times N$ matrix whose rows span $\bE_+(A_d)$.}

b)\; \textup{In the totally incoming case with full Neumann boundary conditions we have $\cO=0$, $S=\{0\}\subset \bR^N$,  and we can take $M=Id_N$. So $\tilde\Gamma_2=\Gamma_2=I$.}

c)\;\textup{In the totally incoming case with one Neumann boundary condition, we have $\cN=1$, $\cD=N-1$, $S=\{0\}\subset \bR^1$, and we may take $M=1$.  Thus, $\tilde \Gamma_2= \Gamma_2$, a $1\times N$ matrix.}

d) \textup{ In the totally incoming case we have $E_-(A_d)=\{0\}$; thus, our construction of the the approximate solution shows that $u_j^*(x',z)=0$ for all $j$.  In other words, the layer is absent (or constant)}.

e) \textup{ In the situation $\mathcal{D}=\mathcal{I}$
on the boundary of case (i), assuming \eqref{gen},
we find by the argument of case (ii) that the amplitude
of boundary layers is $O(\eps)$.
In other words, the layer is absent to lowest order
also in this boundary case.
}
\end{rem}

 By introducing variations on the method of Section \ref{s:linear},
we discuss next two approaches to obtaining
a well-posedness theory for problems of the form \eqref{res2}.
When one has such a theory, one can proceed as in section \ref{s:linear} to construct the boundary layer expansion to any order.

\subsection{The reduced hyperbolic problem: approach based on Kreiss symmetrizers. }\label{wp}

\emph{\quad} Substituting for $\partial_d u_0$ the expression
\be\label{boundary}
\D_d u_0 = -A_d^{-1}(\partial_tu_0+\sum_{j=1}^{d-1} A_j \D_j u_0) + A_d^{-1} f,
\ee
and taking the Laplace-Fourier transform with Laplace frequency
$\gamma+i\tau$, $\gamma, \tau\in \RR^1$, and Fourier frequency $\eta\in \RR^{d-1}$,
we convert the boundary operator appearing in
\eqref{boundary} to the homogeneous degree one boundary symbol
\be\label{bsymbol}
-A_d^{-1}(\gamma +i\tau + \sum_{j=1}^{d-1}i\eta_j A_j)
\ee
The matrix
$(\gamma +i\tau + \sum^{d-1}_{j=1}i\eta_j A_j)$, by symmetry of $A_j$,
is invertible for $\gamma>0$ with $O(\gamma^{-1})$ inverse.
As we saw above $\tilde \Gamma_2$ is of full rank $r:=\cN-\cO$; hence
$\Gamma_2':=-\tilde \Gamma_2  A_d^{-1}(\gamma +i\tau + \sum^{d-1}_{j=1}i\eta_j A_j)$
has the same rank for $\gamma>0$.
Multiplying on the left by $m(\gamma, \tau, \eta):=(i\tau+\gamma +|\eta|)^{-1}$, we obtain a symbol homogeneous of degree zero
\be\label{ortho}
\hat \Gamma_2 (\gamma, \tau, \eta):=
-m(\gamma, \tau, \eta)
\tilde \Gamma_2
A_d^{-1}(\gamma +i\tau + \sum_{j\ne d}i\eta_j A_j).
\ee
The Neumann boundary conditions can be rewritten now as degree-zero Dirichlet conditions
\ba\label{newNeumann}
\hat \Gamma_2 (\gamma, \tau, \eta) \hat u_0(\gamma,\tau, \eta,0)&=
\hat G_2(\gamma,\tau,\eta)\\
& :=
m(\gamma, \tau, \eta)\left(\hat{\tilde{g}}_2(\gamma,\tau,\eta)
-\tilde \Gamma_2
A_d^{-1}\hat f(\gamma, \tau, \eta,0)\right),
\ea
where $\hat { }$ denotes Laplace--Fourier transform.

With this rephrasing of the boundary conditions,
the Laplace--Fourier transformed system becomes a hyperbolic
boundary-value problem of the following form:
\begin{align}\label{newp}
\begin{split}
&\partial_d \hat u_0+A_d^{-1}(\gamma + i\tau+\sum_{j=1}^{d-1}i \eta_j A_j)\hat u_0=A_d^{-1}\hat f(\gamma,\tau,\eta,x_d)\\
&\Gamma_1 \hat u_0(\gamma,\tau,\eta,0)=\hat g_1\\
&\hat\Gamma_2(\gamma,\tau,\eta)\hat u_0(\gamma,\tau,\eta,0)=\hat G_2(\gamma,\tau,\eta) \text{ as in }\eqref{newNeumann}.
\end{split}
\end{align}
Uniform estimates
may be proved for \eqref{newp} using Kreiss symmetrizers
(see, for example, \cite{CP,BS,Met4}, and also Proposition
\ref{pseudoprop}),
provided that: (i) the boundary matrix $\Gamma$ is uniformly well-conditioned,
\be\label{wellcond}
|\Gamma|,\, |\Gamma^\dagger|\le C,
\ee
where $\Gamma^\dagger$ is the pseudoinverse of $\Gamma$, and
(ii) there holds the {\it uniform Lopatinski condition}:
\begin{align}\label{lop}
\det\left(\ker\bp \Gamma_1\\
\hat \Gamma_2(\gamma,\tau,\eta)\ep,\EE_+\left(A_d^{-1}(\gamma + i\tau +i \sum_{j=1}^{d-1}\eta_jA_j)\right)\right)\geq C>0
\end{align}
for some $C$ independent of $(\tau,\eta)\in\bR^d$, $\gamma>0$.  Here one defines the determinant by taking an orthonormal basis for each of the spaces appearing there.  The condition thus expresses ``uniform transversality" of those spaces for all such $(\gamma,\tau,\eta)$.

For discussion below, we recall also the {\it weak Lopatinski condition},
which is defined as in \eqref{lop}, except that $C_\gamma>0$ is allowed to depend on $\gamma>0$.

Assuming that the uniform Lopatinski condition is satisfied, we can use the following proposition to solve the
outer hyperbolic problem.   In the next proposition for $\gamma\geq 1$ we let
\begin{align}\label{norm}
|f|_{s,\gamma}:=\left||\tau,\gamma,\eta|^s\hat f(\tau-i\gamma,\eta,x_d)\right|_{L^2(\tau,\eta,x_d)},
\end{align}
and we let $\langle g\rangle_{s,\gamma}$ denote the corresponding norm on the boundary.   The block structure assumption
made in the next proposition is satisfied by many of the important physical examples (see \cite{MZ2}); we shall omit further discussion of it here.\footnote{The block structure assumption can actually be avoided in the
constant coefficient symmetric case by using the approach of \cite{GMWZ8}.}

\begin{prop}\label{ests}
Suppose that $L$ is an operator that can be conjugated to \emph{block structure} in the sense of \cite{MZ2}.
Assuming well-conditioning \eqref{wellcond} and
uniform stability \eqref{lop}, there exist positive constants $C$, $\gamma_0$ and a unique solution of \eqref{res2} satisfying
\be\label{mixest}
\gamma |u|^2_{0,\gamma} + \langle u\rangle^2_{0,\gamma} \leq C\left( |f|^2_{0,\gamma}/\gamma+|\partial_{x_d}f|_{-1,\gamma}^2+ \langle g_1\rangle_{0,\gamma}^2+ \langle \tilde g_2\rangle^2_{-1,\gamma}\right).
\ee
for $\gamma \geq \gamma_0$.
\end{prop}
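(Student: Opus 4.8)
The plan is to use the reduction already carried out above: after Laplace–Fourier transform and the substitution \eqref{boundary}, the mixed Dirichlet–Neumann problem \eqref{res2} becomes the \emph{classical} first-order hyperbolic system \eqref{newp} on the half-line $x_d>0$, whose boundary conditions are now \emph{all homogeneous of degree zero} in $(\gamma,\tau,\eta)$, i.e. of Dirichlet type. Write $G=G(\gamma,\tau,\eta):=A_d^{-1}(\gamma+i\tau+\sum_{j=1}^{d-1}i\eta_j A_j)$ for the coefficient of the interior ODE, and $\Gamma:=\bp \Gamma_1 \\ \hat\Gamma_2(\gamma,\tau,\eta)\ep$ for the combined boundary operator, an $\mathcal{I}\times N$ matrix with data $(\hat g_1,\hat G_2)$ (recall $\mathcal{D}+(\mathcal{N}-\mathcal{O})=\mathcal{I}$, and $\Gamma$ has rank $\mathcal{I}$ by \eqref{wellcond}). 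For $\gamma>0$, symmetry of the $A_j$ together with $\det A_d\neq 0$ forces $G$ to have no purely imaginary eigenvalue, so the space $\EE_+(G)$ of initial data of $L^2_{x_d}$-decaying solutions of $\partial_d w+Gw=0$ has dimension $\mathcal{I}$, and the uniform Lopatinski condition \eqref{lop} says precisely that $\Gamma$ restricts to a uniformly invertible map $\EE_+(G)\to\mathbb{C}^{\mathcal I}$. This is exactly the configuration handled by Kreiss' theory.

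First I would construct a Kreiss symmetrizer: using the block-structure assumption together with \eqref{wellcond} and \eqref{lop}, one builds a matrix $R(\gamma,\tau,\eta)$ — Hermitian, homogeneous of degree $0$, uniformly bounded, smooth away from the origin — satisfying the usual interior positivity ($\re(RG)\gtrsim\gamma\,\mathrm{Id}$) and boundary positivity ($-\langle R(0)v,v\rangle\gtrsim|v|^2-C|\Gamma v|^2$) estimates relative to $G$ and $\Gamma$. This is the standard construction (cf. \cite{K,CP,BS,Met4} and Proposition \ref{pseudoprop}); the only delicate point is uniformity of all constants as $(\gamma,\tau,\eta)$ ranges over $\{\gamma>0\}\times\RR^d$ — near the glancing set, where block structure is used, and as $\gamma\downarrow 0$, where the symbol $m$ and the factor $(\gamma+i\tau+\sum i\eta_j A_j)$ entering $\hat\Gamma_2$ degenerate; there homogeneity and hypothesis \eqref{wellcond} do the job.

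Next I would derive the frequency-localized a priori estimate: for each fixed $(\gamma,\tau,\eta)$ with $\gamma\geq\gamma_0$, pair the interior equation of \eqref{newp} with $R\hat u_0$, add the conjugate, integrate over $x_d>0$, and combine the two positivity properties with $|\Gamma\hat u_0(0)|^2=|\hat g_1|^2+|\hat G_2|^2$ to get, with constants independent of $(\gamma,\tau,\eta)$,
\begin{equation*}
\gamma\,|\hat u_0|^2_{L^2_{x_d}}+|\hat u_0(\gamma,\tau,\eta,0)|^2\;\lesssim\;\frac1\gamma\,|\hat f|^2_{L^2_{x_d}}+|\hat g_1|^2+|\hat G_2|^2 .
\end{equation*}
It then remains to rewrite $|\hat G_2|^2$: by \eqref{newNeumann}, $|\hat G_2|^2\lesssim |\gamma,\tau,\eta|^{-2}\bigl(|\hat{\tilde g}_2|^2+|\hat f(\gamma,\tau,\eta,0)|^2\bigr)$, and bounding the interior trace by $|\hat f(\cdot,0)|^2\leq 2\int_0^\infty|\hat f|\,|\partial_d\hat f|\,dx_d$ followed by Young's inequality (using $\gamma\leq|\gamma,\tau,\eta|$ and $\gamma_0\geq 1$) gives $|\gamma,\tau,\eta|^{-2}|\hat f(\cdot,0)|^2\lesssim\gamma^{-1}|\hat f|^2_{L^2_{x_d}}+|\gamma,\tau,\eta|^{-2}|\partial_d\hat f|^2_{L^2_{x_d}}$. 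Feeding this back, integrating in $(\tau,\eta)$ by Plancherel, and recalling the definitions of $|\cdot|_{s,\gamma}$ and $\langle\cdot\rangle_{s,\gamma}$ yields exactly \eqref{mixest}; the one-derivative loss in the $\langle\tilde g_2\rangle_{-1,\gamma}$ and $|\partial_{x_d}f|_{-1,\gamma}$ terms is precisely the trace of the order $-1$ symbol $m$ produced when the degree-one Neumann condition $\tilde\Gamma_2\partial_d u_0=\tilde g_2$ was converted to the degree-zero condition $\hat\Gamma_2\hat u_0(0)=\hat G_2$.

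Finally, for existence and uniqueness: uniqueness is immediate from \eqref{mixest} with $f=g_1=\tilde g_2=0$. For existence, since the problem has constant coefficients one can solve \eqref{newp} explicitly frequency by frequency — projecting the data onto $\EE_+(G)$ along the complementary subspace and applying Duhamel, the solution operator being bounded uniformly in $(\gamma,\tau,\eta)$ by the estimate above, so its inverse Laplace–Fourier transform gives a genuine $L^2$ function, causal because the bounds are uniform in $\gamma\geq\gamma_0$ (Paley–Wiener) — or, alternatively, invoke the standard duality argument via the analogous estimate for the adjoint (backward) problem. I expect the main obstacle to be the combination of the symmetrizer construction and the bookkeeping of weights: keeping the well-conditioning and Lopatinski constants uniform down to $\gamma\to 0^+$ and through glancing while tracking the degenerations introduced by the Neumann-to-Dirichlet-type conversion; the Kreiss machinery itself is classical once block structure is granted.
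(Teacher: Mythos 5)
Your proposal is correct and takes essentially the same route as the paper: both reduce \eqref{res2} to the zero-order pseudodifferential Dirichlet problem \eqref{newp}, invoke the Kreiss-symmetrizer estimate \eqref{k1} (which the paper simply cites as standard and you re-derive in outline), and then recover \eqref{mixest} by unpacking $\hat G_2$ via \eqref{newNeumann}, $|m|\sim|\gamma,\tau,\eta|^{-1}$, and a trace bound for $\hat f(\cdot,0)$ — you use a Young's-inequality balance, the paper a cruder $\langle f|_{x_d=0}\rangle_{0,\gamma}\le |f|_{0,\gamma}+|\partial_{x_d}f|_{0,\gamma}$, but both yield the stated weights. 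For existence the paper appeals to Proposition \ref{pseudoprop} (pseudodifferential boundary conditions), while you argue directly by the explicit Fourier-frequency-by-frequency solution plus Paley–Wiener or duality, which is legitimate here since the coefficients are constant; this is a minor stylistic difference, not a different method.
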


\begin{proof}

For the problem  \eqref{newp} with data $(A_d^{-1}f,g_1,G_2)$ one has the standard Kreiss estimate (\cite{CP,BS}):
\begin{align}\label{k1}
\gamma |u|^2_{0,\gamma} + \langle u\rangle^2_{0,\gamma} \leq C\left( \frac{|f|^2_{0,\gamma}}{\gamma}+\langle g_1\rangle^2_{0,\gamma}+\langle G_2 \rangle^2_{0,\gamma}\right).
\end{align}
Existence for the problem \eqref{newp} follows from Proposition \ref{pseudoprop}, which allows for pseudodifferential boundary conditions.   The estimate \eqref{mixest} now follows directly from \eqref{k1} and \eqref{newNeumann} using $|m|\sim |\tau,\gamma,\eta|^{-1}$ and
\begin{align}
\langle f|_{x_d=0}\rangle_{0,\gamma}\leq |f|_{0,\gamma}+|\partial_{x_d}f|_{0,\gamma}.
\end{align}

\end{proof}

Assuming that the uniform Lopatinski condition is satisfied, we can solve the outer hyperbolic problem in this way and as in section \ref{s:linear} proceed to construct the boundary layer expansion to any order.
The following proposition provides some information about when the weak and uniform Lopatinski conditions are satisfied by the problem \eqref{newp}.

\begin{lem}\label{lopsat}

Consider the problem \eqref{newp}, where the $A_j$ are constant, real, symmetric $N\times N$ matrices.

(a) In the totally incoming case with mixed boundary conditions or full Neumann boundary conditions, if the weak Lopatinski condition holds then the uniform Lopatinski condition holds.

(b) Let $d>1$.  For full Neumann boundary conditions the weak Lopatinski condition holds. The uniform Lopatinski condition can fail if the characteristics are not totally incoming.   For example, it fails whenever there exists an eigenvalue $\omega(\tau-i\gamma,\eta)$ of $-A_d^{-1}(\tau-i\gamma+\sum^{d-1}_{j=1}A_j \eta_j)$, analytic in $\tau-i\gamma$, such that $\omega(\tau,\eta)=0$ and  $\partial_\tau \omega(\tau,\eta)<0$ for the chosen $(\tau,\eta)$.

(c) For pure Neumann boundary conditions and $d=1$ the uniform Lopatinski condition is satisfied.

(d) In the totally incoming case with a single Neumann condition, the weak Lopatinski condition holds
if and only if $\bp \Gamma_1\\\Gamma_2 A_d^{-1}\ep$  (in this case
a full $N\times N$ matrix) is invertible.

(e) For mixed boundary conditions the weak Lopatinski condition holds only if
 $\bp \Gamma_1\\\tilde \Gamma_2 A_d^{-1}\ep$ is full rank
on $\EE_+(A_d)$.  There are examples with mixed boundary conditions where weak Lopatinski fails and other examples where uniform Lopatinski holds.

\end{lem}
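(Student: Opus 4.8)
The plan is to reduce each item to an explicit computation with the boundary symbol
$G=G(\gamma,\tau,\eta):=A_d^{-1}\bigl(\gamma+i\tau+\sum_{j=1}^{d-1}i\eta_jA_j\bigr)$; write also $S=S(\tau,\eta):=\tau I+\sum_{j=1}^{d-1}\eta_jA_j$ and $\tilde S(\eta):=\sum_{j=1}^{d-1}\eta_jA_j$, so $A_dG=\gamma I+iS$, and let $\EE_+(G)$ denote the stable ("decaying") subspace of $\partial_d u+Gu=0$, of dimension $\mathcal I$ for $\gamma>0$. Two structural observations drive everything. First, row-scaling by the nonzero scalar $-m$, $m=(i\tau+\gamma+|\eta|)^{-1}$, leaves $\ker\bp\Gamma_1\\\hat\Gamma_2\ep$ unchanged, so $m$ is irrelevant to both Lopatinski conditions; in the totally incoming case $A_d>0$ one has $\mathcal O=0$ and (after an invertible left factor) $\tilde\Gamma_2=\Gamma_2$, so $\bp\Gamma_1\\\hat\Gamma_2\ep$ is $N\times N$ with $\det\bp\Gamma_1\\\hat\Gamma_2\ep=(-m)^{\mathcal N}Q(s,\eta)$, where $s=\gamma+i\tau$ and $Q(s,\eta):=\det\bp\Gamma_1\\\Gamma_2A_d^{-1}(sI+i\tilde S(\eta))\ep$ is a polynomial homogeneous of degree $\mathcal N$ in $(s,\eta)$. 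Second, if $A_d>0$ and $Gv=\mu v$ then $\gamma|v|^2+i\,\overline v^{T}Sv=\mu\,\overline v^{T}A_dv$ with $\overline v^{T}Sv,\overline v^{T}A_dv$ real and $\overline v^{T}A_dv>0$, hence $\re\mu>0$; so $\EE_+(G)=\bC^N$ for every $\gamma>0$ in the totally incoming case. Part (a) follows at once: since $\EE_+(G)=\bC^N$, the Lopatinski determinant — formed from orthonormal bases of $\ker\bp\Gamma_1\\\hat\Gamma_2\ep$ and of $\EE_+(G)$ — is $0$ when that kernel is nontrivial and otherwise is the modulus of the determinant of an $N\times N$ unitary, i.e.\ exactly $1$; weak Lopatinski says it is never $0$ for $\gamma>0$, hence it is $\equiv1$ there, which is the uniform condition. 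Part (c): for $d=1$ and pure Neumann there is no $\eta$, $S=\tau I$, $G=(\gamma+i\tau)A_d^{-1}$ and $m^{-1}=\gamma+i\tau$, so $\hat\Gamma_2=-MA_d^{-1}$ and $\EE_+(G)=\EE_+(A_d)$ are constant in $(\gamma,\tau)$, and the Lopatinski determinant is the nonzero constant $\det\bigl(\ker(MA_d^{-1}),\EE_+(A_d)\bigr)=\det\bigl(\EE_-(A_d),\EE_+(A_d)\bigr)$, using that $\EE_\pm(A_d)$ are orthogonal complements for real symmetric $A_d$.

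For (d), with a single Neumann condition $Q$ is \emph{linear}: expanding the determinant along the Neumann row (whose pairing with the cofactors defines a vector $c$ spanning $\ker\Gamma_1$), $Q(s,\eta)=\bigl(\Gamma_2A_d^{-1}(sI+i\tilde S(\eta))\bigr)c=\alpha s+i\,\Gamma_2A_d^{-1}\tilde S(\eta)c$, where $\alpha:=\Gamma_2A_d^{-1}c=\det\bp\Gamma_1\\\Gamma_2A_d^{-1}\ep$. Since $\Gamma_1$ is real, $c$ may be chosen real, so $\Gamma_2A_d^{-1}\tilde S(\eta)c$ is real and $Q(s,\eta)=\alpha s+i\beta_0(\eta)$ with $\alpha,\beta_0(\eta)\in\bR$ — a Hermite--Biehler normal form. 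If $\alpha\ne0$ the unique root $s=-i\beta_0(\eta)/\alpha$ is purely imaginary, so $Q\ne0$ for $\re s>0$ and weak Lopatinski holds (hence, by (a), so does uniform); if $\alpha=0$ then $Q\equiv i\beta_0(\eta)$ is independent of $s$ and vanishes on the nonempty set $\{\beta_0(\eta)=0\}$, which meets $\{\re s>0\}$, so weak Lopatinski fails. Thus weak Lopatinski $\iff\alpha\ne0\iff\bp\Gamma_1\\\Gamma_2A_d^{-1}\ep$ invertible, which is (d). For the necessary condition in (e), evaluate at the frequency $(\gamma,0,0)$, $\gamma>0$: there $S=0$, $G=\gamma A_d^{-1}$, $\EE_+(G)=\EE_+(A_d)$, $m^{-1}=\gamma$ and $\hat\Gamma_2=-\tilde\Gamma_2A_d^{-1}$, so weak Lopatinski forces $\ker\bp\Gamma_1\\\tilde\Gamma_2A_d^{-1}\ep\cap\EE_+(A_d)=\{0\}$, i.e.\ $\bp\Gamma_1\\\tilde\Gamma_2A_d^{-1}\ep$ has full rank on $\EE_+(A_d)$. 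The two example types in (e) then come from choosing $\Gamma_1,\Gamma_2$ (compatibly with \eqref{ra} and \eqref{gen}) to violate, resp.\ to satisfy, this condition; for "uniform holds" one may take a totally incoming mixed problem with $d=1$ and $\bp\Gamma_1\\\Gamma_2A_d^{-1}\ep$ invertible, where $Q(s)=s^{\mathcal N}\det\bp\Gamma_1\\\Gamma_2A_d^{-1}\ep\ne0$ for $\re s>0$ and (a) upgrades weak to uniform.

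It remains to treat (b). For \emph{weak} Lopatinski with full Neumann and arbitrary $d$: a decaying solution $\hat u$ of $\partial_d\hat u+G\hat u=0$ with $M\partial_d\hat u(0)=0$ has $\partial_d\hat u(0)\in\ker M=\EE_-(A_d)$; put $\hat v:=\partial_d\hat u=-G\hat u$, which solves the same (constant-coefficient) equation, lies in $L^2(x_d>0)$, and has $\hat v(0)\in\EE_-(A_d)$. Pairing $A_d\partial_d\hat v+(\gamma I+iS)\hat v=0$ with $\hat v$, integrating over $x_d>0$ and taking real parts, using $\partial_d(\overline{\hat v}^{T}A_d\hat v)=2\re\bigl(\overline{\hat v}^{T}A_d\partial_d\hat v\bigr)$ and $\overline{\hat v}^{T}S\hat v\in\bR$, gives $\overline{\hat v(0)}^{T}A_d\hat v(0)=2\gamma\|\hat v\|_{L^2}^2\ge0$, while $\hat v(0)\in\EE_-(A_d)$ gives $\overline{\hat v(0)}^{T}A_d\hat v(0)\le0$, strictly unless $\hat v(0)=0$; hence $\hat v\equiv0$ and, as $G$ is invertible for $\gamma>0$, $\hat u\equiv0$: weak Lopatinski holds. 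For \emph{failure} of uniform Lopatinski when $\EE_-(A_d)\ne\{0\}$: the transformed Neumann boundary kernel is $\ker(MG)=G^{-1}\EE_-(A_d)$, so the Lopatinski determinant is $\det\bigl(G^{-1}\EE_-(A_d),\EE_+(G)\bigr)$. Take $(\tau_0,\eta_0)\ne0$ at which the mode $\omega$ of the statement vanishes, i.e.\ $G(0,\tau_0,\eta_0)=iA_d^{-1}S(\tau_0,\eta_0)$ has a zero eigenvalue (equivalently $-\tau_0\in\spec\tilde S(\eta_0)$), with real eigenvector $r_0$; first-order perturbation in $\gamma$ gives $\partial_\tau\omega(\tau_0,\eta_0)=-|r_0|^2/\bigl(r_0^{T}A_dr_0\bigr)$, so the hypothesis $\partial_\tau\omega(\tau_0,\eta_0)<0$ is exactly $r_0^{T}A_dr_0>0$, and the branch $\mu(\gamma)=\gamma\,|r_0|^2/\bigl(r_0^{T}A_dr_0\bigr)+O(\gamma^2)$ has $\re\mu(\gamma)>0$ for small $\gamma>0$. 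Hence the eigenvector $r(\gamma)\to r_0$ lies in $\EE_+(G(\gamma,\tau_0,\eta_0))$; meanwhile $G^{-1}$ blows up along $r(\gamma)$ at rate $\mu(\gamma)^{-1}$, so — provided $\EE_-(A_d)$ is not contained in the span of the remaining eigenvectors of $G(0,\tau_0,\eta_0)$ — the subspace $G^{-1}\EE_-(A_d)$ turns towards the line through $r_0$; the two subspaces in the Lopatinski determinant thus share the direction $r_0$ as $\gamma\to0^+$, so the determinant tends to $0$ and uniform Lopatinski fails. One completes the proof by exhibiting real symmetric $A_j$ ($d>1$, $A_d$ indefinite so $\EE_-(A_d)\ne\{0\}$) realizing such a $(\tau_0,\eta_0,\omega)$ with that transversality — e.g.\ $N=d=2$, $A_d=\mathrm{diag}(2,-1)$, $A_1=\bp 0&1\\1&0\ep$, full Neumann, at $(\tau_0,\eta_0)=(-1,1)$, where $r_0=(1,1)^{T}$ gives $\partial_\tau\omega=-2<0$ and $\EE_-(A_d)=\bC e_2$ is not invariant under $G(0,\tau_0,\eta_0)$. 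The main obstacle is exactly this last piece of (b): tracking the two \emph{limiting} subspaces $\lim_{\gamma\to0^+}G^{-1}\EE_-(A_d)$ and $\lim_{\gamma\to0^+}\EE_+(G)$ near the degenerate frequency and verifying that $\EE_-(A_d)$ genuinely meets the vanishing mode, while keeping the various normalizations ($m$, and the $u\leftrightarrow\partial_d u$ passage) consistent across the five cases.
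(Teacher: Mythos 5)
Your treatment of parts (a), (c), (d), (e), and the weak--Lopatinski half of (b) is correct and follows essentially the same route as the paper; the variations (your direct $L^2$ energy integral in place of the paper's one-line dissipativity observation in (b), your use of a $d=1$ example for the ``uniform holds'' half of (e) instead of the paper's two-dimensional Example) are minor and each is a sound substitute.

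The genuine gap you have put your finger on is the ``uniform Lopatinski fails whenever\dots'' assertion of (b), and it is worth noting that it cannot be closed as the lemma is stated. The transversality hypothesis you introduce --- that $\EE_-(A_d)$ is not contained in $\range G(0,\tau_0,\eta_0)$, which for a simple zero eigenvector $r_0$ of $\tau_0+\sum_{j<d}\eta_{0,j}A_j$ reduces, using $\range G(0)=A_d^{-1}(r_0^\perp)=(A_dr_0)^\perp$ and the orthogonality $\EE_+(A_d)=\EE_-(A_d)^\perp$, to $r_0\notin\EE_+(A_d)$ --- is genuinely necessary, not merely convenient for your argument. The paper's own proof passes over exactly this point when it asserts, without justification, that $v$ is also a limit of vectors lying in $\ker\hat\Gamma_2(\gamma,\tau,\eta)$ as $\gamma\to 0$. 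That assertion is false in general: take $N=3$, $d=2$, $A_2=\mathrm{diag}(1,1,-1)$, $A_1=\bp 0&1&0\\1&0&0\\0&0&5\ep$, full Neumann, $\tilde\Gamma_2=\bp 1&0&0\\0&1&0\ep$. Then $G$ decouples into a totally-incoming $2\times2$ block in coordinates $(1,2)$ and an outgoing $1\times1$ block in coordinate $3$, so for every $\gamma>0$ and $(\tau,\eta)$ one has $\EE_+(G)=\Span\{e_1,e_2\}$ and $\ker\hat\Gamma_2=\Span\{e_3\}$, and the Lopatinski determinant is identically $1$. Yet at $(\tau_0,\eta_0)=(1,1)$ the eigenvalue $\omega(\lambda,\eta)=-\lambda+\eta$ of $-A_2^{-1}(\lambda+\eta A_1)$ satisfies $\omega(1,1)=0$ and $\partial_\tau\omega=-1<0$, so the stated criterion is met; the reason nothing goes wrong is that $r_0=(1,-1,0)^T\in\EE_+(A_2)$, so your transversality condition fails and $\ker\hat\Gamma_2$ never approaches the vanishing mode. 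Your proposal therefore correctly diagnoses that the ``whenever'' in (b) must be strengthened by the transversality requirement. Your conditional argument is then sound (modulo the slip of writing ``not invariant under $G(0)$'' where you mean ``not contained in $\range G(0)$''), and your concrete example $A_2=\mathrm{diag}(2,-1)$, $A_1=\bp 0&1\\1&0\ep$ does satisfy transversality --- there $r_0=(1,1)^T\notin\EE_+(A_2)$ and $\range G(0,-1,1)=\Span\{(1,2)^T\}\not\ni e_2$ --- just as the paper's own wave-equation-with-drift example does; so the failure mechanism is real, but the general sufficiency claim in (b) requires the extra hypothesis you identified.
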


\begin{proof}

\textbf{(a) }In the totally incoming case $\EE_+(A_d^{-1}(\gamma + i\tau +i \sum_{j=1}^{d-1}\eta_jA_j))=\bC^N$.   If the weak Lopatinski condition holds the determinant \eqref{lop}  is $\pm 1$ for all $\gamma>0$.

\textbf{(b) }In the full Neumann case $\Gamma_1$ is absent and
$\tilde\Gamma_2$ is an $(N-\cO)\times N$ matrix whose rows span $\bE_+(A_d)$ (see Remark \ref{k3}).
Since $A_d^{-1}(\gamma + i\tau + \sum_{j=1}^{d-1}i\eta_j A_j)$
is invertible for $\gamma>0$ and $\EE_+(
A_d^{-1}(\gamma + i\tau + \sum_{j=1}^{d-1}i\eta_j A_j))$
an invariant subspace, we find that the weak Lopatinski condition
is equivalent to
$\tilde \Gamma_2 $
being full rank on $\EE_+(A_d^{-1}(\gamma + i\tau +i \sum_{j=1}^{d-1}\eta_jA_j))$ for $\gamma>0$.
Since the problem
\begin{align}\label{newpa}
\begin{split}
&\partial_d w+A_d^{-1}(\gamma + i\tau+\sum_{j=1}^{d-1}i \eta_j A_j)w=0\\
&\tilde \Gamma_2 w|_{x_d=0}=h
\end{split}
\end{align}
is maximally dissipative, a simple energy estimate shows $|w(0)|\leq C|h|$ when $w\in \EE_+(A_d^{-1}(\gamma + i\tau +i \sum_{j=1}^{d-1}\eta_jA_j))$, so the full rank condition holds.

In the case $A_d$ is not positive definite, the kernel space in \eqref{lop} must be nontrivial.
Taking $\gamma=0$, $|\eta|=1$ and choosing
$\tau$ from among the eigenvalues $\lambda_k(\eta,0)$ (here $\xi_d=0$) of $-\sum_{j=1}^{d-1}\eta_jA_j$
with corresponding eigenvector $v\neq 0$, we find that $\Gamma_2'(\gamma,\tau,\eta)$,
or, equivalently, $\hat \Gamma_2(\gamma,\tau,\eta)$, annihilates $v$.
It can happen that $v$ lies in the limit space as $\gamma\to 0$ of
$\EE_+(A_d^{-1}(\gamma + i\tau +i \sum_{j=1}^{d-1}\eta_jA_j))$.  The Cauchy-Riemann equations imply that this happens, for example, whenever there is  an eigenvalue $\omega(\tau-i\gamma,\eta)$ of $-A_d^{-1}(\tau-i\gamma+\sum^{d-1}_{j=1}A_j \eta_j)$, analytic in $\tau-i\gamma$, such that $\omega(\tau,\eta)=0$ and  $\partial_\tau \omega(\tau,\eta)<0$ for the chosen $(\tau,\eta)$.

Since $v$ is also a limit of vectors lying in $\ker\hat{\Gamma}_2(\gamma,\tau,\eta)$ as $\gamma\to 0$, we see that
for such $(\tau,\eta)$ the determinant in \eqref{lop} converges to zero along some sequence $\gamma_n\to 0$.

\textbf{(c) }When $d=1$ and $\gamma>0$, we have $\ker\hat\Gamma_2(\gamma,\tau)=\ker \tilde \Gamma_2 A^{-1}_d$ and $\bE_+(A^{-1}_d(\gamma+i\tau))=\bE_+(A_d)$.  Thus, both spaces are independent of $(\tau,\gamma)$.  The uniform Lopatinski condition now follows from the fact that $\tilde\Gamma_2$ is full rank on $\bE_+(A_d)$.

\textbf{(d) }Recall from Remark \ref{k3} that $\tilde\Gamma_2=\Gamma_2$ a $1\times N$ matrix.
The assertion follows by the
observation that in this case the real part of the determinant of $\bp \Gamma_1\\
\Gamma_2A_d^{-1}(\gamma+i\tau + i\sum_{j\ne d}\eta_j A_j)\ep$ is
$\gamma \det \bp \Gamma_1\\
\Gamma_2A_d^{-1}\ep$.

\textbf{(e)}The first assertion regarding mixed boundary conditions follows
by inspection of the case $\gamma=1$, $\tau=0$, $\eta=0$.
For the second assertion we refer to the examples given below.

\end{proof}

\begin{rem}\label{k8}
1) \textup{When the problem \eqref{newp} only satisfies the  weak Lopatinski condition, there is a
still a chance of proving well-posedness  for the reduced hyperbolic problem \eqref{res2} using degenerate Kreiss symmetrizers and constructing the WKB expansion.  Indeed, several kinds of weakly stable problems have been studied successfully in this way (see, for example, \cite{BS,Co2}); typically the energy estimates exhibit a loss of derivatives.}

2)\textup{\;Glancing points are points $(\tau,\eta)$  where the matrix
$A_d^{-1}(i\tau + \sum_{j=1}^{d-1}i\eta_jA_j)$ has nontrivial Jordan blocks, or equivalently, where
an eigenvalue
$\lambda_j(\xi,\eta)$ of
$\sum_{j=1}^{d-1}\eta_jA_j + \xi A_d$, is stationary with respect
to $\xi$.   Such points always occur in $d>1$, \emph{except} in the totally incoming or totally outgoing cases, where they never occur (see \cite{GMWZ6}).  Example \ref{neueg} shows that the uniform Lopatinski condition can fail at glancing points.
We know of no proof of well-posedness for the rescaled initial-boundary value
problem in the case when the uniform Lopatinski condition fails in this way.
(We present a different method in Appendix \ref{s:new}
for which this difficulty does not
appear; see Example \ref{neueg2}.)}

3)\textup{ Example \ref{badinceg} shows that even weak stability can fail for the problem \eqref{newp}.}
\end{rem}

\begin{example}\label{neueg}
Consider the simplest example of the first-order wave equation
with drift $\alpha$,
$$
A_1=\bp 0 & 1\\ 1 & 0 \ep,
\quad
A_2 =\bp 1+\alpha & 0 \\ 0 & -1+\alpha \ep,
$$
with full Neumann boundary conditions, so that
$\tilde \Gamma_2= \bp 1 & 0\ep$.
Then
$$
\Gamma_2'=-\tilde \Gamma_2 A_2^{-1}(\gamma+i\tau + i\eta A_1)
=-\bp \frac{\gamma+ i\tau}{1+\alpha} & \frac{i\eta}{1+\alpha} \ep,
$$
which leads to the zero-order boundary matrix $\hat \Gamma_2=-\frac{1}{i\tau+\gamma+|\eta|}\bp
\frac{ \gamma+i\tau}{1+\alpha}
& \frac{ i\eta }{1+\alpha}\ep$.  By Lemma \ref{lopsat}(b) the weak Lopatinski condition holds.  Applying the criterion of Lemma \ref{lopsat}(b),  we find that the uniform Lopatinski condition
fails at $\eta=-1$, $\gamma=0$, $\tau=1$, where
$\lim_{\gamma\to 0^+}\EE_+( A_2^{-1}(\gamma+i\tau + i\eta A_1)=\Span\{ (1,1)^T\}$.
Moreover, the computation
$\lambda_\pm(\xi,\eta)= \alpha \xi \pm \sqrt{\xi^2+\eta^2}$,
where $\lambda_\pm$ are the eigenvalues of $\xi A_2 + \eta A_1$
shows that $\partial \lambda_\pm/\partial \xi=0$ at $\xi=0$,
corresponding to failure at a glancing point,
occurs only for $\alpha=0$ for this choice of $(\tau,\eta)$.
\end{example}

\begin{example}\label{inceg}
Next, consider the totally incoming problem
$$
A_1=\bp 0 & 1\\ 1 & 0\ep,
\quad
A_2 =\Id,
$$
with mixed Dirichlet--Neumann conditions $\Gamma_1=\bp * &1\ep$,
$\Gamma_2= \tilde \Gamma_2= \bp 1 & 0\ep$.
Then,
$$
\bp \Gamma_1 \\ \Gamma_2'\ep=
\bp \Gamma_1 \\ -\tilde \Gamma_2 A_2^{-1}(\gamma+i\tau + i\eta A_1)\ep
=
\bp \Gamma_{11}& \Gamma_{12}\\
-(\gamma+ i\tau) & -i\eta \ep
$$
is full rank on $\EE_+(A_2^{-1}(\gamma+i\tau+i\eta A_1))=\CC^2$ whenever
$
0\ne \det \bp \Gamma_{11}& \Gamma_{12}\\
\gamma+ i\tau & i\eta \ep=
\gamma + i(\tau - \eta \Gamma_{11}),
$
in particular for $\gamma>0$.
Thus, we have weak Lopatinski stability of the zero-order boundary condition.
$\bp \Gamma_1\\\hat \Gamma_2\ep$. By Lemma \ref{lopsat} the uniform Lopatinski condition also holds.

\end{example}

\begin{example}\label{badinceg}
Finally, consider the totally incoming problem
$$
A_1=\bp 0 & 1&a\\ 1 & 1&0\\ a & 0 & 0\ep,
\quad
A_2 =\Id,
$$
with mixed Dirichlet--Neumann conditions $\Gamma_1=\bp 1 & 1 & b \ep$,
$\Gamma_2= \tilde \Gamma_2= \bp 0 & 1& 0 \\ 0 & 0 & 1\ep$.
Then,
$$
\bp \Gamma_1 \\ \Gamma_2'\ep=
\bp \Gamma_1 \\ -\tilde \Gamma_2 A_2^{-1}(\gamma+i\tau + i\eta A_1)\ep
=
-\bp -1 & -1 & -b\\
i\eta& \gamma+ i\tau+ i\eta & 0 \\
i\eta a &0 & \gamma+ i\tau  \ep
$$
is full rank on $\EE_+=\CC^2$ when its determinant is nonvanishing.
An easy row reduction gives
$$
\det \bp \Gamma_1 \\ \Gamma_2'\ep=
\det \Big( (\gamma+i\tau)\Id - i\eta \bp 0 & a\\ b & ab\ep
\Big)= 0
$$
when $(\gamma+i\tau)/i\eta$ is an eigenvalue of $ \bp 0 & a\\ b & ab\ep $,
or
$(\gamma+i\tau)/i\eta= \frac{ab\pm \sqrt{a^2b^2+4ab}}{2}$.
Choosing $a=1$, $b=-1$, we obtain
$(\gamma+i\tau)/i\eta= \frac{-1 \pm i \sqrt{3}}{2}$, or
$\gamma+i\tau= (-i \mp \sqrt{3})(\eta/2)$, and the Lopatinski condition
is violated for $\gamma= \mp \sqrt{3}\eta/2$, $\tau= -\eta/2$.
This shows that the weak Lopatinski condition can fail for the totally
incoming case, even with $\bp \Gamma_1\\\Gamma_2 A_d^{-1}\ep$
full rank.
\end{example}


\begin{example}\label{O.disc1}
This last example comes from a result by B. Fornet (see \cite{F1}, \cite{F2}), and shows that such types of Dirichlet-Neumann boundary conditions
have a natural place in the theory of first order hyperbolic Cauchy problems with discontinuous coefficients.
Let us consider the following scalar Cauchy problem in 1D
\begin{equation*}
\left\{\begin{aligned}{}
&\D_t u+ a(x) \D_{x} u = f \qquad x\in \RR, t > 0  \\
& u|_{t=0}=h \quad
\end{aligned}\right.
\end{equation*}
where the real valued coefficient $a(x)$ satisfies $a(x) = -\alpha < 0$ if $x<0$ and $a(x) = \beta > 0$ if $x\geq 0$, with data $h \in C^\infty(\RR)$, $f\in C^\infty(\RR^2)$ compactly supported. This problem is of course not well-posed due to the lack of uniqueness. In order to select one solution, one can use
for example a vanishing viscosity approach, and look for the limit of the solution $u^\eps$ of
\begin{equation*}
\left\{\begin{aligned}{}
&\D_t u+ a(x) \D_{x} u - \eps \D^2_{x} u= f \qquad x\in \RR, t > 0  \\
& u|_{t=0}=h \quad
\end{aligned}\right.
\end{equation*}
as $\eps \rightarrow 0$. To study the convergence, the problem is written as an initial boundary value problem (or
transmission problem) with $u^\eps_\pm(t,x) = u^\eps(t,\pm x)$ for $x>0$ and $v^\eps=(u^\eps_+,u^\eps_-)^{T}$ leading to the constant coefficient system
\begin{equation}\label{O.disc2}
\D_t v^\eps+ A \D_{x} v^\eps - \eps \D_x^2 v^\eps = (f_+,f_-)^T \ \mathrm{in} \ t>0, x>0
\end{equation}
with
$$
A =\bp \beta & 0\\ 0 & \alpha \ep,
$$
corresponding to totally incoming characteristic fields. The  boundary conditions are $\Gamma_1v=0$ and $\Gamma_2 \D_x v =0$ on $x=0$ with
\begin{equation}\label{O.disc3}
\Gamma_1 = (1, -1), \quad  \Gamma_2 = (1, 1).
\end{equation}
The result is that $v^\eps$ converges in $L^2([0,T]\times \RR_+)$ to the (unique) solution $v^0$ of the limit hyperbolic problem
$$
\D_t v^0+ A \D_{x} v^0 = (f_+,f_-)^T
$$
with the same boundary conditions
$$
\Gamma_1 v^0 _{|x=0} = 0, \quad \Gamma_2 (\D_x v^0 )_{|x=0} = 0,
$$
and initial conditions $(h_+,h_-)^T$. The fact that the problem is one dimensional helps a lot, and as a matter of fact, it is an example where the uniform Evans condition is satisfied (see \cite{F1}). The convergence analysis also uses specific boundary layer expansions. One can find more general situations and examples in the paper \cite{F2} with larger systems, still in 1D.
\end{example}


\begin{rem}\label{hope}
\textup{ Example \ref{inceg} is an example of the mixed, totalling incoming case with
one Neumann condition where the uniform Lopatinski condition holds.
Recall that this case, corresponding to supersonic incoming flow with a Neumann condition on temperature, was  left open in the study of  boundary layers for the full compressible Euler equations
\eqref{NSeq} in \cite{R}.  In Appendix \ref{s:Rao} we provide a criterion (satisfied for example by ideal gases) for the uniform Lopatinski condition to be satisfied in that case.}

\end{rem}

We point out that the
well-conditioning of $\Gamma$, \eqref{wellcond},
fails in many cases.  In particular,
in the totally incoming case, when there is even one Neumann condition,
we find that $\Gamma$ drops rank for $\gamma=0$ at any values of
$\tau, \eta$ for which $\tau + \sum_{j\ne d}\eta_jA_j$ is not invertible,
so that $|\Gamma^\dagger|$ blows up as $\gamma\to 0$.
In this particular case, this may be remedied by simply multiplying
$\Gamma$ and data $g$ both by $\Gamma^{-1}$ to eliminate this difficulty
at the expense of losses on the source;
we explore this approach further in Remark \ref{mrmk} below.

In this section, we have dealt entirely with construction of approximate
solutions.  Convergence to these solutions is a separate issue that
requires estimates on the full hyperbolic--parabolic problem, estimates
that we have for the moment only for the pure Neumann boundary, totally incoming
case.  This is an important direction for further investigation.

\subsection{Second approach based on solving a Cauchy problem on the boundary. }\label{s:second}

\emph{\quad} We return now to the reduced hyperbolic problem in its original form \eqref{res2} in the general case of mixed-type boundary conditions, but assuming that we are in the totally incoming case\footnote{Thus, this approach is relevant to the example of Rao discussed in Appendix \eqref{s:Rao}. }.
Extensions to the general case are discussed in Appendix \ref{s:new}.
Writing $u=u_0$, differentiating the Dirichlet boundary condition
$\Gamma_1 u|_{x_d=0}=g_1$ with respect to time,
and making the usual substitution \eqref{boundary} for $\partial_du_0$, we obtain
the boundary condition
\be\label{tansys}
B u|_{x_d=0}= \bp \partial_t g_1\\ \tilde g_2-\tilde\Gamma_2 A_d^{-1}f\ep,
\qquad
\hbox{\rm where} \;
B:=
\bp \Gamma_1 \\ -\tilde \Gamma_2 A_d^{-1}\ep \partial_ t
+\sum_{j=1}^{d-1} \bp 0\\ -\tilde \Gamma_2 A_d^{-1}A_j\ep \partial_{x_j}.
\ee
The next proposition shows that sometimes this may be treated as
a Cauchy problem in the tangential variables
and solved for complete Dirichlet data $u|_{x_d=0}$.

\begin{defn}\label{hypdef}

Let $p(\tau,\eta):= \det \left(\bp \Gamma_1 \\ -\tilde \Gamma_2 A_d^{-1}\ep
\tau
+
\sum_{j= 1}^{d-1} \bp 0\\ -\tilde\Gamma_2 A_d^{-1}A_j\ep \eta_j\right)$.
We say that the system \eqref{tansys} on the boundary
is:

a) \emph{evolutionary} if
the coefficient of $\partial_t$ is invertible.

b) \emph{weakly hyperbolic} if for any $\eta\in\bR^{d-1}$ the roots in $\tau$ of $p(\tau,\eta)=0$ are real.

\end{defn}

\begin{prop}\label{weakprop}
The system \eqref{tansys} is both evolutionary and weakly hyperbolic if and only if the problem \eqref{newp} satisfies the weak Lopatinski condition.
\end{prop}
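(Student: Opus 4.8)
The plan is to express both the weak Lopatinski condition for \eqref{newp} and the conjunction ``evolutionary $+$ weakly hyperbolic'' as statements about the zeros of the single polynomial $p(\tau,\eta)$, and then to identify them using two elementary symmetries of $p$: its homogeneity of degree $N=\cD+\cN$ in $(\tau,\eta)$, and the reflection identity $p(-\tau,-\eta)=(-1)^N p(\tau,\eta)$, valid because $B$ is a first-order operator.

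\emph{Rewriting the weak Lopatinski condition.} Write $\mathcal{B}_0:=\bp\Gamma_1\\-\tilde\Gamma_2 A_d^{-1}\ep$, an $N\times N$ matrix since $\cO=0$ so $\cD+\cN=N$, and $\mathcal{B}(\tau,\eta):=\mathcal{B}_0\,\tau+\sum_{j=1}^{d-1}\bp0\\-\tilde\Gamma_2 A_d^{-1}A_j\ep\eta_j=\bp\Gamma_1\tau\\-\tilde\Gamma_2 A_d^{-1}(\tau+\sum_{j}\eta_jA_j)\ep$, so $p=\det\mathcal{B}$. In the totally incoming case $\EE_+\big(A_d^{-1}(\gamma+i\tau+i\sum_j\eta_jA_j)\big)=\CC^N$, so the determinant in \eqref{lop} takes only the values $0$ (when $\bp\Gamma_1\\\hat\Gamma_2(\gamma,\tau,\eta)\ep$ is singular) and numbers of modulus one (when it is invertible); hence the weak Lopatinski condition holds if and only if $\bp\Gamma_1\\\hat\Gamma_2(\gamma,\tau,\eta)\ep$ is invertible for all $\gamma>0$, $(\tau,\eta)\in\RR^d$. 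As $\hat\Gamma_2=-m\,\tilde\Gamma_2 A_d^{-1}(\gamma+i\tau+i\sum_j\eta_jA_j)$ with the scalar $m\neq0$, this is the invertibility of $\bp\Gamma_1\\-\tilde\Gamma_2 A_d^{-1}(\gamma+i\tau+i\sum_j\eta_jA_j)\ep$. Substituting $\tau\mapsto s:=\gamma+i\tau$ and $\eta_j\mapsto i\eta_j$ into $\mathcal{B}$ and factoring the common $s$ out of the $\cD$ rows coming from $\Gamma_1$ gives $\det\mathcal{B}(s,i\eta)=s^{\cD}\det\bp\Gamma_1\\-\tilde\Gamma_2 A_d^{-1}(s+i\sum_j\eta_jA_j)\ep$; since $s\neq0$ when $\gamma>0$, the weak Lopatinski condition is equivalent to $p(s,i\eta)\neq0$ for all $s$ with $\re s>0$ and all $\eta\in\RR^{d-1}$. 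Homogeneity gives $p(s,i\eta)=i^N p(-is,\eta)$, and $-is=\tau-i\gamma$ runs over the open lower half-plane $\{\im\zeta<0\}$ as $(\gamma,\tau)\in(0,\infty)\times\RR$; so weak Lopatinski is equivalent to
\[
(\star)\qquad p(\zeta,\eta)\neq0\ \text{ for all }\zeta\text{ with }\im\zeta<0\ \text{ and all }\eta\in\RR^{d-1}.
\]

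\emph{Matching $(\star)$ with evolutionary $+$ weakly hyperbolic.} If the system is evolutionary, $\det\mathcal{B}_0\neq0$ is the leading coefficient of $p(\cdot,\eta)$ in $\tau$, so for each $\eta$ this polynomial has exactly $N$ roots; weak hyperbolicity puts them on $\RR$, so none lies in the open lower half-plane, i.e. $(\star)$ holds. Conversely, assume $(\star)$. Taking $\eta=0$ and using $p(\zeta,0)=\zeta^N\det\mathcal{B}_0$ (with $\zeta\neq0$ available in the open lower half-plane) forces $\det\mathcal{B}_0\neq0$: the system is evolutionary and $p(\cdot,\eta)$ has $N$ roots for every $\eta$. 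Fix $\eta$; by $(\star)$ every root of $p(\cdot,\eta)$ has $\im\geq0$, while applying $(\star)$ at $-\eta$ and using $p(-\zeta,-\eta)=(-1)^N p(\zeta,\eta)$ --- so the roots of $p(\cdot,-\eta)$ are exactly the negatives of those of $p(\cdot,\eta)$ --- forces every root of $p(\cdot,\eta)$ to have $\im\leq0$. Hence all roots are real: the system is weakly hyperbolic.

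\emph{The crux.} The computations are all short; the one step that bears the weight is the use of the reflection symmetry $p(-\tau,-\eta)=(-1)^N p(\tau,\eta)$ together with $(\star)$ applied at both $\eta$ and $-\eta$, which upgrades the one-sided fact ``no roots below $\RR$'' to the two-sided conclusion ``all roots real'' --- without it one recovers only a half-plane root condition, not weak hyperbolicity. It is also worth flagging that the evolutionary condition is not a separate hypothesis but exactly the $\eta=0$ instance of $(\star)$, and it is precisely what makes $p(\cdot,\eta)$ genuinely a degree-$N$ polynomial, so that the two root conditions are comparable. The only interpretive point is reading the determinant in \eqref{lop} when $\EE_+=\CC^N$ as transversality of $\ker\bp\Gamma_1\\\hat\Gamma_2\ep$ with all of $\CC^N$, i.e. as triviality of that kernel, exactly as in the proof of Lemma~\ref{lopsat}(a).
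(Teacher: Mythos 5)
Your proof is correct and follows essentially the same route as the paper: translate the weak Lopatinski condition into the nonvanishing statement $p(\tau-i\gamma,\eta)\neq 0$ for $\gamma>0$, via the substitution into the determinant and the factor pulled from the $\Gamma_1$ rows. What you add is an explicit closure of the ``if'' direction: the paper's argument derives only that weak Lopatinski implies $p(\cdot,\eta)$ has no roots in the open lower half-plane, and leaves implicit the passage from there to weak hyperbolicity (all roots real). Your reflection argument --- using $p(-\tau,-\eta)=(-1)^{N}p(\tau,\eta)$ and applying the no-roots-below condition at both $\eta$ and $-\eta$ --- supplies exactly that missing step, and you are right to flag it as the crux, along with the observation that evolutionarity is what makes $p(\cdot,\eta)$ genuinely degree $N$. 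A slightly shorter route to the same step: since $\Gamma_1$, $\tilde\Gamma_2$, $A_d$, and the $A_j$ are real, $p(\cdot,\eta)$ has real coefficients for each fixed real $\eta$, so non-real roots occur in conjugate pairs and ``no roots strictly below the real axis'' already forces all roots real using the condition at a single value of $\eta$.
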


\begin{proof}
\textbf{1. }First observe that the factor $m$ in $\hat\Gamma_2$ has no effect on the kernel space in \eqref{lop}.  Suppose the weak Lopatinski condition holds.  Taking $\tau=0$, $\eta=0$, $\gamma>0$ in \eqref{lop}, since the $E_+$ space in \eqref{lop} is $\bC^N$, we see that the coefficient of $\partial_t$ in \eqref{tansys} is invertible.   More generally, the matrix
$\bp \Gamma_1\\
\hat \Gamma_2(\gamma,\tau,\eta)\ep$ is nonsingular when $\gamma>0$, and thus so is the matrix
$\bp \Gamma_1 \\ -\tilde \Gamma_2 A_d^{-1}\ep
(\tau-i\gamma)
+
\sum_{j= 1}^{d-1} \bp 0\\ -\tilde\Gamma_2 A_d^{-1}A_j\ep \eta_j$.

\textbf{2. }The argument can be reversed to prove the other direction.

\end{proof}

 Weak hyperbolicity is not enough to guarantee well-posedness in $H^s$ spaces of the problem \eqref{tansys}.
We refer to \cite{BS} for a discussion of necessary and sufficient conditions for such well-posedness.
One important sufficient condition for well-posedness is that the roots in $\tau$ of $p(\tau,\eta)=0$ are real and semisimple with constant multiplicities for $\eta\neq 0$.   This condition is verified for the  system \eqref{tansys} arising in the Rao example in Appendix \ref{s:Rao}.

In problems where  the trace  $u_0|_{x_d=0}=h$ can be found by solving \eqref{tansys}, we can obtain the solution to the reduced hyperbolic problem \eqref{res2} by solving
\begin{align}
Lu_0=f \text{ in }x_d>0, \;u_0|_{x_d=0}=h,\; u_0=0 \text{ in } t<0.
\end{align}
This problem is maximally dissipative in the totally incoming case.

We record the resulting bounds, which are to be compared to those
of \eqref{mixest}.

\begin{prop}\label{statebds}
Suppose that $L$ is an operator that can be conjugated to \emph{block structure} in the sense of \cite{MZ2}.
Assuming that the roots in $\tau$ of $p(\tau,\eta)=0$ are real and semisimple with constant multiplicities for $\eta\neq 0$,
there exist positive constants $C$, $\gamma_0$ and a unique solution of \eqref{res2} satisfying
\be\label{e:statest}
\gamma |u|^2_{0,\gamma} + \langle u\rangle^2_{0,\gamma} \leq C\left( |f|^2_{0,\gamma}/\gamma+|\partial_{x_d}f|_{0,\gamma}^2/\gamma^2
+ \langle \partial_t g_1\rangle_{0,\gamma}^2/\gamma^2
+ \langle \tilde g_2\rangle^2_{0,\gamma}/\gamma^2 \right).
\ee
for $\gamma \geq \gamma_0$.
\end{prop}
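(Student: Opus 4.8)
The plan is to factor the reduced problem \eqref{res2}, in the totally incoming case, into the two problems singled out in Section \ref{s:second}: first solve the tangential Cauchy problem \eqref{tansys} for the trace $h:=u_0|_{x_d=0}$, then solve the interior problem $Lu_0=f$ in $x_d>0$ with the full Dirichlet datum $u_0|_{x_d=0}=h$ and $u_0=0$ in $t<0$, which by the observation preceding the statement is maximal dissipative. First I would record that this factorization is exact: differentiating $\Gamma_1u_0|_{x_d=0}=g_1$ in $t$ and substituting \eqref{boundary} for $\partial_du_0$ shows the trace of any solution of \eqref{res2} solves \eqref{tansys}, while conversely, if $h$ solves \eqref{tansys} with $h=0$ in $t<0$ and $u_0$ solves the interior problem with Dirichlet data $h$, then \eqref{boundary} recovers $\tilde\Gamma_2\partial_du_0|_{x_d=0}=\tilde g_2$ and the relations $\partial_t(\Gamma_1h-g_1)=0$, $\Gamma_1h-g_1=0$ in $t<0$ recover $\Gamma_1u_0|_{x_d=0}=g_1$. (The corner compatibility conditions assumed in Section \ref{s:mixed} are what make these trace computations, and the identity $Lu_0=f$ up to $x_d=0$, rigorous, and they supply the regularity of $h$ and $u_0$ needed throughout.) Existence and uniqueness for \eqref{res2}, together with the estimate, then reduce to the corresponding facts for the two subproblems.

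For the tangential Cauchy problem, I would first observe that the hypothesis forces \eqref{tansys} to be evolutionary, so that, after multiplying by the inverse of the coefficient $\bp\Gamma_1\\-\tilde\Gamma_2 A_d^{-1}\ep$ of $\partial_t$ in $B$, it can be written $\partial_th+\sum_{j=1}^{d-1}C_j\partial_{x_j}h=F$ with constant matrices $C_j$ and with $F$ assembled linearly from $\partial_tg_1$, $\tilde g_2$ and $f|_{x_d=0}$. The symbol $P(\eta):=\sum_jC_j\eta_j$ has, by Definition \ref{hypdef} and the hypothesis, real semisimple eigenvalues of constant multiplicity for $\eta\neq0$ (these being the negatives of the roots in $\tau$ of $p(\tau,\eta)=0$); hence its spectral projectors $\pi_k(\eta)$ are smooth and homogeneous of degree zero, and $S(\eta):=\sum_k\pi_k(\eta)^*\pi_k(\eta)$ is a bounded, uniformly positive, Hermitian Friedrichs symmetrizer with $S(\eta)P(\eta)$ Hermitian. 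Taking the Fourier--Laplace transform in $(t,x'')$ with $\zeta=\tau-i\gamma$ (legitimate since $h=0$ in $t<0$), pairing $(\zeta I+P(\eta))\hat h=-i\hat F$ with $S(\eta)\hat h$, and reading off imaginary parts gives $\gamma\,\hat h^*S(\eta)\hat h=\re\big(\hat h^*S(\eta)\hat F\big)$, whence $\gamma|\hat h|^2\lesssim|\hat F|^2/\gamma$ after absorption; Plancherel then produces a unique $h$ with $\langle h\rangle_{0,\gamma}^2\lesssim\langle F\rangle_{0,\gamma}^2/\gamma^2$. Combining this with the elementary trace bound $\langle f|_{x_d=0}\rangle_{0,\gamma}\leq|f|_{0,\gamma}+|\partial_{x_d}f|_{0,\gamma}$ and the boundedness of the matrices yields
\be\label{sbd-h}
\langle h\rangle_{0,\gamma}^2\lesssim\gamma^{-2}\big(\langle\partial_tg_1\rangle_{0,\gamma}^2+\langle\tilde g_2\rangle_{0,\gamma}^2+|f|_{0,\gamma}^2+|\partial_{x_d}f|_{0,\gamma}^2\big),\qquad\gamma\geq\gamma_0.
\ee

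For the interior problem, since $A_d>0$ the full Dirichlet datum is maximal dissipative, so a unique $u_0$ exists --- in the symmetric setting of Section \ref{s:mixed} directly from the energy identity got by multiplying $Lu_0=f$ on the left by $e^{-2\gamma t}u_0$ and integrating by parts (the boundary term having a favorable sign because $A_d>0$), and more generally under the block-structure hypothesis from the standard theory \cite{CP,MZ2} --- and it satisfies $\gamma|u_0|_{0,\gamma}^2+\langle u_0\rangle_{0,\gamma}^2\lesssim|f|_{0,\gamma}^2/\gamma+\langle h\rangle_{0,\gamma}^2$, the boundary term on the left being just $\langle h\rangle_{0,\gamma}^2$ since $u_0|_{x_d=0}=h$. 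Inserting \eqref{sbd-h} and using $\gamma^{-2}\leq\gamma^{-1}$ for $\gamma\geq\gamma_0\geq1$ to fold the $|f|_{0,\gamma}^2$ term of \eqref{sbd-h} into $|f|_{0,\gamma}^2/\gamma$ then gives precisely \eqref{e:statest}; uniqueness follows from the exact factorization above ($h$ is forced by \eqref{sbd-h}, then $u_0$ by the interior problem). I expect the one genuinely substantial step to be the construction of the bounded symmetrizer $S(\eta)$ --- equivalently, the $L^2$ (and $H^s$) well-posedness of the tangential Cauchy problem --- since mere weak hyperbolicity is not enough (cf. \cite{BS}); it is exactly the semisimplicity-and-constant-multiplicity hypothesis that delivers smooth spectral projectors and hence a genuine Friedrichs symmetrizer. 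The remaining steps are the bookkeeping in the factorization (where the corner compatibility conditions enter) and the classical maximal-dissipative estimate for the interior problem.
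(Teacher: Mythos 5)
Your proposal is correct and follows the same two‑step route as the paper's proof: first solve the tangential Cauchy problem \eqref{tansys} for the trace $u_0|_{x_d=0}$ (your explicit Friedrichs‑symmetrizer built from the smooth spectral projectors supplied by the semisimple/constant‑multiplicity hypothesis is exactly what the paper compresses into ``standard hyperbolic Cauchy estimates''), and then feed the resulting trace estimate, together with the $\langle f|_{x_d=0}\rangle_{0,\gamma}\lesssim |f|_{0,\gamma}+|\partial_{x_d}f|_{0,\gamma}$ bound, into the maximally dissipative interior Dirichlet problem. One minor remark: evolutionarity of \eqref{tansys} is not, strictly speaking, implied by the semisimplicity/constant‑multiplicity hypothesis alone but is presupposed in the surrounding discussion (``in problems where the trace\dots can be found by solving \eqref{tansys}''); this does not affect your argument, which otherwise reproduces \eqref{e:statest} exactly, including the fold of the $|f|^2_{0,\gamma}/\gamma^2$ term into $|f|^2_{0,\gamma}/\gamma$.
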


\begin{proof}
Estimating the $\langle\cdot\rangle_{0,\gamma}$ norm of the trace of $f$ at $x_d=0$,
and using this to bound
the data $\bp \partial_t g_1\\ \tilde g_2-\tilde\Gamma_2 A_d^{-1}f\ep$
in \eqref{tansys}, we obtain from standard hyperbolic Cauchy estimates
the bound
$
\langle u|_{x_d=0}\rangle^2_{0,\gamma}\leq
C\left( |f|^2_{0,\gamma}/\gamma^2+|\partial_{x_d}f|_{0,\gamma}^2/\gamma^2
+ \langle \partial_t g_1\rangle_{0,\gamma}^2/\gamma^2)
+ \langle \tilde g_2\rangle^2_{0,\gamma}/\gamma^2 \right),
$
from which \eqref{e:statest} then follows by standard boundary value
estimates for maximally dissipative systems.
\end{proof}

\begin{rem}\label{mrmk}
\textup{
The bounds \eqref{mixest} obtained by method one in Proposition \ref{ests} are stronger than those of
\eqref{e:statest} by  factor $\gamma/|\gamma,\tau,\eta|$
in boundary terms $g_j$ and the term $\partial_{x_d}f$
coming from the trace of $f$.
This reflects the well-conditioning hypothesis \eqref{wellcond} made in Proposition \ref{ests} but not in our derivation of \eqref{e:statest}.
Indeed, when well-conditioning fails (but the other hypotheses of Proposition \ref{ests} hold)
one can apply method one to derive the bounds \eqref{e:statest} provided one can find
for $\gamma>0$ and $r:=\rank \Gamma_1+\rank \tilde \Gamma_2$
an $r\times r$ matrix multiplier
$|m(\gamma,\tau,\eta)|\le C/\gamma$,
such that the rescaled boundary condition
\be\label{genrescale}
m\bp (\gamma+i\tau)\Gamma_1\\
-\tilde \Gamma_2 A_d^{-1}(\gamma+i\tau +\sum_{j\ne d} i\eta_j A_j)\ep
\ee
satisfies the well-conditioning hypothesis \eqref{wellcond} needed
to obtain standard Kreiss-type bounds for the resulting rescaled
boundary-value problem.
One may check that this yields exactly the bounds \eqref{e:statest}.
Thus, this modification allows somewhat wider application of method one.
For example, in the case of totally incoming characteristics, the
uniform Lopatinski condition is trivially satisfied, but \eqref{wellcond}
fails for the multiplier $|\gamma,\tau,\eta|^{-1}$, whereas for
the multiplier $m:=\Gamma^{-1}$, the rescaled
boundary condition $m\Gamma=\Id$ trivially satisfies \eqref{wellcond},
and in favorable cases satisfies $|m|=|\Gamma^{-1}|\le C/\gamma$.
Indeed, this can be recognized as the solution operator of the
Cauchy problem on the boundary just described in method two.
}
\end{rem}

\subsection{Discussion and open problems}\label{s:discussion}

\emph{\quad} To summarize, following up on the analyses initiated in \cite{GMWZ5,GMWZ6}
to accommodate mixed Neumann--Dirichlet boundary conditions in the general
theory of hyperbolic--parabolic boundary layers, we here investigate the
case left open in those works that the number of incoming modes exceeds
the number of Dirichlet conditions imposed on the full hyperbolic--parabolic
solution.
In this case, we find that (i) the resulting reduced,
hyperbolic, ``outer problem'' satisfies Neumann or mixed Neumann--Dirichlet,
rather than Dirichlet conditions as in the standard case, and
(ii) the resulting boundary layers are ``weak'' in the sense that they
are $O(\eps)$ amplitude, where $\eps$ is the order of the viscosity.

Although the existence of this new type of boundary layer, with quite different
behavior from the standard type, is surprising to us, such layers have physical relevance (sse Appendix \ref{s:Rao}). In particular, one must understand these layers in order to treat cases arising in physical applications to suction-reduced
drag in aerodynamics.
Their analysis requires the study of hyperbolic boundary-value problems with
Neumann or mixed Neumann--Dirichlet boundary conditions, an area that appears not to have received much attention, despite the extensive
study of noncharacteristic hyperbolic boundary-value problems.   We have described two approaches to these hyperbolic boundary problems, one involving a reduction to a problem with pseudodifferential Dirichlet conditions, and the other involving a reduction to a Cauchy problem on the boundary.   We have provided examples where each approach works, but much work remains to be done on the general case.    An important example where the second approach works is the case of supersonic inflow for the full compressible Euler equations
considered in Appendix \ref{s:Rao}.

To study the small viscosity limit in the quasilinear hyperbolic-parabolic boundary problems considered here, our approach requires estimates for the linearization of the problem about an approximate solution.   The derivation of such estimates is completely open for cases other than the pure Neumann
totally incoming case treated in the remainder of the paper.

\section{The quasilinear totally incoming case}
\emph{\quad} We turn now to our main task,
the full treatment of the quasilinear case with full Neumann boundary
conditions and totally incoming modes.

\subsection{Construction of an approximate solution}\label{approx}

\quad By plugging $u^a_\eps$ as in \eqref{ab} into the boundary problem \eqref{aa}, Taylor expanding $A_j(u^a_\eps)$ about $u_0$,   and equating coefficients of equal powers of $\eps$ on right and left, we obtain the following sequence of boundary problems:\\
\begin{align}\label{g1}
\begin{split}
&(a)\;\sum^d_{j=0}A_j(u_0)\partial_ju_0=f,  \quad \partial_d u_0|_{x_d=0}=0\\
&(b)\;\sum^d_{j=0}A_j(u_0)\partial_ju_1+\sum^d_{j=1}d_uA_j(u_0)(u_1,\partial_ju_0)=\Delta u_0,  \quad \partial_d u_1|_{x_d=0}=0\\
&(c)\;\sum^d_{j=0}A_j(u_0)\partial_ju_2+\sum^d_{j=1}d_uA_j(u_0)(u_2,\partial_ju_0)=\\
&\qquad \quad \Delta u_1-\sum^d_{j=1}d_uA_j(u_0)(u_1,\partial_ju_1)-\sum^d_{j=1}d^2_uA_j(u_0)(u_1,u_1,\partial_ju_0),  \quad \partial_d u_1|_{x_d=0}=0
\end{split}
\end{align}
and so on, where $f=0$ in $t<0$ and $u_j=0$ in $t<0$ for all $j$.   Here $f\in H^s(\overline{\bR}^{d+1}_+)$ for $s$ large to be specified later.

    To solve \eqref{g1}(a), we first solve the symmetric, pure initial value problem on $x_d=0$:
\begin{align}\label{g2}
\sum^{d-1}_{j=0}A_j(v)\partial_jv= f|_{x_d=0},\quad  v=0\text{ in }t<0,
\end{align}
and then the symmetric, dissipative boundary problem on $\Omega_{T_0}$ for $T_0$ small:
\begin{align}\label{g3}
\sum_{j=0}^dA_j(u_0)\partial_ju_0=f,\quad u_0|_{x_d=0}=v,\quad u_0=0\text{ in }t<0.
\end{align}
From \eqref{g2}, \eqref{g3}, and the invertibility of $A_d(u_0)$ we obtain $\partial_d u_0|_{x_d=0}=0$.   The subsequent linear problems \eqref{g1}(b),(c),...for the unknowns $u_1$, $u_2$,... are solved by the same method.

Standard theory gives
$0<T_0<T_1$ such that\footnote{The drop by three units of regularity at each stage is due application of the Laplacian and the taking of a trace.  Here we have chosen to restrict the Sobolev indices to lie in $\bN$. }
\begin{align}
\begin{split}
&v\in H^{s-1}(b\Omega_{T_1}), \;u_0\in H^{s-1}(\Omega_{T_0}),\;u_1\in H^{s-4}(\Omega_{T_0}),\;u_2\in H^{s-7}(\Omega_{T_0}),\\
&\qquad \quad  \;\dots, u_k\in H^{s-1-3k}(\Omega_{T_0}).
\end{split}
\end{align}
Moreover, as long as $s-3M-2>\frac{d+1}{2}$, it is easy to check that the remainder $R_\eps$ in \eqref{ac} belongs to $H^{s-3M-3}(\Omega_{T_0})$.  We now summarize this construction.

\begin{prop}[Approximate solutions]\label{g4}
Fix $M\in\bN$.  Consider the boundary problem \eqref{aa}, where $f\in H^s(\overline{\bR}^{d+1}_+)$ for some $s>3M+2+\frac{d+1}{2}$. Then \eqref{aa} has an approximate solution of the form
\begin{align}\label{g5}
u^a_\eps(x)=u_0(x)+\eps u_1(x) + \dots +\eps^M u_M(x),
\end{align}
satisfying \eqref{ac}, where $u_k\in H^{s-1-3k}(\Omega_{T_0})$ and the remainder $R_\eps\in H^{s-3M-3}(\Omega_{T_0})$.

\end{prop}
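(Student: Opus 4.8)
The plan is to make the formal cascade sketched above rigorous, solve it order by order, and then read off the regularity of the profiles and the size of the remainder. First I would substitute the ansatz \eqref{g5} into $\cE(u^a_\eps)$, Taylor-expand each $A_j(u^a_\eps)$ about $u_0$ in powers of $\eps(u_1+\eps u_2+\cdots)$, expand $-\eps\Delta u^a_\eps$, and equate the coefficient of $\eps^k$ on the two sides for $0\le k\le M$ (the coefficient of $\eps^k$ in $f$ being $f$ for $k=0$ and $0$ for $k\ge 1$). This produces exactly the system \eqref{g1}(a),(b),(c),$\dots$; for each $k$ it has the form $\sum_{j=0}^d A_j(u_0)\partial_j u_k+(\text{a zeroth-order term in }u_k)=F_k$, $\partial_d u_k|_{x_d=0}=0$, $u_k=0$ in $t<0$, where $F_0=f$ and, for $k\ge 1$, $F_k$ is a fixed polynomial expression in $u_0,\dots,u_{k-1}$, their $x$-derivatives, and $\Delta u_{k-1}$, the top-order contribution being $\Delta u_{k-1}$.

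Next I would solve each member of the cascade by the two-step procedure indicated in the text. For $k=0$: because $A_d(u_0)>0$ there is no fast layer, and the Neumann-type condition $\partial_d u_0|_{x_d=0}=0$ is converted into Dirichlet data by first solving the symmetric hyperbolic Cauchy problem \eqref{g2} on the hyperplane $\{x_d=0\}$ for the trace $v:=u_0|_{x_d=0}$ (a well-posed symmetric system since $A_0=I$ and the $A_j$ are symmetric), and then solving the interior quasilinear initial-boundary-value problem \eqref{g3} with Dirichlet data $v$. The latter is maximally dissipative because $A_d(u_0)>0$ makes all $N$ modes incoming, so full Dirichlet data is the correct boundary condition; the corner compatibility conditions at $\{t=0,\ x_d=0\}$ hold to infinite order because $f$ and all data vanish for $t<0$, and one obtains a (small, because quasilinear) existence time $T_0$. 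Restricting the interior equation to $\{x_d=0\}$ and subtracting \eqref{g2} leaves $A_d(v)\partial_d u_0|_{x_d=0}=0$, whence $\partial_d u_0|_{x_d=0}=0$ since $A_d$ is invertible. The problems \eqref{g1}(b),(c),$\dots$ have the identical structure but are linear in the unknown with coefficients frozen at $u_0$, so the same two steps — a linear symmetric hyperbolic Cauchy problem on $\{x_d=0\}$ for the trace $v_k$, then a linear maximally dissipative IBVP on $\Omega_{T_0}$ — yield $u_k$, again with $\partial_d u_k|_{x_d=0}=0$ recovered from $\det A_d\ne 0$ and with $u_k=0$ in $t<0$. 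The linear problems do not shrink $T_0$, so $u^a_\eps$ is defined on all of $\Omega_{T_0}$ and satisfies $\partial_d u^a_\eps|_{x_d=0}=\sum_k\eps^k\partial_d u_k|_{x_d=0}=0$ and $u^a_\eps=0$ in $t<0$.

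The regularity count is then mechanical. By standard quasilinear and linear hyperbolic theory no derivatives are lost in solving a symmetric hyperbolic Cauchy problem or a maximally dissipative IBVP, except for the half-derivative — counted as one, in the integer-Sobolev convention used here — lost in taking a boundary trace, plus the two derivatives lost in applying $\Delta$ to pass from $u_{k-1}$ to $F_k$. Thus $f|_{x_d=0}\in H^{s-1}$ gives $v,u_0\in H^{s-1}$; then $F_1=\Delta u_0\in H^{s-3}$ gives $v_1\in H^{s-4}$ and $u_1\in H^{s-4}$; and inductively $F_k\in H^{s-3k}$, $v_k\in H^{s-1-3k}$, $u_k\in H^{s-1-3k}(\Omega_{T_0})$. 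The hypothesis $s>3M+2+\frac{d+1}{2}$ ensures $s-1-3k>\frac{d+1}{2}$ for all $k\le M$, so every composition $A_j(u_\cdot)$, product, and differentiated factor entering the $F_k$ and the error lies in a Sobolev space on which the algebra/Moser estimates apply. Finally, collecting all discarded terms, $\cE(u^a_\eps)-f=\eps^M R_\eps$ where $R_\eps$ is a finite sum $\sum_{\ell\ge 1}\eps^\ell(\text{products of factors }\partial^\alpha u_i,\ i\le M,\text{ with smooth coefficients})$, including the term $-\eps\,\Delta u_M$; the roughest factor appearing anywhere is $\Delta u_M\in H^{s-3M-3}$, all other terms lie in $H^{s-2-3M}$ or better, and hence $R_\eps$ is bounded in $H^{s-3M-3}(\Omega_{T_0})$ uniformly for $\eps\in(0,1]$. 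This gives \eqref{ac} together with the stated regularities.

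I expect the only point needing genuine care to be the handling of the $\partial_d$-boundary condition at each order: one must check both that the auxiliary Cauchy problem on $\{x_d=0\}$ is itself a well-posed hyperbolic system and that, after solving the interior IBVP with the Dirichlet data that problem produces, the original condition $\partial_d u_k|_{x_d=0}=0$ is actually recovered — precisely the point where the noncharacteristic hypothesis $\det A_d\ne 0$ enters, and which must be verified for every $k$ in the cascade. Everything else is a routine application of symmetric hyperbolic boundary-value theory together with Sobolev product estimates.
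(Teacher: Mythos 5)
Your proof is correct and follows essentially the same approach as the paper: solve the order-$\eps^k$ cascade by first solving a symmetric hyperbolic Cauchy problem on $\{x_d=0\}$ for the trace, then a maximally dissipative interior IBVP with that Dirichlet data, recovering $\partial_d u_k|_{x_d=0}=0$ from $\det A_d\ne 0$, with the loss of $3$ derivatives per step (one from the trace, two from the Laplacian) and the remainder controlled by the roughest term $\Delta u_M\in H^{s-3M-3}$. The compatibility-condition and Moser-estimate details you supply are exactly the routine points the paper leaves implicit.
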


\subsection{Error equation}\label{error}
We look for an exact solution of the form
\begin{align}\label{ad}
u_\eps=u^a+\eps^L v^\eps, \text{ where }1\leq L<M.
\end{align}
To obtain the problem satisfied by $v$ we divide the equation $\cE(u)-\cE(u^a)=-\eps^M R_\eps$ by $\eps^L$ to obtain
\begin{align}\label{ae}
\begin{split}
&\sum^d_{j=0}A_j(u^a+\eps^Lv)\partial_{x_j}v+E(u^a,\nabla u^a,\eps^Lv)v-\eps\Delta v=-\eps^{M-L} R_\eps\\
&\partial_{x_d}v|_{x_d=0}=0\\
&v=0\text{ in }x_0 <0,
\end{split}
\end{align}
where with $\nabla=(\partial_{x_1},\dots,\partial_{x_d})$
\begin{align}\label{af}
E(u^a,\nabla u^a,\eps^Lv)v:=\sum^d_{j=1}\left(\int^1_0\partial_u A_j(u^a+s\eps^L v)\cdot v \;ds\right)\partial_{x_j}u^a.
\end{align}
To obtain a linear operator acting on $v$ on the left we rewrite \eqref{ae} as
\begin{align}\label{ae1}
\begin{split}
&\sum^d_{j=0}A_j(u^a)\partial_{x_j}v+E(u^a,\nabla u^a,0)v-\eps\Delta v=\\
&\qquad -\eps^{M-L} R_\eps+\eps^LB_1(u^a,\eps^Lv)(v,\nabla v)+\eps^LB_2(u^a,\nabla u^a,\eps^L v)(v,v):=\cF_\eps(v,\nabla v)\\
&\partial_{x_d}v|_{x_d=0}=0\\
&v=0\text{ in }x_0 <0.
\end{split}
\end{align}
Here $B_1$ and $B_2$, defined by the equation,  are smooth functions and bilinear in their last two arguments.

Next we rewrite \eqref{ae1} as a $2N\times 2N$ first-order system for the unknown $U=(u_1,u_2)^t:=(v,\eps\partial_{x_d}v)^t$, setting $\partial^{''}=(\partial_{x_1},\dots,\partial_{x_{d-1}})$:
\begin{align}\label{ag}
\begin{split}
&\partial_{x_d}U^{}=\frac{1}{\eps}G(p(x),\eps\partial_{x'})U^{}+F_\eps(U,\partial^{''} U)\\
&\Gamma U^{}:=u^{}_2=0\text{ on }x_d=0\\
&U^{}=0\text{ in }x_0 <0,
\end{split}
\end{align}
where
\begin{align}\label{ah}
\begin{split}
&F_\eps(U)=\begin{pmatrix}0\\-\cF_\eps(v,\nabla v)\end{pmatrix}\text{ and }G(p(x),\eps\partial_{x'})=\begin{pmatrix}0&I\\M& A_d \end{pmatrix}\text{ with }\\
&M=\sum^{d-1}_{j=0}A_j(u^a)\eps\partial_{x_j}+\eps E(u^a,\nabla u^a,0)-\eps^2\Delta_{x''}\text{ and }A_d=A_d(u^a).
\end{split}
\end{align}
In \eqref{ag} we have set
\begin{align}\label{ai}
\begin{split}
&p(x)=(p_1(x),p_2(x),p_3(x))\text{ where }\\
&p_1(x):=u_0,\;\;p_2(x)=u^a-u_0,\;\;p_3(x)=\eps E(u^a,\nabla u^a,0).
\end{split}
\end{align}

To prove weighted estimates we introduce $\tilde{U}=e^{-\gamma x_0}U$, $\tilde{F}=e^{-\gamma x_0}F$, where $\gamma \geq 1$,  and observe that \eqref{ag} is equivalent to
\begin{align}\label{aj}
\begin{split}
&\partial_{x_d} \tilde{U}=\frac{1}{\epsilon}G^\gamma(p(x),\eps\partial_{x'},\eps\gamma) \tilde{U}+\tilde{F}_\eps(U,\partial^{''}U),\\
&\Gamma \tilde U^{}:=\tilde u^{}_2=0\text{ on }x_d=0\\
&\tilde U^{}=0\text{ in }x_0 <0,
\end{split}
\end{align}
where $G^\gamma$ is defined by replacing  $\partial_{x_0}$ by $\partial_{x_0}+\gamma$ the definition of $G$.

\subsection{Symbolic preparation}

The operator $G^\gamma$ in \eqref{aj} is the semiclassical differential operator defined by the symbol
\begin{align}\label{ak}
G(p(x),\beta)=\begin{pmatrix}0&I\\M(p(x),\beta)&A(p(x))\end{pmatrix},
\end{align}
where, with $p=(p_1,p_2,p_3)$, $\beta=(\beta_0,\dots,\beta_{d-1},\gamma')$
\begin{align}\label{al}
\begin{split}
&M(p,\beta):=i\beta_0+\gamma'+\sum^{d-1}_{j=1}A_j(p_1+p_2)i\beta_j+p_3+\sum^{d-1}_{j=i}\beta_j^2\\
&A(p):=A_d(p_1+p_2).
\end{split}
\end{align}

\begin{lem}\label{am}
 For $p_1\in\cU$,  $(p_2,p_3)$ in a small enough neighborhood $\omega_2\times\omega_3$ of $(0,0)$, and $\beta$ in a small enough neighborhood $\omega_\beta$ of $0$, there exists a a $C^\infty$ invertible  matrix $T(p,\beta)$  such that $T^{-1}{G}(p,\beta)T$ has the block diagonal form
\begin{align}\label{an}
T^{-1}{G} T=\begin{pmatrix}H&0\\0&P\end{pmatrix},
\end{align}
where
\begin{align}\label{ap}
T(p,\beta)=\begin{pmatrix}I&A^{-1}\\-A^{-1}M+\tau_1&I+\tau_2\end{pmatrix},
\end{align}
with
\begin{align}\label{ej7.65}
\tau_1(p,\beta)=(O(\beta)+O(p_3))^2,\;\tau_2(p,\beta)=O(\beta)+O(p_3).
\end{align}
and
\begin{align}\label{a0}
\begin{split}
&H(p,\beta)=-A^{-1}M+\tau_1\\
&P(p,\beta)=A+A\tau_2.
\end{split}
\end{align}

\end{lem}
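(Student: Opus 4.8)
The plan is to diagonalize $G$ by a standard block-reduction argument, using the fact that at $\beta = 0$ and $p_3 = 0$ the matrix $G$ is already (conjugate to) block-triangular with well-separated diagonal blocks, and then to invoke the implicit function theorem to perturb. First I would record the structure at the base point: when $\beta = 0$ and $p_3 = 0$ we have $M = 0$, so $G = \begin{pmatrix} 0 & I \\ 0 & A \end{pmatrix}$, whose spectrum consists of the eigenvalue $0$ (with $N$-dimensional generalized eigenspace, the ``$H$-block'') and the eigenvalues of $A = A_d(p_1)$ (the ``$P$-block''). By Assumption \ref{assumptions}(I), $A_d(p_1) > 0$ for $p_1 \in \cU$, so $\spec(A)$ is bounded away from $0$; hence the two blocks have disjoint spectra, uniformly for $p_1 \in \cU$. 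For general small $\beta, p_3$, the matrix $M(p,\beta) = i\beta_0 + \gamma' + \sum_{j=1}^{d-1} A_j i\beta_j + p_3 + \sum_{j=1}^{d-1}\beta_j^2$ satisfies $M = O(\beta) + O(p_3)$, so $G(p,\beta)$ is a small perturbation of the base-point matrix and its spectrum splits into two groups near $0$ and near $\spec(A_d(p_1))$ respectively, still disjoint for $(p_2,p_3,\beta)$ in a sufficiently small neighborhood.

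Next I would construct $T$ in the proposed form $T = \begin{pmatrix} I & A^{-1} \\ -A^{-1}M + \tau_1 & I + \tau_2 \end{pmatrix}$ by the method of undetermined blocks. Write the change of variables as a two-step process: the first two columns of $T$ must span the invariant subspace of $G$ near the ``$0$''-group, and the last two columns the invariant subspace near the ``$A$''-group. Parametrize the first invariant subspace as the graph $\{(w, \Phi w)\}$ for an $N \times N$ matrix $\Phi = \Phi(p,\beta)$; the invariance condition $G \begin{pmatrix} I \\ \Phi\end{pmatrix} = \begin{pmatrix} I \\ \Phi\end{pmatrix} H$ with $H$ the restricted operator yields $\Phi = H$ and the Riccati-type equation $M + A\Phi = \Phi^2$, i.e. $H$ solves $H^2 - AH - M = 0$. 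At the base point $H = 0$; the linearization of the map $H \mapsto H^2 - AH - M$ at $H = 0$ is $\delta H \mapsto -A\,\delta H$, which is invertible since $A$ is, so the implicit function theorem gives a unique small $C^\infty$ solution $H(p,\beta)$ with $H = A^{-1}(\Phi^2 - M) = -A^{-1}M + A^{-1}\Phi^2$; since $\Phi = H = -A^{-1}M + O((\beta,p_3)^2)$ and $M = O(\beta) + O(p_3)$, iterating once shows $H = -A^{-1}M + \tau_1$ with $\tau_1 = (O(\beta) + O(p_3))^2$, matching \eqref{a0} and \eqref{ej7.65}. Similarly, parametrize the second invariant subspace as $\{(A^{-1}z + \psi z, z)\}$; the invariance equation gives a perturbation problem whose base-point solution is $\psi = 0$ (the columns $(A^{-1}, I)^t$ span $\ker(G - A)$ when $M = 0$), solvable by the implicit function theorem to yield $\tau_2 = O(\beta) + O(p_3)$ and the lower-right block of $T$ equal to $I + \tau_2$. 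Assembling, $T$ has exactly the stated form, it is invertible near the base point (its base-point value $\begin{pmatrix} I & A^{-1} \\ 0 & I \end{pmatrix}$ is invertible and invertibility is an open condition), and $T^{-1}GT = \diag(H, P)$ where reading off the lower-right block of $GT = T\,\diag(H,P)$ gives $P = A + A\tau_2$ as in \eqref{a0}. Smoothness of $H$, $P$, $\tau_1$, $\tau_2$, and $T$ in $(p,\beta)$ follows from the smoothness of the data $A_j$ and the smooth dependence in the implicit function theorem.

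The main obstacle, such as it is, is not conceptual but bookkeeping: one must verify the precise orders $\tau_1 = (O(\beta)+O(p_3))^2$ and $\tau_2 = O(\beta)+O(p_3)$ claimed in \eqref{ej7.65}, since these exact orders (rather than merely ``small'') are what later sections rely on when analyzing the degeneracy of the Evans function near zero frequency. This comes down to substituting the first-order solution back into the Riccati equation and checking that the correction is genuinely quadratic in $(\beta, p_3)$ — for $\tau_1$ this is immediate since $\tau_1 = A^{-1}\Phi^2$ and $\Phi = O(\beta)+O(p_3)$, and for $\tau_2$ one checks that the linear term of $\psi$ in $(\beta,p_3)$ is nonzero in general but is $O(\beta)+O(p_3)$, consistent with the stated bound. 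A secondary point to be careful about is uniformity of the neighborhoods $\omega_2, \omega_3, \omega_\beta$ in $p_1 \in \cU$: this follows because the relevant quantitative inputs — the lower bound on $\spec(A_d(p_1))$ and the norm of $A_d(p_1)^{-1}$ — are uniform for $p_1$ in the fixed relatively compact neighborhood $\cU$, so the implicit function theorem applies with a uniform radius.
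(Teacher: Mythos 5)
Your proposal is correct and follows essentially the same route as the paper: the paper's proof is precisely to plug the ansatz \eqref{ap} into $GT = T\,\mathrm{diag}(H,P)$, extract the two scalar block equations $\tau_1 = A^{-1}(-A^{-1}M+\tau_1)^2$ and $\tau_2 = MA^{-1}(I+\tau_2)^{-1}A^{-1}$, and solve these by contraction, which is exactly the Riccati/fixed-point computation you describe via the implicit function theorem. One small caveat: your parametrization of the second invariant subspace as $\{(A^{-1}z+\psi z,\,z)\}$ corresponds to a column of the form $\begin{pmatrix}A^{-1}+\psi\\ I\end{pmatrix}$, whereas the stated $T$ has $\begin{pmatrix}A^{-1}\\ I+\tau_2\end{pmatrix}$; these parametrize the same subspace but differ by a change of basis on the right, so to land exactly on the form \eqref{ap} you should, as the paper does, insert the ansatz directly and read off the equation for $\tau_2$ from the $(2,2)$ block rather than deriving it for $\psi$ and then converting.
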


\begin{proof}
The proof is a simple computation.  Look for $T$ of the given form and use the invertibility of $A$ to solve for $\tau_1$, $\tau_2$ by contraction.

\end{proof}

\subsection{Computation of the low frequency Evans function}\label{Evans}

\emph{\quad}Consider  the $N\times N$ parabolic problem
\begin{align}\label{a1}
\begin{split}
&\partial_t u^\eps+\sum^d_{1}A_j(u_\eps)\partial_ju_\eps-\eps\Delta u^\eps=f\\
&\partial_x u^\eps|_{x=0}=0\\
&u^\eps|_{t<0}=0
\end{split}
\end{align}
We now examine the Fourier-Laplace transform of the  linearization of \eqref{a1} about a constant state $u=\uu\in \cU$, where $\cU$ is the neighborhood of $0$ specified in Assumption \ref{assumptions}.  Writing $\zeta=(\tau,\eta,\gamma)$ for now and setting
$A_j=A_j(\uu)$ and $\cA(i\eta):=\sum^{d-1}_1 A_j i\eta_j$, we obtain:
\begin{align}\label{a4}
\begin{split}
&(i\tau +\gamma) v +\cA(i\eta) v +A_d v_{x_d}+\eps|\eta|^2 v-\eps v_{x_dx_d}=f\\
&v_{x_d}=0\text{ on }x_d=0.
\end{split}
\end{align}
By multiplying through by $\eps$ and rescaling $x_d$ and frequencies ($x_d\to\frac{x_d}{\eps}$, $\zeta\to \eps\zeta$)  we reduce to the case $\eps=1$. Rewriting \eqref{a4} as a first order system we obtain with $U=(u^1,u^2)^t:=(v,v_{x_d})^t$:
\begin{align}\label{a5}
\begin{split}
&\partial_{x_d}U=G(\zeta)U+F\\
&\Gamma U=u_2=0\text{ on }x_d=0,
\end{split}
\end{align}
where
\begin{align}\label{a6}
F=\begin{pmatrix}0\\-\eps f\end{pmatrix},\;\;G(\zeta):=\begin{pmatrix}0&I\\i\tau+\gamma+\cA(i\eta)+|\eta|^2&A_d\end{pmatrix}.
\end{align}

For $\zeta\neq 0$ let $E^-(\zeta)$ be the stable generalized eigenspace of $G(\zeta)$.  Define the Evans function
\begin{align}\label{a7}
D(\zeta)=\det(E^-(\zeta),\ker\Gamma).
\end{align}
Nonvanishing of the high frequency Evans function (a rescaled version of $D(\zeta)$) was verified in \cite{GMWZ5}, Prop. 3.8.  For fixed $0<r<R$ the fact that $D(\zeta)\neq 0$ for $r\leq |\zeta|\leq R$ is proved in section 4.1 of \cite{GMWZ5}.\footnote{More precisely, the estimates (4.7) and (4.8) in \cite{GMWZ5} are also true with $\Re\lambda$ replaced by $|\lambda|$ on the left.  Those estimates and Sobolev's inequality readily imply the trace estimate
\begin{align}\label{a8}
|v(0)|\leq C(r,R)|v_x(0)|
\end{align}
for $(v(0),v_x(0))\in E^-(\zeta)$ and $\zeta$ in this frequency range.}Thus, we focus now on the low frequency region.

We show that the Evans function vanishes in the limit as $\zeta\to 0$.  For $|\zeta|$ small we conjugate $G(\zeta)$ to a block diagonal form
\begin{align}\label{a9}
S^{-1}(\zeta)G(\zeta)S(\zeta)=\begin{pmatrix}H(\zeta)&0\\0&P(\zeta)\end{pmatrix}:=G_{H,P}\;\;,
\end{align}
where
\begin{align}\label{HP}
H(\zeta)=-A_d^{-1}\left(i\tau+\gamma+\cA(i\eta)\right)+O(\rho^2)\;\;(\rho=|\zeta|),\;P(\zeta)=A_d+O(\rho),
\end{align}
and the conjugator can be chosen to have the form
\begin{align}\label{a10}
S(\zeta)=\begin{pmatrix} I&S_{12}(\zeta)\\S_{21}(\zeta)&I \end{pmatrix}\text{ with }S_{21}(\zeta)=O(\rho).
\end{align}
To construct $S$ one can simply look for a matrix of the form \eqref{a10} satisfying $GS=SG_{H,P}$, and
use the invertibility of $A_d$ to solve for the off-diagonal blocks of $S$ and the error terms in \eqref{HP}.

Writing $GS=SG_{H,P}$ and equating $(1,1)$ entries we obtain
\begin{align}\label{a11}
S_{21}(\zeta)=H(\zeta).
\end{align}
Set $U=S(\zeta)\cU$, where $\cU:=\begin{pmatrix}u_H\\u_P\end{pmatrix}$ and consider the equivalent problem
\begin{align}\label{a12}
\begin{split}
&\cU_{x_d}=G_{H,P}\cU+S^{-1}F\\
&\tilde{\Gamma}(\zeta)\cU:=\Gamma S(\zeta)\cU=H(\zeta)u_H+u_P.
\end{split}
\end{align}

Let $F^-(\zeta)=S^{-1}(\zeta)E^-(\zeta)$.
Since $A_d$ is positive, $F^-(\zeta)=\{(z,0):z\in\bC^N\}$.  On the other hand we have from \eqref{a12}
\begin{align}\label{a13}
\ker\tilde{\Gamma}(\zeta)=\{(w,-H(\zeta)w):w\in\bC^N\}.
\end{align}
This gives immediately
\begin{align}\label{a14}
D(\zeta)=\det(F^-(\zeta),\ker\tilde{\Gamma}(\zeta))=\det H(\zeta) \text{ for }\rho \text{ small},
\end{align}
where each equality holds up to a factor that remains bounded away from zero for $\rho$ small.

\subsection{Resolvent estimates by degenerate symmetrizers}\label{symm}

\emph{\quad}Recall that $F^-(\zeta)=\{\cU=(u_H,0):u_H\in\bC^N\}$.   Thus, for $\cU\in F^-(\zeta)$ we have
\begin{align}\label{a15}
\tilde{\Gamma}(\zeta)\cU=H(\zeta)u_H,
\end{align}
so
\begin{align}\label{a16}
|\cU|=|u_H|=|H^{-1}(\zeta)\tilde{\Gamma}(\zeta)\cU|.
\end{align}
This gives the degenerate trace estimate
\begin{align}\label{a17}
|\tilde{\Gamma}(\zeta)\cU|\geq R(\zeta)|\cU|, \text{ where }R(\zeta):=|H^{-1}(\zeta)|^{-1}, \text{ for }\cU\in F^-(\zeta).
\end{align}

\begin{prop}\footnote{This Proposition does not require $A_d>0$; it remains true when $H$ satisfies the generalized block structure property of \cite{GMWZ6}.}\label{a18}
Let $\rho:=|\zeta|$.
Then for $r>0$ small enough we have
\begin{align}\label{a19}
|R(\zeta)|\geq C(\gamma+\rho^2) \text{ for }0<|\zeta|\leq r.
\end{align}

\end{prop}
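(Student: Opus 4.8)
The idea is to transfer the estimate on $R(\zeta)=|H^{-1}(\zeta)|^{-1}=\sigma_{\min}(H(\zeta))$ to the lower‑left block $M(\zeta):=i\tau+\gamma+\cA(i\eta)+|\eta|^2$ of $G(\zeta)$ in \eqref{a6}, using the quadratic relation satisfied by $H$, thereby avoiding any direct (and, because $H$ need not be normal, delicate) analysis of the smallest singular value of $H$ itself. Equating the $(2,1)$ blocks in $G(\zeta)S(\zeta)=S(\zeta)G_{H,P}$ exactly as in the derivation of \eqref{a11} gives $H^2-A_dH-M=0$, i.e.
\[ (H-A_d)\,H \;=\; M . \]
By \eqref{HP}, $H(\zeta)\to0$ as $\rho:=|\zeta|\to0$; hence for $\rho\le r$ with $r$ small, $H-A_d$ is a small perturbation of the invertible matrix $-A_d$, so it is invertible with $|H-A_d|\le 2|A_d|$. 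Once $M$ is known to be invertible (which will follow from the bound below), $(H-A_d)H=M$ forces $H$ to be injective, hence invertible, with $H^{-1}=M^{-1}(H-A_d)$, so that
\[ |H^{-1}(\zeta)| \;\le\; |M^{-1}(\zeta)|\,|H-A_d| \;\le\; \frac{2|A_d|}{\sigma_{\min}(M(\zeta))} . \]
Thus it suffices to prove $\sigma_{\min}(M(\zeta))\ge C'(\gamma+\rho^2)$ for $0<\rho\le r$.

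\textbf{Computing $\sigma_{\min}(M)$.} Since the $A_j$ are symmetric, set $\cB(\eta):=\sum_{j=1}^{d-1}A_j\eta_j$ (real symmetric) and $\Theta(\zeta):=\tau I+\cB(\eta)$, so $\cA(i\eta)=i\cB(\eta)$ and $M(\zeta)=(\gamma+|\eta|^2)I+i\Theta(\zeta)$ with $\Theta$ Hermitian. Then $M^{*}M=(\gamma+|\eta|^2)^{2}I+\Theta^{2}$, whence
\[ \sigma_{\min}(M(\zeta))^{2}=(\gamma+|\eta|^2)^{2}+\sigma_{\min}(\Theta(\zeta))^{2}, \qquad \sigma_{\min}(M)\ge\tfrac12\bigl((\gamma+|\eta|^2)+\sigma_{\min}(\Theta)\bigr). \]
As $\Theta$ is symmetric, $\sigma_{\min}(\Theta)=\min_k|\tau+\mu_k(\eta)|$ where $\mu_k(\eta)$ are the real eigenvalues of $\cB(\eta)$, and $|\mu_k(\eta)|\le C_{\cB}|\eta|$ with $C_{\cB}:=\max_{|\omega|=1}|\cB(\omega)|$.

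\textbf{The only nontrivial step.} It remains to bound $(\gamma+|\eta|^2)+\sigma_{\min}(\Theta)$ below by a constant multiple of $\gamma+\rho^2$; since $\gamma\le\rho\le r\le1$ we have $\gamma+\rho^2\le 2\gamma+\tau^2+|\eta|^2$, so it is enough to dominate the latter. I would split according to whether $\Theta=\tau I+\cB(\eta)$ is nearly singular. If $\sigma_{\min}(\Theta)\ge|\tau|/2$, then using $|\tau|\le1$ (so $|\tau|\ge\tau^2$),
\[ (\gamma+|\eta|^2)+\sigma_{\min}(\Theta)\;\ge\;\gamma+|\eta|^2+\tfrac12\tau^2\;\ge\;\tfrac14\bigl(2\gamma+\tau^2+|\eta|^2\bigr). \]
If instead $\sigma_{\min}(\Theta)<|\tau|/2$, then some eigenvalue $\mu_k(\eta)$ of $\cB(\eta)$ satisfies $|\mu_k(\eta)|\ge|\tau|-|\tau+\mu_k(\eta)|>|\tau|/2$, which forces $C_{\cB}|\eta|>|\tau|/2$, hence $|\eta|^2>\tau^2/(4C_{\cB}^{2})$, and then
\[ (\gamma+|\eta|^2)+\sigma_{\min}(\Theta)\;\ge\;\gamma+|\eta|^2\;\ge\;\min\!\bigl(\tfrac12,\tfrac1{8C_{\cB}^{2}}\bigr)\bigl(2\gamma+\tau^2+|\eta|^2\bigr). \]
In either case $(\gamma+|\eta|^2)+\sigma_{\min}(\Theta)\ge c_0(\gamma+\rho^2)$ with $c_0=\min(\tfrac14,\tfrac1{8C_{\cB}^{2}})$; in particular $M$ is invertible for $0<\rho\le r$, and combining with the two earlier displays yields $R(\zeta)=\sigma_{\min}(H(\zeta))\ge\frac{c_0}{8|A_d|}(\gamma+\rho^2)$, which proves the proposition. (The argument uses only the symmetry of $A_1,\dots,A_{d-1}$ and invertibility of $A_d$, consistent with the footnote.) The step one might have expected to be the obstacle — getting a genuine lower bound on the smallest singular value of $H$ from eigenvalue information, which is subtle since $H$ is not normal — does not arise at all: the identity $(H-A_d)H=M$ reduces everything to the matrix $M$, whose singular values are given explicitly, and the only real work is the elementary dichotomy for $\sigma_{\min}(\tau I+\cB(\eta))$ above.
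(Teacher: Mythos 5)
Your proof is correct, and it takes a genuinely different route from the paper's. The paper's argument rescales $H(\zeta)=\rho\check H(\check\zeta,\rho)$, conjugates $\check H$ to block form $\mathrm{diag}(\check H_k)$ via the smooth block reduction of [GMWZ6], and then bounds $|\check H_k^{-1}|\le C/(\check\gamma+\rho)$ using the degenerate Kreiss symmetrizer estimate $\Re(\Sigma\check H_k)\ge C(\check\gamma+\rho)$, which passes from eigenvalue information to a singular‑value bound precisely because of the symmetrizer. You sidestep that machinery entirely by observing that the $(2,1)$ block of $GS=SG_{H,P}$ gives the exact quadratic identity $(H-A_d)H=M$, so that for small $\rho$ (where $H-A_d$ is a small perturbation of $-A_d$) the problem reduces to bounding $\sigma_{\min}(M)$ from below; because $M=(\gamma+|\eta|^2)I+i(\tau I+\cB(\eta))$ with $\cB(\eta)$ real symmetric, $M$ is normal and $\sigma_{\min}(M)^2=(\gamma+|\eta|^2)^2+\sigma_{\min}(\tau I+\cB(\eta))^2$ is computed explicitly, after which your dichotomy in $\tau$ is elementary. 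What you gain is a short, self‑contained argument with no symmetrizer construction and no appeal to the block reduction of [GMWZ6]. What you lose is exactly the generality advertised in the paper's footnote: your argument hinges on $M$ being normal, i.e.\ on the symmetry of $A_1,\dots,A_{d-1}$, whereas the symmetrizer proof works whenever $H$ has the (generalized) block structure property, with or without symmetry of the $A_j$. Both proofs use only invertibility, not positivity, of $A_d$, consistent with the first half of the footnote.
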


\begin{proof}
\textbf{1. }   Write $H(\zeta)=\rho\check H(\check\zeta,\rho)$ and  fix $\underline{\check\zeta}\in \overline{S}^d_+$. For $(\check\zeta,\rho)$ in a neighborhood of $(\underline{\check\zeta},0)$ we use the smooth block reduction of (\cite{GMWZ6}, (3.20))
\begin{align}\label{a20}
V^{-1}\check H V=\mathrm{diag}(\check H_k),
\end{align}
where  $\check H_k$ has spectrum in a small disk centered at  $\umu_k$, for  $\umu_k$ the $k$th distinct eigenvalue of $\check H(\underline{\check\zeta},0)$.  By compactness of  $\overline{S}^d_+$ it suffices to show
\begin{align}\label{a21}
|\check H_k^{-1}(\check\zeta)|\leq C\frac{1}{(\check\gamma+\rho)}
\end{align}
for $\check\zeta$ in a neighborhood of any fixed $\underline{\check\zeta}\in\overline{S}^d_+$ and $\rho$ small.

\textbf{2. } Let $\Sigma$ be a Kreiss symmetrizer constructed as in \cite{GMWZ6}  for $\check H_k$.   The symmetrizer $\Sigma$ satisfies
\begin{align}\label{a21a}
\begin{split}
&(a)\;\Re(\Sigma\check H_k)\geq C(\check\gamma+\rho), \\
&(b)\;|\Sigma|\leq C
\end{split}
\end{align}
near the basepoint.  The estimate \eqref{a21a}(a) implies that $\Sigma\check H_k$ is invertible near the basepoint for $\check\gamma+\rho>0$, and since the same is true for $\check H_k$, we see that $\Sigma$ itelf is invertible near the basepoint for $\check\gamma+\rho>0$.  The estimate \eqref{a21a}(a) also implies

\begin{align}\label{a21b}
C(\check\gamma+\rho)|u|^2\leq \Re(\Sigma\check H_k u,u)\leq |\Sigma\check H_k u||u|,
\end{align}
so
\begin{align}\label{a21c}
|(\Sigma\check H_k)^{-1}|\leq \frac{C}{\check\gamma+\rho}
\end{align}
and thus (since $\Sigma$ is invertible)
\begin{align}\label{a21d}
|\check H_k^{-1}|=|(\Sigma\check H_k)^{-1}\Sigma|\leq \frac{C'}{\check\gamma+\rho}.
\end{align}

\end{proof}

\subsubsection{Resolvent estimates}

With $\cU=\begin{pmatrix}u_H\\u_P\end{pmatrix}$ as in \eqref{a12}, we have
\begin{align}\label{a33a}
F^-(\zeta)=\{(u_H,0):u_H\in\bC^N\},\;F^+(\zeta)=\{(0,u_P):u_P\in\bC^N\},
\end{align}
where $F^\mp(\zeta)$ is the negative (resp. positive) generalized eigenspace of $G_{H,P}(\zeta)$.   Writing $\cU$ as $U$ now, we consider the problem
\begin{align}\label{a34}
\begin{split}
&\partial_{x_d}U=G_{H,P}U+F\\
&\tilde{\Gamma}(\zeta)U=g.
\end{split}
\end{align}

Let $|u_H|_2$ denote the $L^2[0,\infty)$ norm, and let $|u|$ be the norm of the trace at $x_d=0$.
\begin{prop}\label{a35}
Fix $r>0$ small.  For $0<|\zeta|\leq r$ we have the following estimate for solutions of \eqref{a34}:
\begin{align}\label{a36}
(\gamma+\rho^2)^3|u_H|_2^2+|u_P|^2_2 +(\gamma+\rho^2)^2|u_H|^2+|u_P|^2\leq C \left(|F_P|^2_2+(\gamma+\rho^2)|F_H|^2_2+|g|^2\right).
\end{align}

\end{prop}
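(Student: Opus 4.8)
\emph{Overall strategy.} The plan is to exploit the block structure already in hand. Since $G_{H,P}=\blockdiag(H(\zeta),P(\zeta))$ has negative/positive generalized eigenspaces $F^{\mp}(\zeta)$ as in \eqref{a33a}, the decaying solution of \eqref{a34} splits as $U=(u_H,u_P)^t$ with
\[
\partial_{x_d}u_H=H(\zeta)u_H+F_H,\qquad \partial_{x_d}u_P=P(\zeta)u_P+F_P ,
\]
the two equations being coupled only through the boundary relation $\tilde\Gamma(\zeta)U=H(\zeta)u_H|_{x_d=0}+u_P|_{x_d=0}=g$ (recall \eqref{a12}--\eqref{a13}). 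Writing $\delta:=\gamma+\rho^2$, I would estimate the two blocks separately and then close the argument using the boundary condition.

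\emph{The $P$-block.} Since $P=A_d+O(\rho)$ with $A_d>0$, for $\rho=|\zeta|\le r$ small we have $\re P\ge cI$, so $P$ is its own backward symmetrizer: the unique $L^2$ solution is $u_P(x_d)=-\int_{x_d}^\infty e^{(x_d-s)P}F_P(s)\,ds$, and the identity $\tfrac{d}{dx_d}|u_P|^2=2\re\langle Pu_P+F_P,u_P\rangle$, integrated over $[0,\infty)$ against the decay of $u_P$, gives after Young's inequality
\[
|u_P|^2+|u_P|_2^2\le C|F_P|_2^2 .
\]
This already controls two of the four terms in \eqref{a36}, and shows that $u_P$, in particular its trace, depends only on $F_P$.

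\emph{The $H$-block and the coupling.} For $H$ I would use the degenerate Kreiss symmetrizer constructed in the proof of Proposition \ref{a18} (following \cite{GMWZ6}): writing $H=\rho\check H$ and using the block reduction \eqref{a20}, one obtains a Hermitian, uniformly bounded, sign-definite $\Sigma_H(\zeta)$ with $\re(\Sigma_H H)\ge c(\gamma+\rho^2)I=c\delta I$. The weighted identity $\tfrac{d}{dx_d}\langle\Sigma_H u_H,u_H\rangle=2\re\langle\Sigma_H(Hu_H+F_H),u_H\rangle$, integrated over $[0,\infty)$, then yields $c\delta|u_H|_2^2\le C|u_H|^2+C\delta^{-1}|F_H|_2^2$, i.e. $\delta^2|u_H|_2^2\le C(\delta|u_H|^2+|F_H|_2^2)$. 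To bound the trace $|u_H|=|u_H|_{x_d=0}|$ I would use the boundary condition in the form $H(\zeta)u_H|_{x_d=0}=g-u_P|_{x_d=0}$ together with the degenerate resolvent bound $|H(\zeta)^{-1}|\le C/\delta$ from Proposition \ref{a18} (that is, $R(\zeta)\ge C\delta$) and the $P$-estimate $|u_P|\le C|F_P|_2$; this gives $\delta^2|u_H|^2\le C(|g|^2+|F_P|_2^2)$. Inserting this into the previous display yields $\delta^3|u_H|_2^2\le C(|g|^2+|F_P|_2^2+\delta|F_H|_2^2)$, and summing the four bounds gives exactly \eqref{a36}.

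\emph{Main obstacle.} There is no single hard computation; the delicate point is the bookkeeping of the weights $\delta=\gamma+\rho^2$. The trace $u_H|_{x_d=0}$ of the hyperbolic component can be recovered only through the degenerate inverse $H(\zeta)^{-1}$, so every use of it costs a factor $\delta^{-1}$, and one must check that the interior energy estimate for $u_H$ still closes with the stated lossy powers $\delta^3$ and $\delta^2$ — this degeneracy is precisely the source of the $\sqrt\eps$ loss advertised in the introduction. The substantive ingredient, the sharp symmetrizer bound $\re(\Sigma_H H)\ge c\delta$, is already in place from Section \ref{symm}.
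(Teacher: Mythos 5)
Your proof is correct, and it uses the same essential ingredients as the paper's: the strict positivity of $P=A_d+O(\rho)$ for small $\rho$, the Kreiss symmetrizer $\Sigma_H$ for $H$ giving $\Re(\Sigma_H H)\ge c(\gamma+\rho^2)$, and the degenerate bound $|H(\zeta)^{-1}|\le C(\gamma+\rho^2)^{-1}$ of Proposition \ref{a18}. The difference is one of bookkeeping: the paper packages everything into a single weighted block-diagonal symmetrizer $S_k=\mathrm{blockdiag}\big((\gamma+\rho^2)^2S_H,\ kS_P\big)$, integrates once by parts, and then uses the trace inequality \eqref{a17}--\eqref{a19} together with a large constant $k$ on the $P$-block to absorb the boundary term $|u_P(0)|^2$; you instead exploit explicitly that the $H$- and $P$-blocks are decoupled in the interior and coupled only through the boundary relation $H(\zeta)u_H(0)+u_P(0)=g$. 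That lets you estimate $u_P$ (including its trace) first, purely in terms of $F_P$, then recover $u_H(0)=H^{-1}(g-u_P(0))$ at a cost of one power of $(\gamma+\rho^2)^{-1}$, and only then run the $H$-interior energy estimate. The two arguments are equivalent in content — in particular, your $|H^{-1}|\le C/(\gamma+\rho^2)$ is exactly what the paper's trace inequality $|Hu_H(0)|\ge R(\zeta)|u_H(0)|$ with $R(\zeta)\gsi\gamma+\rho^2$ says — but your sequential version makes it transparent that the loss of a factor $(\gamma+\rho^2)$ originates entirely from inverting $H$ to get the trace of the hyperbolic component, while the parabolic block is lossless. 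One very small caveat: you call $\Sigma_H$ ``sign-definite''; all that is actually needed (and all the paper requires of $S_H$ in \eqref{b2}) is uniform boundedness, which yields the lower bound $\Sigma_H u\cdot u\ge -C|u|^2$ used for the boundary term.
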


\begin{proof}

\textbf{1. } We use a degenerate symmetrizer of the form
\begin{align}\label{b1}
S_k(\zeta)=\begin{pmatrix}(\gamma+\rho^2)^2 S_H(\zeta)&0\\0&k S_P(\zeta) I\end{pmatrix},
\end{align}
where $k>0$ will be chosen sufficiently large,  $S_H(\zeta)$ is a standard symmetrizer for the $H(\zeta)$ block (constructed as in \cite{MZ1}, e.g.) and satisfies
\begin{align}\label{b2}
\begin{split}
&S_H^*=S_H\\
&\Re (S_H H)\geq C (\gamma+\rho^2)\\
&S_H u_H\cdot u_H\geq -|u_H|^2,
\end{split}
\end{align}
while $S_P$ satisfies
\begin{align}\label{b2a}
\begin{split}
&S_P^*=S_P\\
&\Re (S_P P)\geq I \\
&S_P u_P\cdot u_P\geq |u_P|^2.
\end{split}
\end{align}

Taking the real part of the $L^2[0,\infty)$ inner product, $(\cdot,\cdot)$, of $-S_k U$ with \eqref{a34} and integrating by parts gives
\begin{align}\label{b3}
\frac{1}{2}S_k U(0)\cdot U(0)+(U,\Re (S_k  G_{H,P})U)=\Re(-S_k U,F),
\end{align}
so
\begin{align}\label{b4}
\begin{split}
&\frac{1}{2}\left(k|u_P(0)|^2-(\gamma+\rho^2)^2|u_H(0)|^2\right)+(\gamma+\rho^2)^3|u_H|_2^2+k|u_P|^2_2\leq \\
&\quad|(\gamma+\rho^2)^2S_H u_H,F_H)|+k|(S_Pu_P,F_P)|\leq \\
&\quad\quad \delta (\gamma+\rho^2)^3|u_H|^2_2+C_\delta (\gamma+\rho^2)|F_H|^2_2+\delta k|u_P|^2_2+C_\delta k|F_P|^2_2.
\end{split}
\end{align}
After absorbing interior terms in the obvious way from the right, it remains only to estimate the boundary terms.

\textbf{2. }
Using \eqref{a17} and \eqref{a19}, we have for  the boundary terms,
\begin{align}\label{b5}
\begin{split}
&k|u_P(0)|^2-(\gamma+\rho^2)^2|u_H(0)|^2=k|u_P(0)|^2+(\gamma+\rho^2)^2|u_H(0)|^2-2(\gamma+\rho^2)^2|u_H(0)|^2\geq\\
&\quad k|u_P(0)|^2+(\gamma+\rho^2)^2|u_H(0)|^2-C\left|\tilde\Gamma(\zeta)\begin{pmatrix}u_H(0)\\0\end{pmatrix}\right|^2\geq\\
&\quad\quad k|u_P(0)|^2+(\gamma+\rho^2)^2|u_H(0)|^2-C|g|^2-C|u_P(0)|^2.
\end{split}
\end{align}
For $k$ large enough \eqref{b5} and \eqref{b4} imply the estimate \eqref{a36}.

\end{proof}

\subsection{The basic variable coefficient $L^2$ estimate}\label{basic}

\begin{notation}\label{c0a}
1.  For $u(x)\in L^2(\overline{\mathbb{R}}_+,H^s(\mathbb{R}^d_{x'}))$ and $\zeta=(\zeta',\gamma)=(\
\zeta_0,\zeta'',\gamma)$, set
\[
|u|_{s,\gamma}=|\langle\zeta\rangle^s \hat{u}(\zeta',x_d)|_{L^2(\zeta',x_d)}.
\]

2.  For $u(x')\in H^s(\mathbb{R}^d)$ set $\langle u\rangle_{s}=|\langle\zeta\rangle^s\hat{u}|_{L^2(\zeta')}$.

3.  Let $\Lambda(\epsilon\zeta)=(1+(\epsilon\gamma)^2+(\epsilon\zeta_0)^2+|\epsilon\zeta''|^4)^{\frac{1}{4}}$.  For $u(x)$, $v(x')$ set
\[
|u|_\Lambda=|\Lambda(\epsilon\zeta) \hat{u}(\zeta',x_d)|_{L^2(\zeta',x_d)},\; \langle v\rangle_\Lambda=|\Lambda(\epsilon\zeta) \hat{v}(\zeta')|_{L^2(\zeta')},
\]
and similarly define $|u|_\phi$, $\langle v\rangle_\phi$ for other weights $\phi=\phi(\eps,\zeta)$.

4. For $u(x)$ set $\langle u\rangle_\phi=\langle u(x',0)\rangle_{\phi}$.

\end{notation}

For given $p(x)$, $F$, and $g$  we now consider the following \emph{linear} boundary problem corresponding to \eqref{aj}, where now we drop tildes and the superscript $\gamma$ on $G$:
\begin{align}\label{c0b}
\begin{split}
&\partial_{x_d} U-\frac{1}{\epsilon}G(p(x),\eps\partial_{x'},\eps\gamma)U=F\\
&\Gamma U=g \text{ on }x_d=0\\
&U=0 \text{ in } x_0<0:
\end{split}
\end{align}
Our goal  is to prove the following (degenerate) $L^2$ estimate for solutions of \eqref{c0b}.

\begin{thm}[Main $L^2$ estimate]\label{c0c}
Under Assumption \ref{assumptions}, there exist positive constants $C$, $\epsilon_0$, $\gamma_0$ such that for all $\gamma>\gamma_0$, $0<\epsilon<\epsilon_0$ with $\epsilon\gamma\leq 1$,  solutions to \eqref{c0b} satisfy
\begin{align}\label{c0d}
\epsilon|U|_0+\epsilon\langle U\rangle_0\leq C\left(\sqrt{\eps}|F|_0+\langle g \rangle_0\right).
\end{align}
\end{thm}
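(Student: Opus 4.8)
The plan is to obtain \eqref{c0d} by quantizing the symbol-level resolvent estimate of Proposition \ref{a35}, the block decomposition of Lemma \ref{am}, and the degenerate trace bound \eqref{a17}--\eqref{a19} through the semiclassical pseudodifferential calculus of the Appendix (small parameter $\eps$, large parameter $\gamma$, regime $\eps\gamma\le 1$). First I would block-diagonalize: applying Lemma \ref{am} with $\beta=(\eps\zeta',\eps\gamma)$ and writing $T=T(p(x),\eps\D_{x'},\eps\gamma)$, $U=T\cU$, the interior equation in \eqref{c0b} becomes $\D_{x_d}\cU=\tfrac1\eps G_{H,P}(p(x),\eps\D_{x'},\eps\gamma)\cU+T^{-1}F+E_1\cU$, where $G_{H,P}=\diag(H,P)$ and $E_1$ collects $[\D_{x_d},T]$ and the symbol-calculus remainder of $T^{-1}GT$, both lower order in the $\la\eps\zeta\ra$-scale. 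The boundary condition becomes $\tilde\Gamma(\zeta)\cU=g$ with $\tilde\Gamma=\Gamma T$; freezing $p(x)$ and rescaling $x_d\to x_d/\eps$, $\zeta\to\eps\zeta$ (which reduces the frozen problem to the $\eps=1$ case of Sections \ref{Evans} and \ref{symm}), one has on the stable subspace $F^-=\{(u_H,0)\}$ the degenerate trace bound $|\tilde\Gamma(\zeta)\cU|\ge R(\zeta)|\cU|$ with $R(\zeta)\gtrsim\gamma+\rho^2$ by Proposition \ref{a18} --- here $\gamma+\rho^2$ corresponding, after restoring the scaling, to the symbol $\eps\gamma+|\eps\zeta'|^2$.

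Next I would quantize the degenerate symmetrizer $S_k=\diag\big((\gamma+\rho^2)^2 S_H,\ kS_P\big)$ of \eqref{b1}, with $S_H$, $S_P$ the $H$- and $P$-block symmetrizers satisfying the symbol-level positivity \eqref{b2}, \eqref{b2a}; it is self-adjoint modulo calculus errors. Forming $\Re\big(-S_k U,\ \D_{x_d}U-\tfrac1\eps G_{H,P}U-E_1U-T^{-1}F\big)$ over $x_d>0$ and integrating by parts in $x_d$, the sharp G{\aa}rding inequality applied to $\Re\big(\tfrac1\eps S_k G_{H,P}\big)$ produces --- modulo errors --- the interior control $\tfrac1\eps\big[(\gamma+\rho^2)^3|u_H|_0^2+k|u_P|_0^2\big]$, while the boundary contribution $\tfrac12\langle S_kU(0),U(0)\rangle$ is, by \eqref{a17}--\eqref{a19} together with $|\tilde\Gamma\cU|^2\le C(\langle g\rangle_0^2+\langle u_P\rangle_0^2)$, bounded below by $k\langle u_P\rangle_0^2+(\gamma+\rho^2)^2\langle u_H\rangle_0^2-C\langle g\rangle_0^2-C\langle u_P\rangle_0^2$, absorbed for $k$ large. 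This is the quantized analogue of \eqref{b4}--\eqref{b5}. Undoing $U=T\cU$ (bounded with bounded inverse up to lower order) and restoring the powers of $\eps$ from the rescaling then yields \eqref{c0d}; the $\sqrt\eps$ factor on $F$ and the $\eps^{-1}$ (rather than the naive $\eps^{-1/2}$) weighting of $g$ are precisely the footprint of the degenerate powers of $\gamma+\rho^2$ in Proposition \ref{a35}.

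Since Proposition \ref{a35} holds only for $|\eps\zeta|\le r$, I would carry the above out against a semiclassical cutoff $\chi(\eps\zeta)$ supported in $|\eps\zeta|\le 2r$ and treat $(1-\chi(\eps\zeta))U$ separately: there the $P$-block dominates, $\tilde\Gamma$ is nondegenerate (trace bound \eqref{a8} and \cite{GMWZ5}, Prop.~3.8), and a direct parabolic energy estimate --- the present-setting analogue of \eqref{18}, available because the layer in the totally incoming case is constant --- gives the bound with no loss. Patching the two zones is harmless in the $\eps$-semiclassical calculus since $[\chi(\eps\zeta),\cdot]$ is of lower order.

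\emph{The main obstacle} is the error control that ties all this together: every remainder the calculus produces --- symbol smoothing of the coefficients $p(x)$, subprincipal terms of $T^{-1}GT$, the commutator $[\D_{x_d},T]$, and the self-adjointness and G{\aa}rding remainders of $S_k$ --- must be absorbed into the main terms, but in the low zone those terms are \emph{degenerate}: they carry positive powers of $\gamma+\rho^2$ that vanish as $\zeta\to0$, exactly where the Evans function degenerates, so ``lower order'' is not by itself enough. I would resolve this by checking, using the refined joint $(\eps,\gamma,\zeta)$ symbol classes of the Appendix, that each such error in the low zone carries at least one extra factor of $\rho$ --- hence, after Cauchy-Schwarz and $\gamma\ge\gamma_0$ large, an extra factor $\gamma+\rho^2$ --- relative to whichever interior or boundary term is to absorb it, and that the powers of $\gamma+\rho^2$ weighting the blocks of $S_k$ in \eqref{b1} are chosen exactly so this bookkeeping closes. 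Making this quantitative enough that the $\sqrt\eps$ loss does not amplify is, I expect, the technically heaviest part of the argument.
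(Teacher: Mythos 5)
Your overall plan is aligned with the paper's in its broad strokes — block-diagonalize via Lemma~\ref{am}, quantize a degenerate symmetrizer modeled on Proposition~\ref{a35}, use G\aa rding for interior positivity and a degenerate trace bound at the boundary, and patch frequency regimes — and you correctly diagnose the $\sqrt\eps$ loss as the footprint of the degenerate powers of $\gamma+\rho^2$.

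However, the step you flag as ``the technically heaviest part'' is not merely a bookkeeping check; it is where the paper introduces several additional structural ideas that your sketch does not supply, and without which the claim that ``each such error carries at least one extra factor of $\rho$'' does not follow. Concretely: (i) The paper does \emph{not} quantize the matrix Kreiss symmetrizer $S_H$ of Proposition~\ref{a35} directly. It performs a \emph{second} conjugation, via the block-structure Proposition~\ref{c23}, taking the $H$-block to \emph{scalar diagonal} form $\mrh=\diag(q_j)$, and then uses the genuinely scalar symmetrizer $S_h=-(\eps^2\gamma^2+\eps^4\rho^4)I_N$ of \eqref{c28}. For a matrix-valued $S_H$ weighted by the degenerate factor, the G\aa rding and self-adjointness remainders of $\Re(S_{h,D}\,H_D)$ involve $\zeta$-derivatives of $S_H$ and of $H$ that do not obviously carry the needed gain; the reduction to scalar $q_j$ and a scalar symmetrizer is precisely what makes the remainder structure explicit (see \eqref{c51}--\eqref{c57}). (ii) Because the block-structure conjugator $V(p',\hat\beta,\rho')$ is only locally defined, the paper interposes a pseudodifferential partition of unity $\phi^\eps_{l,D}$ in both $x$ and $(\hat\zeta,\eps\rho)$ before symmetrizing; your sketch has no microlocalization step, and without it the conjugation to diagonal form is not available globally. (iii) At the boundary, the paper does not quantize the inverse $H^{-1}$ implicit in \eqref{a17}--\eqref{a19}. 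Instead it writes the boundary condition as $\cB_D(w_h,w_p)^t=$ data with $\cB_D=(\mrh_D,\ \cI^\eps_D)$, and bounds $\langle\cB_D^*\cB_D\cdot,\cdot\rangle$ from below directly (Lemma~\ref{d4a}), exploiting the diagonal form of $\mrh$ to compute $\mrh_D^*\mrh_D$ entry by entry with controlled classical-calculus errors. This is the quantized trace bound; your proposal of quantizing the pseudo-inverse-type bound \eqref{a17}--\eqref{a19} and absorbing its errors is a different and considerably more delicate operation, and you have not indicated how the inverse of a degenerate operator is to be handled. (iv) Finally, the paper's argument threads together \emph{two} calculi — semiclassical for the $P$-block and the initial conjugations $T_D$, $T_{c,D}$; classical for the $H$-block after observing $\cH=\eps\cH^*$ with $\cH^*$ of classical order one — and the distinction matters because it determines which remainders gain a factor of $\eps$ and which gain a factor of $\langle\zeta\rangle^{-1}$. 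Your sketch works entirely in the semiclassical calculus, and as stated the error estimates will not close: e.g.\ the subprincipal terms of $T^{-1}GT$ are $O(\eps)$ rather than $O(\eps\zeta)$, and only some of them can be pushed into the $r_0$ corner entries of $G_{HP}$ as in \eqref{c13}--\eqref{c22}; the residual $O(\eps D)V$ terms require the specific interpolation of Lemma~\ref{c16} against the degenerate weight $\eps\gamma^{3/2}+\eps^{5/2}\rho^3$, which your plan does not make available.

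In short, your proposal correctly identifies the symbol-level ingredients and the shape of the final estimate, but the absorption of calculus remainders against a degenerate symmetrizer is not a routine verification: it requires the reduction to scalar diagonal blocks, the microlocal partition of unity, the explicit $\cB_D^*\cB_D$ boundary argument, and the careful interplay of classical and semiclassical calculi, none of which appear in the proposal. These are the core of the paper's proof of Theorem~\ref{c0c}, not its technical packaging.
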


The preceding estimate is a composite of three more precise estimates corresponding to the three natural frequency regimes in the problem, the regimes in which $\epsilon\zeta$ is of small, medium, or large size.

Recall $\beta=(\beta',\gamma')\in\mathbb{R}^d\times\mathbb{R}_+$ is a placeholder for $\epsilon\zeta$. We shall localize with respect to the size of $\beta$ using smooth cutoff functions $\chi_j(\beta)$, $j=S,M,L$,  such that
\begin{align}\label{c0e}
\chi_S(\beta)+\chi_M(\beta)+\chi_L(\beta)= 1,
\end{align}
where for some constants $R_1$ (sufficiently small), $R_2$ (sufficiently large)
\begin{align}\label{c0f}
\begin{split}
&\text{ supp }\chi_S\subset \{0\leq |\beta|\leq R_1\} \\
&\text{ supp }\chi_M\subset \{\frac{3}{4}R_1\leq |\beta|\leq R_2\} \\
&\text{ supp }\chi_L\subset \{\frac{3}{4}R_2\leq |\beta|\}.
\end{split}
\end{align}

\begin{notation}\label{c0g}
1. We will occasionally use the symbol $\chi_M$ to denote a cutoff distinct from the one in \eqref{c0f}, but also supported in a bounded region strictly away from the origin.  Similar statements apply as well to $\chi_S$, $\chi_L$.

2. Choose smooth cutoffs $\chi_1(\beta)$, $\chi_2(\beta)$ identically equal to $1$ near $\beta=0$ and compactly supported in $\omega_\beta$ such that
\begin{align}\label{c7a}
\chi_1\chi_2=\chi_1,\;\;\chi_S\chi_1=\chi_S.
\end{align}

3.  The symbol $r_0$ will always denote a symbol or operator of order zero.

4. Denote by $O(\epsilon D)$ a semiclassical operator with symbol $s(x,\beta)$ such that $s=\beta\cdot f(x,
\beta)$ for some smooth $f$.\footnote{Since $s$ must be bounded, we must then have $|f|=O(1/|\beta|)$ for $|\beta|$ large.}  $O(\epsilon)$ denotes an operator with symbol $s=\epsilon f(x,\beta)\in\mathcal{S}_\infty$.

In a similar way define $O(\epsilon^2)$, $O((\epsilon D)^2)$, etc..  When speaking of symbols instead of operators we'll use, as before, the notation $O(\epsilon\zeta)$, $O(\epsilon)$, etc..  In ambiguous cases like $O(\epsilon)$, the intent (symbol or operator) should be clear from the context.

5. Write the solution to \eqref{c0b} as $U=(u,v)$.  Define
\begin{align}\label{c0h}
U_\Lambda=(\Lambda u,v),
\end{align}
where $\Lambda(\epsilon D)$ is the multiplier associated to the symbol defined in Notation \ref{c0a}.

\end{notation}

Here are the  estimates by frequency size:

\begin{prop}\label{c2a}
Using the notation just introduced, we have the following estimates for solutions to \eqref{c0b}.  Let $R_1$, $R_2$ be as in \eqref{c0f}.  For $R_1$ sufficiently small and $R_2$ sufficiently large, there exist constants $C$, $\gamma_1$, $\epsilon_1$ such that for all $\gamma>\gamma_1$, $0<\epsilon<\epsilon_1$ with $\epsilon\gamma\leq 1$
\begin{align}\label{c2}
\begin{split}
&(a)\;|\chi_{S,D} U|_{\eps\gamma^{\frac{3}{2}}+\eps^{\frac{5}{2}}\rho^3} +\langle\chi_{S,D}U\rangle_{\eps\gamma+\eps^2\rho^2}\leq\\
& \qquad C\left(\sqrt{\eps}|F|_0+\langle g\rangle_0+\epsilon|U^{}|_0+|\chi_{2,D}U|_{\eps^{\frac{3}{2}}\rho+\eps\gamma+\eps^2\rho^2}+|\chi_{M,D}U|_0+\eps\langle U \rangle_0\right)\\
&(b)\;|\chi_{M,D} U^{}|_0+\sqrt{\epsilon}\langle \chi_{M,D} U^{}\rangle_0\leq C\left(\epsilon|F^{}|_0+\sqrt{\eps}\langle g\rangle_0+\epsilon|U^{}|_0+\epsilon\langle U^{}\rangle_0\right)\\
&(c)\;|\chi_{L,D} U^{}_\Lambda|_{\sqrt{\Lambda}}+\sqrt{\epsilon}\langle\chi_{L,D} U^{}_\Lambda\rangle_0\leq C\left(\epsilon|F^{}|_{\Lambda^{-1/2}}+\sqrt{\eps}\langle g\rangle_0+\epsilon|U^{}_\Lambda|_{\Lambda^{-1/2}}+\epsilon\langle U^{}_\Lambda\rangle_{\Lambda^{-1/2}}\right).
\end{split}
\end{align}

\end{prop}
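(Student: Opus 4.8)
The plan is to prove the three frequency-localized estimates \eqref{c2}(a)--(c) by microlocalizing the system \eqref{c0b} with the cutoffs $\chi_S,\chi_M,\chi_L$, conjugating each localized piece to a convenient normal form, and then quantizing, via the calculus of the Appendix, the frozen-coefficient symbol estimates established in Sections~\ref{Evans}--\ref{symm}. Throughout, the powers of $\eps$ and the composite weights $\eps\gamma^{3/2}+\eps^{5/2}\rho^3$, $\eps\gamma+\eps^2\rho^2$ appearing in (a), and the $\sqrt\eps$, $\sqrt\Lambda$ factors in (b), (c), are simply the images under the rescaling $x_d\mapsto x_d/\eps$, $\zeta\mapsto\eps\zeta$ (and the factor $1/\eps$ in front of $G$) of the symbol-level weights $(\gamma+\rho^2)^3$, $(\gamma+\rho^2)^2$ of \eqref{a36}; since $\eps\gamma\le 1$, the placeholder $\beta=\eps\zeta$ stays in the neighborhood $\omega_\beta$ where Lemma~\ref{am} applies on the support of $\chi_S$, and in a fixed annulus on $\msupp\chi_M$.

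For (a), on $\msupp\chi_S$ I would apply the block-reduction operator $T(p(x),\eps D)$ of Lemma~\ref{am} to decouple \eqref{c0b} into an $H$-block and a $P$-block modulo a remainder that, by the symbolic calculus, is $O(\eps)+O(\eps D)$ acting on $U$, together with commutators of $T$ with $\partial_{x_d}$ and with the cutoffs. For the $P$-block one has $P=A+A\tau_2\approx A_d>0$, so there is no incoming mode and no boundary condition is needed there: a standard symmetrizer $S_P$ as in \eqref{b2a} gives clean interior and trace bounds for $u_P$. For the $H$-block one quantizes the degenerate Kreiss symmetrizer $\Sigma$ of Proposition~\ref{a18}, combined with the weight $(\gamma+\rho^2)^2$ exactly as in the symbol-level computation \eqref{b1}--\eqref{b5}; a G\r{a}rding inequality turns the symbol estimate \eqref{a36} into the stated interior bound on $\chi_{S,D}U$. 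The boundary term is handled precisely as in \eqref{b5}: the bad contribution $-(\gamma+\rho^2)^2|u_H(0)|^2$ is dominated via the degenerate trace estimate \eqref{a17} together with $|R(\zeta)|\ge C(\gamma+\rho^2)$ from \eqref{a19}, at the cost of $|g|^2$ and $|u_P(0)|^2$. The remainder terms produced by the conjugation, the calculus errors, and the cutoff commutators are either of strictly lower order or are supported where $\chi_2\equiv1$ or on $\msupp\chi_M$; these are exactly the terms $|\chi_{2,D}U|_{\dots}$, $|\chi_{M,D}U|_0$, $\eps|U|_0$, $\eps\langle U\rangle_0$ on the right of (a), to be absorbed later in the proof of Theorem~\ref{c0c} by exploiting the overlap of the cutoffs and summing the three estimates.

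For (b), on $\msupp\chi_M$ the rescaled frequency $\beta=\eps\zeta$ ranges over a compact set bounded away from the origin, where the Evans function $D(\zeta)$ — equivalently the Lopatinski determinant — is bounded away from zero by Section~4.1 of \cite{GMWZ5} (see also the trace estimate \eqref{a8}); hence the problem is uniformly Kreiss well-posed and a standard non-degenerate Kreiss symmetrizer, quantized as in Proposition~\ref{pseudoprop}, yields (b), the $\eps$, $\sqrt\eps$ factors coming only from the scaling. For (c), on $\msupp\chi_L$ the quadratic term $\sum\beta_j^2$ in $M(p,\beta)$ (see \eqref{al}) dominates, so the $x_d$-evolution is parabolically elliptic and the high-frequency Evans function is nonvanishing by \cite{GMWZ5}, Prop.~3.8; a symmetrizer adapted to this parabolic block — equivalently a direct integration by parts using the coercivity $\re M\gtrsim\Lambda^2$ — produces the $\sqrt\Lambda$ gain, which is placed on the function component $u$ only since $v=\eps\partial_{x_d}v$ already carries one derivative.

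The hard part will be (a). Two issues arise: first, the symmetrizer and weights are \emph{non-classical}, depending on the composite quantity $\gamma+\rho^2$ rather than on $|\zeta|$ alone, so one must carry out the quantization (G\r{a}rding inequality, composition and adjoint remainders) inside the variant of the calculus built in the Appendix and verify that every error term genuinely has the lower order or shifted frequency-localization claimed; second, the estimate is intrinsically \emph{lossy} — the left-hand weights degenerate as $\zeta\to0$, and the right-hand side retains $U$-dependent terms that cannot be absorbed within regime (a) alone. Checking that those leftover terms occupy exactly the slots annihilated once (a), (b), (c) are assembled with the overlaps of $\chi_S,\chi_M,\chi_L$ — i.e. that the bookkeeping closes — is the delicate point, and is deferred to the proof of Theorem~\ref{c0c}.
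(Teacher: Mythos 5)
Your overall plan for (a) — conjugate with $T$ of Lemma~\ref{am}, split into $H$ and $P$ blocks, apply a degenerate symmetrizer to the $H$ block and a positive one to the $P$ block, control the boundary term via the degenerate trace estimate \eqref{a17}--\eqref{a19} — matches the paper's strategy, and your treatment of (b) and (c) (medium frequency: uniform Kreiss; high frequency: parabolic coercivity) agrees with the paper, which simply cites the corresponding estimates of \cite{MZ1} there.

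However, for (a) there is a genuine gap in the quantization step. You propose to "quantize the degenerate Kreiss symmetrizer $\Sigma$ of Proposition~\ref{a18} combined with the weight $(\gamma+\rho^2)^2$". But $\Sigma$ in Proposition~\ref{a18} is only used to prove the scalar lower bound $R(\zeta)\gsi\gamma+\rho^2$; it is not the operator one pairs with. The symmetrizer that enters the estimate is built on the \emph{block-structure} normal form $V^{-1}\hat\cH V$, $W^{-1}\cP W$ of Proposition~\ref{c23}, and the conjugators $V$, $W$ there are defined only \emph{locally} in $(p',\hat\beta,\rho')$, near a base point. Thus before one can apply the symmetrizer and G\aa rding inequality one must introduce a second, fine microlocalization — the partition of unity $\phi^\eps_{l,m}(x,\zeta)=\kappa_l(x)\psi_m(\hat\zeta,\eps\rho)$ in both $x$ \emph{and} frequency direction — commute it through the conjugated system, pass to the pieces $U_l=\phi^\eps_{l,D}U_c$, and only then conjugate each piece by the local $V_D$, $W_D$. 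Your sketch mentions only the coarse cutoffs $\chi_S,\chi_M,\chi_L$ and jumps directly to G\aa rding; without the finer localization the argument cannot be carried out, because there is no global symbol to quantize.

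Two smaller but related omissions: after the first conjugation $T$ there remain off-diagonal $\eps r_0$ blocks, and a \emph{second} conjugation $T_c$ (of upper-triangular form) is needed to remove them before the $H/P$ decoupling is exact up to acceptable errors; and the boundary estimate is not simply the symbol computation \eqref{b5} transported by G\aa rding, but requires the pairing $\langle \cB_D^*\cB_D(w_h,w_p),(w_h,w_p)\rangle$ and a careful decomposition $\mrh_j=A_j+iD_j$ with cross-term estimates (Lemma~\ref{d4a}) to control $\langle w_h\rangle$ in the degenerate weights. You also defer the absorption of the residual $U$-terms to Theorem~\ref{c0c}, whereas the bulk of that absorption (using Lemma~\ref{c16} and the choice of large constants $M,K$) already happens inside the proof of (a) at the final step.
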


\begin{proof}
The estimates \eqref{c2a}(b),(c) are proved in \cite{MZ1}.  In the latter case we have applied the high frequency estimate of Proposition 4.6 of \cite{MZ1} after commuting $(\Lambda^{-1/2})_D$ through the problem. We concentrate now on proving \eqref{c2a}(a).

\textbf{a. Localize to small frequency region.} Commuting $\chi_{S,D}$ through \eqref{c0b}, we see that    $\chi_{S,D}U$ satisfies
\begin{align}\label{c3}
\begin{split}
&\chi_{S,D} U_{x_d}-\frac{1}{\epsilon}{G}_D\chi_{S,D} U=\chi_{S,D} F+\frac{1}{\epsilon}[\chi_{S,D},{G}_D]U\\
&\Gamma\chi_{S,D}U=\chi_{S,D} g\text{ on } x_d=0.
\end{split}
\end{align}
There is a high frequency contribution to the commutator because of the $x'$ dependence of ${G}$, and to get a good estimate for this we use the semiclassical calculus.\footnote{Even though the symbol of $G$ is not bounded, one can use and directly estimate  the formula for the remainder given in (A.6) of \cite{GMWZ2} to prove \eqref{c4}.}
Since
\begin{align}\label{c4}
\chi_{S,D}{G}_D=(\chi_S {G})_D+\frac{\epsilon}{i}(\partial_{\beta'}\chi_S\partial_{x'}{G})_D+\epsilon^2 r_0,
\end{align}
we have
\begin{align}\label{c5}
\frac{1}{\epsilon}[\chi_{S,D},{G}_D]U=\frac{1}{i}(\partial_{\beta'}\chi_S\partial_{x'}{G})_D U+\epsilon r_0 U.
\end{align}
Thus $U_a=\chi_{S,D} U$ satisfies
\begin{align}\label{c6}
\begin{split}
\partial_{x_d}U_a-\frac{1}{\epsilon}{G}_D U_a=F_a\\
\Gamma U_a=g_a \text{ on }x_d=0,
\end{split}
\end{align}
where
\begin{align}\label{c7}
|F_a|_0\leq C|F|_0 +|(\partial_{\beta'}\chi_S\;r_0)_D U|_0 +\epsilon|U|_0,\;\;\langle g_a\rangle_0\leq \langle g\rangle_0.
\end{align}

To prove \eqref{c2}(a) it suffices to prove the same estimate with $\chi_{S,D}U$, $F$, and $g$ replaced by $U_a$, $F_a$ and $g_a$.

\textbf{b. Conjugate to $G_{HP,D}$. }Let $T(p,\beta)$  be the conjugator constructed in Lemma \ref{am} and set
\begin{align}\label{c8}
&G_{HP}=\begin{pmatrix}H&0\\0&P+\eps r_0\end{pmatrix}
\end{align}
Extend  $T(p(x),\beta)$ smoothly to all $\beta\in\mathbb{R}^d\times\overline{\mathbb{R}}_+$ as a semiclassical symbol with a uniformly bounded inverse, and use the calculus to  construct right and left (approximate) inverses $T_{-1,D}$ satisfying
\begin{align}\label{c9}
\begin{split}
&T_{D}T_{-1,D}=I+\epsilon^2 r_0\\
&T_{-1,D}T_{D}=I+\epsilon^2 r_0.
\end{split}
\end{align}
The right and left inverses are not equal, but we use the same notation for both.  The symbol $T_{-1}$ in each case has the form
\begin{align}\label{c10}
T_{-1}(p(x),\beta)=T^{-1}+\epsilon r_0.
\end{align}

Defining $V=T_{-1,D}U_a$, we have
\begin{align}\label{c11}
\begin{split}
&(a)\;T_{D}V=U_a +\epsilon^2 r_0 U_a\\
&(b)\;(\partial_{x_d} T_{D})V+T_{D}\partial_{x_d} V=\partial_{x_d} U_a+O(\epsilon)(r_0U_a+\epsilon F_a)=\\
&\qquad \frac{1}{\epsilon}G_{D}T_{D}V +F_a +O(\epsilon)(r_0U_a+\epsilon F_a).
\end{split}
\end{align}

We have the following symbol equalities
\begin{align}\label{c12}
\begin{split}
&(a)\;T=\begin{pmatrix}I&A^{-1}\\0&I\end{pmatrix}+O(\epsilon\zeta)+O(\epsilon)\\
&(b)\;T_{-1}=\begin{pmatrix}I&-A^{-1}\\0&I\end{pmatrix}+O(\epsilon\zeta)+O(\epsilon)\\
&(c)\;T_{-1}\partial_{x_d}T=\begin{pmatrix}0&r_0\\0&0\end{pmatrix}+O(\epsilon\zeta)+O(\epsilon)\\
&(d)\;G T\chi_2=\begin{pmatrix}0&I\\0&A\end{pmatrix}\chi_2(\epsilon\zeta)+O(\epsilon\zeta)+O(\epsilon)\\
&(e)\;\eps\partial_{\beta'}T_{-1}=\epsilon O(\epsilon\zeta)+O(\epsilon)\\
&(f)\;\frac{1}{\epsilon}(\eps\partial_{\beta'}T_{-1})\partial_{x'}(G T\chi_2)=\begin{pmatrix}0&r_0\\0&r_0\end{pmatrix}\chi_2(\epsilon\zeta)+O(\epsilon\zeta)+O(\epsilon)\\
&(g)\;\frac{1}{\epsilon}T_{-1}G T\chi_2=\frac{1}{\epsilon}G_{HP}\chi_2+\begin{pmatrix}0&r_0\\0&r_0\end{pmatrix}\chi_2(\epsilon\zeta)+O(\epsilon\zeta)+O(\epsilon).
\end{split}
\end{align}
For \eqref{c12}(g) we  used \eqref{c10}, \eqref{an}, and \eqref{c12}(d).

Applying the operator $T_{-1,D}$ to \eqref{c11}(b) and using the semiclassical calculus, we obtain in view of the symbol equalities \eqref{c12}:
\begin{align}\label{c13}
\partial_{x_d}V=\frac{1}{\epsilon}\begin{pmatrix}H_{D}&\epsilon
r_0\\0&P_{D}+\epsilon r_0\end{pmatrix}\chi_{2,D} V+r_0F_a+O(\epsilon)U_a+O(\epsilon D)V+O(\epsilon)V.
\end{align}
Observe that terms on the right in \eqref{c12}(c),(f), and (g) all make contributions to the $r_0$ entries of the first matrix on the right in \eqref{c13}.
Using the calculus to commute $\chi_{1,D}$ through  \eqref{c13}, we obtain
\begin{align}\label{c14}
\partial_{x_d} (\chi_{1,D}V)=\frac{1}{\epsilon}\begin{pmatrix}H_{D}&\epsilon
r_0\\0&P_{D}+\epsilon r_0\end{pmatrix}(\chi_{1,D}V)+r_0F_a +O(\eps)U+O(\eps D)\chi_{1,D}U+(r_0\partial_{\beta'}\chi_1 )_DU.
\end{align}
Next define
\begin{align}\label{c14a}
F_b:=r_0F_a +O(\eps)U+O(\eps D)\chi_{1,D}U+(r_0\partial_{\beta'}\chi_1 )_DU,
\end{align}
and observe that since $U_a=T_D V-\eps^2 r_0 U_a=T_D\chi_{1,D}V+\eps^2r_0U$ and $U_b=\chi_{1,D}V$ satisfies
\begin{align}\label{c15}
\begin{split}
&\partial_{x_d}U_b=\frac{1}{\epsilon}\begin{pmatrix}H_{D}&\epsilon r_0\\0&P_{D}+\epsilon r_0\end{pmatrix}U_b+F_b\\
&\Gamma  T_DU_b=g_a+\epsilon^2 r_0 U:=g_b \text{ on }x_d=0,
\end{split}
\end{align}
to prove \eqref{c2}(a) it now suffices to prove the same estimate with $\chi_{S,D}U$, $F$, and $g$ replaced by $U_b$, $F_b$ and $g_b$.   Observe that $\sqrt{\eps}F_b$ is a sum of terms including $\sqrt{\eps}O(\eps D)\chi_{1,D}U$.  The latter term is absorbed using the following Lemma, whose proof is elementary.

\begin{lem}\label{c16}
 Fix $\delta>0$. Then for $\gamma$ large we have
 \begin{align}
\begin{split}
 &(1)\;\eps^{\frac{3}{2}}\rho\leq \delta \left(\eps\gamma^{\frac{3}{2}}+\eps^{\frac{5}{2}}\rho^3\right)\\
&(2)\;\eps^2\rho^2\leq \delta \left(\eps\gamma^{\frac{3}{2}}+\eps^{\frac{5}{2}}\rho^3\right).
\end{split}
\end{align}

\end{lem}

Define
\begin{align}\label{c17}
G_{b}(p(x),\beta)=\begin{pmatrix}H&\epsilon r_0\\0&P+\epsilon r_0\end{pmatrix}.
\end{align}
A direct computation using the invertibility of $P$ shows that for $\beta\in\omega_\beta$ one can choose a matrix symbol $T_{c}$ of the form
\begin{align}\label{c18}
T_{c}(p(x),\beta)=\begin{pmatrix}I&\epsilon r_0\\0&I\end{pmatrix}
\end{align}
such that
\begin{align}\label{c19}
T_{c}^{-1}G_bT_{c}=\begin{pmatrix}H&0\\0&P+\epsilon r_0\end{pmatrix}=G_{HP}.
\end{align}
As before we extend and invert $T_{c,D}$.  The operator $T_{c,-1,D}$ associated to the symbol
\begin{align}\label{c20}
T_{c,-1}=\begin{pmatrix}I&-\epsilon r_0\\0&I\end{pmatrix}
\end{align}
is easily seen to be a right and left inverse satisfying  the analogue of \eqref{c9}.

Redefine $V=T_{c,-1,D}U_b$.  Now repeat the preceding argument line for line, but note, for example, that instead of \eqref{c12}(c),(e),(f) we have, respectively,
\begin{align}\label{c21}
\begin{split}
&T_{c,-1}\partial_{x_d} T_{c}=O(\epsilon)\\
&\eps\partial_{\beta'}T_{c,-1}=\begin{pmatrix}0&\epsilon^2 r_0\\0&0\end{pmatrix}\\
&\frac{1}{\epsilon}(\eps\partial_{\beta'}T_{c,-1})\partial_{x'}(G_{b}T_{c})=O(\epsilon).
\end{split}
\end{align}

We set $U_{c}=\chi_{1,D}V$ and use the calculus just as before to find that $U_c$ satisfies
\begin{align}\label{c22}
\begin{split}
&(a)\;\partial_{x_d}U_{c}=\frac{1}{\epsilon}G_{HP,D} U_{c}+F_c\\
&(b)\;\Gamma T_DT_{c,D} U_{c}=g_a+\epsilon^2 r_0 U:=g_c\text{ on }x_d=0,
\end{split}
\end{align}
where $F_c$ has a formula like \eqref{c14a} (with $F_b$ in place of $F_a$).  Thus, to prove \eqref{c2}(a) it now suffices to prove the same estimate with $\chi_{S,D}U$, $F$, and $g$ replaced by $U_c$, $F_c$ and $g_c$.

\textbf{c. Block structure. }Recall that $G_{HP}$ is given  by \eqref{c8}, where $H(p,\beta)$ and $P(p,\beta)$ are as in Lemma \ref{am}. Let $p'=(p_1,p_2)$, define $\cH(p',\beta)=H(p',0,\beta)$, $\cP(p',\beta)=P(p',0,\beta)$, and set
\begin{align}\label{c22a}
\cG_{HP}(p',\beta)=\begin{pmatrix}\cH(p',\beta)&0\\0&\cP(p',\beta)\end{pmatrix}.
\end{align}
Note that for $\beta\in\omega_\beta$
\begin{align}\label{c22b}
\begin{pmatrix}H(p,\beta)&0\\0&P(p,\beta)\end{pmatrix}=\cG_{HP}(p',\beta)+\begin{pmatrix}O(p_3)&0\\0&O(p_3)\end{pmatrix},
\end{align}
and thus
\begin{align}\label{c22c}
G_{HP}(p(x),\beta)=\cG_{HP}(p'(x),\beta)+\begin{pmatrix}\eps r_0&0\\0&\eps r_0\end{pmatrix}.
\end{align}
To proceed further we need to conjugate $\cG_{HP}$ to block structure form, which is especially simple in the totally incoming case.   Introduce polar coordinates
\begin{align}\label{c22d}
\beta=\rho'\hat\beta,\text{ where }\hat\beta\in S^d_+=\{(\widehat{\beta'},\whgamma)\in S^d:\whgamma\geq 0\},\;\;\rho'=|\beta|
\end{align}
and write
\begin{align}\label{c22e}
\cH(p',\beta)=\rho'\hat\cH(p',\hat \beta,\rho').
\end{align}
Similarly we set $\hat\zeta=(\widehat{\zeta'},\widehat{\gamma})=\zeta/|\zeta|$ and $\rho=|\zeta|$.

\begin{prop}[Block structure]\label{c23}
Let $\underline{p'}\in \cU$.  For each $\underline{\widehat{\beta}}\in S^d_+$ there is a neighborhood $\cO$ of $(\underline{p'},\underline{\widehat{\beta}},0)$ in $\mathbb{R}^{2N}\times S^d_+\times\overline{\mathbb{R}}_+$ and  a $C^\infty$ matrix $V(p',\widehat{\beta},\rho')$ defined on $\cO$ such that $V^{-1}\hat\cH V$ has the following block diagonal structure:

1. If $\underline{\whgamma}>0$, then $V^{-1}\hat\cH V=Q$ where $\Re Q=(Q+Q^*)/2<c<0$.

2. When  $\underline{\whgamma}=0$, we have
\begin{align}\label{c24}
V^{-1}\hat\cH V=\begin{bmatrix} q_1&\cdots&0\\\vdots&\ddots&\vdots\\0&\cdots& q_N\end{bmatrix}(p',\widehat{\beta},\rho'):=\hat \mrh(p',\widehat{\beta},\rho'),
\end{align}
where the $q_j$ are scalars, not necessarily distinct, such that $\Re q_j=0$ when $\widehat{\gamma'}=\rho'=0$, $\partial_{\widehat{\gamma'}}(\Re q_j)<c<0$ and $\partial_{\rho'}(\Re q_j)<c<0$.

There is a $C^\infty$ matrix $W(p',\beta)$ defined on a neighborhood  of $(\underline{p'},0)$ such that
\begin{align}\label{c25}
W^{-1}\cP W= \mrp(p',\beta), \text{ where }\Re \mrp>C_p>0.
\end{align}
\end{prop}

\begin{proof}
A general block structure result that applies in our case is Lemma 2.10 of \cite{MZ1}.  The simplification due to the totally incoming assumption, $A_2>0$, is explained in Corollary 7.9 of \cite{GMWZ6}.\footnote{This assumption rules out glancing modes, and also guarantees that all blocks are $1\times 1$ near points where $\underline{\whgamma}=0$.}
\end{proof}

\textbf{d. Degenerate symmetrizers. }The simple block structure described in Proposition \ref{c23} permits the following simple construction of degenerate symmetrizers.  Let $\Omega_T=\{x\in \overline{\bR}^{1+d}_+:0\leq x_0\leq T\}$ and $b\Omega_T=\{x\in\Omega_T:x_d=0\}$.

\begin{prop}\label{c26}
Fix $\ux\in b\Omega_T$ and $\underline{\hzeta}\in S^d_+$ and consider a neighborhood
$\cO$ of $(p'(\ux),\hzeta,0)$ in $\mathbb{R}^{2N}\times S^d_+\times\overline{\mathbb{R}}_+$ on which a conjugator $V(p',\hat \beta,\rho')$ as in Proposition \ref{c23} is defined.  For $(x,\zeta)$ such that $(p'(x),\hat\zeta,\eps\rho)\in\cO$, define
\begin{align}\label{c27}
S(\hat\zeta,\eps\rho)=\begin{pmatrix}S_h&0\\0&S_p\end{pmatrix},
\end{align}
where the $N\times N$ matrices $S_h$, $S_p$ are given by
\begin{align}\label{c28}
S_h=-(\eps^2\gamma^2+\eps^4\rho^4)I_N,\quad S_p=K I_N, \;\;K>0,
\end{align}
and set $\mrh(p',\widehat{\beta},\rho'):=\rho'\hat \mrh(p',\widehat{\beta},\rho')$.
Then, depending on $\cO$, either
\begin{align}\label{c29}
\Re \frac{1}{\eps}S_h(\hat\zeta,\eps\rho)\mrh(p'(x),\hat\zeta,\eps\rho)=\rho (\eps^2\gamma^2+\eps^4\rho^4)k(x,\hat\zeta,\eps\rho), \text{ where }k>C>0
\end{align}
or
\begin{align}\label{c30}
\Re \frac{1}{\eps}S_h\mrh=\begin{bmatrix} (\eps^2\gamma^2+\eps^4\rho^4)(\gamma b_{0,1}+\eps\rho^2b_{1,1})&\cdots&0\\\vdots&\ddots&\vdots\\0&\cdots& (\eps^2\gamma^2+\eps^4\rho^4)(\gamma b_{0,N}+\eps\rho^2b_{1,N})\end{bmatrix},
\end{align}
where $b_{0,j}(x,\hat\zeta,\eps\rho)>C>0$, $b_{1,j}(x,\hat\zeta,\eps\rho)>C>0$.   Also,
\begin{align}\label{c30a}
\Re \frac{1}{\eps}S_p \;\mathrm{p}=\frac{1}{\eps}K\;\Re\mrp(p'(x),\beta) \geq \frac{1}{\eps}KC_p,  \text{ where }K>0, C_p>0.
\end{align}
Finally, for $u=(u_h,u_p)\in\bC^{2N}$ we have
\begin{align}\label{c30b}
(S(\hat\zeta,\eps\rho)u,u)\geq K|u_p|^2-(\eps^2\gamma^2+\eps^4\rho^4)|u_h|^2.
\end{align}

\end{prop}
\begin{proof}
The equalities \eqref{c30a} and \eqref{c30b} are immediate.
Consider $\cO$ as in case 2 of Proposition \ref{c23}.    The properties of $q_j$ stated there imply
\begin{align}\label{c31}
q_j(p',\hat\beta,\rho')=\whgamma a_{0,j}+\rho'a_{1,j}+i d_j
\end{align}
where $a_{0,j}$, $a_{1,j}$, $d_j$ are real functions of $(p',\hat\beta,\rho')$ such that $a_{0,j}<c<0$, $a_{1,j}<c<0$.
Setting $\beta=\eps \zeta$ and noting that $\hat\beta=\hat\zeta$ and $\rho'=\eps\rho$, we see that \eqref{c30} holds with
\begin{align}\label{c32}
b_{0,j}(x,\hat\zeta,\eps\rho)=-a_{0,j}(p'(x),\hat\zeta,\eps\zeta),\;b_{1,j}(x,\hat\zeta,\eps\rho)=-a_{1,j}(p'(x),\hat\zeta,\eps\zeta).
\end{align}
Similarly, \eqref{c29} holds when $\cO$ is as in case 1 of Proposition \ref{c23}.

\end{proof}

\textbf{e. Microlocalize. }Next we construct a pseudodifferential partition of unity that will allow us to prove estimates using the symmetrizers just constructed.

Let $K_{p'}$ be the compact set given by the closure of the range of $p'(x)$ on $\Omega_T$.\footnote{Recall that, by finite propagation speed of the hyperbolic problem, $p(x)$ is constant outside a compact subset of $\Omega_T$.}  For $\delta>0$ small we can choose an open cover $\{\cO_k\}$ of $K_{p'}\times S^d_+ \times [0,\delta]$ by sets $\cO_k$ on which conjugators as in Proposition \ref{c23} are defined.  Choose partitions of unity $\kappa_l(x)$ and $\psi_m(\hat\beta,\rho')$ subordinate to open covers of $\Omega_T$ and $S^d_+\times [0,\delta]$, respectively, with the property that for any pair $(l,m)$ there exists a $k$ such that
\begin{align}\label{c33}
(x,\hat\beta,\rho')\in \mathrm{supp} \;\kappa_l(x)\psi_m(\hat\beta,\rho')\Rightarrow (p'(x),\hat\beta,\rho')\in \cO_k.
\end{align}
Next define the bounded families of classical symbols
\begin{align}\label{c34}
\phi^\eps_{l,m}(x,\zeta)=\kappa_l(x)\psi_m(\hat\zeta,\eps\rho).
\end{align}
After re-indexing this family as $\phi^\eps_l(x,\zeta)$, we rewrite the unknown $U_c$ in \eqref{c22} as the finite sum
\begin{align}\label{c35}
U_c=\sum_l U_l, \text{ where }U_l:=\phi^\eps_{l,D}U_c.
\end{align}

Next we commute $\phi^\eps_{l,D}$ through the problem \eqref{c22}. Observe that \eqref{c22}(a) is unchanged if $G_{HP,D}$ is replaced by $G_{HP,D}\;\chi_{2,D}$, and henceforth we include the factor $\chi_2$ (often suppressed) in the definitions of $\cH$ and $\cP$ \eqref{c22a}.  Observe that $\cP\chi_2$ is a classical symbol of order zero and that
\begin{align}\label{c36}
\cH=\eps\rho \hat \cH(p'(x),\hat\zeta,\eps\rho)\chi_2(\eps\zeta)=\eps \cH^*,
\end{align}
where $H^*$ is a classical symbol of order one.  The leading terms in the symbols of the commutators $[\cH^*_D,\phi^\eps_{l,D}]$ and  $\frac{1}{\eps}[\cP_D,\phi^\eps_{l,D}]$ are, respectively,
\begin{align}\label{c36a}
\begin{split}
&(a)\;\partial_{\zeta'}\cH^*\;D_{x'}\phi^\eps_l-\partial_{\zeta'}\phi^\eps_l\; D_{x'}\cH^*\in\cC^0\\
&(b)\;\frac{1}{\eps}\left(\partial_{\zeta'}\cP\; D_{x'}\phi^\eps_l-\partial_{\zeta'}\phi^\eps_l\;D_{x'}\cP\right)\in\frac{1}{\eps}\;\cC^{-1}.
\end{split}
\end{align}
Thus, from \eqref{c22c} and \eqref{c22}(a)we find:
\begin{align}\label{c37}
\partial_{x_d}U_{l}=\frac{1}{\epsilon}\cG_{HP,D} U_{l}+\begin{pmatrix} r_0&0\\0& \frac{1}{\eps}r_{-1}\end{pmatrix}U_c+\phi^\eps_{l,D}F_c
\end{align}
where $r_{-1}\in\cC^{-1}$.

The boundary operator $\Gamma T_DT_{c,D}$ can be viewed as an element of $\cC^0$.  Its leading symbol is
\begin{align}\label{c38}
\Gamma TT_c=\begin{pmatrix}0&I\end{pmatrix}\begin{pmatrix}I&A^{-1}\\-A^{-1}M+\tau_1&I+\tau_2\end{pmatrix}\begin{pmatrix}I&\eps r_0\\0&I\end{pmatrix}=
\begin{pmatrix}\cH+\eps r_0&I+\eps r_0+O(\eps\zeta)\chi_2\end{pmatrix}.
\end{align}
Using \eqref{c36a}(a) (and a similar computation for the commutator $[O(\eps D)\chi_{2,D},\phi^\eps_{l,D}]$) we find
\begin{align}\label{c39}
\Gamma T_DT_{c,D}U_l=\phi^\eps_{l,D}g_c+\eps r_0 U_c.
\end{align}

For a fixed $l$, set
\begin{align}\label{c39a}
U_c=(U_h,U_p)^t,\;\;U_l=\phi^\eps_{l,D}U_c=(u_h,u_p)^t, \text{ and }\phi^\eps_{l,D}F_c=(f_h,f_p)^t.
\end{align}
 We can rewrite \eqref{c37} and \eqref{c39} as follows:
\begin{align}\label{c40}
\begin{split}
&(a)\;\partial_{x_d}u_h=\frac{1}{\eps}\cH_Du_h+r_0U_h+f_h\\
&(b)\;\partial_{x_d}u_p=\frac{1}{\eps}\cP_Du_p+\frac{r_{-1}}{\eps}U_p+f_p,\;\;r_{-1}\in\cC^{-1}\\     &(c)\;\cH_Du_h+I^\eps_Du_p=r_0 g_c+\eps r_0 U_c.
\end{split}
\end{align}
Here we have set $I^\eps_D=I+O(\eps D)\chi_{2,D}$ and used \eqref{c38} and the semiclassical calculus to compute $\Gamma T_D T_{c,D}$.

\textbf{f. Conjugate with $V_D$ and $W_D$. }For the same fixed $l$ as in \eqref{c39a} suppose that for $(x,\zeta)\in\mathrm{supp}\;\phi^\eps_l(x,\zeta)$, $(p'(x),\hat\zeta,\eps \rho)$  is contained in an open set $\cO$ as in case 2 of Proposition \ref{c23}\footnote{We omit the details for case 1, which is similar but easier.}, and let $V(p'(x),\hat\zeta,\eps\rho)\in\cC^0$ be the corresponding conjugator as in \eqref{c24}.   Extend $V$ and (approximately) invert $V_D$ in the classical calculus to obtain left and right inverses such that
\begin{align}\label{c41}
V_DV^{-1}_D=I+r_{-1},\;\;  V^{-1}_DV_D=I+r_{-1}, \text{ where }r_{-1}\in\cC^{-1}. \end{align}
Defining $w_h=V^{-1}_Du_h$ we obtain by a computation similar to \eqref{c11}(b)
\begin{align}\label{c42}
(\partial_{x_d}V_D)w_h+V_D\partial_{x_d}w_h=\frac{1}{\eps}\cH_D V_D w_h+r_0 U_h+r_0 f_h.
\end{align}
Here we have used the fact that
\begin{align}\label{c43}
\frac{1}{\eps}\cH_D \text{ is a bounded family in }\cC^1.
\end{align}
Applying $V^{-1}_D$ to \eqref{c42} and using \eqref{c43} again, we find
\begin{align}\label{c44}
\partial_{x_d}w_h=\frac{1}{\eps}V^{-1}_D\cH_D V_D w_h+r_0 U_h+r_0f_h=\frac{1}{\eps}\mrh_Dw_h+ r_0 U_h+r_0f_h, \end{align}
where $\mrh=\mrh(p'(x),\zeta,\eps\rho)$ is as in \eqref{c29}, and hence $\frac{1}{\eps}\mrh_D\in\cC^1$.

Similarly, extend the conjugator $W(p',\eps\beta)$ in \eqref{c25} and construct approximate inverses of $W_D$ in the semiclassical calculus such that
\begin{align}\label{c45}
W_DW^{-1}_D=I+\eps r_{0},\;\;  W^{-1}_DW_D=I+\eps r_{0}.
\end{align}
Defining $w_p=W^{-1}_D u_p$ we obtain by using the semiclassical calculus and computing as above\footnote{Here we have used $r_0\chi_2(\eps\zeta)\in\frac{1}{\eps}\cC^{-1}$, which holds since $\eps\chi_2(\eps\zeta)\in\cC^{-1}.$}
\begin{align}\label{c46}
\partial_{x_d}w_p=\frac{1}{\eps}\mrp_D w_p+\frac{r_{-1}}{\eps}U_p +r_0 f_p,\text{ where }r_{-1}\in\cC^{-1}. \end{align}

\textbf{g. Interior estimates. }We quantize $S_h$ and $S_p$ as in \eqref{c28} by setting
\begin{align}\label{c47}
S_{h,D}=-\left(\eps^2\gamma^2 I_N+\eps^4(\rho^4I_N)_D\right)\chi_{2,D},\; S_{p,D}=KI_N\chi_{2,D}, \end{align}
where $(\rho^4I_N)_D \in\cC^4$.  Pairing \eqref{c44} and \eqref{c46} with $S_{h,D}w_h$ and $S_{p,D}w_p$, we obtain the identities
\begin{align}\label{c48}
\begin{split}
&\langle S_{h,D}w_h,w_h\rangle+\Re \frac{1}{\eps}(S_{h,D}\;\mrh_D w_h,w_h)=-2\Re(r_0 U_h+r_0f_h,S_{h,D}w_h)\\
&\langle S_{p,D}w_p,w_p\rangle+\Re \frac{1}{\eps}(S_{p,D}\;\mrp_D w_p,w_p)=-2\Re(\frac{r_{-1}}{\eps} U_p+r_0f_p,S_{p,D}w_p).
\end{split}
\end{align}
Since
\begin{align}\label{c47a}
\begin{split}
&|(r_0f_h,S_{h,D}w_h)|=|\left(r_0f_h,(\eps^2\gamma^2+\eps^4\rho^4)_D\;\chi_{2,D}w_h\right)|\\
&\quad=\left|\left(\sqrt{\eps}(\eps^2\gamma^2+\eps^4\rho^4)^{\frac{1}{4}}_D\;\chi_{2,D}\;r_0f_h,\frac{1}{\sqrt{\eps}}(\eps^2\gamma^2+\eps^4\rho^4)^{\frac{3}{4}}_D\; \chi_{2,D}w_h\right)\right| \leq\\
&\qquad\qquad C_\delta\;\eps |f_h|_0^2+ \delta |w_h|_{\eps\gamma^{3/2}+\eps^{5/2}\rho^3}^2,
\end{split}
\end{align}
we see that
\begin{align}\label{c49}
-2\Re(r_0 U_h+r_0f_h,S_{h,D}w_h)\leq C|U_h|_{\eps\gamma+\eps^2\rho^2}^2+C_\delta\;\eps |f_h|_0^2+ \delta |w_h|_{\eps\gamma^{3/2}+\eps^{5/2}\rho^3}^2.
\end{align}
Similarly,   since $|r_{-1}U_p|_0\leq \frac{C}{\gamma}|U_p|_0$,
\begin{align}\label{c50}
-2\Re(\frac{r_{-1}}{\eps} U_p+r_0f_p,S_{p,D}w_p)\leq  \frac{CK}{\gamma\eps}|U_p|^2_0+C_\delta\eps|f_p|^2_0+\frac{\delta}{\eps}|w_p|^2_0.
\end{align}

Next set $\mrh^*=\frac{1}{\eps}h\in\cC^1$ and note that
\begin{align}\label{c51}
|(S_{h,D}\mrh^*_D w_h,w_h)-((S_h\mrh^*)_Dw_h,w_h)|\leq C\eps^2\gamma^2|w_h|^2_0+C\eps^4|w_h|_{\rho^2}^2.
\end{align}
Here we have used the classical calculus to obtain, for example,
\begin{align}\label{c52}
\left(\eps^4\rho^4\chi_2(\eps\zeta)\right)_D\mrh^*_D=   (\eps^4\rho^4\chi_2(\eps\zeta)\mrh^*)_D+\eps^4 r_{4,D},\;\;r_4\in\cC^4.
\end{align}
The semiclassical calculus implies $\chi_{2,D}\mrp_D=(\chi_2\mrp)_D+\eps r_0$, so
\begin{align}\label{c53}
\left|\frac{1}{\eps}(S_{p,D}\;\mrp_D w_p,w_p)-\frac{1}{\eps}((S_{p}\mrp)_D w_p,w_p)\right|\leq C|w_p|^2_0.
\end{align}

   Now we can use \eqref{c30} and the Garding inequality for the classical calculus to get estimates from below:
\begin{align}\label{c54}
\begin{split}
&\Re \frac{1}{\eps}((S_{h}\mrh)_D w_h,w_h)\geq C(\eps^2\gamma^3|w_h|^2_0+\eps^5|w_h|_{\rho^3}^2)-C(\eps^2\gamma^3|U_h|^2_{-1}+\eps^5|U_h|^2_{\rho^2})\\
&\qquad\qquad\qquad -C(\eps^3\gamma^2|U_h|^2_0+\eps^4\gamma|U_h|^2_\rho).
\end{split}
\end{align}
To obtain \eqref{c54} we have used, for example, the Garding estimate:
\begin{align}\label{c55}
\Re\left((\eps^5\rho^6 b_{1,j}\chi_2(\eps\zeta))_Dw_{h,j},w_{h,j}\right)\geq    C\eps^5|w_{h,j}|_{\rho^3}^2)-C\eps^5|U_h|^2_{\rho^2},
\end{align}
where $b_{1,j}$ is as in \eqref{c30} and $w_{h,j}$ is the $j$-th component of $w_h$.  The error terms in the second line of
\eqref{c54} come from ``cross-term" estimates like
\begin{align}\label{c56}
\Re\left((\eps^4\gamma\rho^4 b_{0,j}\chi_2(\eps\zeta))_Dw_{h,j},w_{h,j}\right)\geq    C\eps^4\gamma |w_{h,j}|_{\rho^2}^2)-C\eps^4\gamma |U_h|^2_{\rho},
\end{align}
Another application of the classical Garding inequality gives
\begin{align}\label{c57}
\Re\frac{1}{\eps}((S_p\mrp)_D w_p,w_p)\geq \frac{KC_p}{\eps}|w_p|^2_0-\frac{C}{\eps}|U_p|^2_{-1}.
\end{align}

Combining the above estimates for $w_h$ we obtain
\begin{align}\label{c58}
\begin{split}
&(\eps^2\gamma^3|w_h|^2_0+\eps^5|w_h|_{\rho^3}^2) +\langle S_{h,D}w_h,w_h\rangle \leq C_\delta\;\eps |f_h|_0^2+ C|U_h|_{\eps\gamma+\eps^2\rho^2}^2+\\
&\qquad C(\eps^2\gamma^3|U_h|^2_{-1}+\eps^5|U_h|^2_{\rho^2}+\eps^3\gamma^2|U_h|^2_0+\eps^4\gamma|U_h|^2_\rho),
\end{split}
\end{align}
after absorbing $w_h$ norms from the right using Lemma \ref{c16}.   Similarly, we find
\begin{align}\label{c59}
\frac{K}{\eps}|w_p|^2_0+\langle S_{p,D}w_p,w_p\rangle\leq C_\delta\eps|f_p|^2_0+\frac{CK}{\gamma\eps}|U_p|^2_0+\frac{C}{\eps}|U_p|^2_{-1},
\end{align}
after absorbing $w_p$ norms from the right.

\textbf{h. Boundary terms. }We clearly have
\begin{align}\label{d1}
\langle S_{h,D}w_h,w_h\rangle\geq -C_2(\eps^2\gamma^2\langle w_h\rangle^2_0+\eps^4\langle w_h\rangle_2^2),
\end{align}
and an application of the classical Garding inequality gives
\begin{align}\label{d2}
\langle S_{p,D}w_p,w_p\rangle\geq K\langle w_p\rangle_0^2-C\langle U_p\rangle^2_{-1}.
\end{align}

We use the classical calculus and the fact that $\cH_D\in\eps\cC^1$ to rewrite \eqref{c40}(c) as
\begin{align}\label{d2a}
V^{-1}_D\cH_DV_Dw_h+V^{-1}_DI^\eps_DW_Dw_p=r_0g_c+\eps r_0 U_c,
\end{align}
which implies with a new $\eps r_0U_c$
\begin{align}\label{d3}
\cB_D\begin{pmatrix}w_h\\w_p\end{pmatrix}:=\mrh_Dw_h+\cI^\eps_D w_p=r_0g_c+\eps r_0 U_c,\text{ where }\cI^\eps_D=V^{-1}_DI^\eps_DW_D.
\end{align}
Clearly,
\begin{align}\label{d4}
\langle\cB_D^*\cB_D \begin{pmatrix}w_h\\w_p\end{pmatrix},\begin{pmatrix}w_h\\w_p\end{pmatrix}\rangle\leq C\langle g_c\rangle^2_0+C\eps^2\langle U_c\rangle^2_0,
\end{align}
and we now proceed to estimate $\langle\cB_D^*\cB_D \begin{pmatrix}w_h\\w_p\end{pmatrix},\begin{pmatrix}w_h\\w_p\end{pmatrix}\rangle$ from below.

\begin{lem}\label{d4a}
\begin{align}\label{d4b}
\begin{split}
&\langle\cB_D^*\cB_D \begin{pmatrix}w_h\\w_p\end{pmatrix},\begin{pmatrix}w_h\\w_p\end{pmatrix}\rangle\geq \\
&\qquad C_1\eps^2\gamma^2\langle w_{h}\rangle^2_0+C_1\eps^4\langle w_{h}\rangle^2_2-C_3\langle w_p\rangle_0^2-C\left(\eps^2\gamma^2 \langle U_h\rangle ^2_{-1}+\eps^4 \langle U_h\rangle ^2_1+\eps^3\gamma \langle U_h\rangle ^2_0\right).
\end{split}
\end{align}

\end{lem}

\begin{proof}
We start from
\begin{align}\label{d5}
\begin{split}
\langle\cB_D^*\cB_D
&\begin{pmatrix}w_h\\w_p\end{pmatrix},\begin{pmatrix}w_h\\w_p\end{pmatrix}\rangle=\langle\mrh_D^*\mrh_Dw_h,w_h\rangle+2\Re\langle \mrh_Dw_h,\cI^\eps_Dw_p\rangle+|\cI^\eps_Dw_p|^2_0\geq \\
&\qquad\qquad \frac{1}{2}\langle\mrh_D^*\mrh_Dw_h,w_h\rangle-C_3\langle w_p\rangle_0^2
\end{split}
\end{align}
Using \eqref{c31} we see that
\begin{align}\label{d6}
\mrh=\mrh(p'(x),\hzeta,\eps\rho)=\mathrm{diag} (\mrh_j)=\mathrm{diag} (A_j+iD_j),
\end{align}
where $A_j=\eps\gamma a_{0,j}+\eps^2\rho^2 a_{1,j}$ and $D_j=\eps\rho d_j$.  Now
\begin{align}\label{d7}
h_{j,D}^*h_{j,D}=A_{j,D}^*A_{j,D}+D_{j,D}^*D_{j,D}+i(A_{j,D}^*D_{j,D}-D_{j,D}^*A_{j,D}),
\end{align}
where
\begin{align}\label{d8}
i(A_{j,D}^*D_{j,D}-D_{j,D}^*A_{j,D})=\eps^2\gamma r_0+\eps^3r_2, \;\;r_2\in\cC^2,
\end{align}
since, for example, the classical calculus implies
\begin{align}\label{d9}
(\eps^2\rho^2 a_{1,j})_D^*(\eps\rho d_j)_D-(\eps\rho d_j)_D^*(\eps^2\rho^2 a_{1,j})_D=\eps^3 r_2.
\end{align}
Next we compute
\begin{align}\label{d10}
A_{j,D}^*A_{j,D}=\left(\eps^2\gamma^2(a_{0,j}^2)_D+\eps^2\gamma^2r_{-1}\right)+\left(\eps^4(\rho^4 a_{1,j})_D+\eps^4 r_3\right)+\left(2\eps^3\gamma(\rho^2a_{0,j}a_{1,j})_D+\eps^3\gamma r_1\right).
\end{align}
The classical Garding inequality and \eqref{d10} imply
\begin{align}\label{d11}
\begin{split}
&\langle A_{j,D}^*A_{j,D}w_{h,j},w_{h,j}\rangle\geq C_1\eps^2\gamma^2\langle w_{h,j}\rangle^2_0+C_1\eps^4\langle w_{h,j}\rangle^2_2+C\eps^3\gamma \langle w_{h,j}\rangle^2_1\\
&\qquad -C\left(\eps^2\gamma^2 \langle U_h\rangle ^2_{-1}+\eps^4 \langle U_h\rangle ^2_1+\eps^3\gamma \langle U_h\rangle ^2_0\right)\\
&\qquad\qquad -C\left(\eps^2\gamma^2\langle w_{h,j}\rangle^2_{-1/2}+\eps^4\langle w_{h,j}\rangle^2_{3/2}+\eps^3\gamma \langle w_{h,j}\rangle^2_{1/2}\right),
\end{split}
\end{align}
where the error terms in the second line of \eqref{d11} are Garding errors, while those in the third line arise from the composition errors in \eqref{d10}.   From \eqref{d7}, \eqref{d8}, and \eqref{d11}  we obtain, after absorbing error terms involving $w_h$ and $w_p$ by taking $\gamma$ large:
\begin{align}\label{d12}
\langle\mrh_D^*\mrh_Dw_h,w_h\rangle\geq C_1\eps^2\gamma^2\langle w_{h}\rangle^2_0+C_1\eps^4\langle w_{h}\rangle^2_2-C\left(\eps^2\gamma^2 \langle U_h\rangle ^2_{-1}+\eps^4 \langle U_h\rangle ^2_1+\eps^3\gamma \langle U_h\rangle ^2_0\right).
\end{align}
Using \eqref{d5}, and \eqref{d12} we obtain the estimate of the Lemma with new constants.
\end{proof}

Combining the estimates of this paragraph we find, for constants as in \eqref{d1}, \eqref{d2}, Lemma \ref{d4a} and  some $M>0$ to be chosen:
\begin{align}\label{d13}
\begin{split}
&\langle S_{h,D}w_h,w_h\rangle+\langle S_{p,D}w_p,w_p\rangle+M\langle\cB_D^*\cB_D
\begin{pmatrix}w_h\\w_p\end{pmatrix},\begin{pmatrix}w_h\\w_p\end{pmatrix}\rangle\geq \\
&\qquad (MC_1-C_2)\left(\eps^2\gamma^2\langle w_{h}\rangle^2_0+\eps^4\langle w_{h}\rangle^2_2\right)+(K-MC_3)\langle w_p\rangle_0^2\\
&\qquad\qquad -MC\left(\eps^2\gamma^2 \langle U_h\rangle ^2_{-1}+\eps^4 \langle U_h\rangle ^2_1+\eps^3\gamma \langle U_h\rangle ^2_0\right)-C\langle U_p\rangle^2_{-1}.
\end{split}
\end{align}

\textbf{i. Conclusion. }To finish the proof of Proposition \ref{c2a}  we first add estimates \eqref{c58} and \eqref{c59}, and then add $M\langle\cB_D^*\cB_D
(w_h,w_p)^t,(w_h,w_p)^t\rangle$ to both sides of the resulting inequality.  Boundary terms on the left in the estimate so obtained are estimated from below using \eqref{d13}; on the right one uses \eqref{d4}.  After choosing $M$ so that $MC_1>C_2$ and then $K$ such that $K>MC_3$, we get (with a new $C$)

\begin{align}\label{e1}
\begin{split}
&\left(\eps^2\gamma^3|w_h|^2_0+\eps^5|w_h|_{\rho^3}^2 +\frac{K}{\eps}|w_p|^2_0\right)+\left(\eps^2\gamma^2\langle w_{h}\rangle^2_0+\eps^4\langle w_{h}\rangle^2_2+\langle w_p\rangle_0^2\right)\leq\\
&\qquad C\left(\eps |f_h|_0^2+ \eps|f_p|^2_0+\langle g_c\rangle^2_0\right)+\\
&C\left(|U_h|_{\eps\gamma+\eps^2\rho^2}^2+ \eps^2\gamma^3|U_h|^2_{-1}+\eps^5|U_h|^2_{\rho^2}+\eps^3\gamma^2|U_h|^2_0+\eps^4\gamma|U_h|^2_\rho+\frac{K}{\gamma\eps}|U_p|^2_0+\frac{1}{\eps}|U_p|^2_{-1}\right)+\\
&\qquad C\left(\eps^2\gamma^2 \langle U_h\rangle ^2_{-1}+\eps^4 \langle U_h\rangle ^2_1+\eps^3\gamma \langle U_h\rangle ^2_0+\langle U_p\rangle^2_{-1}+\eps^2\langle U_c\rangle^2_0\right),
\end{split}
\end{align}
where the last two lines are ``error" terms.
Since
\begin{align}\label{e2}
u_h=V_Dw_h+r_{-1}u_h \text{ and }u_p=W_D w_p+\eps r_0 u_p
\end{align}
for $V_D$, $W_D$ as in \eqref{c41}, \eqref{c45}, the estimate \eqref{e1} holds with $(w_h,w_p)^t$ replaced by $(u_h,u_p)^t=U_l$.   Recalling that
\begin{align}\label{e2a}
U_c=(U_h,U_p)^t=\sum_l U_l  \text{ and }\phi^\eps_{l,D}F_c=(f_h,f_p)^t,
\end{align}
summing the estimates over $l$, and absorbing error terms from the right using Lemma \ref{c16}, we conclude
\begin{align}\label{e3}
\begin{split}
&\left(\eps^2\gamma^3|U_h|^2_0+\eps^5|U_h|_{\rho^3}^2 +\frac{K}{\eps}|U_p|^2_0\right)+\left(\eps^2\gamma^2\langle U_{h}\rangle^2_0+\eps^4\langle U_{h}\rangle^2_2+\langle U_p\rangle_0^2\right)\leq\\
&\qquad \qquad\qquad \qquad C\left(\eps |F_h|_0^2+ \eps|F_p|^2_0+\langle g_c\rangle^2_0\right)
\end{split}
\end{align}
This estimate is stronger than the estimate described at the end of paragraph \textbf{b} as being sufficient to prove \eqref{c2}(a). This concludes the proof of Proposition \ref{c2a}.

\end{proof}

\subsection{Higher derivative estimates}\label{higher}

\emph{\quad} In this section we'll use the notation for norms introduced in section \ref{basic}.  We use $\partial$ to denote some tangential derivative, one of $\partial_0,\dots,\partial_{d-1}.$ Sometimes $\partial U$ will denote the tangential gradient of $U$, instead of just a single partial derivative of $U$.

\begin{notation}\label{e9.54}
1.  For $k=1,2,\dots$ let $U^{*,k}=((\frac{\gamma}{\epsilon^2})^k U,(\frac{\gamma}{\epsilon^2})^{k-1}\partial U,\dots,\partial^k U)$.  Here $\partial^j U$ represents all possible tangential derivatives of $U$ order $j$.

2.  Define $U^{*,k}_\Lambda$ simply by replacing $U$ by $U_\Lambda$  in the definition of $U^{*,k}$.
\end{notation}

\begin{prop}\label{e9.55}
Under the assumptions of section 2, there exist positive constants $C$, $\epsilon_0$, $\gamma_0$ such that for all $\gamma>\gamma_0$, $0<\epsilon<\epsilon_0$ with $\epsilon\gamma\leq 1$,  solutions to \eqref{c0b} satisfy
\begin{align}\label{ej9.56}
|U^{*,k}|_0+\langle U^{*,k}\rangle_0\leq C\left(\frac{|F^{*,k}|_0}{\sqrt{\epsilon}}+\frac{\langle g^{*,k}\rangle_0}{\eps}\right).
\end{align}
\end{prop}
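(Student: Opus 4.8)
The plan is to derive the higher-derivative estimate \eqref{ej9.56} from the basic $L^2$ estimate of Theorem \ref{c0c} by differentiating the problem \eqref{c0b} tangentially and setting up an enlarged first-order system for $U^{*,k}$.  First I would apply the tangential operators $\partial^j$ (all tangential derivatives of order $j\le k$) to \eqref{c0b}.  Since $G(p(x),\eps\partial_{x'},\eps\gamma)$ has $x$-dependent coefficients through $p(x)$, commuting $\partial^j$ through the equation produces commutator terms of the form $\frac{1}{\eps}[\partial^j,G_D]U$; the key observation, as in the basic estimate, is that $\partial_{x'}G$ carries a compensating factor, so that $\frac1\eps[\partial^j, G_D]$ acting on lower-order derivatives of $U$ contributes terms that are $O(1)$ times $\partial^{j-1}U$-type quantities rather than $\frac1\eps$ times them.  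Weighting the $\ell$-th component $\partial^\ell U$ by $(\gamma/\eps^2)^{k-\ell}$, as in Notation \ref{e9.54}, is designed precisely so that all these commutator contributions can be absorbed into the interior error terms already present on the right-hand side of the basic estimate after multiplication by $\gamma$-powers.

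Second, I would assemble the components $(\gamma/\eps^2)^{k-j}\partial^j U$ into a single vector-valued unknown $\cU:=U^{*,k}$ satisfying an enlarged block-triangular system of the same structural form as \eqref{c0b}: namely $\partial_{x_d}\cU - \frac1\eps \tilde G_D \cU = \cF$ with boundary condition $\tilde\Gamma\cU = \cG$, where $\tilde G$ is block lower-triangular with diagonal blocks equal to copies of $G$, and the off-diagonal blocks come from the commutators $\partial_{x_j}G$ weighted by the appropriate ratio $(\gamma/\eps^2)^{-1}\cdot\eps = \eps^3/\gamma$ — these are small.  The source $\cF = F^{*,k}$ plus lower-order terms that are controlled by $\gamma^{-1}$ times the left-hand side, and the boundary data is $\cG=g^{*,k}$ since $\Gamma = (0\;\;I)$ commutes with tangential derivatives and the $p$-dependence does not enter $\Gamma$.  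Because $\tilde G$ differs from $\mathrm{blockdiag}(G,\dots,G)$ only by small (in $\gamma,\eps$) lower-triangular perturbations, the microlocal symmetrizer construction of Sections \ref{symm}--\ref{basic} goes through verbatim for the enlarged system: one builds a block-diagonal degenerate symmetrizer $\mathrm{blockdiag}(S,\dots,S)$ and the off-diagonal perturbations are absorbed exactly as the $\eps r_0$ and $\frac1\eps r_{-1}$ terms were absorbed in the proof of Proposition \ref{c2a}.  This yields the estimate \eqref{c0d} applied to $\cU$, which is precisely \eqref{ej9.56} after tracking the weights: the factor $1/\sqrt\eps$ on $|F^{*,k}|_0$ and $1/\eps$ on $\langle g^{*,k}\rangle_0$ match the $\sqrt\eps|F|_0$ and $\langle g\rangle_0$ of \eqref{c0d} divided through by the overall $\eps$ on the left of \eqref{c0d}.

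The main obstacle, and the place requiring genuine care rather than bookkeeping, is verifying that every commutator term generated by moving $\partial^j$ past $\frac1\eps G_D$ is indeed controlled with the chosen weights $(\gamma/\eps^2)^{k-j}$, i.e.\ that the enlarged system really is a ``small lower-triangular perturbation'' of the block-diagonal one in the precise norms \eqref{c0d} uses.  This requires the semiclassical-calculus remainder estimates (as in \eqref{c4}--\eqref{c5} and the footnote referring to (A.6) of \cite{GMWZ2}) applied now to arbitrary order $k$, keeping track of the fact that $G$ has an unbounded symbol.  In particular one must check that the mixed terms — a tangential derivative landing on $p(x)$ inside $M(p,\beta)$, which contains the second-order piece $\sum \beta_j^2$ — produce contributions of order $\eps$ (as operators), so that the weight ratio $\eps^2/\gamma$ makes them harmless; and one must handle the boundary commutators for $\Gamma T_D T_{c,D}$ as in \eqref{c38}--\eqref{c39}.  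Once the enlarged system is seen to have exactly the structure treated in Theorem \ref{c0c}, the conclusion is immediate; essentially all the work is in this structural reduction, which is why I would present it as ``using an appropriate enlarged system'' and then invoke the already-proven $L^2$ estimate.
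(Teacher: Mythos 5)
Your proposal matches the paper's approach: Proposition \ref{e9.55} is deduced from the frequency-region estimates of Proposition \ref{e10.1}, and the small-frequency part is proved by exactly the enlarged-system device you describe, with the weight $\gamma/\eps^2$ chosen (cf.\ Remark \ref{e10.8}) so that the commutator $\tfrac1\eps[\partial,G_D]U$ becomes $\tfrac{\eps}{\gamma}[\partial,G_D]\bigl(\tfrac{\gamma}{\eps^2}U\bigr)$ and can be absorbed as an $O(\eps)$ source after the small-frequency cutoff. The only cosmetic differences are that the paper keeps $\mathrm{blockdiag}(G,G)$ on the left and throws the commutator into the forcing (then iterates $k$ times) rather than assembling the full $(k+1)$-block lower-triangular operator at once, and it simply cites \cite{MZ1} for the medium- and high-frequency pieces; also your stated off-diagonal size $\eps^3/\gamma$ should be $\eps^2/\gamma$, though this does not affect the argument since both are small.
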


This follows immediately from the following more precise estimates.

\begin{prop}\label{e10.1}
Using the notation just introduced, we have the following estimates for solutions to \eqref{c0b}.  Let $R_1$, $R_2$ be as in \eqref{c0f}.  For $R_1$ sufficiently small and $R_2$ sufficiently large, there exist constants $C$, $\gamma_1$, $\epsilon_1$ such that for all $\gamma>\gamma_1$, $0<\epsilon<\epsilon_1$ with $\epsilon\gamma\leq 1$
\begin{align}\label{e10.2}
\begin{split}
&(a)\;|\chi_{S,D} U^{*,k}|_{\eps\gamma^{\frac{3}{2}}+\eps^{\frac{5}{2}}\rho^3} +\langle\chi_{S,D}U^{*,k}\rangle_{\eps\gamma+\eps^2\rho^2}\leq\\
& \qquad C\left(\sqrt{\eps}|F^{*,k}|_0+\langle g^{*,k}\rangle_0+\epsilon|U^{*,k}|_0+|\chi_{2,D}U^{*,k}|_{\eps^{\frac{3}{2}}\rho+\eps\gamma+\eps^2\rho^2}+|\chi_{M,D}U^{*,k}|_0+\eps\langle U^{*,k} \rangle_0\right)\\
&(b)\;|\chi_{M,D} U^{*,k}|_0+\sqrt{\epsilon}\langle \chi_{M,D} U^{*,k}\rangle_0\leq C\left(\epsilon|F^{*,k}|_0+\sqrt{\eps}\langle g^{*,k}\rangle_0+\epsilon|U^{*,k}|_0+\epsilon\langle U^{*,k}\rangle_0\right)\\
&(c)\;|\chi_{L,D} U^{*,k}_\Lambda|_{\sqrt{\Lambda}}+\sqrt{\epsilon}\langle\chi_{L,D} U^{*,k}_\Lambda\rangle_0\leq C\left(\epsilon|F^{*,k}|_{\Lambda^{-1/2}}+\sqrt{\eps}\langle g^{*,k}\rangle_0+\epsilon|U^{*,k}_\Lambda|_{\Lambda^{-1/2}}+\epsilon\langle U^{*,k}_\Lambda\rangle_{\Lambda^{-1/2}}\right).
\end{split}
\end{align}
\end{prop}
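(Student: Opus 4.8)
The plan is to reduce the three estimates \eqref{e10.2} to the single‑tangential‑derivative estimates \eqref{c2} of Proposition~\ref{c2a} by an \emph{enlarged system} construction, exactly parallel to the treatment of higher derivatives in \cite{MZ1}. First I would record the boundary problem satisfied by $U^{*,k}$. For $0\le j\le k$, apply $\partial^j$ (a product of $j$ of the tangential derivatives $\partial_0,\dots,\partial_{d-1}$) to \eqref{c0b} and multiply by $(\gamma/\epsilon^2)^{k-j}$. Since $G(p(x),\epsilon\partial_{x'},\epsilon\gamma)$ depends on $x$ only through the smooth bounded functions $p(x)=(p_1,p_2,p_3)$ (constant outside a compact set), and since $\Gamma$ is a constant projection, Leibniz' rule gives
\begin{align*}
&\partial_{x_d}U^{*,k}=\tfrac1\epsilon\,\mathbf G_D\,U^{*,k}+\mathbf F^{*,k}\\
&\Gamma U^{*,k}=g^{*,k}\text{ on }x_d=0,\quad U^{*,k}=0\text{ in }x_0<0,
\end{align*}
where $\mathbf G_D$ is block‑lower‑triangular with diagonal blocks $\tfrac1\epsilon G_D$ and strictly sub‑diagonal entries built from the commutators $\tfrac1\epsilon(\partial^iG)_D$, $i\ge1$, and $\mathbf F^{*,k}$ collects $F^{*,k}$ together with these coupling terms. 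The key structural point is that, by \eqref{al}, the symbol $\partial^iG$ is of the form $O(\epsilon\zeta)+O(\epsilon)$ — the $O(\epsilon\zeta)$ from the $x$‑dependence of $A_j(p_1+p_2)$, the $O(\epsilon)$ from $p_3=\epsilon E$ — so that, after the weight factor $(\gamma/\epsilon^2)^{-i}$ relating the $j$‑th and $(j-i)$‑th blocks, each coupling term carries a gain $(\epsilon^2/\gamma)^i$ over the diagonal; in the frequency regimes relevant to \eqref{e10.2} this makes the coupling subordinate.

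With this system in hand, the three estimates of \eqref{e10.2} are obtained as in Proposition~\ref{c2a}. For the medium‑ and large‑frequency pieces (b) and (c), one invokes the corresponding higher‑derivative estimates of \cite{MZ1} applied to the enlarged system, the commutators with $\partial^j$, with $\Lambda^{-1/2}$, and with the cutoffs $\chi_M,\chi_L$ contributing only error terms absorbed exactly as in the proof of \ref{c2a}(b),(c). For the small‑frequency estimate (a), I would run paragraphs \textbf{a}--\textbf{i} of the proof of Proposition~\ref{c2a} verbatim for the enlarged system: commute $\chi_{S,D}$ through; conjugate by $T_D$ and then $T_{c,D}$ (which act blockwise, since the conjugators of Lemma~\ref{am} are independent of the extra derivative index), picking up the same $\epsilon^2r_0$, $O(\epsilon D)$ errors; microlocalize with the partition of unity $\phi^\epsilon_{l,D}$ of \eqref{c34}; conjugate with $V_D$, $W_D$ to the block structure of Proposition~\ref{c23}; and apply the degenerate symmetrizer of Proposition~\ref{c26}, now taken as the direct sum over the $k+1$ slots of the blockwise symmetrizer $S=\mathrm{diag}(S_h,S_p)$ — the extra weights $(\gamma/\epsilon^2)^{k-j}$ are already built into $U^{*,k}$, so no further modification is needed. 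The interior and boundary estimates of paragraphs \textbf{g}--\textbf{h} then go through unchanged slot by slot, yielding the analogue of \eqref{e1}, after which summing over $l$ and absorbing with Lemma~\ref{c16} gives (a).

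The main obstacle — essentially the only thing not already in the proof of \ref{c2a} — is showing that the strictly sub‑diagonal coupling terms in $\mathbf G_D$ land among the ``error'' terms (the last two lines of \eqref{e1}) with the right powers of $\epsilon$ and $\gamma$. I would handle this by induction on the number of tangential derivatives: the top block $(\gamma/\epsilon^2)^kU$ satisfies a problem of exactly the form \eqref{c0b} and is estimated directly; each lower block $(\gamma/\epsilon^2)^{k-j}\partial^jU$ is then estimated using the already‑controlled blocks of index $<j$, the coupling $\tfrac1\epsilon(\partial^iG)_D$ contributing, after the weight gain $(\epsilon^2/\gamma)^i$ and the $O(\epsilon\zeta)+O(\epsilon)$ bound on $\partial^iG$, a term of size $\le C(\epsilon/\gamma)$ times an already‑estimated quantity in the small‑frequency band and of size $\le C\epsilon$ times $|U^{*,k}|_0$ otherwise — in both cases absorbable for $\gamma$ large and $\epsilon$ small. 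The commutators of $\partial^j$ with the conjugators $T,T_c,V,W$ and with the boundary operator $\Gamma T_DT_{c,D}$ are order $\le0$ in the classical calculus (or $O(\epsilon)$ in the semiclassical one), since these conjugators depend on $x$ only through $p$ and $p'$, hence contribute only the same kind of error terms already present in \eqref{c39} and \eqref{e1}.
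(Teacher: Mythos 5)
Your approach matches the paper's: construct an enlarged system for $U^{*,k}$ (the paper does the $2$-block version for $U^{*,1}$ and iterates; you write all $k+1$ blocks at once), observe that the weight $(\gamma/\epsilon^2)^{k-j}$ puts an $(\epsilon^2/\gamma)^i$ gain on the sub-diagonal coupling so that it lands among the absorbable errors, and then rerun Proposition~\ref{c2a} block by block in each frequency regime. One small slip: your claim that $\partial^i G=O(\epsilon\zeta)+O(\epsilon)$ omits the $O(1)$ contribution from $\partial^i A_d(p_1+p_2)$ in the $(2,2)$ block of $G$; this piece is nevertheless absorbed by exactly the same $\tfrac1\epsilon(\epsilon^2/\gamma)^i$ factor (giving $O(\epsilon/\gamma)$), so the argument goes through — this is precisely what the choice of weight $\gamma/\epsilon^2$ (rather than $\gamma/\epsilon$, cf.\ Remark~\ref{e10.8}) is designed to accommodate.
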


\begin{proof}
The estimates in (b) and (c) follow directly from the higher derivative estimates of \cite{MZ1} in the medium and large freqency regions.  These are estimates with $\gamma$ weights for the linearized problem, so one can simply apply them to the problems satisfied by $\frac{U}{(\epsilon^2)^j}$ for various $j$.

As usual, therefore, we focus on the small frequency region.  If we simply differentiate the equation and throw commutators on the right as forcing, those new forcing terms are too large to absorb in a straightforward way.  To get around this problem we reprove $L^2$ estimates for an appropriate enlarged system.

\textbf{1. Enlarging the system.}  We begin with a solution $U$ of the linear system \eqref{c0b}
\begin{align}\label{e10.3}
\begin{split}
&\partial_d U-\frac{1}{\epsilon}{G}U=F\\
&\Gamma U=g \text{ on }x_d=0\\
&U=0 \text{ in } x_0<0:
\end{split}
\end{align}

Let $\partial$ denote one of $\partial_0,\dots,\partial_{d-1}$.  Observe that $(\frac{\gamma}{\epsilon^2}U,\partial U)$ satisfies the enlarged system
\begin{align}\label{e10.4}
\begin{split}
&\partial_d\begin{pmatrix}\frac{\gamma}{\epsilon^2}U\\\partial U\end{pmatrix}-\frac{1}{\epsilon}\begin{pmatrix}{G}&0\\0&{G}\end{pmatrix}\begin{pmatrix}\frac{\gamma}{\epsilon^2}U\\\partial U\end{pmatrix}=\begin{pmatrix}\frac{\gamma}{\epsilon^2}F\\\partial F\end{pmatrix}+\begin{pmatrix}0\\\frac{\epsilon}{\gamma}[\partial,{G}]\begin{pmatrix}\frac{\gamma}{\epsilon^2}U\end{pmatrix}\end{pmatrix},\\
&\begin{pmatrix}\Gamma&0\\0&\Gamma\end{pmatrix}\begin{pmatrix}\frac{\gamma}{\epsilon^2}U\\\partial U\end{pmatrix}=\begin{pmatrix}\frac{\gamma}{\epsilon^2}g\\\partial g\end{pmatrix}\text{ on }x_d=0,\\
&\begin{pmatrix}\frac{\gamma}{\epsilon^2}U\\\partial U\end{pmatrix}=0 \text{ in }x_0<0.
\end{split}
\end{align}

\textbf{2. Localize to small frequency region.}
Let $\chi_S(\epsilon\zeta)$ be a small frequency cutoff as before.  Commuting $\chi_{S,D}$ through \eqref{e10.4} we obtain (writing $\chi_S$ for $\chi_{S,D}$)
\begin{align}\label{e10.5}
\begin{split}
&\partial_d (\chi_S U^{*,1})-\frac{1}{\epsilon}\begin{pmatrix}{G}&0\\0&{G}\end{pmatrix}(\chi_S U^{*,1})=\\
&\qquad\qquad\chi_S F^{*,1}+\chi_S\begin{pmatrix}0\\\frac{\epsilon}{\gamma}[\partial,{G}](\frac{\gamma}{\epsilon^2}U)\end{pmatrix}+\frac{1}{\epsilon}\left[\chi_S,\begin{pmatrix}{G}&0\\0&{G}\end{pmatrix}\right]U^{*,1}=F',
\end{split}
\end{align}
where
\begin{align}\label{e10.6}
|F'|_0\leq C(|F^{*,1}|_0+|(\partial_{\beta'}\chi_S)U^{*,1}|_0+\epsilon|U^{*,1}|_0).
\end{align}
The second commutator was computed like the corresponding term in
the previous section \eqref{c5}.

The boundary condition is
\begin{align}\label{e10.7}
\begin{pmatrix}\Gamma&0\\0&\Gamma\end{pmatrix}\chi_S U^{*,1}=\chi_S g^{1,*}.
\end{align}

The problem \eqref{e10.5},\eqref{e10.7} can be treated just like \eqref{c6}.
We may now repeat the argument of the previous section to obtain the desired estimate of $U^{*,1}$.  Iteration completes the proof.

\end{proof}

\begin{rem}\label{e10.8}
If $U^{*,1}$ had been defined instead as $\begin{pmatrix}\frac{\gamma}{\epsilon}U\\\partial U\end{pmatrix}$, the first commutator in \eqref{e10.5} would have produced an unacceptable $O(|U^{*,1}|_0)$ error.
\end{rem}

\subsection{Nonlinear stability} \label{nonlinear}

\begin{notation}\label{e11.1}

1. Recall $|u|_{k,\gamma}=|\langle\zeta\rangle^k\hat{u}(\zeta,x_d)|_0$.  For $k\in\mathbb{N}$ we have the equivalence of norms
\begin{align}\label{e11.2}
|u|_{k,\gamma}\sim\sum_{|\alpha|\leq k}\gamma^{k-|\alpha|}|\partial^\alpha u|_0.
\end{align}

2. Set $|u|_*=|u|_{L^\infty}$.

3. Define
\begin{align}\label{e11.3}
\|u\|_{k,\gamma}=|u|_{k,\gamma}+|\epsilon\partial u|_{k,\gamma}.
\end{align}

4. Let $M$ and $L<M$ be the positive integers appearing in the nonlinear error equation \eqref{ae1}.  They can be taken arbitrarily large as long as the approximate solution $u^a$ is constructed with sufficiently many terms.

5. $\phi(\gamma)$ always denotes an increasing function of $\gamma$.  It may change from term to term.

6. Set $\partial''=(\partial_1,\dots,\partial_{d-1})$.
\end{notation}

We return to the nonlinear error equation \eqref{aj}, and again drop tildes and the superscript $\gamma$.
Let $\kappa(x_0)$ be a smooth cutoff which is identically one on $[0,T_0]$.
We will solve \eqref{aj} on $[0,T_0]$ using the following iteration scheme:
\begin{align}\label{e11.5}
\begin{split}
&\partial_d U_{n+1}-\frac{1}{\epsilon}{G}U_{n+1}=\kappa(x_0){F}_\epsilon(U_n,\partial'' U_n),\\
&\Gamma U_{n+1}=0\text{ on }x_d=0,\\
&U_{n+1}=0\text{ in }x_0<0,
\end{split}
\end{align}
where from \eqref{ae1} and \eqref{ah} we see that $F_\epsilon(U_n,\partial'' U_n)$ has the form
\begin{align}\label{e11.6}
\begin{split}
&{F}_\epsilon(U_n,\partial'' U_n)=\epsilon^{L-3}f_1(u^a,\nabla u^a,\epsilon^L U_n,\epsilon,e^{\gamma x_0})(\epsilon U_n,\epsilon U_n)\\
&\qquad+\epsilon^{L-3}f_2(u^a,\nabla u^a,\eps^L U_n,\epsilon,e^{\gamma x_0})(\epsilon U_n,\epsilon \partial'' U_n)\\
&\qquad\qquad +\epsilon^{M-L}R_\eps\\
&\qquad\qquad\qquad :=\mathcal{A}+\mathcal{B}+\mathcal{C},
\end{split}
\end{align}
for smooth functions $f_1$, $f_2$.
For $\mathbb{F}(U,\partial''U):=\kappa(x_0){F}_\epsilon(U,\partial'' U)$ consider the nonlinear error equation
\begin{align}\label{e11.7}
\begin{split}
&\partial_d U-\frac{1}{\epsilon}{G}U=\mathbb{F}(U,\partial'' U),\\
&\Gamma U=0\text{ on }x_d=0,\\
&U=0\text{ in }x_0<0.
\end{split}
\end{align}

\begin{thm}\label{e11.8}
Recall $d$ is the number of space dimensions.  Fix constants $k,L,M$ satisfying
\begin{align}\label{e11.9}
\begin{split}
&k-3>\frac{d}{2}\\
&M-L-2k-\frac{1}{2}>1\\
&L-3-2k-\frac{1}{2}>1.
\end{split}
\end{align}
Suppose the forcing term $f$ in \eqref{aa} is chosen in $H^s(\overline{\bR}^{d+1}_+)$, where $s\geq 3M+3+k$, so that
 $u^a$ as constructed in Proposition \ref{g4} yields a remainder $R_\eps \in H^k(\Omega_{T_0})$.\footnote{This is the same $R_\eps$ that appears in \eqref{e11.6}.}
Then there exist constants $\epsilon_0$, $\gamma_0$ such that for all $0<\epsilon\leq\epsilon_0$, $\gamma\geq\gamma_0$ satisfying $\epsilon\gamma\leq 1$, the error equation \eqref{e11.7} has a unique solution $U$ satisfying the estimates
\begin{align}\label{e11.10}
\begin{split}
&\|U\|_{k,\gamma}\leq \epsilon^{M-L-2k-\frac{1}{2}}\phi(\gamma)\\
&|U|_*\leq 1\\
&|\partial U|_*\leq 1
\end{split}
\end{align}
for some $\phi(\gamma)$, an increasing function of $\gamma$.
\end{thm}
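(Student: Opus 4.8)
The plan is to solve \eqref{e11.7} by the Picard scheme \eqref{e11.5} started from $U_0=0$, controlling every iterate in the set
\[
\mathfrak{B}:=\Big\{\,U:\ |U^{*,k}|_0+\langle U^{*,k}\rangle_0\le \eps^{M-L-2k-\frac12}\phi(\gamma),\ \ |U|_*\le 1,\ \ |\partial U|_*\le 1\,\Big\}
\]
for a suitable increasing $\phi(\gamma)$, all $\gamma$ large and $\eps$ small with $\eps\gamma\le1$. Since $(\gamma/\eps^2)^{k-j}\ge\gamma^{k-j}$ when $\eps\le1$, membership in $\mathfrak{B}$ forces $\|U\|_{k,\gamma}\le \eps^{M-L-2k-\frac12}\phi(\gamma)$, so a fixed point in $\mathfrak{B}$ yields exactly \eqref{e11.10}. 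I would establish two things: (i) the map $U_n\mapsto U_{n+1}$ of \eqref{e11.5} sends $\mathfrak{B}$ into itself, and (ii) it is a contraction in the weaker norm $|\,\cdot^{*,k-1}|_0+\langle\,\cdot^{*,k-1}\rangle_0$. Standard weak-limit and lower-semicontinuity arguments then produce a solution $U\in\mathfrak{B}$ of \eqref{e11.7}, unique in $\mathfrak{B}$; restricting to $\{x_0\le T_0\}$, where $\kappa\equiv1$, gives a solution of \eqref{aj}.

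\textbf{Step (i): invariance.} I would apply the higher-derivative estimate of Proposition \ref{e9.55} to \eqref{e11.5} with boundary datum $g=0$, so that
\[
|U_{n+1}^{*,k}|_0+\langle U_{n+1}^{*,k}\rangle_0\le \frac{C}{\sqrt\eps}\,\big|(\kappa\,F_\eps(U_n,\partial'' U_n))^{*,k}\big|_0 .
\]
Using the decomposition $F_\eps=\mathcal{A}+\mathcal{B}+\mathcal{C}$ of \eqref{e11.6}: the remainder $\mathcal{C}=\eps^{M-L}R_\eps$, with $R_\eps\in H^k(\Omega_{T_0})$ bounded uniformly in $\eps$ by Proposition \ref{g4}, contributes $C\eps^{-\frac12}\eps^{M-L}|R_\eps^{*,k}|_0\le C\eps^{M-L-2k-\frac12}\phi(\gamma)$, the term that sets the size in \eqref{e11.10}. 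The genuinely nonlinear terms $\mathcal{A}=\eps^{L-1}f_1(\cdot)(U_n,U_n)$ and $\mathcal{B}=\eps^{L-1}f_2(\cdot)(U_n,\eps\partial'' U_n)$ are handled by Moser (Gagliardo--Nirenberg) estimates adapted to the weighted norm $|\cdot^{*,k}|_0$: putting the factor with fewest derivatives into $L^\infty$, using $|U_n|_*\le1$ and $|\eps\partial'' U_n|_*\le\eps|\partial U_n|_*\le1$, and the smoothness of $f_1,f_2$ in $(u^a,\nabla u^a,\eps^LU_n,\eps,e^{\gamma x_0})$, one gets
\[
\big|(\mathcal{A}+\mathcal{B})^{*,k}\big|_0\le C\eps^{L-1}\phi(\gamma)\,|U_n^{*,k}|_0\le C\eps^{L-1}\phi(\gamma)\,\eps^{M-L-2k-\frac12}\phi(\gamma).
\]
Dividing by $\sqrt\eps$ this is $C\phi(\gamma)^2\eps^{L-\frac32}\cdot\eps^{M-L-2k-\frac12}\le \tfrac12\eps^{M-L-2k-\frac12}\phi(\gamma)$ once $\eps$ is small (using $L-\tfrac32>0$, which follows from \eqref{e11.9}, and $\eps\gamma\le1$ to absorb $\phi(\gamma)$ into the surplus power), giving the first inequality defining $\mathfrak{B}$ for $U_{n+1}$. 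For the two $L^\infty$ bounds I would note that the tangential norm together with the equation controls a full $L^\infty$-norm of $U_{n+1}$ and $\partial U_{n+1}$ with no loss in $\eps$: from $\eps\partial_dU_{n+1}=GU_{n+1}+\eps F$ and the semiclassical form of $G$ (whose tangential derivatives carry exactly the compensating powers of $\eps$), each normal derivative of $U_{n+1}$ is estimated by the tangential norm, and since $k-3>\frac{d}{2}$, Sobolev embedding gives $|U_{n+1}|_*+|\partial U_{n+1}|_*\le C\eps^{M-L-2k-\frac12}\phi(\gamma)\le1$ for $\eps$ small. Hence $U_{n+1}\in\mathfrak{B}$.

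\textbf{Step (ii): contraction.} The difference $U_{n+1}-U_n$ solves \eqref{e11.7} with $g=0$ and forcing $\kappa\big(F_\eps(U_n,\partial'' U_n)-F_\eps(U_{n-1},\partial'' U_{n-1})\big)$; the remainder $\mathcal{C}$ cancels, and bilinearity of $\mathcal{A},\mathcal{B}$ together with the $\eps^L$-Lipschitz dependence of $f_1,f_2$ on their $U$-argument give, by the same Moser estimates applied with $(k-1)$-weights and the $\mathfrak{B}$- and $L^\infty$-bounds on $U_n,U_{n-1}$,
\[
\big|(F_\eps(U_n)-F_\eps(U_{n-1}))^{*,k-1}\big|_0\le C\eps^{L-1}\phi(\gamma)\,\big|(U_n-U_{n-1})^{*,k-1}\big|_0 .
\]
Applying Proposition \ref{e9.55} with $k$ replaced by $k-1$ yields $|(U_{n+1}-U_n)^{*,k-1}|_0\le C\eps^{L-\frac32}\phi(\gamma)\,|(U_n-U_{n-1})^{*,k-1}|_0$, a contraction for $\eps$ small. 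Thus $(U_n)$ converges in the $(k-1)$-norm, the limit lies in $\mathfrak{B}$ by lower semicontinuity of $|\cdot^{*,k}|_0$ and $|\cdot|_*$, and solves \eqref{e11.7}; uniqueness in $\mathfrak{B}$ follows from the same difference estimate. (Alternatively the contraction can be closed in $L^2$ using Theorem \ref{c0c} directly.)

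\textbf{Main obstacle.} The delicate point is the bookkeeping of powers of $\eps$ and $\gamma$: the estimate of Proposition \ref{e9.55} loses $\eps^{-\frac12}$ relative to $|F^{*,k}|_0$, and $|F^{*,k}|_0$ itself carries the large weights $(\gamma/\eps^2)^{k}$, so there is essentially no slack. Closure works only because every truly nonlinear contribution carries the prefactor $\eps^{L-1}$ and the remainder $\eps^{M-L}$, with the three inequalities in \eqref{e11.9} arranged precisely so that, after all these losses, the net power of $\eps$ in each estimate is strictly positive --- and in particular because the derivative in $\mathcal{B}=\eps^{L-1}f_2(\cdot)(U_n,\eps\partial'' U_n)$ already appears weighted by $\eps$, so that it is controlled by $|U_n^{*,k}|_0$ with no extra loss; the analogue with an unweighted derivative would fail, exactly as in Remark \ref{e10.8}. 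Verifying the weighted Moser estimates for $\mathbb{F}$ with these precise $\eps$-powers, and the no-loss recovery of normal regularity from the first-order/semiclassical structure of $G$, is the part that requires care.
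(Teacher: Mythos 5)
Your overall strategy — Picard iteration \eqref{e11.5}, a priori bounds via Propositions \ref{e9.55} and \ref{c0c}, Moser/Sobolev bookkeeping, contraction in a weaker norm — matches the paper's. But there is a genuine gap concerning the high-frequency part of the $\eps$-weighted tangential derivative, and it affects both the definition of your invariant set $\mathfrak{B}$ and your Moser estimate on $\mathcal{B}$.

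First, the implication ``$U\in\mathfrak{B}\Rightarrow\|U\|_{k,\gamma}\le\eps^{M-L-2k-\frac12}\phi(\gamma)$'' is false: $|U^{*,k}|_0$ dominates $|U|_{k,\gamma}$ (since $(\gamma/\eps^2)^{k-j}\ge\gamma^{k-j}$), but it does \emph{not} dominate $|\eps\partial U|_{k,\gamma}$, which is the second half of $\|U\|_{k,\gamma}$ as defined in \eqref{e11.3}. Concretely, $|\eps\partial U|_{k,\gamma}$ involves $|\eps\partial^{k+1}U|_0$, an $(k+1)$-st order quantity not present in $U^{*,k}$; on the high-frequency region $\{|\eps\zeta|\gtrsim 1\}$ the factor $\eps\zeta$ is not small, so $|\eps\partial^{k+1}U|_0$ is not controlled by $|\partial^kU|_0$ there. (Your remark that the $\eps$-weighting of $\eps\partial''$ makes it controllable ``with no extra loss'' is valid only on frequencies where $|\eps\zeta|$ is bounded, which is the $(1-\chi_L)$ part in the paper's notation; it fails on the $\chi_L$ part.) Consequently your set $\mathfrak{B}$ does not control the norm appearing in the conclusion \eqref{e11.10}, and also the weighted Moser estimate for $\mathcal{B}$ is incomplete: when the top $k$ derivatives land on the $\eps\partial''U_n$ factor you need $|(\eps\partial''U_n)^{*,k}|_0$, which is not bounded by $C|U_n^{*,k}|_0$ (an explicit check with $j=k$ shows the ratio of these weights is $\gamma/\eps$, which is unbounded under $\eps\gamma\le1$).

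The missing ingredient is the paper's high-frequency estimate, Step 4 of the proof: from the strengthened form \eqref{e11.12aa} of the high-frequency estimate \eqref{e10.2}(c) with $g=0$, one obtains \eqref{e11.14},
\[
|\chi_L(\eps\partial U)|_{k,\gamma}\le C\sqrt{\eps}\,|F^{*,k}|_0,
\]
which \emph{gains} a factor $\sqrt{\eps}$ precisely where the main estimate \eqref{ej9.56} \emph{loses} $1/\sqrt{\eps}$. This lets one carry $\|U\|_{k,\gamma}\sim|U|_{k,\gamma}+|\chi_L(\eps\partial U)|_{k,\gamma}$ (see \eqref{e11.12a}) as the induction quantity: the two pieces of $\|U_{n+1}\|_{k,\gamma}$ are estimated in \eqref{e11.21} and \eqref{e11.22} with losses $\eps^{-2k-1/2}$ and $\eps^{-2k}$ respectively, and the $\mathcal{B}_2$ contribution $\eps^{L-3}|\chi_L(\eps\partial U_n)|_{k,\gamma}$ in \eqref{e11.18}--\eqref{e11.20} is then absorbed, yielding the closed recursion \eqref{e11.23} with contraction factor $\eps^{L-3-2k-\frac12}C(\gamma)$ — not $\eps^{L-\frac32}$ as in your count. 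Without tracking $|\chi_L(\eps\partial U)|_{k,\gamma}$ separately and using \eqref{e11.14}, the iteration cannot be closed, and the third inequality in \eqref{e11.9} plays no visible role; with it, the proof goes through and your remaining steps (Sobolev for the $L^\infty$ bounds via \eqref{e11.25}--\eqref{e11.28}, contraction in a weaker norm) agree with the paper's Steps 8--9.
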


\begin{proof}
The first few points are some preliminaries.

\textbf{1. Sobolev inequalities. }

For $k-3>\frac{d}{2}$ we have
\begin{align}\label{e11.11}
\begin{split}
&(a)\epsilon|\partial U|_*\leq C(\gamma)(\epsilon|U|_{k-2,\gamma}+\epsilon|\partial_d U|_{k-2,\gamma})\\
&(b)\epsilon|U|_*\leq C(\gamma)(\epsilon|U|_{k-3,\gamma}+\epsilon|\partial_d U|_{k-3,\gamma}).
\end{split}
\end{align}

\textbf{2. Moser inequalities. }

For $k\in\mathbb{N}$ let $\alpha=(\alpha_1,\dots,\alpha_r)$ with $|\alpha|=\alpha_1+\cdots +\alpha_r\leq k$, $\alpha_i\in\mathbb{N}$. Suppose $|v_i|_{k,\gamma}+|v_i|_*<\infty$.  Then
\begin{align}\notag
\;\gamma^{k-|\alpha|}|(\partial^{\alpha_1}v_1)\cdots (\partial^{\alpha_r}v_r)|_0\leq C\sum^r_{i=1}|v_i|_{k,\gamma}(\prod_{j\neq i}|v_i|_*)
\end{align}

\textbf{3. Relations between norms. }Directly from the definitions we see
\begin{align}\label{e11.12}
\begin{split}
&(a)|U|_{k,\gamma}\leq C|U^{*,k}|_0\\
&(b)|U^{*,k}|_0\leq \frac{C}{\epsilon^{2k}}|U|_{k,\gamma}.
\end{split}
\end{align}

Let $\chi_L(\epsilon\zeta)$ be a high frequency cutoff like the one in \eqref{e10.2}(c).  Observe that
\begin{align}\label{e11.12a}
\|U\|_{k,\gamma}\sim |U|_{k,\gamma}+|\chi_L(\epsilon\partial U)|_{k,\gamma}.
\end{align}

\textbf{4. High frequency estimate. }Here we make use of a slightly modified form of the high frequency estimate in \eqref{e10.2}(c) with $g=0$:
\begin{align}\label{e11.12aa}
|\chi_L U^{*,k}_\Lambda|_\Lambda+\sqrt{\epsilon}\langle\chi_L U^{*,k}_\Lambda\rangle_{\sqrt{\Lambda}}\leq
 C\left(\epsilon|F^{*,k}|_0+\epsilon|U^{*,k}_\Lambda|_0+\epsilon\langle U^{*,k}_\Lambda\rangle_0\right).
\end{align}
We can absorb the high frequency pieces of $U^{*,k}_\Lambda$ in the two terms on the right in \eqref{e10.1}(c) to obtain
\begin{align}\label{e11.13}
|\chi_L U^{*,k}_\Lambda|_\Lambda\leq
C\left(\epsilon|F^{*,k}|_0+\epsilon|U^{*,k}|_0+\epsilon\langle U^{*,k}\rangle_0\right),
\end{align}
and then use the main $L^2$ estimate \eqref{ej9.56} to replace the right side of the above inequality by $C\sqrt{\eps}|F^{*,k}|_0$.
When $|\epsilon\zeta|$ is large, we have $\frac{\Lambda^2}{\epsilon}\geq C\langle\zeta\rangle$.
Thus, with \eqref{e11.12}(a) we may conclude
\begin{align}\label{e11.14}
|\chi_L (\epsilon\partial U)|_{k,\gamma}\leq C\sqrt{\eps}|F^{*,k}|_0.
\end{align}

\textbf{5. Induction assumption. }Let the first iterate $U_1$ be $0$. Assume there exist $\epsilon_1(\gamma)$, $\gamma_1$ such that for $0<\epsilon\leq\epsilon_1$, $\gamma\geq\gamma_1$, and some $\phi(\gamma)$
\begin{align}\label{e11.15}
\begin{split}
&\|U_n\|_{k,\gamma}\leq 2\epsilon^{M-L-2k-\frac{1}{2}}\phi(\gamma)\\
&|U_n|_*\leq 1\\
&|\partial U_n|_*\leq 1
\end{split}
\end{align}
The main step is to show, after decreasing $\epsilon_1$ if necessary, that $U_{n+1}$ satisfies the same estimates.

\textbf{6. Estimate $\mathbb{F}_n:=\mathbb{F}(U_n,\partial''U_n)$. }
Set $\mathbb{A}=\kappa(x_0)\mathcal{A}$ for $\cA$ as in \eqref{e11.6}, and define $\mathbb{B}$ and $\mathbb{C}$ similarly.

Applying the Moser inequalities we have
\begin{align}\label{e11.16}
|\mathbb{A}|_{k,\gamma}\leq C(\gamma)\epsilon^{L-2}|U_n|_{k,\gamma},
\end{align}
where $C(\gamma)$ depends on $L^\infty$ norms of $(u^a,\nabla u^a)$ and $\epsilon U_n$.

Write $\epsilon\partial U_n=(1-\chi_L)(\epsilon\partial U_n)+\chi_L(\epsilon\partial U_n)$, and corresponding to this decomposition set $\mathbb{B}=\mathbb{B}_1+\mathbb{B}_2$.  Since $|\epsilon\zeta|\leq C$ on supp $(1-\chi_L(\epsilon\zeta))$, we have just as above
\begin{align}\label{e11.17}
|\mathbb{B}_1|_{k,\gamma}\leq C(\gamma)\epsilon^{L-2}|U_n|_{k,\gamma}.
\end{align}
For $\mathbb{B}_2$ we have
\begin{align}\label{e11.18}
|\mathbb{B}_2|_{k,\gamma}\leq C(\gamma)(\epsilon^{L-2}|U_n|_{k,\gamma}+\epsilon^{L-3}|\chi_L(\epsilon\partial U_n)|_{k,\gamma}).
\end{align}
Moreover, we have
\begin{align}\label{e11.19}
|\mathbb{C}|_{k,\gamma}\leq \phi(\gamma)\epsilon^{M-L}.
\end{align}
Summing these estimates we obtain
\begin{align}\label{e11.20}
|\mathbb{F}_n|_{k,\gamma}\leq C(\gamma)(\epsilon^{L-2}|U_n|_{k,\gamma}+\epsilon^{L-3}|\chi_L(\epsilon\partial U_n)|_{k,\gamma})+\epsilon^{M-L}\phi(\gamma).
\end{align}

\textbf{7. Estimate $\|U_{n+1}\|_{k,\gamma}$. }
In view of the main estimate \eqref{ej9.56}, \eqref{e11.12}, and \eqref{e11.20} we have
\begin{align}\label{e11.21}
\begin{split}
&|U_{n+1}|_{k,\gamma}\leq C|U^{*,k}_{n+1}|_0\leq\frac{C}{\sqrt{\epsilon}}|\mathbb{F}^{*,k}_n|_0\leq\frac{C}{\epsilon^{2k+\frac{1}{2}}}|\mathbb{F}_n|_{k,\gamma}\\
&\leq C(\gamma)(\epsilon^{L-2-2k-\frac{1}{2}}|U_n|_{k,\gamma}+\epsilon^{L-3-2k-\frac{1}{2}}|\chi_L(\epsilon\partial U_n)|_{k,\gamma})+\epsilon^{M-L-2k-\frac{1}{2}}\phi(\gamma).
\end{split}
\end{align}

From \eqref{e11.14} and \eqref{e11.20} we obtain
\begin{align}\label{e11.22}
\begin{split}
&|\chi_L(\epsilon\partial U_{n+1})|_{k,\gamma}\leq C|\mathbb{F}^{*,k}_n|_0\leq\frac{C}{\epsilon^{2k}}|\mathbb{F}_n|_{k,\gamma}\\
&\leq C(\gamma)(\epsilon^{L-2-2k}|U_n|_{k,\gamma}+\epsilon^{L-3-2k}|\chi_L(\epsilon\partial U_n)|_{k,\gamma})+\epsilon^{M-L-2k}\phi(\gamma).
\end{split}
\end{align}

Adding the previous two estimates we find
\begin{align}\label{e11.23}
\|U_{n+1}\|_{k,\gamma}\leq \epsilon^{L-3-2k-\frac{1}{2}}C(\gamma)\|U_n\|_{k,\gamma}+\epsilon^{M-L-2k-\frac{1}{2}}\phi(\gamma).
\end{align}

Provided $\epsilon_1(\gamma)$ is chosen so that $\epsilon^{L-3-2k-\frac{1}{2}}C(\gamma)\leq \frac{1}{2}$, the induction assumption and \eqref{e11.23} imply
\begin{align}\label{e11.24}
\|U_{n+1}\|_{k,\gamma}\leq 2\epsilon^{M-L-2k-\frac{1}{2}}\phi(\gamma).
\end{align}

\textbf{8. $L^\infty$ estimates. }The equation gives
\begin{align}\label{e11.25}
\epsilon |\partial_d U_{n+1}|_{k-2,\gamma}\leq C |U_{n+1}|_{k,\gamma}+\epsilon|\mathbb{F}_n|_{k-2,\gamma}.
\end{align}
From \eqref{e11.20} we get
\begin{align}\label{e11.26}
|\mathbb{F}_n|_{k,\gamma}\leq \epsilon^{L-3}C(\gamma)\|U_n\|_{k,\gamma}+\epsilon^{M-L}\phi(\gamma).
\end{align}

Thus,
\begin{align}\label{e11.27}
\epsilon |\partial_d U_{n+1}|_{k-2,\gamma}\leq 2\epsilon^{M-L-2k-\frac{1}{2}}\phi(\gamma).
\end{align}
This together with the inequalities \eqref{e11.11} and the assumption \eqref{e11.9} immediately implies that for $\epsilon_1$ small enough
\begin{align}\label{e11.28}
\begin{split}
&\epsilon|U_{n+1}|_*\leq \epsilon\\
&\epsilon|\partial U_{n+1}|_*\leq \epsilon.
\end{split}
\end{align}

This completes the inductive step.

\textbf{9. Contraction.}
 Thus, the sequence of iterates satisfies  the estimates \eqref{e11.15}. One can now consider the problem satisfied by $U_{n+1}-U_n$ and use estimates like those above (but simpler) to show that for $\epsilon_1$ small enough, the sequence converges to some $U$ in the $\|\;\|_{0,\gamma}$ norm.  A standard argument (involving interpolation and weak convergence) implies that $U$ solves the error equation \eqref{e11.7} and satisfies the estimates \eqref{e11.10} in Theorem \ref{e11.8}.

This completes the proof of Theorem \ref{e11.8}, and the paper.
\end{proof}

\medskip

{\bf Acknowledgement:}  The third and fourth authors thank
Indiana University, Bloomington, University of North Carolina,
Chapel Hill,
University of Provence, University of Bordeaux 1,
E.N.S., Paris, University of Paris 13, and the
Foundation Sciences Math\'ematiques de Paris for their hospitality
in supporting a series of visits in which this work was
initiated and partially carried out.
\appendix

\section{Appendix: Classical and semiclassical pseudodifferential calculi}

\subsection{Semiclassical calculus}

\emph{\quad}Our proof of the $L^2$ estimate in the small frequency region requires the use of classical and  semiclassical pseudodifferential operators with finite regularity in $x'$.  Here we summarize the needed properties of those calculi, referring the reader to the Appendix of \cite{GMWZ2} for all the proofs.  We are not able to use paradifferential operators (which might have allowed us to assume much less regularity in $x'$), because the process of paralinearization introduces $O(|U|_{L^2})$ errors at a stage when they are too big to be absorbed by the left side of our degenerate $L^2$ estimate.

\begin{notation}\label{ee8.4}
1.  Let $\zeta'=(\zeta_0,\zeta'')\in \mathbb{R}^d$ denote variables dual to the tangential variables $x'=(x_0,x'')$, and set $\zeta=(\zeta',\gamma)$, where we always take $\gamma\geq 1$.  Set $\langle\zeta\rangle=\sqrt{|\zeta|^2}=\sqrt{|\zeta',\gamma|^2}$ and, with slight abuse, $\langle\zeta'\rangle=\sqrt{|\zeta',1|^2}$.

2. For $\epsilon>0$ let $\beta=(\beta',\gamma')\in\mathbb{R}^d\times\overline{\mathbb{R}}_+$ (resp. $\beta'\in\mathbb{R}^d$) denote a placeholder for $\epsilon\zeta$ (resp. $\epsilon\zeta'$).

3. We will ignore powers of $2\pi$ in all formulas involving pseudodifferential operators and Fourier transforms.

4. On $H^s(\mathbb{R}^d)$ define the norms $|u|_{s,\gamma}=|\langle\zeta\rangle^s\hat{u}|_{L^2}$.

5. The notation
\[
T_{\epsilon,\gamma}:\mathcal{X}\to\mathcal{Y}
\]
for a family of linear operators mapping one function space into another means that the operator norm is uniformly bounded with respect to $\epsilon,\gamma$ for $0<\epsilon<1$ and $\gamma\geq 1$. For a particular $s\in\mathbb{R}$ we say $T_{\epsilon,\gamma}$ is \emph{of order $k$ on $H^s$} if
\begin{align}\label{ee8.4a}
T_{\epsilon,\gamma}:H^s(\mathbb{R}^d)\to H^{s-k}(\mathbb{R}^d).
\end{align}
 When the domain and target spaces of $T$ are clear from the context, we'll write simply $|T|$ for the operator norm.

6. We will sometimes denote spaces like $C^M(\mathbb{R}^d_{x'},C^\infty(\mathbb{R}^d\times\overline{\mathbb{R}}_+))$ by $C^M(x',C^\infty(\beta))$ when the domains of the variables involved are clear.

\end{notation}

\begin{rem}\label{ee8.4z}
\textup{Our pseudodifferential operators are defined by symbols with finite regularity in $x'$.  Such an operator is generally of order $k$ on $H^s$ only for $s$ in a proper subinterval of $\mathbb{R}$.}
\end{rem}

The semiclassical operators are built from ``symbols" in the set

\begin{align}\label{ee8.5}
\begin{split}
&\mathcal{S}_M=\{p(x',\beta)\in
C^M(\mathbb{R}^d_{x'},C^\infty(\mathbb{R}^d\times\overline{\mathbb{R}}_+)): \\
&\qquad p\text{ is independent of }x' \text{ for }|x'| \text{ large and } \sup_{|\mu|\leq M}|\partial_{x'}^\mu\partial_{\beta'}^\nu p(x',\beta)|\leq C_\nu\}.
\end{split}
\end{align}
Let $\mathcal{S}_\infty=\cap_M \mathcal{S}_M$.   Define symbol norms
\begin{align}\label{ee8.5z}
|p|_{M,K}=\sup_{|\mu|\leq M}\sup_{|\nu|\leq K}\sup_{(x',\beta)}|\partial_{x'}^\mu\partial_{\beta'}^\nu p(x',\beta)|.
\end{align}
To each $p(x',\beta)\in \mathcal{S}_M$ we associate the operator defined by
\begin{align}\label{ee8.6}
p(x',\epsilon D)u = \int e^{ix'\zeta'}p(x',\epsilon\zeta)\hat{u}(\zeta')d\zeta'.
\end{align}

\begin{prop}\label{ee8.7}
If $p\in \mathcal{S}_M$ and $M\geq d+1$ then
\[
p(x',\epsilon D):L^2(\mathbb{R}^d)\to L^2(\mathbb{R}^d).
\]
\end{prop}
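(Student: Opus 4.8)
The statement is the basic $L^2$-continuity property of the semiclassical calculus, and the natural route is a Schur-test argument on the Fourier side, exploiting the fact that the symbol has $M\ge d+1$ bounded derivatives in $x'$ and is $x'$-independent outside a ball. The first reduction is to peel off the part of $p$ that does not depend on $x'$ at infinity: writing $p(x',\beta)=p_\infty(\beta)+p_0(x',\beta)$, where $p_0$ is compactly supported in $x'$ (for $d=1$ one first subtracts a fixed smooth bounded function of $x'$ interpolating the two limiting values), the operator $p_\infty(\epsilon D)$ is an ordinary Fourier multiplier with $\|p_\infty(\epsilon D)\|_{L^2\to L^2}\le \sup_\beta|p_\infty(\beta)|\le |p|_{0,0}$, uniformly in $\epsilon$ and $\gamma$. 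So it suffices to treat $p_0$.

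For $p_0$, I would take the Fourier transform in $x'$, $\hat p_0(\xi',\beta)=\int e^{-ix'\xi'}p_0(x',\beta)\,dx'$. Since $p_0(\cdot,\beta)$ is supported in a fixed ball and has all $x'$-derivatives up to order $M$ bounded by $C|p_0|_{M,0}$ uniformly in $\beta$, integration by parts $M$ times gives $|\hat p_0(\xi',\beta)|\le C_M|p_0|_{M,0}\,\langle\xi'\rangle^{-M}$ for all $\beta\in\RR^d\times\overline{\RR}_+$. A direct computation with the definition \eqref{ee8.6} (ignoring powers of $2\pi$ throughout, as agreed) shows that $v:=p_0(x',\epsilon D)u$ has Fourier transform
\begin{align*}
\hat v(\eta')=\int \hat p_0(\eta'-\zeta',\epsilon\zeta)\,\hat u(\zeta')\,d\zeta',
\end{align*}
i.e. on the Fourier side $v\mapsto u$ is an integral operator with kernel $K(\eta',\zeta')=\hat p_0(\eta'-\zeta',\epsilon\zeta)$, and $|K(\eta',\zeta')|\le C_M|p_0|_{M,0}\langle\eta'-\zeta'\rangle^{-M}$. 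Because $M\ge d+1$, the function $\langle w\rangle^{-M}$ is integrable on $\RR^d$, so both $\sup_{\eta'}\int|K(\eta',\zeta')|\,d\zeta'$ and $\sup_{\zeta'}\int|K(\eta',\zeta')|\,d\eta'$ are bounded by $C_M|p_0|_{M,0}\int\langle w\rangle^{-M}\,dw$. Schur's lemma then gives $\|\hat v\|_{L^2}\le C_M|p_0|_{M,0}\|\hat u\|_{L^2}$, and by Plancherel $\|p_0(x',\epsilon D)u\|_{L^2}\le C_M|p_0|_{M,0}\|u\|_{L^2}$. Combining with the multiplier estimate yields $\|p(x',\epsilon D)\|_{L^2\to L^2}\le C(|p|_{0,0}+|p|_{M,0})$.

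The point worth emphasizing — and the only genuine subtlety — is the uniformity in $\epsilon$ (for $0<\epsilon<1$) and in $\gamma\ge 1$: this is automatic here because the symbol bounds on $p_0$ and all the resulting kernel estimates hold uniformly over \emph{all} $\beta\in\RR^d\times\overline{\RR}_+$, and the operator only ever evaluates the symbol at $\beta=\epsilon\zeta$, which ranges in that set; no cancellation or oscillation in $\epsilon$ is needed. There is no real obstacle in the argument; the mild care needed is in the preliminary decomposition handling the non-compactly-supported (constant-at-infinity) part of the symbol, and in tracking that every constant depends only on the fixed seminorms $|p|_{0,0}$ and $|p|_{M,0}$. (All of this is carried out in detail in the Appendix of \cite{GMWZ2}, to which the reader is referred.)
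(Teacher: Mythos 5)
Your argument is correct, and it is the standard Fourier-side Schur-test route for this type of finite-regularity $L^2$-boundedness. Note that the paper itself does not prove this proposition --- the text explicitly defers all proofs of the calculus lemmas to the Appendix of \cite{GMWZ2} --- so there is no in-paper proof to compare against; your decomposition $p=p_\infty+p_0$ (Fourier multiplier at infinity plus compactly-supported-in-$x'$ remainder), the integration-by-parts bound $|\hat p_0(\xi',\beta)|\le C|p_0|_{M,0}\langle\xi'\rangle^{-M}$ uniform in $\beta$, and the Schur test exploiting $M\ge d+1>d$ are exactly what one expects that reference to do, with uniformity in $(\epsilon,\gamma)$ automatic since the operator only samples the symbol at $\beta=\epsilon\zeta$. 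Two cosmetic slips: you wrote ``$v\mapsto u$'' where you mean ``$u\mapsto v$'' for the integral operator on the Fourier side, and for $d=1$ the object to subtract is not a function of $x'$ alone but a combination $\phi_+(x')p_\infty^+(\beta)+\phi_-(x')p_\infty^-(\beta)$ of two cutoffs times the two $\beta$-dependent limits (each giving a multiplication operator composed with a Fourier multiplier, hence $L^2$-bounded); neither affects the substance of the argument.
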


\begin{defn}\label{ee8.8}
A family of linear operators $r_{\epsilon,\gamma}$ is said to be \emph{of order $\epsilon^k$} if
$r_{\epsilon,\gamma}=\epsilon^k \mathcal{R}_{\epsilon,\gamma}$ where
\[
\mathcal{R}_{\epsilon,\gamma}:L^2(\mathbb{R}^d)\to L^2(\mathbb{R}^d).
\]
\end{defn}

\begin{prop}[Products]\label{ee8.9}
Suppose $p\in \mathcal{S}_{M_1}$ and $q\in \mathcal{S}_{M_2}$, where $M_1\geq d+1$ and $M_2\geq M_1+(d+1)+k+1$ for some $k\geq 1$. Set
\begin{align}\label{ee8.10}
t(x',\beta)=\sum_{|\alpha|\leq k-1}\frac{1}{\alpha!}\epsilon^{|\alpha|}\partial_{\beta'}^\alpha p(x',\beta)D_{x'}^\alpha q(x',\beta).
\end{align}
Then $t(x',\beta)\in \mathcal{S}_{M_1}$ and
\begin{align}\label{ee8.11}
A\equiv p(x',\epsilon D)q(x',\epsilon D)=t(x',\epsilon D)+r_{\epsilon,\gamma},
\end{align}
where $r_{\epsilon,\gamma}$ is  of order $\epsilon^k$. Precisely, $r_{\epsilon,\gamma}=\epsilon^k T$, where
\[
|T|\leq C|p|_{d+1,k}|\partial_{x'}q|_{M_2-1,0}.
\]
\end{prop}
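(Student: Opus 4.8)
The plan is to run the standard oscillatory-integral derivation of the symbol-composition formula, keeping careful track of the semiclassical parameter $\epsilon$ and of the fact that our symbols carry only finitely many $x'$-derivatives. First I would write the Schwartz kernel of $A=p(x',\epsilon D)q(x',\epsilon D)$ as the iterated oscillatory integral obtained by composing the kernels of the two factors, namely (schematically)
\begin{equation}
K_A(x',y')=\iiint e^{i(x'-w')\zeta'+i(w'-y')\eta'}\,p(x',\epsilon\zeta')\,q(w',\epsilon\eta')\,dw'\,d\zeta'\,d\eta',
\end{equation}
and then Taylor expand $p(x',\epsilon\zeta')$ about the point $\epsilon\eta'$ to order $k-1$ with integral remainder, so that $p(x',\epsilon\zeta')=\sum_{|\alpha|\le k-1}\frac{\epsilon^{|\alpha|}}{\alpha!}(\partial_{\beta'}^\alpha p)(x',\epsilon\eta')(\zeta'-\eta')^\alpha+R_k$, where $R_k$ is $\epsilon^k$ times a sum over $|\alpha|=k$ of $(\zeta'-\eta')^\alpha$ against an $s$-averaged derivative $(\partial_{\beta'}^\alpha p)(x',\epsilon\eta'+s\epsilon(\zeta'-\eta'))$.

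For the finite-sum part, the monomial $(\zeta'-\eta')^\alpha$ is produced by applying $(-D_{w'})^\alpha$ to the exponential factor; integrating by parts in $w'$ moves these derivatives onto $q$, and the resulting $\zeta'$-integral collapses the $w'$-variable to $x'$, reassembling exactly the operator with symbol $t(x',\beta')=\sum_{|\alpha|\le k-1}\frac{\epsilon^{|\alpha|}}{\alpha!}\partial_{\beta'}^\alpha p(x',\beta')D_{x'}^\alpha q(x',\beta')$. One checks $t\in\mathcal{S}_{M_1}$ directly: each summand loses $|\alpha|\le k-1$ $x'$-derivatives off of $q$, and the hypothesis $M_2\ge M_1+(d+1)+k+1$ leaves far more than the needed $x'$-regularity, while the $\beta'$-derivatives stay within $\mathcal{S}_\infty$ bounds.

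The remainder kernel, after the same integration by parts in $w'$ and the substitution $\theta'=\zeta'-\eta'$, takes the form $K_r(x',y')=\epsilon^k\int e^{i(x'-y')\eta'}b(x',y',\epsilon\eta')\,d\eta'$, where $b$ is itself an oscillatory integral in $(\theta',w')$ built from $(\partial_{\beta'}^\alpha p)(x',\epsilon\eta'+s\epsilon\theta')$ and $(D_{w'}^\alpha q)(w',\epsilon\eta')$ with $|\alpha|=k$. The heart of the argument is to show $r_{\epsilon,\gamma}=\epsilon^k\mathcal{R}_{\epsilon,\gamma}$ with $\mathcal{R}_{\epsilon,\gamma}:L^2\to L^2$ uniformly in $\epsilon,\gamma$ and operator norm controlled by $|p|_{d+1,k}|\partial_{x'}q|_{M_2-1,0}$. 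For this I would estimate the Schwartz kernel of $\mathcal{R}_{\epsilon,\gamma}$ directly: after rescaling $\eta'=\beta'/\epsilon$ (and likewise in $\theta'$), repeated non-stationary-phase integration by parts in the frequency variables — where the symbols are smooth — produces decay $\langle(x'-y')/\epsilon\rangle^{-(d+1)}$ and $\langle(x'-w')/\epsilon\rangle^{-(d+1)}$ at the cost of $d+1$ extra $\beta'$-derivatives of $p$ and of the already-present $D^\alpha_{x'}q$; the $\epsilon$-powers from the rescalings are exactly compensated by those gained in the integrations by parts, leaving the overall prefactor $\epsilon^k$. Schur's lemma then yields the $L^2$ bound, and tracking seminorms gives the stated estimate, the regularity budget $M_1\ge d+1$ and $M_2-1\ge k+(d+1)$ (guaranteed by the hypotheses) being precisely what legitimizes every derivative that appears.

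The step I expect to be most delicate is exactly this last bookkeeping: since $x'$-derivatives of the symbols are a scarce resource, one must take care that all integrations by parts producing spatial decay are performed in the frequency variables and never in $x'$, and must verify that the number of $x'$-derivatives unavoidably falling on $q$ (the $k$ from the Taylor monomial plus the $d+1$ needed to make the composite integral absolutely convergent) never exceeds $M_2-1$, and similarly for $p$. Because these calculi are developed in full in the Appendix of \cite{GMWZ2}, an equally acceptable route is simply to invoke the composition theorem proved there; the computation sketched above is that theorem specialized to the symbol classes $\mathcal{S}_M$.
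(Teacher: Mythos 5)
The paper does not give a proof of this proposition; it explicitly defers to the Appendix of \cite{GMWZ2} (``we summarize the needed properties of those calculi, referring the reader to the Appendix of \cite{GMWZ2} for all the proofs''), so there is no in-paper argument to compare against.

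Your overall strategy --- compose the kernels, Taylor expand $p$ about $\epsilon\eta'$, integrate by parts in $w'$ to reassemble $t(x',\epsilon D)$, and then estimate the order-$k$ Taylor remainder --- is the right one. There is, however, a genuine gap in the way you propose to bound the remainder. You estimate the Schwartz kernel $K_r(x',y')$ directly by non-stationary-phase integration by parts in the frequency variables and then invoke Schur's lemma. But the symbols in $\mathcal{S}_M$ are only \emph{bounded} in $\beta'$ (along with every $\beta'$-derivative); they carry no decay in $\beta'$. Consequently, after rescaling, the $\eta'$-integral defining $K_r$ is merely an oscillatory integral, not an absolutely convergent one, and the integrations by parts you perform trade powers of $|x'-y'|/\epsilon$ against $\beta'$-derivatives of $p$ and $q$ that are again only bounded --- so the pointwise kernel bound needed for Schur's test is never actually obtained. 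Moreover, the bookkeeping in your last paragraph would produce a bound involving $d+1$ additional $\beta'$-derivatives of both $p$ and $q$, whereas the stated estimate $|T|\le C\,|p|_{d+1,k}\,|\partial_{x'}q|_{M_2-1,0}$ tracks only $k$ $\beta'$-derivatives of $p$ and \emph{zero} $\beta'$-derivatives of $q$. That mismatch is a signal that the kernel-plus-Schur route is not the one that yields this proposition.

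The argument that does match the stated seminorms works at the level of the remainder \emph{symbol}, not the kernel. After the $w'$-integration by parts, the remainder is $\epsilon^k$ times the operator with symbol
\[
r_k(x',\eta') = k\sum_{|\alpha|=k}\frac{1}{\alpha!}\int_0^1(1-s)^{k-1}\!\!\int e^{ix'\theta'}\,\partial^\alpha_{\beta'}p\bigl(x',\epsilon\eta'+s\epsilon\theta'\bigr)\,\widehat{D^\alpha_{x'}q}\,(\theta',\epsilon\eta')\,d\theta'\,ds.
\]
Here the $\theta'$-integral \emph{is} absolutely convergent: since $D_{x'}^\alpha q$ is compactly supported in $x'$ (the symbols are constant for $|x'|$ large) and has $M_2-k$ remaining $x'$-derivatives, $\widehat{D^\alpha_{x'}q}(\theta',\cdot)$ decays like $\langle\theta'\rangle^{-(M_2-k)}$, which is integrable because $M_2\ge 2(d+1)+k+1$. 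Taking up to $d+1$ $x'$-derivatives of $r_k$ either hits $p$ (costing $|p|_{d+1,k}$) or brings down powers $\theta'^j$, $j\le d+1$, which the remaining $x'$-regularity of $q$ absorbs; this is exactly what produces the seminorm $|\partial_{x'}q|_{M_2-1,0}$ with \emph{no} $\beta'$-derivatives of $q$. One then concludes $L^2$-boundedness by the Calderon--Vaillancourt-type criterion of Proposition~\ref{ee8.7} (symbol bounded with $d+1$ bounded $x'$-derivatives), which is precisely designed to handle the lack of $\beta'$-decay and replaces the Schur step. Your final sentence --- that one may simply invoke the composition theorem of \cite{GMWZ2} --- is in fact what the paper does.
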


\begin{prop}[Adjoints]\label{ee8.13a}
Suppose $p\in \mathcal{S}_M$, where $M\geq (d+1)+k+1$, for some $k\geq 1$.  Set
\begin{align}\label{ee8.14a}
t(x',\beta)=\sum_{|\alpha|\leq k-1}\frac{1}{\alpha!}\epsilon^{|\alpha|}\partial_{\beta'}^\alpha D^\alpha_{x'}p^*(x',\beta).
\end{align}
Then $t\in \mathcal{S}_{M-k+1}$ and
\[
p(x',\epsilon D)^*=t(x',\epsilon D)+r_{\epsilon,\gamma},
\]
where $r_{\epsilon,\gamma}$ is  of order $\epsilon^k$.  We have $r_{\epsilon,\gamma}=\epsilon^k T$, where
\[
|T|\leq C|\partial_{x'} p|_{M-1,k}.
\]
\end{prop}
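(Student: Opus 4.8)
The plan is to reproduce the classical derivation of the adjoint formula in pseudodifferential calculus, keeping track of the powers of $\epsilon$ generated by the rescaling $\beta=\epsilon\zeta$ and of the fact that symbols are only $C^M$ in $x'$. First I would write down the oscillatory-integral representation of $P^*:=p(x',\epsilon D)^*$. Starting from \eqref{ee8.6}, one computes for $u,v\in\mathcal S(\mathbb R^d)$ that $\langle p(x',\epsilon D)u,v\rangle=\int\!\!\int\!\!\int e^{i(x'-y')\zeta'}p(x',\epsilon\zeta)u(y')\overline{v(x')}\,dy'\,d\zeta'\,dx'$, whence
\[
P^*v(x')=\int\!\!\int e^{i(x'-y')\zeta'}\,p^*(y',\epsilon\zeta)\,v(y')\,dy'\,d\zeta',
\]
where $p^*$ is the pointwise conjugate transpose of $p$. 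This integral is not absolutely convergent; following the Appendix of \cite{GMWZ2} it is given a meaning by inserting a cutoff $\chi(\delta\zeta')$, carrying out the manipulations below, and letting $\delta\to0$ — a routine point I would only indicate.

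\textbf{Leading part.} I would Taylor-expand $p^*(y',\beta)$ in its first slot about $y'=x'$ to order $k$,
\[
p^*(y',\beta)=\sum_{|\alpha|\le k-1}\frac{(y'-x')^\alpha}{\alpha!}\,\partial_{x'}^\alpha p^*(x',\beta)+R_k(x',y',\beta),
\]
with $R_k=k\sum_{|\alpha|=k}\frac{(y'-x')^\alpha}{\alpha!}\int_0^1(1-s)^{k-1}\big(\partial_{x'}^\alpha p^*\big)\big(x'+s(y'-x'),\beta\big)\,ds$. For each of the finitely many leading terms I would use $(y'-x')^\alpha e^{i(x'-y')\zeta'}=(-D_{\zeta'})^\alpha e^{i(x'-y')\zeta'}$ and integrate by parts in $\zeta'$; since a derivative $\partial_{\zeta'}$ landing on a function of $\epsilon\zeta$ produces a factor $\epsilon$ times $\partial_{\beta'}$ evaluated at $\beta=\epsilon\zeta$, the $\alpha$-term becomes $\tfrac1{\alpha!}\epsilon^{|\alpha|}\partial_{\beta'}^\alpha D_{x'}^\alpha p^*(x',\epsilon\zeta)$ acting on the remaining $y'$-integral $\int e^{-iy'\zeta'}v(y')\,dy'=\hat v(\zeta')$, i.e. on $t(x',\epsilon D)v$ with $t$ as in \eqref{ee8.14a}. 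That $t\in\mathcal S_{M-k+1}$ is then a bookkeeping check: each summand costs at most $k-1$ extra $x'$-derivatives, $\beta'$-derivatives do not cost $x'$-regularity, and uniform boundedness of $\partial_{x'}^\mu\partial_{\beta'}^\nu t$ follows from that of $p$.

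\textbf{Remainder.} Substituting $R_k$ into the oscillatory integral, the factor $(y'-x')^\alpha$ with $|\alpha|=k$ is again traded for $(-D_{\zeta'})^\alpha$ on the exponential and $k$ integrations by parts are carried out; each $\zeta'$-derivative now lands either on $e^{i(x'-y')\zeta'}$ or on the amplitude $\big(\partial_{x'}^\alpha p^*\big)\big(x'+s(y'-x'),\epsilon\zeta\big)$, the latter yielding an $\epsilon\,\partial_{\beta'}$. After all $k$ derivatives one obtains $r_{\epsilon,\gamma}=\epsilon^k T$ with $T$ an oscillatory-integral operator whose (two-point) amplitude is
\[
a_\epsilon(x',y',\zeta')=\sum_{|\alpha|=k}c_\alpha\int_0^1(1-s)^{k-1}\big(\partial_{\beta'}^\alpha\partial_{x'}^\alpha p^*\big)\big(x'+s(y'-x'),\epsilon\zeta\big)\,ds.
\]
The $L^2$-boundedness of $T$, uniformly in $0<\epsilon<1$, $\gamma\ge1$, and in $s\in[0,1]$, is the two-point-amplitude analogue of Proposition \ref{ee8.7} (also in the Appendix of \cite{GMWZ2}, proved by a Schur-test/Calder\'on--Vaillancourt estimate on the kernel): it consumes $d+1$ further $x'$- and $y'$-derivatives of the amplitude, which are available precisely because $M\ge(d+1)+k+1$ leaves $d+1$ derivatives after the $k$ used in $R_k$ and the one already implicit in writing the bound in terms of $\partial_{x'}p$. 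Propagating the seminorms through this estimate yields $|T|\le C\,|\partial_{x'}p|_{M-1,k}$.

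The step I expect to be the real obstacle is this last one: making the oscillatory integrals rigorous and, above all, extracting an $\epsilon$- and $\gamma$-uniform operator-norm bound on $T$ in terms of finitely many symbol seminorms of $p$ in the setting of merely $C^M$ dependence on $x'$. Everything else is the standard symbol calculus, the only non-classical feature being the systematic appearance of a factor $\epsilon$ whenever a $\zeta'$-derivative hits a symbol evaluated at $\beta=\epsilon\zeta$; these are exactly what turn the classical asymptotic expansion into an expansion in powers of $\epsilon$.
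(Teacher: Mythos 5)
Your argument is correct and is the standard Taylor‐expansion/integration-by-parts derivation that the paper has in mind; indeed, the paper gives no proof of this proposition, stating only that the reader is ``referring \ldots to the Appendix of \cite{GMWZ2} for all the proofs,'' and what you have written is precisely the argument contained there. The one place deserving care is the accounting of $x'$-regularity in the final seminorm bound: the remainder amplitude involves $\partial_{x'}^\alpha p^*$ with $|\alpha|=k$ composed with $x'+s(y'-x')$, and the two-point $L^2$-boundedness lemma (the $C^M$-analogue of Calder\'on--Vaillancourt) consumes $d+1$ additional $x'$- and $y'$-derivatives of this amplitude; since $M\ge (d+1)+k+1$ one may ``spend'' one $x'$-derivative to write the bound in the form $|T|\le C|\partial_{x'}p|_{M-1,k}$, with the remaining $M-1\ge k+(d+1)$ derivatives supplying exactly what the Taylor remainder and the boundedness lemma require — you identified this bookkeeping correctly as the crux.
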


\subsection{Classical calculus}\label{classical}

  For $m\in\mathbb{R}$ define the classical  symbol classes
\begin{align}\label{ee8.23}
\begin{split}
&\mathcal{C}^m_M=\{p(x',\zeta)\in
C^M(\mathbb{R}^d_{x'},C^\infty(\mathbb{R}^d\times\{\gamma\geq 1\}):
p\text{ is independent of }x' \\
&\quad\text{ for }|x'| \text{ large and } \sup_{|\mu|\leq M}|\partial_{x'}^\mu\partial_{\zeta'}^\nu p(x',\zeta)|\leq C_\nu\langle\zeta\rangle^{m-|\nu|}\},
\end{split}
\end{align}
and set $\mathcal{C}^m_\infty=\cap_M \mathcal{C}^m_M$.   Define associated symbol norms
\begin{align}\label{ee8.24z}
|p|_{M,K}=\sup_{|\mu|\leq M}\sup_{|\nu|\leq K}\sup_{(x',\zeta)}|\partial_{x'}^\mu\partial_{\zeta'}^\nu p(x',\zeta)|\langle\zeta\rangle^{|\nu|-m}.
\end{align}

To an element $p(x',\zeta)\in\mathcal{C}^m_M$ we associate the classical operator
\begin{align}\label{ee8.26}
p(x',D)u = \int e^{ix'\zeta'}p(x',\zeta)\hat{u}(\zeta')d \zeta'.
\end{align}

\begin{prop}[Classical products]\label{ee8.54}
Suppose
\[
p(x',\zeta)\in\mathcal{C}^{m_1}_{M_1} \text{ and } q(x',\zeta)\in\mathcal{C}^{m_2}_{M_2},
\]
where $M_1\geq d+1$ and $M_2\geq 2(d+1)+|m_1|+3$.   Set $t(x',\zeta)=p(x',\zeta)q(x',\zeta)$.  Then $t\in\mathcal{C}^{m_1+m_2}_{M_1}$ and
\begin{align}\label{ee8.55}
p(x',D)q(x',D)=t(x',D)+r,
\end{align}
where $r$ is of order $m_1+m_2-1$.  We have
\[
|r|\leq C|p|_{d+1,1}|\partial_{x'}q|_{M_2-1,0}.
\]
\end{prop}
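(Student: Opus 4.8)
The plan is to proceed exactly as in the semiclassical case treated in Proposition \ref{ee8.9}, replacing the bounded symbol classes $\mathcal{S}_M$ by the classical classes $\mathcal{C}^m_M$ and the gain in powers of $\epsilon$ by a gain of one order in $\langle\zeta\rangle$. Writing the composition as an oscillatory integral, the full symbol of $p(x',D)q(x',D)$ is
\begin{align}\label{planintegral}
\sigma(x',\zeta')=\int\int e^{-iz'\cdot\theta'}\,p(x',\zeta'+\theta')\,q(x'+z',\zeta')\,dz'\,d\theta',
\end{align}
ignoring powers of $2\pi$ as elsewhere. The leading (stationary--phase) term of \eqref{planintegral} is the principal symbol $t(x',\zeta')=p(x',\zeta')q(x',\zeta')$, and the membership $t\in\mathcal{C}^{m_1+m_2}_{M_1}$ together with the symbol--norm bound on $t$ are immediate from the Leibniz rule and \eqref{ee8.24z}.

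The remainder is $r(x',\zeta')=\sigma(x',\zeta')-t(x',\zeta')$. To extract one order of decay, I would write $p(x',\zeta'+\theta')=p(x',\zeta')+\int_0^1\theta'\cdot(\partial_{\zeta'}p)(x',\zeta'+s\theta')\,ds$, substitute this in \eqref{planintegral}, and use $\theta'_j e^{-iz'\cdot\theta'}=i\,\partial_{z'_j}e^{-iz'\cdot\theta'}$ to integrate by parts in $z'$. Since the $p$--factor does not depend on $z'$, the $z'$--derivative falls only on $q$, and $\partial_{z'}q(x'+z',\zeta')=(\partial_{x'}q)(x'+z',\zeta')$, giving
\begin{align}\label{planremainder}
r(x',\zeta')=-i\int_0^1\!\!\int\int e^{-iz'\cdot\theta'}\,(\partial_{\zeta'}p)(x',\zeta'+s\theta')\cdot(\partial_{x'}q)(x'+z',\zeta')\,dz'\,d\theta'\,ds.
\end{align}
Formally $\partial_{\zeta'}p$ has order $m_1-1$ and $\partial_{x'}q$ has order $m_2$, so $r$ should define an operator of order $m_1+m_2-1$; the appearance of $\partial_{x'}q$ in \eqref{planremainder} is exactly what forces the symbol norm $|\partial_{x'}q|_{M_2-1,0}$ into the final bound, while $|\partial_{\zeta'}p|$ is controlled by $|p|_{d+1,1}$.

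The analytic content is then to show that the operator with symbol \eqref{planremainder} is bounded on the appropriate $H^s$ with the stated norm. I would do this by the device used for Proposition \ref{ee8.7}: localize dyadically in $\zeta'$, rescale, and apply a limited--regularity Calder\'on--Vaillancourt estimate, controlling the $(z',\theta')$ integrals by repeated integration by parts in $\theta'$ (producing $z'$--weights absorbed by $x'$--derivatives of $q$, which is why $q\in\mathcal{C}^{m_2}_{M_2}$ with $M_2$ large is needed) and in $z'$ (producing $\theta'$--weights, controlled using that $p$ is a symbol of order $m_1$, which is where $|m_1|$ enters the requirement $M_2\ge 2(d+1)+|m_1|+3$). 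These are the estimates of the Appendix of \cite{GMWZ2}, which I would simply invoke.

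The main obstacle is the limited $x'$--regularity: unlike in the smooth calculus one cannot iterate the expansion, so the entire error past the principal term must be controlled in a single step, and the delicate point is the exact bookkeeping of how many $x'$-- and $\zeta'$--derivatives of each factor are consumed --- enough to make the $(z',\theta')$ oscillatory integral in \eqref{planremainder} converge absolutely after the integrations by parts, while losing only one order in $\langle\zeta\rangle$ and picking up only the two symbol norms $|p|_{d+1,1}$ and $|\partial_{x'}q|_{M_2-1,0}$. Tracking the role of $|m_1|$ in this count, and verifying that the resulting operator is of order $m_1+m_2-1$ on the relevant range of $s$ (cf. Remark \ref{ee8.4z}), is where all the work lies.
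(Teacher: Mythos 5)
The paper does not itself prove this proposition: Appendix A explicitly states that all proofs of the calculus statements are deferred to the Appendix of \cite{GMWZ2}, so there is no in-paper argument against which to compare. That said, your outline --- represent the composed symbol as the oscillatory integral, extract the leading term $t=pq$, Taylor-expand $p$ in $\zeta'$ to first order with integral remainder, convert the resulting $\theta'$-factor to $\partial_{z'}$ by integration by parts so that it lands on $q(x'+z',\zeta')$ as $\partial_{x'}q$, and then bound the remainder by dyadic localization in $\zeta'$ plus a finite-regularity Calder\'on--Vaillancourt argument --- is precisely the standard route used for composition formulas in limited-regularity calculi, and your identification of why the two norms $|p|_{d+1,1}$ and $|\partial_{x'}q|_{M_2-1,0}$ appear is correct, as is your observation that (unlike in the $C^\infty$ calculus) one cannot iterate, so the full remainder must be controlled in a single step. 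What you have not actually carried out, and acknowledge as such, is the verification of the specific thresholds $M_1\geq d+1$ and $M_2\geq 2(d+1)+|m_1|+3$: the $d+1$ in $M_1$ is the $L^2$-boundedness requirement for the principal term, while the count in $M_2$ arises from how many $\theta'$-integrations by parts (each spending an $x'$-derivative of $q$) are needed to offset both the $\langle\zeta'+\theta'\rangle^{m_1-1}$ growth of $\partial_{\zeta'}p$ (the source of $|m_1|$) and to produce an absolutely convergent kernel giving operator boundedness. Filling in that bookkeeping is the entirety of the remaining work; the conceptual structure of your proposal is sound and matches what \cite{GMWZ2} would need to do.
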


\begin{prop}[Classical adjoints]\label{ee8.60}
Suppose
\[
p(x',\zeta)\in\mathcal{C}^m_M,
\]
where $M\geq (d+1)+|m|+3$. Set $t(x',\zeta)=p^*(x',\zeta)$.  Then
\begin{align}\label{ee8.61}
p(x',D)^*=t(x',D)+r_{\epsilon,\gamma},
\end{align}
where $r$ is of order $m-1$ and $|r|\leq C|\partial_{x'}p|_{M-1,1}$.
\end{prop}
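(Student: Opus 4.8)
The plan is to treat this as the standard pseudodifferential adjoint computation, exactly parallel to the semiclassical adjoint result Proposition \ref{ee8.13a} specialized to $k=1$, and to the arguments recalled from the Appendix of \cite{GMWZ2}. First I would write $p(x',D)^*$ as an oscillatory integral: for $v$ in the Schwartz class,
\[
p(x',D)^* v(x') = \int\!\!\int e^{i(x'-z')\zeta'}\, p^*(z',\zeta)\, v(z')\, dz'\, d\zeta',
\]
so that the amplitude $p^*(z',\zeta)$ depends on the integration variable $z'$ rather than on the output variable $x'$. Taylor expanding $p^*$ in its first slot about $z'=x'$ to first order,
\[
p^*(z',\zeta) = p^*(x',\zeta) + \sum_{|\alpha|=1}(z'-x')^\alpha \int_0^1 \partial_{x'}^\alpha p^*\bigl(x'+s(z'-x'),\zeta\bigr)\, ds,
\]
the zeroth-order term reproduces exactly $\int\!\!\int e^{i(x'-z')\zeta'}p^*(x',\zeta)v(z')\,dz'\,d\zeta' = p^*(x',D)v(x') = t(x',D)v(x')$, while the integral term defines the remainder operator $r_{\epsilon,\gamma}$.

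Next I would show that $r_{\epsilon,\gamma}$ is of order $m-1$. In each remainder term the factor $(z'-x')^\alpha$ with $|\alpha|=1$ is rewritten using $(z'-x')^\alpha e^{i(x'-z')\zeta'} = -D_{\zeta'}^\alpha e^{i(x'-z')\zeta'}$ with $D_{\zeta'}=-i\partial_{\zeta'}$, and one integrates by parts in $\zeta'$. This moves one $\zeta'$-derivative onto the amplitude $\int_0^1 \partial_{x'}^\alpha p^*(x'+s(z'-x'),\zeta)\,ds$; since $\partial_{x'}^\alpha p^* \in \mathcal{C}^m_{M-1}$, that $\zeta'$-derivative gains a factor $\langle\zeta\rangle^{-1}$, so the resulting amplitude lies, uniformly in $s\in[0,1]$, in the class of symbols of order $m-1$, with symbol norms controlled by $|\partial_{x'}p|_{M-1,1}$. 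Applying the basic $L^2$-continuity bound for pseudodifferential operators with finite $x'$-regularity (the Calder\'on--Vaillancourt-type estimate recalled in \cite{GMWZ2}, which costs on the order of $d+1$ further tangential derivatives of the symbol, plus the extra $|m|$-many derivatives needed to absorb the order shift and to run the integration-by-parts argument) then yields that $r_{\epsilon,\gamma}$ is of order $m-1$ with $|r|\le C|\partial_{x'}p|_{M-1,1}$. This accounting is precisely where the hypothesis $M\ge (d+1)+|m|+3$ is consumed: one uses one $x'$-derivative of $p$ in the Taylor remainder, one $\zeta'$-derivative in the integration by parts, and the remaining budget for the $L^2$ bound and the $\langle\zeta\rangle$-weights.

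The main obstacle, as in the proof of the companion Proposition \ref{ee8.13a}, is the bookkeeping required to make the integration-by-parts argument rigorous with only finite regularity in $x'$: one must track exactly how many $x'$- and $\zeta'$-derivatives of $p$ are consumed by the Taylor remainder, by the conversion of $(z'-x')^\alpha$ into $\zeta'$-derivatives, and by the final $L^2$ boundedness estimate, and check that the total stays within the allotted $M$. Since these estimates are entirely parallel to those already carried out in the Appendix of \cite{GMWZ2} and are not specific to the present problem, I would ultimately invoke that reference for the remaining routine details.
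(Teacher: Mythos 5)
Your argument is correct and is precisely the standard kernel/Taylor-expansion proof that the paper itself has in mind: note that the paper gives no proof of Proposition \ref{ee8.60}, instead referring the reader once and for all to the Appendix of \cite{GMWZ2} ``for all the proofs,'' and the argument there is the oscillatory-integral adjoint expansion you describe. Your derivation of the kernel formula, the first-order Taylor split producing $t(x',D)$ plus a remainder, the conversion of $(z'-x')$ into a $\zeta'$-derivative by integration by parts (gaining the factor $\langle\zeta\rangle^{-1}$ and hence the drop to order $m-1$), and the appeal to a Calder\'on--Vaillancourt-type $L^2$ bound for finite-regularity symbols to get $|r|\le C|\partial_{x'}p|_{M-1,1}$, all match that reference; deferring the final derivative-count bookkeeping to \cite{GMWZ2} is exactly what the paper does.
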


\subsection{Garding inequality}

\begin{notation}\label{ee8.14b}
1.  Let $(u,v)$ denote the $L^2$ pairing, which can be extended as the duality pairing on $H^s\times H^{-s}$.
\end{notation}

2.  For a matrix $a$ (symbol or operator) set $\Re a = \frac{a+a^*}{2}$.

\vspace{.1in}

The following Garding inequality is used in the proof of the $L^2$ estimate to obtain bounds from below both in the interior and on the boundary.

\begin{prop}[Garding inequality]\label{ee8.17e}
Consider \;$n\times n$\; matrix symbols $a\in \mathcal{C}^{m}_{M_1}$, $w\in \mathcal{C}^{0}_{M_2}$, where $M_1\geq 2(d+1)+\max(\frac{m}{2},m)+2+[|\frac{m}{2}|]$ and $M_2\geq 2(d+1)+m+2+[|\frac{m}{2}|]$.  Suppose there is a scalar symbol $\chi\in \mathcal{C}^0_{M_1}$ and $c>0$ such that $\chi^2 w=w$ and
\begin{align}\label{ee8.18}
\Re a(x',\zeta)\geq c\langle\zeta\rangle^m \text{ on supp }\chi.
\end{align}
Let $A=a(x',D)$ and $W=w(x',D)$.
Then there exists $C>0$ such that for all $u\in H^{\frac{m}{2}}$
\begin{align}\label{ee8.18a}
\frac{c}{2}|Wu|^2_{\frac{m}{2},\gamma}\leq \Re(AWu,Wu)+C|u|^2_{\frac{m}{2}-1,\gamma}.
\end{align}
The constant $C$ depends on symbol norms of $a$, $w$, and $\chi$.

\end{prop}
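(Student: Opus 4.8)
The plan is to prove the estimate by a Friedrichs-type symmetrization, carried out entirely within the one-term pseudodifferential calculus of Propositions~\ref{ee8.54} and \ref{ee8.60}; the regularity budget on $M_1,M_2$ is precisely what is needed to legitimize every symbolic manipulation below, and I will not belabor that bookkeeping.

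First I would reduce away the $\langle\zeta\rangle^m$ weight. Set $\Lambda=\langle D\rangle$ and $v:=\Lambda^{m/2}Wu$, so that $|v|_{0,\gamma}=|Wu|_{m/2,\gamma}$ and $|v|_{-1,\gamma}=|Wu|_{m/2-1,\gamma}\le C|u|_{m/2-1,\gamma}$ (the last step because $W$ has order $0$). Using $\Re A=(\Re a)(x',D)+r_{m-1}$ (adjoint rule) and conjugating by the $x'$-independent multiplier $\Lambda^{-m/2}$, one obtains $\Re(AWu,Wu)=((\Re A)Wu,Wu)=(\tilde a_0(x',D)v,v)+(R_{-1}v,v)$, where $\tilde a_0:=\langle\zeta\rangle^{-m}\Re a\in\mathcal C^0_{M_1}$ satisfies $\tilde a_0\ge cI$ on $\operatorname{supp}\chi$ and $R_{-1}$ has order $-1$. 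Next, exploiting $\chi^2w=w$, I would replace $\tilde a_0$ by the \emph{globally} positive Hermitian symbol $\hat a_0:=cI+\chi^2(\tilde a_0-cI)\in\mathcal C^0_{M_1}$, which satisfies $\hat a_0\ge cI$ everywhere (on $\operatorname{supp}\chi$ because $\tilde a_0-cI\ge0$ and $\chi^2\ge0$, off $\operatorname{supp}\chi$ trivially) together with the symbol identity $\hat a_0\,w=\tilde a_0\,w$. Since $v=(\langle\zeta\rangle^{m/2}\chi^2w)(x',D)u$, two applications of the product rule and this identity give $\tilde a_0(x',D)v=\hat a_0(x',D)v+r_{m/2-1}u$, hence $(\tilde a_0(x',D)v,v)=(\hat a_0(x',D)v,v)+(r_{m/2-1}u,v)$.

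Then comes the symmetrization: fix $c_1\in(0,c/2)$ and let $b:=(\hat a_0-(\tfrac c2+c_1)I)^{1/2}$ be the smooth Hermitian positive square root, which is well defined and lies in $\mathcal C^0_{M_1}$ because $\hat a_0-(\tfrac c2+c_1)I\ge(\tfrac c2-c_1)I>0$, and $\hat a_0=b^*b+(\tfrac c2+c_1)I$. With $B:=b(x',D)$ the calculus gives $B^*B=(b^2)(x',D)+r_{-1}$, so $\hat a_0(x',D)=B^*B+(\tfrac c2+c_1)I-r_{-1}$ and $(\hat a_0(x',D)v,v)\ge(\tfrac c2+c_1)|v|_0^2-(r_{-1}v,v)$. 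Collecting the three remainder terms $(R_{-1}v,v)$, $(r_{m/2-1}u,v)$, $(r_{-1}v,v)$, each is bounded by $\delta|v|_0^2+C_\delta\big(|v|_{-1,\gamma}^2+|u|_{m/2-1,\gamma}^2\big)$; choosing $\delta=c_1/3$ absorbs the resulting $3\delta|v|_0^2$ into $(\tfrac c2+c_1)|v|_0^2$, and using $|v|_{-1,\gamma}\le C|u|_{m/2-1,\gamma}$ yields $\Re(AWu,Wu)\ge\tfrac c2|Wu|_{m/2,\gamma}^2-C|u|_{m/2-1,\gamma}^2$, which is the assertion.

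The main obstacle is obtaining the \emph{sharp} loss $|u|_{m/2-1,\gamma}^2$ — not the cruder $|u|_{m/2-1/2,\gamma}^2$ a naive G\r{a}rding estimate produces — while using only the one-term calculus available here. The mechanism that makes it work is structural: every remainder occurs in a bilinear form one of whose factors is $v=\Lambda^{m/2}Wu$, i.e.\ exactly the quantity controlled on the left, so the bad piece of each such form can always be written $\delta|v|_0^2+\cdots$ and reabsorbed into $\tfrac c2|Wu|_{m/2,\gamma}^2$; at no point must one pair an order $-1$ operator against $u$ alone and hope to bound it by $|u|_{-1}^2$. Beyond this, the only care needed is matching the orders of all composition and adjoint remainders against the regularity hypotheses on $M_1,M_2$, and checking that the smooth matrix square root $b$ inherits membership in $\mathcal C^0_{M_1}$ from $\hat a_0$.
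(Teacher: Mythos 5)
The paper does not actually present a proof of this proposition: the Appendix states explicitly that all proofs of the calculus lemmas, including this G\aa rding inequality, are deferred to the Appendix of \cite{GMWZ2}. So I cannot compare step-by-step against the source. On its own terms, though, your argument is correct, and it is the standard Friedrichs symmetrization route one would expect in a finite-regularity one-term calculus. The decisive observations all check out: the conjugation by the $x'$-independent multiplier $\Lambda^{-m/2}$ reduces $\Re a$ to the bounded symbol $\tilde a_0=\langle\zeta\rangle^{-m}\Re a$ at the cost of an order-$(-1)$ remainder; the identity $\chi^2 w=w$ forces $(\tilde a_0-\hat a_0)w=(1-\chi^2)(\tilde a_0-cI)w=0$, so the globally elliptic modification $\hat a_0=cI+\chi^2(\tilde a_0-cI)$ differs from $\tilde a_0$ only by an order-$(m/2-1)$ error when applied to $v=\Lambda^{m/2}Wu$; the Hermitian square root $b=(\hat a_0-(\tfrac c2+c_1)I)^{1/2}$ lies in $\mathcal C^0_{M_1}$ because it is a smooth function of $\hat a_0$ on a compact set of uniformly positive Hermitian matrices; and $B^*B=(b^*b)(x',D)+r_{-1}$ produces the lower bound $(\tfrac c2+c_1)|v|_0^2$ modulo an order-$(-1)$ error.

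Your remark about the sharp loss is also exactly right and is the heart of the matter. A direct sharp-G\aa rding estimate for the matrix symbol $\hat a_0\ge cI$ would produce an error $C|v|_{-1/2}^2\sim C|Wu|_{m/2-1/2}^2$, and there is no way to interpolate that down to $|u|_{m/2-1}^2$ because the estimate gives no control of $|u|_{m/2}$ itself, only of $|Wu|_{m/2}$. The symmetrization instead guarantees that \emph{every} remainder occurs as a bilinear form against $v$, so each bad piece is $\delta|v|_0^2+C_\delta(|v|_{-1}^2+|u|_{m/2-1}^2)$ and the $\delta|v|_0^2$ contribution is reabsorbed into $(\tfrac c2+c_1)|v|_0^2$. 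Choosing $\delta=c_1/3$ and using $|v|_{-1,\gamma}=|Wu|_{m/2-1,\gamma}\le C|u|_{m/2-1,\gamma}$ (boundedness of $W$ on $H^{m/2-1}$, available under the stated hypothesis on $M_2$) closes the estimate. The only place you are deliberately informal is the statement $v=(\langle\zeta\rangle^{m/2}\chi^2 w)(x',D)u$, which holds only up to an order-$(m/2-1)$ remainder, but your ``two applications of the product rule'' correctly accounts for that error, so the conclusion stands. I did not audit the precise inequalities on $M_1,M_2$ line by line, but the orders of all remainders you invoke are consistent with Propositions~\ref{ee8.54} and \ref{ee8.60}, and the stated regularity thresholds are visibly generous enough.
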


\section{Appendix: An existence result for systems with pseudodifferential boundary conditions}\label{s:pseudobc}
\emph{\quad}In this appendix we discuss how to solve
variable-coefficient
hyperbolic initial boundary-value problems with pseudodifferential boundary conditions, such as arise from applying the procedures described in Section \ref{wp} to the quasilinear case.   The problem \eqref{newp} is also of this type, but with constant coefficients.

Consider a
noncharacteristic hyperbolic boundary-value problem
on the half-space $\{x=(x',x_d)=(t,x'',x_d):x_d\geq 0\}$:
\begin{equation}\label{hyp}
\begin{aligned}
&u_{x_d} - G(x,D') u= f\in e^{\gamma t}L^2(x),\\
&e^{\gamma t}\Gamma(x',D')e^{-\gamma t}u(x',0)=g\in e^{\gamma t}L^2(x'),\\
\end{aligned}
\end{equation}
where $G$ is a differential operator with symbol
\begin{equation}\label{G}
G(x, i\tau, i\eta):= -A_d^{-1}(x)
(i\tau + \sum_{j=1}^{d-1}i\eta_j A_j(x)),
\end{equation}
derived from the hyperbolic equation
$u_t+ \sum_{j=1}^d A_j(x) \partial_{x_j}u=A_d f$.   Here the $A_j(x)\in C^\infty$ are $N\times N$ matrices and constant outside some compact set. We suppose  $A_d(x)$ is invertible.  Assume also that  the eigenvalues $\lambda_j(x,\xi)$ of $\sum^d_{j=1}A_j(x)\xi_j$ are $C^\infty$, real, semisimple, and of constant multiplicity for all $(x,\xi)$.\footnote{Matrices $A_j(x)$ of finite regularity can be treated by similar arguments if one uses paradifferential operators. Our assumption on the $\lambda_j$, which implies that the operator $\partial_d-G(x,D')$ in \eqref{hyp} can be conjugated to \emph{block structure} in the sense of \cite{MZ2},  can be weakened as discussed in \cite{MZ2}.}  The boundary operator is a classical pseudodifferential operator of degree zero associated to a $C^\infty$, $p\times N$ matrix symbol $\Gamma(x',\tau,\gamma,\eta)$ using the quantization of section \ref{classical}.

Because of the pseudodifferential boundary conditions this problem is not covered by the standard theory presented, for example, in \cite{CP,BS}.  To state our result,  we first define $D(x',\tau,\gamma,\eta)$ to be the Lopatinski determinant
\begin{align}\label{D}
D(x',\tau,\gamma,\eta)=\det\left(\ker \Gamma(x',\tau,\gamma,\eta), E_+(-G(x',0,\tau,\gamma,\eta))\right),
\end{align}
where the spaces appearing in the determinant are defined just by freezing $x'$.
Here, following \cite[pp. 135--136]{Met4}, we mean the determinant obtained by substituting
orthonormal bases of the spaces involved, so that the modulus of the
determinant is independent of the choice of basis.

\begin{prop}\label{pseudoprop}
With the  above assumptions on \eqref{hyp}, suppose also that the uniform Lopatinski condition holds:
\be\label{Slop}
|D(x',\tau,\gamma,\eta)|\geq C>0, \text{ for }C \text{ independent of }(x',\tau,\eta) \text{ and }\gamma>0.
\ee
Moreover, suppose there hold also the uniform bounds
\be\label{lop2}
\hbox{\rm $|\Gamma|,\,
|\Gamma^\dagger|\le C$,}
\ee
where $\Gamma^\dagger:= \Gamma^*(\Gamma \Gamma^*)^{-1}$ is the pseudo-inverse
of $\Gamma$.
Then, there exists $\gamma_0$ such that for $\gamma\geq \gamma_0$ the problem \eqref{hyp} has a unique solution satisfying
\be\label{maxhyp}
\gamma |u|_{0,\gamma}^2 + \langle u\rangle_{0,\gamma}^2 \le C\left(\frac{|f|_{0,\gamma}^2}{\gamma} + \langle g\rangle_{0,\gamma}^2\right),
\ee
where the constant $C>0$ is independent of $\gamma$.
\end{prop}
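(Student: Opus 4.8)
The plan is to reduce the pseudodifferential boundary-value problem \eqref{hyp} to a standard (differential) one by absorbing the boundary operator $\Gamma(x',D')$ into a microlocal change of unknown on the boundary, and then invoking the classical Kreiss theory as presented in \cite{CP,BS,Met4}. First I would use the block-structure hypothesis on the $\lambda_j$ to conjugate $\partial_d - G(x,D')$ microlocally to block-diagonal form separating the incoming and outgoing modes, obtaining on each compact piece of the cosphere a bounded family of conjugators $V(x',\tau,\gamma,\eta)$ of degree zero, well-conditioned uniformly in $\gamma\geq 1$ by the constant-multiplicity assumption. Under this conjugation the stable subspace $E_-(-G(x',0,\tau,\gamma,\eta))$ becomes a fixed coordinate subspace, and the boundary condition becomes $\tilde\Gamma(x',D')u(x',0) = g$ with $\tilde\Gamma = \Gamma V + (\text{order }-1)$; by \eqref{lop2} the principal symbol $\tilde\Gamma$ still has $|\tilde\Gamma|, |\tilde\Gamma^\dagger| \le C$, and by \eqref{Slop} the restriction of $\tilde\Gamma$ to the complement of $E_-$ is uniformly invertible.

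Next I would construct a Kreiss symmetrizer at the symbol level. This is the heart of the matter. Away from the boundary one takes the usual $\Sigma(x',\zeta)$ adapted to the $\pm$ block decomposition of $G$, with $\Re(\Sigma G) \ge c\langle\zeta\rangle(\gamma/\langle\zeta\rangle)\,\mathrm{Id}$ in the hyperbolic sense and $\Sigma$ positive on the outgoing block, negative on the incoming block. The only new ingredient compared to \cite{CP,BS} is the boundary term: one needs $\Sigma(x',0,\zeta) \ge c\,\mathrm{Id}$ modulo $C|\tilde\Gamma(x',\zeta)v|^2$, which follows from the uniform Lopatinski condition \eqref{Slop} exactly as in the classical construction (the determinant bound gives uniform transversality of $\ker\tilde\Gamma$ and $E_+$, hence a uniform lower bound for $|v|$ in terms of its $E_+$-component plus $|\tilde\Gamma v|$). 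Because $\tilde\Gamma$ is a genuine pseudodifferential operator (not merely a matrix), one quantizes $\Sigma$ and $\tilde\Gamma$ using the classical calculus of Appendix \ref{classical}, and the error terms produced by the calculus — of order $-1$ on the symbol level — are absorbed using the Garding inequality, Proposition \ref{ee8.17e}, after choosing $\gamma$ large. Here the finite-regularity restrictions on the symbol classes force one to work in a fixed Sobolev range, but since we only want the $L^2$ estimate \eqref{maxhyp} this is harmless; one needs the $A_j$ and $\Gamma$ smooth enough (which is assumed) so that all the composition and adjoint remainders in Propositions \ref{ee8.54}, \ref{ee8.60}, \ref{ee8.17e} are available.

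Pairing the symmetrized equation with $u$, integrating by parts in $x_d$, and using the symmetrizer properties then yields, for $\gamma \ge \gamma_0$,
\[
\gamma|u|_{0,\gamma}^2 + \langle u\rangle_{0,\gamma}^2 \le C\Big(\tfrac{1}{\gamma}|f|_{0,\gamma}^2 + \langle g\rangle_{0,\gamma}^2\Big) + (\text{lower-order terms}),
\]
and the lower-order terms are absorbed on the left by enlarging $\gamma_0$. This gives the a priori estimate \eqref{maxhyp}. Existence is then obtained by the standard duality argument: the adjoint problem (with the adjoint pseudodifferential boundary condition, which inherits a uniform Lopatinski condition and well-conditioning from \eqref{Slop}, \eqref{lop2} by the same microlocal reduction) satisfies the analogous backward estimate, so the forward problem is solvable by the Hahn–Banach / Lax–Milgram scheme; uniqueness is immediate from \eqref{maxhyp}. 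The main obstacle I anticipate is purely technical bookkeeping: making sure that all the pseudodifferential remainder terms generated when one quantizes the symmetrizer and commutes it past the pseudodifferential boundary operator $\Gamma(x',D')$ are genuinely of order $-1$ (hence absorbable via Garding for large $\gamma$), and that the finite $x'$-regularity of the symbols is sufficient for every composition invoked; conceptually nothing new beyond the classical Kreiss construction is required once the boundary condition has been conjugated into a well-conditioned pseudodifferential Dirichlet-type condition transverse to $E_+$.
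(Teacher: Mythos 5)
Your treatment of the a priori estimate matches the paper's: both invoke Kreiss symmetrizers for the block-structured conjugated operator, with the uniform Lopatinski condition \eqref{Slop} and well-conditioning \eqref{lop2} giving the needed uniform transversality, and the paper likewise defers the symbolic details to \cite{CP,BS,Met4}. So far so good.

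The gap is in the existence step, which you dispatch as a ``standard duality argument.'' With a genuinely pseudodifferential boundary operator $\Gamma(x',D')$ the duality argument is not standard, and this is precisely where the paper spends its effort. When you integrate by parts in $x_d$, the resulting boundary pairing $\langle u,v\rangle$ must be split according to $\Gamma$ and a complementary $\tilde\Gamma$; at the symbol level one has $\Id = \Gamma^*\Gamma + \tilde\Gamma^*\tilde\Gamma$ (after normalizing to orthonormal rows), but the quantized operators only satisfy
\[
\Id = (\Gamma_{D'})^*\Gamma_{D'} + (\tilde\Gamma_{D'})^*\tilde\Gamma_{D'} + R_{D'} + \tilde R_{D'},
\]
with $R_{D'},\tilde R_{D'}$ of order $-1$. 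These remainders cannot simply be declared absorbable in a duality (as opposed to an energy-estimate) argument, because the Riesz/Hahn--Banach step requires an \emph{exact} algebraic identity relating the boundary pairing to the two boundary conditions. The paper fixes this by exactly inverting $\bigl(\begin{smallmatrix}\Gamma_{D'}\\\tilde\Gamma_{D'}\end{smallmatrix}\bigr)$ on $L^2$ via a Neumann series for $\gamma$ large, solving for perturbations $S_{D'},\tilde S_{D'}$ of norm $O(\gamma^{-1})$ such that $\Id = (\Gamma_{D'}+S_{D'})^*\Gamma_{D'} + (\tilde\Gamma_{D'}+\tilde S_{D'})^*\tilde\Gamma_{D'}$ holds exactly. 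The duality argument then produces a solution to a \emph{perturbed} problem with boundary operator $\Gamma_{D',\gamma}+S_{D',\gamma}$ rather than $\Gamma_{D',\gamma}$, and a final Neumann-series step (using the a priori bound and $|S_{D',\gamma}|=O(\gamma^{-1})$) recovers the original problem. Without this two-stage correction the argument you sketch does not close; with it, it matches the paper's proof.
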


\begin{proof}
\textbf{1. }The proof of the a priori estimate \eqref{maxhyp} may be carried out using Kreiss symmetrizers as in \cite{CP,BS}, since $\partial_{x_d}-G(x,D')$ may be conjugated to block structure and the uniform Lopatinski condition holds.
See also \cite[pp. 135--136]{Met4}, particularly Lemma 6.2.4,
for details of the Kreiss symmetrizer
argument and the role of bounds \eqref{lop2}.

\textbf{2. }Next we define the approximate
adjoint problem
\begin{align}\label{hypadj}
\begin{split}
&u_{x_d} + G(x,D')^* u= \tilde f \in e^{-\gamma t}L^2(x),\\
&e^{-\gamma t}\tilde{\Gamma}(x',D')e^{\gamma t}u(x',0)=\tilde g\in e^{-\gamma t}L^2(x')
\end{split}
\end{align}
where $G(x,D')^*$ is the formal adjoint of $G(x,D')$ and the $(N-p)\times N$ matrix symbol $\tilde \Gamma$ is chosen so that $\kernel \tilde \Gamma =(\kernel \Gamma)^{\perp}$ for each $(x',\tau,\gamma,\eta)$.
Without loss of generality, we may take both $\Gamma$ and $\tilde \Gamma$
to have orthonormal columns, so that the symbols satisfy
$\Id= \Gamma^*\Gamma + \tilde \Gamma^* \tilde \Gamma$.
Let us now write $\Gamma(x',D')=\Gamma_{D'}$, $G(x,D')^*=(G_{D'})^*$, etc..
Since
\begin{align}
\begin{split}
&(\Gamma_{D'})^*\Gamma_{D'}=(\Gamma^*\Gamma)_{D'}-R_{D'}\text{ and }\\ &(\tilde{\Gamma}_{D'})^*\tilde\Gamma_{D'}=(\tilde\Gamma^*\tilde\Gamma)_{D'}-\tilde R_{D'},
\end{split}
\end{align}
where $R_{D'}$ and $\tilde R_{D'}$ are operators of order $-1$,  we have
\begin{align}
Id=(\Gamma_{D'})^*\Gamma_{D'}+R_{D'}+(\tilde{\Gamma}_{D'})^*\tilde\Gamma_{D'}+\tilde R_{D'}.
\end{align}

\textbf{3. }Observe that for $\gamma$ large we can invert the $N\times N$ matrix operator $\bp \Gamma_D\\\tilde \Gamma_D\ep$ \emph{exactly} on $L^2$ by first using the calculus to construct an approximate inverse, and then using a Neumann series.
Thus we can solve the operator equation
\begin{align}
R_{D'}+\tilde R_{D'}=\bp (S_{D'})^*&(\tilde S_{D'})^*\ep \bp \Gamma_{D'}\\\tilde \Gamma_{D'}\ep = (S_{D'})^*\Gamma_{D'}+(\tilde S_{D'})^*\tilde\Gamma_{D'}
\end{align}
for $(S_{D'})^*$ and $(\tilde S_{D'})^*$. This gives
\begin{align}
Id=(\Gamma_{D'}+S_{D'})^*\Gamma_{D'}+(\tilde\Gamma_{D'}+\tilde S_{D'})^*\tilde\Gamma_{D'}.
\end{align}
The operators $S_{D'}$ and $\tilde S_{D'}$ have norms of size $O(\gamma^{-1})$ on $L^2$.

\textbf{4. } Let us define
\begin{align}
\Gamma_{D',\gamma}:=e^{\gamma t}\Gamma_{D'}e^{-\gamma t}\text{ and }
\Gamma_{D',-\gamma}:=e^{-\gamma t}\Gamma_{D'}e^{\gamma t}.
\end{align}
Integrating by parts we obtain the duality relation
\ba\label{duality}
&\left((\partial_{x_d} - G_{D'})u,v\right) -\left( u,(-\partial_{x_d} -
(G_{D'})^* )v\right)=
\langle u,v\rangle=\langle e^{\gamma t}u,e^{-\gamma t}v\rangle=\\
&\quad\langle(\Gamma_{D',\gamma}+S_{D',\gamma})u,\Gamma_{D',-\gamma}v\rangle+\langle(\tilde\Gamma_{D',\gamma}+\tilde S_{D',\gamma})u,\tilde \Gamma_{D',-\gamma}v\rangle
\ea
Setting $E=\{v\in C^\infty_c(\overline{\bR}^{d+1}_+):\tilde\Gamma_{D',-\gamma}v=0\}$ and recalling \eqref{hypadj}, we define a linear functional on $(\partial_{x_d} +
(G_{D'})^* )E$ by
\begin{align}
\ell\left((-\partial_{x_d} -
(G_{D'})^* )v\right):=(f,v)-\langle g,\Gamma_{D',-\gamma}v\rangle.
\end{align}
One checks that the forward Lopatinski condition \eqref{D} implies that the adjoint problem \eqref{hypadj} satisfies the backward uniform Lopatinski condition (see \cite{CP}),
and thus
\begin{align}\label{est2}
\gamma|v|^2_{0,-\gamma}+\langle v(0)\rangle^2_{0,-\gamma}\leq \frac{|(\partial_{x_d} +
(G_{D'})^* )v|^2_{0,-\gamma}}{\gamma}\text{ for }v\in E,
\end{align}
where $|u|_{0,-\gamma}:=|e^{\gamma t}u|_{L^2}$.  Using
\begin{align}
|\langle g,\Gamma_{D',-\gamma}v\rangle|\leq C \langle g \rangle_{0,\gamma} \langle v \rangle_{0,-\gamma},
\end{align}
a similar estimate for $(f,v)$, and \eqref{est2}, one obtains readily that $\ell$ is continuous on
$(\partial_{x_d} +
(G_{D'})^* )E$ for the topology induced by $e^{-\gamma t}L^2$.  The Riesz theorem then yields
$u\in e^{\gamma t}L^2$ such that
\begin{align}\label{j1}
(u,(-\partial_{x_d} -
(G_{D'})^* )v)=(f,v)-\langle g,\Gamma_{D',-\gamma}v\rangle \text{ for all }v\in E.
\end{align}
Taking $v\in C^\infty_c\{x_d>0\}$ we obtain from \eqref{j1} that $\partial_{x_d}u-G_Du=f$. In view of \eqref{duality},\eqref{j1} we obtain
\begin{align}
\langle(\Gamma_{D',\gamma}+S_{D',\gamma})u-g,\Gamma_{D',-\gamma}v\rangle=0\text{ for all }v\in E.
\end{align}
Since the matrix symbol $\Gamma$ is surjective when restricted to $\ker\tilde \Gamma$, it follows (by density) that
\begin{align}\label{j5}
(\Gamma_{D',\gamma}+S_{D',\gamma})u=g.
\end{align}

\textbf{5. }Given $(f,g)\in Y_\gamma: =e^{\gamma t}L^2(x)\times e^{\gamma t}L^2(x')$, we have found $u\in e^{\gamma t}L^2$ satisfying the nearby problem
\begin{equation}\label{j3}
\begin{aligned}
&u_{x_d} - G(x,D') u= f\in e^{\gamma t}L^2(x),\\
&(\Gamma_{D',\gamma}+S_{D',\gamma})u(x',0)=g\in e^{\gamma t}L^2(x').
\end{aligned}
\end{equation}
Applying  the estimate \eqref{maxhyp} and treating $S_{D',\gamma}u(x',0)$ as an absorbable error,  we have
\begin{align}\label{j4}
\langle u\rangle_{0,\gamma}\leq C\left(\frac{|f|_{0,\gamma}}{\sqrt{\gamma}}+\langle g\rangle_{0,\gamma}\right),
\end{align}
and thus
\begin{align}\label{j8}
\langle S_{D',\gamma} u\rangle_{0,\gamma} \leq \frac{C}{\gamma}\left(\frac{|f|_{0,\gamma}}{\sqrt{\gamma}}+\langle g\rangle_{0,\gamma}\right).
\end{align}

\textbf{6. }Let $T_a^{-1}$ denote the inverse we have just constructed for the operator
\begin{align}\label{j9}
T_au:=\bp (\partial_{x_d}-G_{D'})u\\ (\Gamma_{D',\gamma}+S_{D',\gamma})u|_{x_d=0}\ep,\text{ and set }Tu:=\bp (\partial_{x_d}-G_{D'})u\\ \Gamma_{D',\gamma}u|_{x_d=0}\ep.
\end{align}
We have $T=T_a+\bp 0\\ -S_{D',\gamma}\ep$, so $TT_a^{-1}=I+\bp 0\\ -S_{D',\gamma}\ep T_a^{-1}:=I+M_\gamma$.  The estimate \eqref{j8} implies that $M_\gamma$ has norm $<1$ on $Y_\gamma$ for $\gamma$ large, so we can invert $I+M_\gamma$ on $Y_\gamma$ by a Neumann series, and thereby invert $T$.

\end{proof}

\section{The example of Rao}\label{s:Rao}
We now consider the example left untreated in the analysis of
\eqref{NSeq} in \cite{R}, lying in the case (ii) described in
Section \ref{s:mixed}.
Consider perturbations about a constant
boundary-layer solution of \eqref{NSeq} in dimension $d=2$,
with nonvanishing tangential velocity
\be\label{nonvan}
u \ne 0,
\ee
and normal velocity $v$ noncharacteristic, i.e.,
$0\ne v, v\pm c,$
where $c:=\sqrt{p_\rho + \frac{p p_T}{\rho^2 c_v}}$ is sound speed.
Here, we are assuming a polytropic temperature law $e=c_v T$
(used but not stated in Chapter 5 of \cite{R}) and an unspecified
pressure law $p=p(\rho, T)$, with $ p_\rho + \frac{p p_T}{\rho^2 c_v}>0$.

Working with variables $U:=(\rho, u, v, T)^T$, we find, following
\cite{R}, that the equations \eqref{NSeq} may
be expressed in quasilinear form as
\be\label{quasilin}
A_0 \partial_t U + \sum_{j=1}^2A_j \partial_{x_j} U=
\eps \sum_{j,k}\partial_{x_j}(B_{jk}\partial_k U),
\ee
with
\be\label{A0s}
A_0=\bp 1&0&0&0\\
u & \rho & 0 & 0 \\
v&0 & \rho & 0\\
E&\rho u& \rho v &\rho c_v\ep,
\quad
A_0^{-1}=\bp 1&0&0&0\\
-u/\rho  & 1/\rho & 0 & 0 \\
-v/\rho &0 & 1/\rho & 0\\
-E/\rho c_v & -u/c_v& -v/c_v &1/\rho c_v\ep,
\ee
\be\label{As}
A_1=\bp
u& \rho &0& 0\\
p_\rho/\rho & u &0& p_T/\rho\\
0 & 0& u &0\\
0 &  p/\rho c_v &  0& u\ep
\quad
A_2=\bp
v&0& \rho &0\\
0 & v& 0 &0\\
p_\rho/\rho & 0 &v& p_T/\rho\\
0 & 0 & p/\rho c_v &  v\ep
\ee
and
\be\label{Bs}
B_{jk}=\bp 0 & (0,0,0)\\
0& \beta_{jk}\ep,
\ee
where $\beta_{jk}$ is elliptic in the
sense that the eigenvalues of $\sum_{jk}\beta_{jk}\xi_j\xi_k$
have real part $\ge c|\xi|^2$, $c>0$, for all $\xi\in \RR^d$.

Note that the parabolic terms involving $B_{jk}$ are
of the more general form treated in \cite{GMWZ5,GMWZ6,R}
and not the Laplacian form to which we have restricted for
simplicity in \eqref{aa}.
In particular, they are degenerate parabolic, no diffusion being
present in the equation for density $\rho$, whose principal
part $\rho_t + (u,v)^T \cdot \nabla_x \rho$
is a hyperbolic transport equation.
Though it does not change the theory in any essential way,
this will require a bit of discussion as we go along.

To start with, and most significantly, the
fact that the $\rho$ equation is hyperbolic means
(see \cite{Se2,SZ,Z,GMWZ5,GMWZ6})
that a boundary condition is needed for $\rho$ only
when the convection $v$ in the normal direction is inward on
the boundary, i.e., $v>0$.
Boundary conditions on the ``parabolic variables'' $(u,v,T)$
must always be imposed, and may be Neumann or Dirichlet type.
Here, we will impose Dirichlet conditions on $\rho$ (when needed),
$u$, $v$, and Neumann, or ``insulative'' conditions $\partial_d T=0$ on $T$,
following (one version of) engineering practice as described in the
introduction.

Assume, now, that we are in
the {\it incoming supersonic} case in
the notation of \cite{GMWZ5,R},
the case left open in the treatment of \cite{R}, that
\be\label{subin}
0<c < v.
\ee
By the previous discussion,
we require a full set of boundary conditions for the
hyperbolic--parabolic problem, including a condition on $\rho$.
We thus take boundary conditions
\be\label{Rbc}
\hbox{\rm $(\rho,u,v)=(g_1,g_2,g_3)$ and
$\partial_{x_2} T=0=g_4$ at $x_d=0$.}
\ee
The first-order hyperbolic part of \eqref{quasilin},
comprising the Euler equations, may be written as
\be\label{hypeg}
U_t + \sum_j \tilde A_j \partial_{x_j} U=0,
\ee
where $\tilde A_j:=A_0^{-1}A_j$.
Recalling the standard computation that the characteristics
of $A_2$ are $v-c$, $v$, and $v+c$ (with multiplicity two),
we find, consulting \eqref{subin},
that this hyperbolic problem is {\it totally incoming},
i.e., all eigenvalues of $A_2$ are strictly positive.

Though we shall not carry it out here, it is not difficult to see
using the methods of \cite{GMWZ5,GMWZ6} that in this situation
there can exist no small-amplitude boundary layers other than
the trivial, constant layer, for the simple reason
that any rest state satisfying \eqref{subin} must, by the dimensional
counting arguments of \cite{GMWZ5}, be a repellor for the standing-wave
ODE, so cannot be
the limit as $x_d\to +\infty$ of a nonconstant
standing wave (boundary layer).
Thus, the same derivation as in Section \ref{s:mixed}, case (ii),
of a formal boundary-layer expansion applies, yielding an outer problem with
$\mathcal{D}=3$ Dirichlet conditions
and $\mathcal{N}=1$ Neumann boundary conditions,
the same ones imposed on the full hyperbolic--parabolic problem.

At this point, having derived an outer problem,
we can forget its hyperbolic--parabolic origins and analyze it
as in Section \ref{s:mixed}.
As noted in Lemma \ref{lopsat}(d), we have that the outer
problem is weakly Lopatinski stable, and solvable for the constant-coefficient
problem, provided
$\bp \Gamma_1\\ \Gamma_2 \tilde A_2^{-2}\ep$
or, equivalently,
$\bp \Gamma_1 A_0^{-1} A_2\\ \Gamma_2  \ep$
is full rank, where
\be\label{Gammas}
\Gamma_1:=\bp 1 & 0 & 0&0\\
0& 1 & 0&0  \\
0& 0& 1 &0  \ep,
\quad
\Gamma_2:=\bp 0 & 0 & 0&1\ep.
\ee
are the boundary matrices corresponding to the boundary conditions
described above.

Computing, we have
$$
\bp \Gamma_1 A_0^{-1} A_2\\ \Gamma_2  \ep=
\bp M & *\\
O_{1\times 3} &  1\ep,
$$
where $M= \bp 1 & 0 & 0\\
-u/\rho & 1/\rho  & 0\\
-v/\rho & 0 & 1/\rho \ep
\bp v & 0 & p\\
0 & v & 0\\
p_\rho/\rho & 0 & v \ep
$
is invertible provided that
$$
0\ne \det
\bp v & 0 & p\\
0 & v & 0\\
p_\rho/\rho & 0 & v \ep
= v(v^2-p_\rho)=
v^2-c^2 + pp_T/\rho^2,
$$
or
\be\label{wellposedness}\tag{Weak Lop}
v^2-c^2 + pp_T/\rho^2.
\ee
By the supersonic condition \eqref{subin},
this is evidently true under the standard assumption
\be\label{suff}
p_T>0,
\ee
satisfied in most typical applications,
in particular, for an ideal gas pressure law $p=R \rho T$, where $R>0$
is the universal gas constant.
However, in general \eqref{suff}, hence apparently also
\eqref{wellposedness}, need not be true, for example for nonstandard
materials/phases such as ice, for which pressure decreases with temperature.

Continuing, let us check not only reality but also semisimplicity/constant
multiplicity of the characteristic roots of the boundary problem
$$
\bp
\bp \Id_3 & 0\ep \\
\Gamma_1 A_2^{-1}A_0 \ep \partial_t v
+
i\eta
\bp
\bp \Id_3 & 0\ep \\
\Gamma_2 A_2^{-1} A_1\ep \partial_1 v=0
$$
derived above, or, equivalently, eigenvalues of
\be\label{trimat}
-i\eta
\bp
\bp \Id_3 & 0\ep \\
\Gamma_2 A_2^{-1}A_0 \ep^{-1}
\bp
\bp \Id_3 & 0\ep \\
\Gamma_2 A_2^{-1} A_1\ep
=
-i\eta
\bp
\Id_3 & 0\\
* & (\Gamma_2 A_2^{-1}A_0 \Gamma_2^T)^{-1}
\Gamma_2 A_2^{-1} A_1\Gamma_2^T \ep,
\ee
where, by our previous calculations,
$\Gamma_2 A_2^{-1}A_0 \Gamma_2^T\ne 0$.
By the lower block triangular structure of \eqref{trimat}, this
is implied by the nonvanishing property
$\Gamma_2 A_2^{-1} A_1\Gamma_2^T \ne 0.$
Noting that $A_1\Gamma_2^T=\bp 0 \\p_\rho/\rho\\0\\u\ep$,
while, by Kramer's rule,
$$
\Gamma_2 A_2^{-1}=(\det A_2)^{-1}\bp vp_Tp/\rho^2c_v & 0&
-v^2p/\rho c_v & (v^2-p_\rho)v\ep,
$$
we find that
$
\Gamma_2 A_2^{-1} A_1\Gamma_2^T = uv(v^2-p_\rho)/\det A_2
$
is nonvanishing if \eqref{wellposedness} holds,
under the nonvanishing
tangential velocity assumption \eqref{nonvan}.
The condition of constant multiplicity is trivially satisfied, since
$\eta$ is dimension one.

On the other hand, when \eqref{nonvan} is violated, we have,
computing,
$$
\Gamma_2 A_2^{-1} A_1 =
(\det A_2)^{-1}
\bp * & 0 & v^2\big(\rho(v^2-p_\rho)-p/c_v\big) & 0 \ep,
$$
which in general does not vanish, and so the matrix on
the righthand side of \eqref{trimat} contains a Jordan block,
violating semisimplicity. (Specifically, it has all zero eigenvalues,
by lower triangular form and vanishing on the diagonal, but is
not identically zero.)
This shows in passing that semisimplicity (hence maximal bounds)
can fail for method two when the weak Lopatinski condition is satisfied.

Thus, when \eqref{wellposedness} holds,
in particular under the standard assumption \eqref{suff},
we have also weak hyperbolicity together with semisimplicity/constant
multiplicity of characteristics of the boundary problem,
so obtain by the theory of Section \ref{s:second}
(together with the observation above that boundary layers are absent
at all orders in this case)
existence of approximate solutions to all orders.
However, the question of convergence is still open up to now, for lack
of associated hyperbolic--parabolic estimates.

\begin{rem}
\textup{
Example \ref{inceg} is closely related, and gives
a similar conclusion, as does any problem with a single Neumann condition.
}
\end{rem}

\section{Extension of the second approach to the non-totally incoming case}\label{s:new}

\emph{\quad} We return now to the reduced hyperbolic problem in its original form \eqref{res2} in the general case of mixed-type boundary conditions, assuming that $A_d$ is nonsingular and that the  $A_j$ are symmetric.   We also assume that the operator $L$ can be conjugated to block structure in the sense of \cite{MZ2}.

We split the problem
\eqref{res2} into two parts: a problem
with homogeneous Neumann boundary conditions
\be\label{Npart}
Lv=f,\quad \pi_+(A_d)\partial_d v|_{x_d=0}=0,
\ee
and a problem with homogeneous forcing and mixed boundary conditions
\be\label{mpart}
Lw=0,\quad
\Gamma_1 w |_{x_d=0}=g_1-\Gamma_1 v|_{x_d=0}:=h_1,
\quad
\tilde \Gamma_2 \partial_d w|_{x_d=0}=\tilde g_2-\tilde\Gamma_2 \partial_d v|_{x_d=0}:=h_2
\ee
where $u_0:=v+w$.   In $t<0$ we have $v=0$ and $w=0$.

The first problem may be solved as in Section \ref{s:linear}.   To solve the second problem we take the Laplace--Fourier transform,
substitute the usual boundary symbol for $\partial_d$, and multiply $\Gamma_1$ by $(\gamma+i\tau)$ to obtain
\be\label{step}
\bp (\gamma+i\tau)\Gamma_1 \\ -\tilde \Gamma_2
A_d^{-1}(\gamma +i\tau +\sum_{j=1}^{d-1} i\eta_j A_j)\ep \hat{w}(0)=
\bp (\gamma+i\tau) \hat{h}_1\\ \hat{h}_2\ep.
\ee
Note that $w$ is  a decaying solution of $Lw=0$ if and only if
$\hat{w}(0)\in \EE_+(A_d^{-1}(\gamma +i\tau
+\sum_{j=1}^{d-1} i\eta_j A_j))$\footnote{Here and in solving \eqref{Npart} we use the assumption of constant coefficients.}.   Thus, the problem $Lw=0$ on $x_d\geq 0$ with boundary conditions \eqref{step} is equivalent
to the problem \emph{on the boundary} with enlarged boundary conditions
\be\label{stepen}
\bp \Gamma_0(\gamma,\tau,\eta) (\gamma+i\tau)\\ \Gamma_1 (\gamma+i\tau)\\ -\tilde \Gamma_2
A_d^{-1}(\gamma +i\tau +\sum_{j=1}^{d-1} i\eta_j A_j)\ep \hat w(0)=
\bp 0\\ \hat{h}_1\\ \hat{h}_2\ep,
\ee
where $\Gamma_0(\gamma,\tau,\eta)$ is a matrix whose rows are orthogonal to $\EE_+
( A_d^{-1}(\gamma +i\tau +\sum_{j= 1}^{d-1} i\eta_j A_j))$\footnote{By the block structure assumption this space extends continuously to $\gamma =0$ \cite{MZ3}}.  But, this
is equivalent to the Cauchy problem on the boundary
\be\label{cauchyprob}
\bp \Gamma_0(\gamma,\tau,\eta) \\ \Gamma_1 \\ \tilde \Gamma_2 A_d^{-1}\ep
w_t
+
\sum_{j= 1}^{d-1} \bp 0 \\ 0\\ \tilde\Gamma_2 A_d^{-1}A_j\ep w_{x_j}=\bp 0 \\{h}_1\\{h}_3\ep \text{ where }h_3=-h_2.
\ee

\begin{defn}\label{hypdef2}
Let $\overline{S}^d_+=\{(\tau,\gamma,\eta):|\tau,\gamma,\eta|=1,\gamma\geq 0\}$. Parallel to
Definition \ref{hypdef}, we
we say that \eqref{cauchyprob}
is \emph{ evolutionary  at $(\tau_0,\gamma_0,\eta_0) \in \overline{S}^d_+$} if
$$
\mathcal{A}_0=
\bp \Gamma_0(\gamma_0, \tau_0,\eta_0) \\ \Gamma_1 \\ \tilde \Gamma_2 A_d^{-1}\ep
$$
is invertible.   We say \eqref{cauchyprob} is \emph{weakly hyperbolic at $(\tau_0,\gamma_0,\eta_0)\in \overline{S}^d_+$} if, in addition, the first-order system
\be\label{firstordersys}
(\gamma + i\tau)
\hat w
+
\sum_{j=1}^{d-1}
i\eta_j
\bp \Gamma_0(\gamma_0, \tau_0,\eta_0) \\ \Gamma_1 \\ \tilde \Gamma_2 A_d^{-1}\ep^{-1}
\bp 0 \\ 0\\  \tilde \Gamma_2 A_d^{-1} A_j\ep \hat w=
\bp \Gamma_0(\gamma_0, \tau_0,\eta_0) \\ \Gamma_1 \\ \tilde \Gamma_2 A_d^{-1}\ep^{-1}
\bp 0 \\\hat h_1\\\hat h_3\ep
\ee
has pure imaginary characteristics, defined as eigenvalues of
the homogeneous degree one symbol
$\mathcal{A}(\eta; \gamma_0 ,\tau_0 ,\eta_0 ):=
\sum_{j=1}^{d-1} i\eta_j \tilde{ \mathcal{A}}_j(\gamma_0,\tau_0, \eta_0)$ with homogeneous
degree zero coefficients
$$
\tilde{\mathcal{A}}_j(\gamma_0,\tau_0, \eta_0):=
\mathcal{A}_0^{-1}(\gamma_0,\tau_0,\eta_0)\mathcal{A}_j=
\bp \Gamma_0(\gamma_0, \tau_0,\eta_0) \\ \Gamma_1 \\ \tilde \Gamma_2 A_d^{-1}\ep^{-1}
\bp 0 \\ 0\\  \tilde \Gamma_2 A_d^{-1} A_j\ep,
\quad
\mathcal{A}_j:=
\bp 0 \\ 0\\  \tilde \Gamma_2 A_d^{-1} A_j\ep.
$$
\end{defn}

Unlike the situation of the totally incoming case,
the evolutionarity and weak hyperbolicity conditions  must be checked separately, and do not
follow simply by the weak Lopatinski condition. However, we have:

\begin{prop}
System \eqref{cauchyprob}
is evolutionary and weakly hyperbolic at all $(\gamma_0, \tau_0,\eta_0) \in \overline{S}^d_+$
only if the original problem \eqref{res2} satisfies
the weak Lopatinski condition.
\end{prop}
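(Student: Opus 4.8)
The plan is to prove the contrapositive: assuming the weak Lopatinski condition \eqref{lop} fails for \eqref{newp} (equivalently \eqref{res2}), I will exhibit a point of $\overline{S}^d_+$ at which \eqref{cauchyprob} is either not evolutionary or not weakly hyperbolic in the sense of Definition \ref{hypdef2}. The whole argument is linear-algebraic; the key is to pin down the dictionary between the two reductions already set up in Section \ref{s:new}.

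\textbf{Step 1: an algebraic factorization.} First I would observe that the $N\times N$ matrix $\mathcal{M}(\gamma,\tau,\eta)$ on the left of \eqref{stepen} factors as
\[
\mathcal{M}(\gamma,\tau,\eta)=\mathrm{diag}(I,I,-I)\,\Big(\mathcal{A}_0(\gamma,\tau,\eta)(\gamma+i\tau)+\sum_{j=1}^{d-1}i\eta_j\mathcal{A}_j\Big),
\]
where $\mathcal{A}_0$ and $\mathcal{A}_j$ are the coefficient matrices of \eqref{cauchyprob} as in Definition \ref{hypdef2}; this is checked by a block-row computation, using that the weight $(\gamma+i\tau)$ multiplies exactly the $\Gamma_0$- and $\Gamma_1$-blocks in \eqref{stepen}. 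Since the sign matrix is invertible, $\mathcal{M}(\gamma,\tau,\eta)$ is singular if and only if $\mathcal{A}_0(\gamma,\tau,\eta)(\gamma+i\tau)+\sum_j i\eta_j\mathcal{A}_j$ is.

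\textbf{Step 2: singularity of $\mathcal{M}$ vs.\ failure of weak Lopatinski.} Next I would record that for $\gamma>0$ one may take $\Gamma_0(\gamma,\tau,\eta)$ homogeneous of degree zero with $\ker\Gamma_0(\gamma,\tau,\eta)=\EE_+\big(A_d^{-1}(\gamma+i\tau+\sum i\eta_jA_j)\big)$; moreover $\EE_+$ depends only on the ray through $(\gamma,\tau,\eta)$, the scalar $m$ in $\hat\Gamma_2$ is nonzero, and $(\gamma+i\tau)\neq 0$. Chasing these facts through \eqref{step} and the definition \eqref{lop}, a nonzero vector $v$ lies in $\ker\bp\Gamma_1\\\hat\Gamma_2(\gamma,\tau,\eta)\ep\cap\EE_+\big(A_d^{-1}(\gamma+i\tau+\sum i\eta_jA_j)\big)$, with $\gamma>0$, precisely when $\mathcal{M}(\gamma,\tau,\eta)v=0$. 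Hence if weak Lopatinski fails there is a frequency $(\gamma_1,\tau_1,\eta_1)$ with $\gamma_1>0$ at which $\mathcal{M}(\gamma_1,\tau_1,\eta_1)$ is singular; normalizing to $(\gamma_0,\tau_0,\eta_0):=(\gamma_1,\tau_1,\eta_1)/|\gamma_1,\tau_1,\eta_1|\in\overline{S}^d_+$ and using degree-zero homogeneity of $\mathcal{A}_0$ (i.e.\ of $\Gamma_0$), the matrix $\mathcal{A}_0(\gamma_0,\tau_0,\eta_0)(\gamma_1+i\tau_1)+\sum_j i\eta_{1,j}\mathcal{A}_j$ is singular.

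\textbf{Step 3: the dichotomy.} The conclusion is then immediate. If $\mathcal{A}_0(\gamma_0,\tau_0,\eta_0)$ is not invertible, \eqref{cauchyprob} is by definition not evolutionary at $(\gamma_0,\tau_0,\eta_0)\in\overline{S}^d_+$. Otherwise, factoring out $\mathcal{A}_0(\gamma_0,\tau_0,\eta_0)$ shows that $(\gamma_1+i\tau_1)I+\mathcal{A}(\eta_1;\gamma_0,\tau_0,\eta_0)$ is singular, so $-(\gamma_1+i\tau_1)$ is an eigenvalue of $\mathcal{A}(\eta_1;\gamma_0,\tau_0,\eta_0)$ with real part $-\gamma_1<0$; being non-purely-imaginary, this shows \eqref{cauchyprob} is not weakly hyperbolic at $(\gamma_0,\tau_0,\eta_0)$. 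In either case the hypothesis is contradicted. I expect the only real obstacle to be the bookkeeping in Steps 1--2 — checking that the nonzero scalars $(\gamma+i\tau)$ and $m$, together with the degree-zero choice of $\Gamma_0$, genuinely identify ``$\mathcal{M}$ singular at a frequency with $\gamma>0$'' with ``weak Lopatinski fails there'', and that the normalization to $\overline{S}^d_+$ is consistent with the homogeneity conventions of Definition \ref{hypdef2}. Once that identification is fixed there is no analytic content; the argument is exactly the ``reverse'' of the equivalence in Proposition \ref{weakprop} for the totally incoming case, and it is only one-directional because evolutionarity and weak hyperbolicity are demanded on all of $\overline{S}^d_+$ (including $\gamma=0$), whereas the weak Lopatinski condition only concerns $\gamma>0$.
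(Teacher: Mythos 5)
Your proof is correct and follows the same route as the paper's: argue the contrapositive, use failure of weak Lopatinski at some $\gamma_1>0$ to produce a nonzero $\hat w$ annihilated simultaneously by $\Gamma_0$, $\Gamma_1$, and $\tilde\Gamma_2 A_d^{-1}(\gamma_1+i\tau_1+\sum i\eta_{1,j}A_j)$, and read this off as $-(\gamma_1+i\tau_1)$ being a non--pure-imaginary eigenvalue of $\mathcal{A}(\eta_1;\gamma_0,\tau_0,\eta_0)$. The only (welcome) difference is that you make explicit two points the paper glosses over: the dichotomy between $\mathcal{A}_0$ singular (evolutionarity fails) versus $\mathcal{A}_0$ invertible (weak hyperbolicity fails), and the normalization of the bad frequency onto $\overline{S}^d_+$ via degree-zero homogeneity of $\Gamma_0$; both are consistent with the paper's intent and close small gaps rather than change the argument.
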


\begin{proof}
If the weak Lopatinski condition fails for \eqref{res2}, then
for some $\gamma>0$ and $\tau,\eta$, there exists $\hat w$
such that $\Gamma_0(\gamma,\tau,\eta) \hat w=0$,
$ \Gamma_1 \hat w=0$, and $\tilde \Gamma_2
A_d^{-1}(\gamma +i\tau +\sum_{j=1}^{d-1} i\eta_j A_j) \hat w=0$,
whence \eqref{stepen} and \eqref{cauchyprob} hold with
$\hat h_1=0$, $\hat h_3=0$, as, by the evolutionarity assumption,
does \eqref{firstordersys}.
But, this, by inspection, means that $\gamma+i\tau$ is
an eigenvalue of $\mathcal {A}(\eta; \gamma, \tau, \eta)$
that is not purely imaginary, contradicting weak hyperbolicity
at $(\gamma,\tau,\eta)$.
\end{proof}

When the  evolutionarity and weak hyperbolicity conditions do hold, we have a situation
analogous to that of Section \ref{s:second},
but for the generalized first-order Cauchy problem
\be\label{nonloc}
\partial_t w + \sum_{j=1}^{d-1} \mathbb{A}_j \partial_{x_j}w= \mathbb{G},
\ee
on the boundary $x_d=0$, where
$\widehat{\mathbb{A}_j w}:= \tilde{\mathcal{A}}_j(\gamma,\tau, \eta)\hat w$ and
$
          \hat{ \mathbb{G}}:=
\bp \Gamma_0(\gamma, \tau,\eta) \\ \Gamma_1 \\ \tilde \Gamma_2 A_d^{-1}\ep^{-1}
\bp 0 \\\hat h_1\\\hat h_3\ep.
$

As in the standard case, evolutionarity plus weak hyperbolicity is
not sufficient to give well-posedness of the problem \ref{nonloc},
but requires some additional structural assumptions.
The following sufficient conditions are often applicable.

\begin{lem}\label{wellcond2}
Assuming evolutionarity at all $(\tau_0,\gamma_0,\eta_0)\in \overline{S}^d_+$,
a sufficient condition for well-posedness of \eqref{nonloc}\footnote{By well-posedness of \eqref{nonloc} we mean that there exists $\gamma_0$ such that for $\gamma\geq\gamma_0$, if $\bG\in e^{\gamma t}L^2$, then there is a solution $w\in e^{\gamma t}L^2$.}is that the eigenvalues of $\mathcal{A}(\eta;\gamma_0, \tau_0, \eta_0)$
be semisimple, pure imaginary, and of constant multiplicity
for all $(\tau_0,\gamma_0,\eta_0)\in \overline{S}^d_+$,  $\eta\ne 0$.  In
in this case we have the uniform resolvent estimate
\be\label{resestD}
| (\gamma+i\tau+ \sum_{j=1}^{d-1} i\eta_j \tilde{\mathcal{A}}_j(\gamma,\tau,\eta) )^{-1}|\le
C/\gamma,
\quad \gamma>0.
\ee
\end{lem}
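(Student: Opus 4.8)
\textbf{Proof strategy for Lemma \ref{wellcond2}.}

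The plan is to reduce the generalized first-order Cauchy problem \eqref{nonloc} to a system of scalar transport-type equations via simultaneous diagonalization, and then to conclude well-posedness together with the resolvent estimate \eqref{resestD} by a classical energy/resolvent argument. First I would observe that by the evolutionarity assumption at all $(\tau_0,\gamma_0,\eta_0)\in\overline{S}^d_+$, the matrix $\mathcal{A}_0(\gamma_0,\tau_0,\eta_0)$ is invertible on the compact sphere, hence uniformly well-conditioned; this makes the coefficients $\tilde{\mathcal{A}}_j(\gamma_0,\tau_0,\eta_0)$ of the homogeneous degree-zero symbol bounded, with bounded pseudo-inverse. Next, under the hypothesis that the eigenvalues of $\mathcal{A}(\eta;\gamma_0,\tau_0,\eta_0)=\sum_{j=1}^{d-1}i\eta_j\tilde{\mathcal{A}}_j(\gamma_0,\tau_0,\eta_0)$ are pure imaginary, semisimple, and of constant multiplicity for $\eta\ne 0$ and $(\tau_0,\gamma_0,\eta_0)\in\overline{S}^d_+$, a standard perturbation-of-eigenvalues argument (as in the block-structure constructions used elsewhere in the paper, e.g. Proposition \ref{c23}) produces a smooth, uniformly bounded and uniformly invertible matrix $T(\gamma_0,\tau_0,\eta_0,\hat\eta)$ that block-diagonalizes $\mathcal{A}$ into $\mathrm{diag}(i\mu_k I)$, with the $\mu_k$ real and smooth in the parameters. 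Compactness of $\overline{S}^d_+$ (and of the $\hat\eta$-sphere) converts these local constructions into uniform ones.

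With such a $T$ in hand, I would conjugate \eqref{nonloc} by the corresponding Fourier multiplier $T(\eps\!-\!)$—more precisely, by the operator with symbol $T(\gamma,\tau,\eta,\hat\eta)$—to reduce the problem to a collection of decoupled scalar equations of the form $\partial_t w_k + i\mu_k(\ldots)\,|\partial''|\,w_k = \mathbb{G}_k$ (after restoring the homogeneity), i.e.\ genuinely hyperbolic scalar symmetrizable problems. For these, the energy estimate is immediate: pairing with $w_k$ and using that the symbol of the transport term is skew-adjoint up to lower-order bounded terms, one obtains $\gamma|w_k|_{0,\gamma}^2 \le C|\mathbb{G}_k|_{0,\gamma}^2/\gamma$, hence on the Laplace–Fourier side the bound $|(\gamma+i\tau+i\mu_k|\eta|)^{-1}|\le C/\gamma$, $\gamma>0$, uniformly. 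Summing over the blocks and conjugating back by $T$ and $T^{-1}$, whose operator norms are uniformly bounded, yields exactly \eqref{resestD} and, by the standard duality/Riesz-representation argument (as in the proof of Proposition \ref{pseudoprop}, Step 4), existence of a solution $w\in e^{\gamma t}L^2$ for $\bG\in e^{\gamma t}L^2$ whenever $\gamma\ge\gamma_0$.

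The main obstacle I anticipate is the uniformity of the diagonalization as $\gamma_0\to 0$ and as $\hat\eta$ ranges over its sphere, together with the passage between the homogeneous degree-zero symbol (in which the spectral hypotheses are naturally phrased) and the inhomogeneous operator $\gamma+i\tau+\sum i\eta_j\tilde{\mathcal{A}}_j$ appearing in \eqref{resestD}: one must check that the constant-multiplicity/semisimplicity hypothesis, imposed on the unit sphere, indeed propagates to give a globally bounded conjugator with globally bounded inverse, and that no hidden Jordan structure appears along sequences approaching the boundary $\gamma_0=0$ of $\overline{S}^d_+$. This is precisely the place where constant multiplicity (rather than mere semisimplicity at each fixed point) is essential, and it is the analogue of the block-structure reduction of \cite{MZ2} invoked throughout the paper; once it is granted, the remaining steps are routine.
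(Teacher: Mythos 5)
Your proposal is correct and follows essentially the same route as the paper: in both cases the hypothesis of semisimple, purely imaginary eigenvalues of constant multiplicity is used to diagonalize the frozen symbol $\sum_j i\eta_j\tilde{\mathcal A}_j(\gamma_0,\tau_0,\eta_0)$, compactness of $\overline{S}^d_+$ (and of the $\hat\eta$-sphere) makes the conjugator bounds uniform, a trivial scalar estimate gives $|(\gamma+i\tau+i\mu_k)^{-1}|\leq 1/\gamma$, and one then restricts to the diagonal $(\gamma_0,\tau_0,\eta_0)=(\gamma,\tau,\eta)/|\gamma,\tau,\eta|$ using degree-zero homogeneity. Your framing in terms of conjugating the equation and running an energy estimate is a harmless elaboration — since $\mathbb A_j$ is a Fourier multiplier the conjugation is pointwise and the energy/duality step is superfluous (one simply inverts the multiplier, as the paper does) — but the underlying argument, including the crucial ``freeze the basepoint, then restrict to the diagonal'' device, is the same.
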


\begin{proof}
By taking the Laplace-Fourier transform of the problem \eqref{nonloc}, we see that well-posedness follows from the
estimate \eqref{resestD}.   We prove \eqref{resestD} by first proving the family of estimates
\be\label{strongerest}
| (\gamma+i\tau+ \sum_{j= 1}^{d-1} i\eta_j \tilde{\mathcal{A}}_j(\gamma_0,\tau_0,\eta_0) )^{-1}|\le
C/\gamma,
\quad \gamma>0
\ee
for the problems frozen at $(\gamma_0,\tau_0,\eta_0)\in\overline{S}^d_+$.
For a given $(\gamma_0,\tau_0,\eta_0)$ this estimate can be obtained by conjugating the frozen system to Jordan form and using semisimplicity.
Using compactness of the unit half-sphere and continuity of the $\tilde\cA_j$, we conclude that the estimate is uniform for all
$(\tau_0,\gamma_0,\eta_0)\in\overline{S}^d_+$.  Restricting to the diagonal $(\gamma_0, \tau_0, \eta_0)=(\gamma,\tau,\eta)/|\gamma,\tau,\eta|$ and using degree zero homogeneity
of $\mathcal{A}_j$,
we obtain \eqref{resestD}.
\end{proof}

The conditions of Lemma \ref{wellcond2}, which refer to a large family of frozen systems,  seem hard to check and possibly over-restrictive; however,
as examples below show, the conditions are sometimes satisfied.  Since the matrix
$\mathcal{A}$ has a guaranteed large block of semisimple zero
eigenvalues
associated with the boundary conditions $\Gamma_0$
and $\Gamma_1$ , in simple cases verification
amounts to checking nonvanishing of a single remaining eigenvalue (see the proof of Proposition \ref{sharp}).
For systems of size $N=3$, for example,
with $2$ Neumann conditions,
$1$ Dirichlet condition, and $2$ incoming modes, the matrix
$\mathcal{A}$ has a two-dimensional kernel, so we are in the
situation described.
Likewise, for systems with $\leq 2$ Neumann boundary conditions,
if there is even a single outgoing mode, then we are in this case.
Of course, one might always attempt to establish
\eqref{resestD} by direct matrix inversion, as well.

When we do have conditions sufficient to
obtain well-posedness of \eqref{cauchyprob}, hence
bounds \eqref{resestD} on $w|_{x_d=0}$,
we may then from the trace information on
$w|_{x_d=0}$ obtain interior bounds on $w$ using any
Lopatinski stable Dirichlet boundary condition, for example a maximally
dissipative one.
Putting the estimates for $v$ and $w$ together, we obtain a
Kreiss-type estimate with losses for the solution $u_0$, similarly
as in the totally incoming case.

\begin{rem}\label{caveat2}
\textup{Even when the semisimplicity and constant multiplicity assumptions of Lemma \ref{wellcond2} are not satisfied, it may still be possible to implement method two in some problems.  This would require refined estimates for the Cauchy problem that to  our knowledge have not yet been carried out.}

\end{rem}

\subsubsection{A sharp condition in a special case}\label{s:sharp}
Let the number of Neumann conditions be one greater than the number
of incoming hyperbolic modes, i.e., the number of reduced Neumann conditions
be one.
\begin{prop}\label{sharp}
Let the number of reduced Neumann conditions be one.
Then, assuming evolutionarity at all $(\tau,\gamma,\eta)\in\overline{S}^d_+$,
a necessary and sufficient condition for well-posedness of \eqref{nonloc} (i.e., for the
estimate \eqref{resestD})
is that the eigenvalues of $\mathcal{A}(\eta ;0, 0, \eta)$
be semisimple, pure imaginary, and of constant multiplicity with respect
to $\eta$, for all $|\eta|=1$, or, equivalently, the scalar condition
$\sum_{j\ne d}i\eta_j \bar \alpha_j\ne 0$ for $\eta\ne 0$, where
$\bar \alpha_j:=  (\mathcal{A}_j \mathcal{A}_0^{-1})_{NN}(0,0,\eta)$.
\end{prop}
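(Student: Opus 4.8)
The plan is to reduce the general well-posedness criterion of Lemma \ref{wellcond2} to the sharpest possible scalar condition by exploiting the special block structure available when there is exactly one reduced Neumann condition. First I would observe that the matrix $\mathcal{A}_0(\gamma_0,\tau_0,\eta_0)$ has rows coming from $\Gamma_0$, $\Gamma_1$, and the single row $\tilde\Gamma_2 A_d^{-1}$; since $\Gamma_0$ and $\Gamma_1$ contribute $N-1$ rows annihilating all of $\mathbb{A}_j$ (because $\mathcal{A}_j$ has only its last row nonzero), the conjugated matrices $\tilde{\mathcal{A}}_j=\mathcal{A}_0^{-1}\mathcal{A}_j$ are rank one, with range spanned by the last column of $\mathcal{A}_0^{-1}$ and ``row'' determined by the bottom entries of $\mathcal{A}_j$. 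Consequently $\mathcal{A}(\eta;\gamma_0,\tau_0,\eta_0)=\sum_{j\ne d}i\eta_j\tilde{\mathcal{A}}_j$ is itself rank one: it has an $(N-1)$-dimensional kernel, and its unique possibly-nonzero eigenvalue equals its trace, namely $\sum_{j\ne d}i\eta_j(\mathcal{A}_j\mathcal{A}_0^{-1})_{NN}$.

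Next I would pin down semisimplicity. A rank-one matrix $M$ is semisimple if and only if $\operatorname{tr}M\ne 0$ or $M=0$; when $\operatorname{tr}M=0$ but $M\ne 0$, $M$ is nilpotent of index two and hence has a nontrivial Jordan block, which is exactly the obstruction to the resolvent bound. So the semisimple/pure-imaginary/constant-multiplicity hypothesis of Lemma \ref{wellcond2}, applied at the base point $(\gamma_0,\tau_0,\eta_0)=(0,0,\eta)$, is equivalent to the scalar nonvanishing condition $\sum_{j\ne d}i\eta_j\bar\alpha_j\ne 0$ for all $\eta\ne 0$, with $\bar\alpha_j=(\mathcal{A}_j\mathcal{A}_0^{-1})_{NN}(0,0,\eta)$; and when this holds, pure-imaginarity and constant multiplicity come for free, since the eigenvalue is $i\sum_{j\ne d}\eta_j\Im(\ldots)$ up to the structure of $\bar\alpha_j$ — more precisely, one checks using the symmetry of the $A_j$ and the reality of the entries that $\bar\alpha_j$ is real, so the nonzero eigenvalue is automatically purely imaginary. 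For sufficiency I then invoke Lemma \ref{wellcond2} directly to get \eqref{resestD}.

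For necessity — the genuinely new direction — I would argue contrapositively: if the scalar condition fails at some $\eta_*\ne 0$, then $\mathcal{A}(\eta_*;0,0,\eta_*)$ is a nonzero nilpotent, so the frozen symbol $\gamma+i\tau+\sum i\eta_j\tilde{\mathcal{A}}_j(0,0,\eta_*)$, evaluated along $(\gamma,\tau)=(\gamma,0)$ with $\eta=s\eta_*$ and $s\to\infty$ at fixed small $\gamma$, has an inverse of size $\sim s/\gamma^2$ rather than $1/\gamma$ (the standard blow-up rate for a Jordan block of size two hit by a small real parameter $\gamma$), violating \eqref{resestD}. Here I must be careful to use homogeneity: $\tilde{\mathcal{A}}_j$ is degree-zero homogeneous, so freezing at $(0,0,\eta_*)$ versus at the diagonal point $(\gamma,0,s\eta_*)/|\cdots|$ differs only by a continuous perturbation that vanishes as $\gamma/s\to 0$, and a perturbation argument (the nilpotent block is structurally stable in the relevant sense) shows the $1/\gamma$ bound genuinely fails. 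The main obstacle I anticipate is this last perturbation/quantitative step: one needs that the Jordan-block degeneracy at the base point really does force the resolvent of the \emph{non-frozen} operator appearing in \eqref{nonloc} to be unbounded in the $e^{\gamma t}L^2\to e^{\gamma t}L^2$ sense, which requires translating the pointwise symbol blow-up into a failure of the operator estimate — most cleanly done by testing against a sequence of highly oscillatory wave packets concentrated in frequency near $s\eta_*$ as $s\to\infty$. I would also double-check the identification ``$\bar\alpha_j$ real'' and the claim that constant multiplicity is automatic once the eigenvalue is nonzero, since the multiplicity of the zero eigenvalue is the fixed number $N-1$ by the rank-one structure.
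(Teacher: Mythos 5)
Your rank-one observation is correct and is a nice repackaging of the structure the paper exploits: since $\mathcal{A}_j$ has only its bottom row nonzero, $\tilde{\mathcal{A}}_j=\mathcal{A}_0^{-1}\mathcal{A}_j$ and hence $\mathcal{A}(\eta;\gamma_0,\tau_0,\eta_0)$ are rank $\le 1$, with the unique possibly-nonzero eigenvalue equal to the trace $\sum_{j\ne d}i\eta_j(\mathcal{A}_j\mathcal{A}_0^{-1})_{NN}$. The genuine gap is in your sufficiency direction. You propose to ``invoke Lemma \ref{wellcond2} directly,'' but that lemma requires semisimplicity/pure-imaginarity/constant-multiplicity at \emph{every} base point $(\tau_0,\gamma_0,\eta_0)\in\overline{S}^d_+$, while the scalar condition of Proposition \ref{sharp} concerns only the slice $(\gamma_0,\tau_0)=(0,0)$. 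At a base point with $|\gamma_0,\tau_0|>0$ the matrix $\mathcal{A}(\eta;\gamma_0,\tau_0,\eta_0)$ may be a nonzero nilpotent, so Lemma \ref{wellcond2}'s hypothesis fails, \emph{even though} \eqref{resestD} still holds there --- this is exactly what Example \ref{noest} exhibits. So sufficiency cannot go through Lemma \ref{wellcond2}; you must estimate the diagonal resolvent directly. The paper does this by conjugating by $\mathcal{A}_0$ to get the lower block triangular matrix $\gamma+i\tau+\sum_{j\ne d}i\eta_j\mathcal{A}_j\mathcal{A}_0^{-1}$, whose bottom block is scalar because there is one reduced Neumann condition, inverting explicitly as in \eqref{ulx1}, and verifying the $\le C/\gamma$ bound separately in two regimes: unconditionally for $|\eta|\le C|\gamma,\tau|$, and via the scalar condition for $|\gamma,\tau|\ll|\eta|$. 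Your rank-one picture makes the scalar condition transparent but does not by itself give you that two-regime estimate.

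On necessity you overcomplicate: \eqref{resestD} is already a pointwise symbol bound, and well-posedness of \eqref{nonloc} reduces to it by Laplace--Fourier transform, so no wave packets or ``translation into a failure of the operator estimate'' are needed. After the block inversion the $(2,1)$ entry of the inverse is $-\bigl(\sum_{j\ne d} i\eta_j\beta_j\bigr)\bigl((\gamma+i\tau)(\gamma+i\tau+\sum_{j\ne d}i\eta_j\alpha_j)\bigr)^{-1}$; taking $\tau=0$, $\eta=s\eta_*$ and using degree-zero homogeneity so that $\alpha_j(\gamma,0,s\eta_*)=\alpha_j(\gamma/s,0,\eta_*)\to\bar\alpha_j(0,0,\eta_*)=0$, this entry grows like $s|\sum\eta_{*j}\beta_j|/\gamma^2$, violating $\le C/\gamma$ as $\gamma/s\to 0$. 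That one-line computation replaces the perturbation argument you flag as your main obstacle. You are also right to want to double-check reality of $\bar\alpha_j$; the paper asserts $\alpha_j,\beta_j$ real without proof, and it is what is needed for the pure-imaginarity claim.
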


\begin{proof}
Conjugating by the uniformly invertible $\mathcal{A}_0$,
we have that \eqref{resestD} is equivalent to
\be\label{resestC}
\hbox{\rm
    $| ((\gamma+i\tau)+ \sum_{j\neq d} i\eta_j
{\mathcal{A}}_j(\gamma,\tau,\eta)\mathcal{
A}_0^{-1} )^{-1}|\le C/\gamma$
for $\gamma>0$,}
\ee
or, computing explicitly in block-diagonal form,
\ba\label{ulx1}
C/\gamma&\geq
\Big|     \bp \gamma+i\tau & 0\\
\sum_{j\ne d} i\eta_j\beta_j &
\gamma +i\tau + \sum_{j\ne d}i\eta_j \alpha_j \ep^{-1} \Big|
\\ &=
\Big|
\bp \frac{1}{\gamma+i\tau}&0\\
\frac{-(\sum_{j\ne d}i\eta_j \beta_j)
(\gamma +i\tau + \sum_{j\ne d}i\eta_j \alpha_j)^{-1}}
{\gamma+i\tau } &
(\gamma +i\tau + \sum_{j\ne d}i\eta_j \alpha_j)^{-1}
\ep\Big|
,
\ea
where the upper blocks consist of $N-1$ rows, the lower blocks of $1$ row,
and
\ba\label{alb}
\alpha_j(\gamma,\tau,\eta)&:=
(\mathcal{A}_j \mathcal{A}_0^{-1})_{NN}(\gamma,\tau,\eta),\\
\beta_j&=(\beta_j^1,\dots, \beta_j^{N-1}),
\quad
\beta_j^i(\gamma,\tau,\eta):=
(\mathcal{A}_j \mathcal{A}_0^{-1})_{Ni}(\gamma,\tau,\eta)
\ea
are real and scalar.
This holds for $|\gamma,\tau|<<|\eta|$ if and only if
$\sum_{j\ne d}i\eta_j\alpha_j (0,0,\eta)\ne 0$ for $\eta\ne 0$,
or, equivalently, $|\sum_{j\ne d}\eta_j\alpha_j (0,0,\eta)|\ge |\eta|/C>0$.
For $|\eta|\le C|\gamma,\tau|$, it holds always,
in the $1-1$, $1-2$, and $2-2$ blocks by inspection
using the fact that $\gamma+i\tau +\sum_{j\ne d}i\eta_j \alpha_j$
is scalar,
and in the $2-1$ block by
$
\Big|\frac{-(\sum_{j\ne d}i\eta_j \beta_j) } {\gamma+i\tau } \Big|\le
C|\eta|/|\gamma,\tau|\le C_2$.
\end{proof}

\begin{rem}\label{noscalar}
\textup{
Though the proof of Proposition \ref{sharp} relied heavily on the
scalar nature of block $\gamma+i\tau +\sum_{j\ne d}i\eta_j \alpha_j$,
the same argument shows in the general case that, assuming evolutionarity,
necessary and sufficient conditions for \eqref{resestD} are well-posedness
of the reduced symbol in the $2-2$ block,
\be\label{reducedres}
\hbox{\rm
$|(\gamma+i\tau +\sum_{j\ne d}i\eta_j \alpha_j)^{-1}|\le C/\gamma$
for $\gamma>0$,}
\ee
and noncharacteristicity at $(\gamma,\tau)=(0,0)$,
$\det (\sum_{j\ne d}i\eta_j\alpha_j (0,0,\eta))\ne 0$, for $\eta\ne 0$.
Though still nontrivial, the formulation \eqref{reducedres} represents
a substantial reduction in order.
}
\end{rem}


\subsection{Some illustrative examples}
We now give some examples indicating the range of possible behaviors.
Before presenting these,
we first
prove two theoretical results that serve to frame the discussion.

\begin{lem}\label{lopcomp}
Assuming that $\mathcal{A}_0$ is invertible for $(\tau,\gamma,\eta)\in\overline{S}^d_+$ and
that
\be\label{rankcond}
\hbox{\rm
$M:=\bp \Gamma_1\\
\tilde \Gamma_2A_2^{-1}(\gamma+i\tau+\sum_{j\ne d}i\eta_jA_j)\ep$
is full rank for $\gamma\ge 0$ and $|\gamma,\tau,\eta|=1$,}\footnote{
For pure Neumann boundary conditions, and non-totally incoming hyperbolic
characteristics, this holds generically for dimension $d=2$ in the
sense that it is true for all choices of boundary matrices $\Gamma_2$
except for a measure zero set, and fails generically for $d\ge 3$
in the sense that it is false away from a measure
zero set of matrix entries $A_1,\dots, A_d$ (precisely, those for
which $A_1, \dots, A_{d-1}$ all
share a common eigenvector).
},
\ee
the uniform Lopatinski condition for the rescaled boundary
condition of method one \eqref{lop}
may be expressed, equivalently, in the convenient form
\be\label{lopver}
\hbox{\rm
 $|(\gamma +i\tau)^{ \rank \tilde \Gamma_2 -N}
\det
 (\gamma+i\tau+ \sum_{j\neq d} i\eta_j \tilde{\mathcal{A}}_j(\gamma,\tau,\eta) )|
\geq \delta_0> 0$
for all $\gamma> 0$, $|\gamma,\tau, \eta|=1$.
}
\ee
\end{lem}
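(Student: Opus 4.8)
The plan is to compare the two Lopatinski determinants directly by relating the kernel space appearing in \eqref{lop} to a solvability problem on the boundary. First I would recall the setup: the rescaled boundary condition of method one is given by the matrix $\bp (\gamma+i\tau)\Gamma_1\\ -\tilde\Gamma_2 A_d^{-1}(\gamma+i\tau+\sum_{j\neq d}i\eta_j A_j)\ep$, and the uniform Lopatinski condition \eqref{lop} asks for uniform transversality of its kernel with $\EE_+(A_d^{-1}(\gamma+i\tau+\sum_{j\neq d}i\eta_j A_j))$. A vector $w$ lies in the kernel of the rescaled boundary matrix precisely when $\Gamma_1 w=0$ (for $\gamma+i\tau\neq 0$) and $\tilde\Gamma_2 A_d^{-1}(\gamma+i\tau+\sum_{j\neq d}i\eta_j A_j)w=0$. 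Under the rank hypothesis \eqref{rankcond}, this kernel has constant dimension $N-\rank M$, and I would show that transversality of this kernel with $\EE_+$ is equivalent to the statement that the map $w\mapsto \bp \Gamma_1 w\\ \tilde\Gamma_2 A_d^{-1}(\gamma+i\tau+\sum_{j\neq d}i\eta_j A_j)w\ep$ restricted to $\EE_+$ has trivial kernel, i.e. is injective on $\EE_+$. Since $\dim\EE_+$ equals the number of incoming modes, and the matrix $M$ has $\mathcal D+(\cN-\cO)=\cD+\cN-\cO$ rows (where in the pure-Neumann case $\cD=0$), one checks the dimension count is consistent with injectivity being the correct condition.

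Next I would translate this injectivity statement into a determinant. Because $\EE_+$ is the decaying-solution space for the ODE $\partial_d\hat w + G(\gamma,\tau,\eta)\hat w=0$ with $G=A_d^{-1}(\gamma+i\tau+\sum_{j\neq d}i\eta_j A_j)$, one has by the equivalence established in Section \ref{s:new} (the passage from \eqref{step} through \eqref{stepen} to \eqref{cauchyprob}) that $w\in\EE_+$ together with the boundary conditions is equivalent to $w$ solving the enlarged boundary system, hence to $\gamma+i\tau$ being an eigenvalue of the pencil built from $\cA_0^{-1}\cA_j$. Concretely: the boundary matrix $\Gamma_0$ whose rows span $\EE_+^\perp$ (extended continuously to $\gamma=0$ by block structure, cf.\ \cite{MZ3}) lets us write membership $w\in\EE_+$ as $\Gamma_0 w=0$, and then the condition ``$w\in\EE_+$, $\Gamma_1 w=0$, $\tilde\Gamma_2 A_d^{-1}(\gamma+i\tau+\sum i\eta_j A_j)w=0$'' becomes exactly equation \eqref{stepen} with zero right-hand side, which by evolutionarity of $\cA_0$ is equivalent to $\hat w$ being a null vector of $(\gamma+i\tau)\,\Id+\sum_{j\neq d}i\eta_j\tilde{\mathcal A}_j(\gamma,\tau,\eta)$. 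Therefore the transversality (i.e.\ triviality of this null space) is equivalent to $\det\bigl((\gamma+i\tau)\Id+\sum_{j\neq d}i\eta_j\tilde{\mathcal A}_j\bigr)\neq 0$.

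The remaining task is to track the powers of $(\gamma+i\tau)$ that arise from two sources: multiplying $\Gamma_1$ by $(\gamma+i\tau)$ to homogenize (which introduces a factor $(\gamma+i\tau)^{\rank\Gamma_1}=(\gamma+i\tau)^{\mathcal D}$ in the determinant), and the fact that $\tilde{\mathcal A}_0^{-1}\tilde{\mathcal A}_j$ carries a guaranteed block of zero eigenvalues of multiplicity equal to $\rank\Gamma_0+\rank\Gamma_1=(\dim\EE_-)+\mathcal D$ coming from the $\Gamma_0$ and $\Gamma_1$ rows. Collecting: the $N\times N$ determinant of $(\gamma+i\tau)\Id+\sum i\eta_j\tilde{\mathcal A}_j$ factors as $(\gamma+i\tau)^{(\dim\EE_-)+\mathcal D}$ times the determinant of the reduced pencil on the remaining $\rank\tilde\Gamma_2 - \dim\EE_-$–dimensional block. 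After dividing out the homogenization factor and simplifying exponents using $\mathcal D+\mathcal N=N$ and $\mathcal N=\rank\Gamma_2\geq\rank\tilde\Gamma_2$, one arrives at exactly the prefactor $(\gamma+i\tau)^{\rank\tilde\Gamma_2-N}$ in \eqref{lopver}. I would do this bookkeeping carefully but it is routine linear algebra once the equivalence of the two kernels is established. The main obstacle, and the step I would spend the most care on, is justifying that $\Gamma_0$ extends continuously to $\gamma=0$ uniformly on $|\gamma,\tau,\eta|=1$ (this is where the block structure assumption and \cite{MZ3} enter) and that the eigenvalue/null-vector correspondence between $\EE_+$ and the frozen pencil is genuinely an equivalence rather than merely an implication — this requires the evolutionarity hypothesis on $\cA_0$ to be in force, which is why it appears in the statement. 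Everything else is a determinant factorization.
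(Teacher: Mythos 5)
Your plan lands on the same core idea as the paper: both proofs hinge on the identity
\be
\det\bp (\gamma+i\tau)\Gamma_0\\(\gamma+i\tau)\Gamma_1\\\tilde\Gamma_2A_d^{-1}(\gamma+i\tau+\sum_{j\neq d}i\eta_jA_j)\ep
= (\gamma+i\tau)^{N-\rank\tilde\Gamma_2}\,\det\bp\Gamma_0\\M\ep,
\ee
obtained by pulling $(\gamma+i\tau)$ out of the first $\cO+\cD$ rows, together with the observation that $\det\mathcal{A}_0^{-1}$ is uniformly bounded above and below by evolutionarity and compactness. The paper states this identity directly and cites M\'etivier for converting transversality into the uniform determinant bound; you arrive at the same identity indirectly by passing through \eqref{stepen}. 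So the route is essentially the same, just motivated differently.

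Two places need tightening. First, your power bookkeeping as written does not parse. You list ``two sources'' of powers of $(\gamma+i\tau)$ — the $\Gamma_1$ homogenization contributing $\cD$ and a ``guaranteed block of zero eigenvalues'' contributing $\cO+\cD$ — which double-counts, and you give the dimension of the reduced block as $\rank\tilde\Gamma_2-\dim\EE_-$ when it should be $\rank\tilde\Gamma_2=N-(\cO+\cD)$. There is in fact a single source: $(\gamma+i\tau)$ factors out of the $\cO+\cD$ rows $\Gamma_0(\gamma+i\tau)$ and $\Gamma_1(\gamma+i\tau)$, giving the exponent $N-\rank\tilde\Gamma_2$ at once. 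Your final exponent is right, but the justification you gave is internally inconsistent. Second, and more substantively, you conflate transversality (``$\det\neq 0$'') with the \emph{uniform} Lopatinski condition \eqref{lopver}, which asks for a lower bound $\delta_0>0$. The passage from one to the other is exactly where the hypothesis \eqref{rankcond} does its work: it ensures $M$ and its pseudo-inverse $M^\dagger$ are uniformly bounded on the compact half-sphere (as is $\Gamma_0$, having orthonormal rows), which is what makes the orthonormal-basis determinant of \eqref{lop} and the raw determinant $\det\bp\Gamma_0\\M\ep$ uniformly comparable (cf.~Lemma 6.2.4 of \cite{Met4}). You flag continuity of $\Gamma_0$ to $\gamma=0$ as the delicate point, and that matters, but the uniform conditioning of $M$ supplied by \eqref{rankcond} is the hypothesis doing the heavy lifting here and must be invoked explicitly.
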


\begin{proof}
Under \eqref{rankcond},
$|M|$ and its pseudo-inverse $M^\dagger:=M^*(MM^*)^{-1}$
are uniformly bounded on $\gamma\ge 0$, $|\gamma,\tau, \eta|=1$, by
continuity, as is  $\Gamma_0$ (which has orthonormal rows) and its pseudo-inverse.
Thus, (see \cite[pp. 135--136]{Met4})
the uniform Lopatinski condition, or transversality of the kernels of $M$ and $\Gamma_0$,
is equivalent to $|\det \bp \Gamma_0\\M\ep|\ge \delta_0>0$,
whence equivalence of \eqref{lopver} follows by uniform boundedness of
$\det \mathcal{A}_0^{-1}$
(a consequence of continuity and evolutionarity) and
$$
\det \bp \Gamma_0 (\gamma+i\tau)\\\Gamma_1(\gamma+i\tau)\\
\tilde \Gamma_2  A_d^{-1}(\gamma + i\tau + \sum_{j\ne d}i\eta_j A_j\ep
=
(\gamma +i\tau)^{ \rank N-\tilde \Gamma_2} \det \bp \Gamma_0\\M\ep.
$$
\end{proof}

\bc
Under the assumptions of Lemma \ref{lopcomp},
the uniform Lopatinski condition
is sufficient (but not necessary; see the examples below)
for resolvent estimate \eqref{resestD}.
\ec

\begin{proof}
Conjugating by the uniformly invertible $\mathcal{A}_0$,
we have that \eqref{resestD} is equivalent to
\be\label{resestB}
\hbox{\rm
    $| ((\gamma+i\tau)+ \sum_{j\neq d} i\eta_j
{\mathcal{A}}_j(\gamma,\tau,\eta)\mathcal{
A}_0^{-1} )^{-1}|\le C/\gamma$
for $\gamma>0$,}
\ee
or, computing explicitly in block-diagonal form,
\be\label{ulx}
C/\gamma\geq
\Big|     \bp \gamma+i\tau & 0\\
|\gamma,\tau,\eta|\beta &|\gamma,\tau,\eta| \alpha_+\ep^{-1} \Big|=
\Big|
\bp \frac{1}{\gamma+i\tau}&0\\
\frac{-\beta \alpha_+^{-1}}{\gamma+i\tau } &
\frac{\alpha_+^{-1}}{|\gamma,\tau,\eta|} \ep\Big|,
\ee
where $\alpha_+:=
\frac{\gamma+i\tau + \sum_{j\ne d}i \alpha_j \eta_j}{|\gamma,\tau,\eta|}$ and
$\beta:= \frac{\sum_{j\ne d}i\eta_j \beta_j}{|\gamma,\tau,\eta|}$,
$\alpha_j$ and $\beta_j$ homogeneous degree zero in $(\gamma,\tau,\eta)$,
defined as in \eqref{alb}.
By \eqref{lopver}, Uniform Lopatinski is equivalent to
$|\det \alpha_+|\geq \delta_0> 0$, whence, by boundedness of $\alpha_+(\gamma,\tau,\eta)$, $|\alpha_+^{-1}|$ is uniformly bounded.
This, along with boundedness of $\beta$, verifies \eqref{ulx}.
\end{proof}

\begin{example}\label{2eg}
Consider the system
$u_t+A_1 u_{x_1}+A_2 u_{x_2}-\eps\Delta_x u=f$, $u\in \RR^3$,
with two incoming hyperbolic modes,
two Neumann conditions $\Gamma_2 u|_{x_2}=g_2$, and one Dirichlet
condition $\Gamma_1 u|_{x_2=0}=g_2$, given by
\be\label{Acoeffs}
A_1:=\bp 0&1&0\\ 1&0&0\\ 0&0&1\ep,\quad
A_2:=\bp -1&0&0\\ 0&1&0\\ 0&0&1\ep,
\ee
and
$\Gamma_1:=\bp 0 & 1 & 0\ep$,
$\Gamma_2:=\bp 1 & * & *\\ 0 & \alpha & \beta \ep$.
We have evidently $\hat \Gamma_2=\bp 0 & \alpha & \beta \ep$.
More, by the decoupled form of $A_1$, $A_2$, and the fact
that $\Pi_+(A_2)u=0$ is a maximally dissipative
Dirichlet condition, we find without calculation
that $\EE_+(A_2^{-1}(\gamma +i\tau + i\eta A_2))$
is spanned by $(0,0,1)^T$ and $(*, 1,0)^T$, so that
\be\label{Gamma0coeff}
\mathcal{A}_0:=
\bp \Gamma_0\\\Gamma_1\\ \hat \Gamma_2 A_2^{-1}\ep=
\bp 1& \delta(\gamma,\tau, \eta)&0\\
0 & 1 & 0 \\
0 & \alpha & \beta \ep,
\quad
\mathcal{A}_1:=
\bp 0\\0\\ \hat \Gamma_2 A_2^{-1}A_1\ep=
\bp  0 & 0 & 0\\
0 & 0 & 0\\
\alpha & 0 & \beta \ep,
\ee

Computing, we find that evolutionarity, $\mathcal{A}_0$ invertible
is satisfied when $\beta\ne 0$, with
\be
\mathcal{A}_0^{-1}=
\bp 1& -\delta(\gamma,\tau, \eta)&0\\
0 & 1 & 0 \\
0 & -\alpha/\beta & 1/\beta \ep,
\quad
\mathcal{A}_0^{-1}\mathcal{A}_1=
\bp  0 & 0 & 0\\
0 & 0 & 0\\
\alpha/\beta & 0 & 1 \ep,
\ee
in which case $\mathcal{A}_0^{-1} \mathcal{A}_1$ is independent of
$(\gamma, \tau, \eta)$, with real,
semisimple constant-multiplicity characteristics
$0$, $0$, $1$.
Thus, by Lemma \ref{wellcond2} method
two works in this case.

On the other hand, by \eqref{lopver} and the easily-checked \eqref{rankcond}
for $\alpha\neq 0$, the uniform Lopatinski condition for the rescaled boundary
condition of method one is equivalent to
$$
0<\delta_0\leq |(\gamma+i\tau)^{-2}
\det \bp \gamma+i\tau& 0 &0\\
0 & \gamma+i\tau & 0\\
i\eta \alpha/\beta & 0 & \gamma+i\tau +i\eta  \ep|
=|\gamma + i(\tau +\eta)|
$$
for $\gamma > 0$, $|\gamma,\tau,\eta|=1$.   This clearly fails for
$\gamma=0$, $\eta=-\tau$.
Thus, method two can apply even when uniform Lopatinski fails.
\end{example}

\begin{example}\label{neueg2}
Consider again Example \ref{neueg}, of the first-order wave equation
with drift $\alpha$,
$$
A_1=\bp 0 & 1\\ 1 & 0 \ep,
\quad
A_2 =\bp 1+\alpha & 0 \\ 0 & -1+\alpha \ep,
\quad
|\alpha|<1,
$$
with full Neumann boundary conditions, so that
$\tilde \Gamma_2= \bp 1 & 0\ep$, $\Gamma_0=(\delta(\gamma,\tau,\eta),1)$.
Then,
$\tilde \Gamma_2 A_2^{-1}=\bp \frac{1}{1+\alpha}& 0\ep$ and
$\tilde \Gamma_2 A_2^{-1}A_1=
\bp 0 & \frac{1}{1+\alpha} \ep,$ so that
$ \mathcal{A}_0= \bp \delta & 1\\ \frac{1}{\alpha+1}& 0\ep$,
$\mathcal{A}_1= \bp 0 & 0\\0&\frac{1}{1+\alpha}\ep$,
$\mathcal{A}_0^{-1}= \bp 0 & (1+\alpha)\\ 1& -\delta(1+\alpha) \ep,$
and thus
$
\mathcal{A}_0^{-1}\mathcal{A}_1=
\bp 0 & 1\\ 0& -\delta \ep
$
is $\alpha$-independent, with real, semi-simple eigenvalues of constant
multiplicity whenever $\delta\neq 0$, or, equivalently, $\eta\neq 0$.
Applying Lemma \ref{wellcond2},
we thus find that the associated Cauchy problem of method two
is well-posed with standard hyperbolic estimate \eqref{resestD}.
Thus, again, method two succeeds despite failure of the uniform Lopatinski
condition
observed in Example \ref{neueg} for method one.
\end{example}

\begin{example}\label{noest}
Substituting in Example \ref{neueg2} the value
$ A_1=\bp \theta & 1\\ 1 & \theta \ep$, $\theta\ne 0$,
we find that $\mathcal{A}_1 \mathcal{A}_0^{-1}=
\bp 0 & 1\\ \frac{1}{1+\alpha}& \theta -\delta \ep $,
and so semisimplicity fails, in general, for $\eta\ne 0$
and $|\gamma,\tau|\ne 0$.
Thus, in this case the frozen-coefficient
algorithm proposed for method two fails, even though
by Proposition \ref{sharp} the problem is well-posed.
\end{example}

\subsection{Questions and comparison to first approach}\label{s:2disc}
\emph{\quad} We have described two methods for solving the reduced
hyperbolic outer problem obtained by the derivation of Section
\ref{s:mixed}, which appear to give slightly different
bounds and apply in slightly different scenarios.
It would be interesting to further clarify the relation between the two methods.
It is not clear that our technique of obtaining estimates through
an enlarged family of
frozen-in frequency coefficients $\mathcal{A}_0(\gamma_0,\tau_0,\eta_0)$
will always produce optimum results for the problem of method two
in problems of interest.
On the other hand, for situations with mixed incoming/outgoing modes,
the first method requires that the uniform Lopatinski condition be satisfied
in order to obtain good bounds, a scenario that might not occur even when
the conditions for method two do apply.

%


\begin{thebibliography}{GMWZ6}

\bibitem[BS]{BS}
Benzoni--Gavage, S. and  Serre, D.,
{\it Multidimensional hyperbolic partial differential equations. First-order systems and applications,}
Oxford Mathematical Monographs. The Clarendon Press, Oxford University Press, Oxford, 2007. xxvi+508 pp. ISBN: 978-0-19-921123-4; 0-19-921123-X.

\bibitem[Br]{Br}
Braslow, A.L., {\it A history of suction-type laminar-flow control
with emphasis on flight research}, NSA History Division, Monographs
in aerospace history, number 13 (1999).



\bibitem[CP]{CP}
Chazarain J. and Piriou, A., \emph{Introduction to the Theory of
Linear Partial Differential Equations}, North Holland, Amsterdam,
1982.


\bibitem[Co2]{Co2} Coulombel, J.-F, and Secchi, P.,
\emph{Nonlinear compressible vortex sheets in two space dimensions}, Ann. Scient. de L'E.N.S.,41 (2008), 85-139.


\bibitem[F1]{F1} Fornet, B. {\it  Viscous approach for Linear Hyperbolic Systems with Discontinuous Coefficients,}
Annales faculté des Sciences de Toulouse, Vol. 18/2 (2009), pp 397-443.

\bibitem[F2]{F2} Fornet, B. {\it  The Cauchy Problem for 1D Linear
Nonconservative Hyperbolic Systems with possibly expansive Discontinuity of the coefficient:
a Viscous Approach,} J. Differential Equations, Vol. 245 (2008), pp 2440-2476.


\bibitem[GS]{GS} Gisclon, M. and Serre, D.,
\textit{Conditions aux limites pour un syst\`eme strictement
hyperbolique fournies par le sch\'ema de Godunov}. RAIRO Mod\'el.
Math. Anal. Num\'er. 31 (1997), 359--380.

\bibitem[GMWZ2]{GMWZ2}
Gues, O., Metivier, G., Williams, M., and Zumbrun, K.,
\emph{Multidimensional viscous shocks II: the small viscosity limit},   Comm. Pure Appl. Math., 57, 2004, 141-218.


\bibitem[GMWZ5]{GMWZ5}
Gues, O., Metivier, G., Williams, M., and Zumbrun, K.,
 \emph{Existence and stability of noncharacteristic boundary-layers for the compressible Navier-Stokes and viscous MHD equations},
 Archive for Rational Mechanics and Analysis, 197, (2010), 1-87.



\bibitem[GMWZ6]{GMWZ6}Gues, O., Metivier, G., Williams, M., and Zumbrun, K.,
\emph{Viscous boundary value problems for symmetric systems with
variable multiplicities}, with  G. Metivier, O. Gues, K. Zumbrun,
  J. Differential Equations, 244 (2008), 309-387.



\bibitem[GMWZ8]{GMWZ8}Gues, O., Metivier, G., Williams, M., and Zumbrun, K.,
{\it Uniform stability estimates for constant-coefficient symmetric hyperbolic
boundary value problems,}
 Comm. Partial Differential Equations  32  (2007),  no. 4-6, 579--590.

\bibitem[K]{K}
Kreiss, H.-O., \emph{Initial boundary value problems for hyperbolic systems},
Comm. Pure Appl. Math. 23 (1970), 277-298.






\bibitem[Met4]{Met4} Metivier, G.,
\emph{Small viscosity and boundary layer methods, theory,
stability analysis, and applications,}
Modeling and simulation in Science, Engineering, and Technology,
Birkh\"auser, Boston, 2003.

\bibitem[MZ1]{MZ1}
Metivier, G. and Zumbrun, K., \emph{Large viscous boundary layers
for noncharacteristic nonlinear hyperbolic problems},
Mem. Amer. Math. Soc.  175  (2005),  no. 826, vi+107 pp.

\bibitem[MZ2]{MZ2} Metivier, G. and Zumbrun, K.,
\textit{Hyperbolic Boundary Value Problems for Symmetric Systems with
Variable Multiplicities},  J. Diff. Eq. 211  (2005),  no. 1, 61--134.

\bibitem[MZ3]{MZ3} M\'etivier, G. and Zumbrun, K.,
\textit{Symmetrizers and continuity of stable subspaces for
parabolic--hyperbolic boundary value problems.}
to appear, J. Discrete. Cont. Dyn. Systems (2004).


 \bibitem[NZ1]{NZ1}
Nguyen, T. and Zumbrun, K.,
{\it Long-time stability of large-amplitude noncharacteristic
boundary layers for hyperbolic--parabolic systems},
 J. Math. Pures Appl. (9)  92  (2009),  no. 6, 547--598.

 \bibitem[NZ2]{NZ2}
Nguyen, T. and Zumbrun, K.,
{Long-time stability of multi-dimensional noncharacteristic viscous
boundary layers},
Comm. Math. Phys.  299  (2010),  no. 1, 1--44.

\bibitem[R]{R}  Rao, I.N.,
{\it Stability of noncharacteristic
boundary-layers for the
compressible nonisentropic
Navier-Stokes equations,}
Doctoral Thesis, UNC Chapel Hill (2010).

\bibitem[R2]{R2} Rousset, F.,
{\it Inviscid boundary conditions and stability of viscous boundary
layers,}  (English summary) Asymptot. Anal. 26 (2001), no. 3-4,
285--306.

\bibitem[R3]{R3} Rousset, F.,
{\it  Stability of small amplitude boundary layers for mixed
hyperbolic-parabolic systems,} Trans. Amer. Math. Soc.  355  (2003),
no. 7, 2991--3008.

\bibitem[S]{S}  Schlichting, H.,
{\em Boundary layer theory}, Translated by J. Kestin. 4th ed.
McGraw-Hill Series in Mechanical
 Engineering. McGraw-Hill Book Co., Inc., New York, 1960.

\bibitem [Se1]{Se1}  Serre, D.,
{\it Second order initial boundary-value problems of variational type,}
J. Funct. Anal. 236 (2006), no. 2, 409--446.

\bibitem[Se2]{Se2} Serre, D.,
{\it Sur la stabilit\'e des couches limites de viscosit\'e,}
(French. English, French summary)  [Stability of viscosity boundary layers]
Ann. Inst. Fourier (Grenoble) 51 (2001), no. 1, 109--130.

\bibitem[SZ]{SZ}
 Serre, D. and  Zumbrun, K.,  {\it Boundary layer stability in real
vanishing-viscosity limit}, Comm. Math. Phys. 221 (2001), no. 2,
267--292.

\bibitem[Z]{Z}  Zumbrun, K.
{\it Stability of noncharacteristic boundary layers in
the standing-shock limit},
 Trans. Amer. Math. Soc.  362  (2010),  no. 12, 6397--6424.

\end{thebibliography}
\end{document}